\newtheorem{theorem}{Theorem}[section]
\newtheorem{thm}[theorem]{Theorem}
\newtheorem{corollary}[theorem]{Corollary}
\newtheorem{lemma}[theorem]{Lemma}
\newtheorem{proposition}[theorem]{Proposition}
\newtheorem{prop}[theorem]{Proposition}
\newtheorem{definition}[theorem]{Definition}
\newtheorem{remark}[theorem]{Remark}
\newtheorem{example}[theorem]{Example}
\numberwithin{equation}{section}
\def\be{\begin{equation}}
\def\ee{\end{equation}}
\def\bes{\begin{equation*}}
\def\ees{\end{equation*}}
\def\vp{{\varphi}}
\def\wt{\widetilde}
\def\eps{\varepsilon}
\def\lam{{\lambda}}
\def\ol{\overline}
\def\Gam{\Gamma}
\def\qed{{\hfill $\square$ \bigskip}}
\def\esssup{{\mathop{\rm ess \; sup \, }}}
\def\essinf{{\mathop{\rm ess \; inf \, }}}
\def\sE {{\cal E}}
\def\sF {{\cal F}}
\def\sL {{\cal L}}
\def\sD{{\cal D}}
\def\sN {{\cal N}}
\def\bE {{\mathbb E}} \def\bN {{\mathbb N}}
\def\bP {{\mathbb P}} \def\bR {{\mathbb R}} \def\bZ {{\mathbb Z}}
\def\ms{\medskip}
\def\sms{\smallskip}
\def\PHI{\mathrm{PHI}}
\def\UHK{\mathrm{UHK}}
\def\LHK{\mathrm{LHK}}
\def\HK{\mathrm{HK}}
\def\CSA{\mathrm{CSA}}
\def\CSJ{\mathrm{CSJ}}
\def\SCSJ{\mathrm{SCSJ}}
\def\CSAJ{\mathrm{CSAJ}}
\def\PI{\mathrm{PI}}
\def\FK{\mathrm{FK}}
\def\VD{\mathrm{VD}}
\def\RVD{\mathrm{RVD}}
\def\PHI{\mathrm{PHI}}
\def\EP{\mathrm{EP}}
\def\UHKD{\mathrm{UHKD}}
\definecolor{dred}{rgb}{0.8, 0.0, 0.0}
\def\Nash{\mathrm{Nash}}
\def\E{\mathrm{E}}
\def\J{\mathrm{J}}
\def\<{\langle}
\def\>{\rangle}
\def\FF{{\cal F}}
\def\T{\mathrm{Tail}}
\begin{document}

\title{\bf Stability of heat kernel estimates for symmetric
non-local Dirichlet forms}

\author{{\bf Zhen-Qing Chen}\footnote{Research partially supported
by NSF grant DMS-1206276.}, \quad {\bf Takashi Kumagai}\footnote{Research
partially supported by the Grant-in-Aid for Scientific Research (A)
25247007 and 17H01093.} \quad and \quad
{\bf Jian Wang}\footnote{Research partially supported by
the National Natural Science Foundation of China (No.\ 11522106), Fok Ying Tung Education Foundation (No.\ 151002)
the JSPS postdoctoral fellowship (26$\cdot$04021),
National Science Foundation of Fujian Province (No.\ 2015J01003), the Program for Nonlinear
Analysis and Its Applications (No.\ IRTL1206), and Fujian Provincial Key Laboratory of Mathematical
Analysis and its Applications (FJKLMAA).
}}

\date{}
\maketitle

\begin{abstract} In this paper,
we consider symmetric jump processes of  mixed-type  on metric measure spaces
under general volume doubling condition, and establish   stability of
two-sided heat kernel estimates and  heat kernel upper bounds.
We obtain their stable equivalent characterizations
in terms of the
jumping kernels, variants
of cut-off Sobolev inequalities, and
the Faber-Krahn inequalities.
In particular, we establish stability of heat kernel estimates for $\alpha$-stable-like processes even with $\alpha\ge 2$ when
the underlying spaces have walk dimensions larger than $2$, which has been one of the major open problems in this area.
\end{abstract}

\medskip
\noindent
{\bf AMS 2010 Mathematics Subject Classification}: Primary 60J35, 35K08, 60J75; Secondary 31C25, 60J25, 60J45.

\smallskip\noindent
{\bf Keywords and phrases}: symmetric jump process, metric measure space, heat kernel estimate, stability,
Dirichlet form,  cut-off Sobolev inequality, capacity, Faber-Krahn inequality, L\'evy system, jumping kernel, exit time.

{\small
 \begin{tableofcontents}
 \end{tableofcontents} }

\section{Introduction and Main Results} \label{sec:intro}

\subsection{Setting}
Let $(M,d)$ be a locally compact separable metric space,
and
$\mu$
a positive Radon measure on $M$ with full support.
We will refer to such a triple $(M, d, \mu)$ as a \emph{metric measure space},
and denote by
$\langle  \cdot , \cdot \rangle$ the inner product in $L^2(M; \mu)$.
Throughout the paper, we
assume that all balls are relatively compact and
assume for simplicity that $\mu (M)=\infty$.
 We would emphasize that in this paper we do not assume $M$ to be connected nor
 $(M, d)$ to be geodesic.

We consider a regular \emph{Dirichlet form} $(\sE, \sF)$
on $L^2(M; \mu)$. By the Beurling-Deny formula, such form can be decomposed
into three terms --- the strongly local term, the pure-jump term and the killing term (see \cite[Theorem 4.5.2]{FOT}).
Throughout this paper, we consider the form that consists of the pure-jump term only; namely
there exists a symmetric Radon measure $J(\cdot,\cdot)$ on $M\times M\setminus \textrm{diag}$,
 where \textrm{diag} denotes the diagonal set $\{(x, x): x\in M\}$,  such that
\begin{equation}\label{e:1.1}
\sE(f,g)=\int_{M\times M\setminus \textrm{diag}}(f(x)-f(y)(g(x)-g(y))\,J(dx,dy),\quad f,g\in \sF.
\end{equation}

Since $(\sE,\sF)$ is regular, each function $f\in \sF$ admits a
quasi-continuous version $\wt f$ on $M$ (see \cite[Theorem
2.1.3]{FOT}). Throughout the paper,
we will always take a quasi-continuous version of $f\in \sF$ without denoting it by $\tilde f$.
Let $(\sL, \sD (\sL))$
be the (negative definite) $L^2$-\emph{generator} of $(\sE,
\sF)$ on $L^2(M; \mu)$; this is, $\sL$  is the self-adjoint operator
in $L^2(M; \mu)$
whose domain $\sD (\sL)$ consists exactly those of $f\in \sF$ that there is some (unique) $u\in L^2(M; \mu)$
so that
$$
  \sE (f,g) =   \langle u,g \rangle \quad \hbox{ for all }
  g \in \sF ,
$$
and $\sL f := - u$.
Let $\{P_t\}_{t\geq 0}$  be
the associated \emph{semigroup}. Associated with the regular
Dirichlet form $(\sE, \sF)$ on $L^2(M; \mu)$ is a $\mu$-symmetric
\emph{Hunt process} $X=\{X_t, t \ge 0; \, \bP^x, x \in M\setminus
\sN\}$. Here
$\sN \subset M$
is a properly exceptional set for $(\sE, \sF)$ in
the sense that
 $\mu(\sN)=0$ and $M_\partial\setminus \sN$ is $X$-invariant;
that is,
$$
\bP^x( X_t \in M_\partial\setminus\sN \hbox{ and } X_{t-}\in M_\partial\setminus\sN \hbox{ for all } t\ge0 )=0
$$
for all $x \in M\setminus\sN$
with the convention that $X_{0-}:=X_0$. Here
 $M_{\partial}:=M\cup\{\partial\}$ is the one-point compactification of $M$.
This Hunt process is unique up to a
properly exceptional set --- see \cite[Theorem 4.2.8]{FOT}.
We fix $X$ and $\sN$, and write $ M_0 = M \setminus \sN.$
While the semigroup $\{P_t\}_{t\ge0}$ associated with $\sE$ is defined on $L^2(M; \mu)$,
a more precise version with better regularity properties can be obtained,
if we set, for any bounded Borel measurable function $f$ on $M$,
$$ P_t f(x) = \bE^x f(X_t), \quad x \in M_0. $$
The \emph{heat kernel} associated with the semigroup $\{P_t\}_{t\ge0}$ (if it exists) is a
measurable function
$p(t, x,y):M_0 \times M_0 \to (0,\infty)$ for every $t>0$, such that
\begin{align} \label{e:hkdef}
\bE^x f(X_t) &= P_tf(x) = \int p(t, x,y) f(y) \, \mu(dy), \quad  x \in M_0,
f \in L^\infty(M;\mu), \\
p(t, x,y) &= p(t, y,x) \quad \hbox{for all } t>0,\, x ,y \in M_0, \\
\label{e:ck}
p(s+t , x,z) &= \int p(s, x,y) p(t, y,z) \,\mu(dy)
\quad \hbox{for all } s>0, t>0,
\,\, x,z \in M_0.
\end{align}
While \eqref{e:hkdef} only determines  $p(t, x, \cdot)$ $\mu$-a.e.,
using the Chapman-Kolmogorov equation \eqref{e:ck} one can
regularize $p(t, x,y)$ so that \eqref{e:hkdef}--\eqref{e:ck} hold
for every point in $M_0$. See \cite[Theorem 3.1]{BBCK} and
\cite[Section 2.2]{GT} for details. We call $p(t, x,y)$ the {\em
heat kernel} on the \emph{metric measure Dirichlet space} (or
\emph{MMD space}) $(M, d, \mu, \sE)$.
By \eqref{e:hkdef}, sometime we also call $p(t,x,y)$ the \emph{transition density function} with respect to the measure $\mu$ for the process $X$.
 Note that in some arguments of
our paper, we can extend (without further mention) $p(t,x,y)$ to all
$x$, $y\in M$ by setting $p(t,x,y)=0$ if $x$ or $y$ is outside
$M_0$. The existence of the heat kernel allows to extend the
definition of $P_tf$ to all measurable functions $f$ by choosing a
Borel measurable version of $f$ and noticing that the integral
\eqref{e:hkdef}  does not change if function $f$ is changed on a set
of measure zero.

Denote the ball centered at $x$ with radius $r$ by
$B(x, r)$ and $\mu (B(x, r))$ by $V(x, r)$.
When
the metric measure space $M$
is  an Alhfors $d$-regular set on $\bR^n$ with $d\in (0, n]$ (that is,   $V(x, r)\asymp r^d$ for all $x\in \bR^n$ and $r\in (0, 1]$),
and
the Radon measure $J(dx,dy)=J(x,y)\,\mu(dx)\,\mu(dy)$ for some non-negative symmetric function $J(x,y)$ such that
\begin{equation}\label{e:1.6}
J(x, y) \asymp \frac{1}{d(x, y)^{d+\alpha}}, \quad x, y \in M
\end{equation}
 for some $0<\alpha<2$,  it is established in \cite{CK1}  that
the corresponding Markov process $X$ has infinite lifetime,  and has
a jointly H\"older
continuous transition density function $p(t, x, y)$ with respect to the measure $\mu$, which enjoys
the following two-sided estimate
\begin{equation}\label{e:1.7}
 p(t, x, y) \asymp  t^{-d/\alpha} \wedge \frac{t}{d(x, y)^{d+\alpha}}
 \end{equation}
for any $(t, x, y)\in (0, 1]\times M\times M$. Here for two positive
functions $f, g$, notation $f\asymp g$ means $f/g$ is bounded
between two positive constants, and $a\wedge b:= \min \{a, b\}$.
Moreover, if $M$ is a global $d$-set; that is, if $V(x, r)\asymp
r^d$
 holds for all $x\in \bR^n$ and $r>0$, then the estimate \eqref{e:1.7} holds for all $(t,x,y)\in(0,\infty)
 \times M\times M$.
We call the above Hunt process $X$  an $\alpha$-stable-like process on $M$.
 Note that when $M=\bR^d$ and $J(x, y)= c|x-y|^{-(d+\alpha)}$ for all $x,y\in \bR^d$ and some constants $\alpha\in(0,2)$ and $c>0$,
 $X$ is  a rotationally symmetric $\alpha$-stable L\'evy process on $\bR^d$.
The estimate \eqref{e:1.7} can be regarded as the jump process counterpart of the celebrated Aronson estimates for diffusions.
Since $J(x, y)$ is the weak limit of $p(t, x, y)/t$ as $t\to 0$, heat kernel estimate \eqref{e:1.7} implies
\eqref{e:1.6}. Hence the results from \cite{CK1} give a stable characterization for $\alpha$-stable-like  heat kernel estimates
when $\alpha \in (0, 2)$ and
the metric measure space $M$
is a $d$-set for some constant $d >0$.
 This result has later  been extended to mixed stable-like processes
 on more general metric measure spaces  in \cite{CK2} and to diffusions with jumps  on Euclidean spaces  in \cite{CK3},
with some growth  condition on the rate function $\phi$ such as
\be\label{eq:into2fb}
\int_0^r \frac{s}{\phi(s)} \,ds\le \frac{c\,
r^2}{\phi(r)} \quad \hbox{for all }  r>0
\ee
with some constant $c>0$. For $\alpha$-stable-like
processes where $\phi (r)=r^\alpha$, condition \eqref{eq:into2fb}
corresponds exactly to $0<\alpha<2$.
Some of the key methods used in \cite{CK1} were inspired by a previous work
\cite{BL} on random walks on integer lattice $\bZ^d$.

The notion of $d$-set arises in the theory of function spaces and in
fractal geometry. Geometrically, self-similar sets are typical examples of $d$-sets.
There are many self-similar fractals on which there exist fractal diffusions with walk dimension $d_w>2$
(that is,  diffusion processes with scaling relation ${\it time} \approx {\it space}^{d_w}$).
This is the case, for example,
for the Sierpinski gasket in $\bR^n$ ($n\ge 2$) which is a $d$-set with $d=\log (n+1)/\log 2$
and has walk dimension $d_w=\log (n+3)/\log 2$,
and for the Sierpinski carpet in $\bR^n$ ($n\ge 2$) which is a $d$-set with $d=\log (3^n-1)/\log 3$
 and has walk dimension $d_w>2$;
 see  \cite{B}.
  A direct calculation shows (see \cite{BSS, Sto})
that  the $\beta$-subordination of the fractal diffusions on these fractals are jump processes
whose Dirichlet forms $(\sE, \sF)$ are of the form  given above with $\alpha = \beta d_w$ and their transition density functions
have two-sided estimate \eqref{e:1.7}. Note that as $\beta \in (0, 1)$, $\alpha \in (0, d_w)$ so $\alpha$ can be larger than 2.
When $\alpha >2$, the approach in \cite{CK1} ceases to work as it is hopeless to construct good cut-off functions a priori in this case.
A long standing open problem in the field is whether estimate \eqref{e:1.7} holds for generic jump processes with jumping kernel
of the form \eqref{e:1.6} for any $\alpha \in (0, d_w)$. A related open question is to find a characterization for heat kernel estimate \eqref{e:1.7} that is stable under ``rough isometries".   Do they hold on general metric measure spaces with volume doubling
(VD) and reverse volume doubling (RVD) properties (see Definition \ref{D:1.1} below for these two terminologies)?
These are the questions we will address in this paper.

For diffusions on manifolds with walk dimension 2, a remarkable fundamental result obtained independently by Grigor'yan \cite{Gr2} and
Saloff-Coste \cite{Sa} asserts that the following are equivalent: (i)  Aronson-type Gaussian bounds for heat kernel,
(ii) parabolic Harnack equality, and (iii) VD and Poincar\'e inequality.
This result is then extended to strongly local Dirichlet forms on metric measure spaces in \cite{BM, St1, St2}
and to graphs in \cite{De}. For diffusions on fractals with walk dimension larger than 2,
the above equivalence  still holds  but one needs to replace (iii)
by (iii') VD, Poincar\'e inequality and a cut-off Sobolev inequality; see \cite{BB2, BBK1, AB}.
For heat kernel estimates of
symmetric jump processes in general metric measure spaces,
as mentioned above, when $\alpha \in (0, 2)$ and
 the metric measure space $M$ is a $d$-set,
 characterizations of $\alpha$-stable-like heat kernel estimates were obtained in \cite{CK1}
which are stable under rough isometries;
see \cite{CK2, CK3} for further extensions.
For  the equivalent characterizations of heat kernel estimates for symmetric jump processes
analogous to the situation when $\alpha \geq 2$,
there are some efforts such as \cite[Theorem 1.2]{BGK1} and \cite[Theorem 2.3]{GHL2}
 but none of these characterizations are stable under rough isometries.
In \cite[Theorem 0.3]{BGK1}, assuming that
  $(\sE, \sF)$ is conservative,  $V(x, r) \leq c_1r^d$ for all $x\in M$, $r>0$ and some constants $c_1,d>0$,
and that  $p(t, x, x) \leq c_2 t^{-d/\alpha}$ for any $x\in M$, $t>0$ and some constant $c_2>0$,
an equivalent characterization for the heat kernel upper bound
estimate in \eqref{e:1.7}
is given in terms of certain exit time estimates.
Under the assumption that $(\sE, \sF)$ is conservative,
  the Radon measure $J(dx,dy)=J(x,y)\,\mu(dx)\,\mu(dy)$ for some non-negative symmetric function $J(x,y)$,
  and
 $V(x, r) \leq cr^d$ for all $x\in M$, $r>0$ and some constants $c,d>0$,
it is shown in \cite{GHL2} that heat kernel  upper bound estimate
in \eqref{e:1.7} holds if and only if  there are some constants $c_1,c_2>0$ such that $p(t, x,x)\leq c_1t^{d/\alpha}$ and
$J(x, y) \leq c_2 d(x, y)^{-(d+\alpha)}$ for all $x,y\in M$ and $t>0$, and the following survival estimate holds:
there are constants $\delta, \eps \in (0, 1)$ so that
$\bP^x(\tau_{B(x, r)}\leq t) \leq \eps$  for all $x\in M$ and $r, t>0$ with $t^{1/\alpha} \leq \delta r$.
In both \cite{BGK1, GHL2}, $\alpha$ can be larger than 2.
 We note that when $\alpha<2$, further equivalent
 characterizations
of heat kernel estimates are given for jump processes on graphs
\cite[Theorem 1.5]{BBK2}, some of which are stable under rough
isometries. Also, when the Dirichlet form of the jump process is
parabolic (namely the capacity of any non-empty compact subset of
$M$ is positive \cite[Definition 6.3]{GHL2}, which is equivalent to
that every singleton has positive capacity), an equivalent
characterization of heat kernel estimates is given in  \cite[Theorem
6.17]{GHL2}, which is stable under rough isometries.

\subsection{Heat kernel}

In this paper, we are concerned with
 both upper bound and  two-sided estimates on $p(t, x,y)$
 for mixed stable-like processes on general metric measure spaces
 including $\alpha$-stable-like processes with $\alpha \geq 2$.
\ms
To state our results precisely, we need a number of
definitions.

\begin{definition} \label{D:1.1} {\rm
(i) We say that $(M,d,\mu)$ satisfies the {\it volume doubling property} ($\VD$) if
there exists a constant ${C_\mu}\ge 1$ such that
for all $x \in M$ and $r > 0$,
\be \label{e:vd}
 V(x,2r) \le {C_\mu} V(x,r).
 \ee
(ii) We say that $(M,d,\mu)$ satisfies the {\it reverse volume doubling property} ($\RVD$)
if there exist constants
$d_1>0$, ${c_\mu} >0$ such that
for all $x \in M$ and $0<r\le R$,
\be \label{e:rvd}
\frac{V(x,R)}{V(x,r)}\ge {c_\mu}  \Big(\frac Rr\Big)^{d_1}.\ee
 }\end{definition}

VD condition \eqref{e:vd} is equivalent to the existence of $d_2>0$ and ${\wt C_\mu}>0$ so that
\be \label{e:vd2}
\frac{V(x,R)}{V(x,r)} \leq   {\wt C_\mu}  \Big(\frac Rr\Big)^{d_2} \quad \hbox{for all } x\in M \hbox{ and }
0<r\le R,
\ee
while RVD condition \eqref{e:rvd} is equivalent to the existence of
${l_\mu} >1$ and ${\wt c_\mu}>1$ so that
\be \label{e:rvd2}
 V(x,{l_\mu}  r) \geq  {\wt c_\mu} V(x,r) \quad \hbox{for all } x\in M \hbox{ and } r>0.
 \ee

Since $\mu$ has full support on $M$, we have $\mu (B(x, r))>0$
for every $x\in M$ and $r>0$.
Under VD condition, we have from \eqref{e:vd2} that for all $x\in M$ and $0<r\le R$,
\be\label{eq:vdeno}
\frac{V(x,R)}{V(y,r)}\le
\frac{V(y, d(x, y)+R)}{V(y,r)} \leq {\wt C_\mu}\Big(\frac {d(x,y)+R}r\Big)^{d_2}.
\ee
On the other hand, under $\RVD$, we have  from \eqref{e:rvd2} that
$$
\mu\big(B(x_0,{l_\mu} r)\setminus B(x_0,r)\big)>0 \quad \hbox{for each } x_0\in M \hbox{ and } r>0  .
$$
It is  known that $\VD$ implies
$\RVD$
if $M$ is connected and unbounded.
See, for example \cite[Proposition 5.1 and Corollary 5.3]{GH}.

\sms

Let $\bR_+:=[0,\infty)$, and $\phi: \bR_+\to \bR_+$ be a strictly increasing continuous
function  with $\phi (0)=0$ ,
$\phi(1)=1$
and satisfying that there exist constants $c_1,c_2>0$ and $\beta_2\ge \beta_1>0$ such that
\be\label{polycon}
 c_1 \Big(\frac Rr\Big)^{\beta_1} \leq
\frac{\phi (R)}{\phi (r)}  \ \leq \ c_2 \Big(\frac
Rr\Big)^{\beta_2}
\quad \hbox{for all }
0<r \le R.
\ee
Note that (\ref{polycon}) is
equivalent to the existence of constants
$ c_3, {l_0}>1$
such that
$$   c_3^{-1}\phi (r) \leq \phi ({l_0}r)
\leq c_3 \, \phi (r)\quad \hbox{for all } r>0.
$$

\begin{definition}{\rm We say $\J_\phi$ holds if there exists a non-negative symmetric
function $J(x, y)$ so that for $\mu\times \mu $-almost  all $x, y \in M$,
\begin{equation}\label{e:1.2}
J(dx,dy)=J(x, y)\,\mu(dx)\, \mu (dy),
\end{equation} and
\begin{equation}\label{jsigm}
 \frac{c_1}{V(x,d(x, y)) \phi (d(x, y))}\le J(x, y) \le \frac{c_2}{V(x,d(x, y)) \phi (d(x, y))}
 \end{equation}
We say that $\J_{\phi,\le}$ (resp. $\J_{\phi,\ge}$) if \eqref{e:1.2} holds and the upper bound (resp. lower bound) in \eqref{jsigm} holds.}
\end{definition}

\begin{remark}\label{R:1.2} \rm \begin{description}
\item{(i)}
Since changing the value of $J(x, y)$ on a subset of $M\times M$ having zero $\mu\times \mu $-measure
does not affect the definition of the Dirichlet form $(\sE, \sF)$ on $L^2(M; \mu)$, without loss of generality,
we may and do assume that in condition $\J_\phi$ ($\J_{\phi, \geq}$ and $\J_{\phi, \leq}$, respectively)
that \eqref{jsigm} (and the corresponding inequality) holds for every $x, y \in M$.
In addition, by the symmetry of $J(\cdot,\cdot)$, we may and do assume that $J(x,y)=J(y,x)$ for  all $x, y\in M$.

\item{(ii)}
Note that, under $\VD$, for every $\lambda >0$, there are constants $0<c_1<c_2$ so that for every $r>0$,
\be\label{eq:noefohrj}
c_1 V(y, r)\le V(x,r)\le c_2 V(y,r) \quad \hbox{for }   x,y \in M
\hbox{ with }  d (x, y) \leq \lambda  r.
\ee
Indeed, by \eqref{eq:vdeno}, we have for every $r>0$ and $x, y\in M$ with $d(x, y) \leq \lambda r$,
\[
{\wt C_\mu}^{-1} (1+\lambda)^{-d_2}\le \frac{V(x, r)}{V(y,r )}\le {\wt C_\mu} (1+\lambda)^{d_2} .
\]
Taking $\lambda =1$ and $r=d (x, y)$ in \eqref{eq:noefohrj} shows that, under $\VD$ the bounds in condition \eqref{jsigm}
are consistent with the symmetry of $J(x, y)$.
\end{description}
\end{remark}

\begin{definition} \rm
Let $U \subset V$ be open sets of $M$ with
$U \subset \ol U \subset V$.
We say a non-negative bounded measurable function $\vp$ is a {\it cut-off function for $U \subset V$},
if $\vp  = 1$ on $U$,  $\vp=0$ on $V^c$ and $0\leq \vp \leq 1$ on $M$.
\end{definition}

 For $f, g\in \sF$, we define the
 carr\'e du champ $\Gamma (f, g)$ for the non-local Dirichlet form
$(\sE, \sF)$ by
$$
\Gamma (f, g) (dx) = \int_{y\in M} (f(x)-f(y))(g(x)-g(y))\,J(d x,d y)  .
$$
Clearly $\sE(f,g)=  \Gamma(f,g) (M)$.

Let $\sF_b= \sF\cap L^\infty(M,\mu)$. It can be verified (see \cite[Lemma 3.5 and Theorem 3.7]{CKS}) that for any
$f\in \sF_b$, $\Gamma(f,f)$ is the unique
Borel measure (called the \emph{energy measure}) on $M$ satisfying
$$
\int_M g \, d\Gamma(f,f)=\sE(f, fg)-\frac 12\sE(f^2,g),\quad f,g\in \sF_b.
$$
Note that the following chain rule holds: for  $f,g,h \in \sF_b$,
$$
\int_M\,d\Gamma(f g,h)=\int_Mf \, d\Gamma(g,h)+\int_Mg\, d\Gamma(f,h).
$$
Indeed, this can be easily seen by the following equality
\[
f(x)g(x)-f(y)g(y)=f(x)(g(x)-g(y))+g(y)(f(x)-f(y)), \quad x,y\in M.
\]
 We now introduce a condition that controls
the energy of cut-off functions.

\begin{definition} \rm Let $\phi$ be an increasing function on $\bR_+$.
\begin{itemize}
\item[(i)] {\bf (Condition $\CSJ(\phi)$)}\quad
We say that condition $\CSJ(\phi)$ holds if there exist constants $C_0\in (0,1]$ and $C_1, C_2>0$
such that for every
$0<r\le R$, almost all $x_0\in M$ and any $f\in \sF$, there exists
a cut-off function $\vp\in \sF_b$ for $B(x_0,R) \subset B(x_0,R+r)$ so that the following holds:
\be \label{e:csj1} \begin{split}
 \int_{B(x_0,R+(1+C_0)r)} f^2 \, d\Gamma (\vp,\vp)
\le &C_1 \int_{U\times U^*}(f(x)-f(y))^2\,J(dx,dy) \\
&+ \frac{C_2}{\phi(r)}  \int_{B(x_0,R+(1+C_0)r)} f^2  \,d\mu,
\end{split}
\ee
where $U=B(x_0,R+r)\setminus B(x_0,R)$ and $U^*=B(x_0,R+(1+C_0)r)\setminus B(x_0,R-C_0r)$.

\item[(ii)] {\bf (Condition $\SCSJ(\phi)$)}\quad
We say that condition $\SCSJ(\phi)$ holds if there exist constants $C_0\in (0,1]$ and $C_1, C_2>0$
such that for every
$0<r\le R$ and almost all $x_0\in M$, there exists
a cut-off function $\vp\in \sF_b$ for $B(x_0,R) \subset B(x_0,R+r)$ so that \eqref{e:csj1} holds for any $f\in \sF$.

\end{itemize}

\end{definition}

Clearly  $\SCSJ(\phi)  \Longrightarrow \CSJ(\phi)$.

\begin{remark}\label{rek:scj}\rm
\begin{itemize}
\item[(i)] $\SCSJ(\phi)$ is a modification of $\CSA(\phi)$ that was introduced in \cite{AB} for strongly local Dirichlet
forms as a weaker version of the so called cut-off Sobolev
inequality {\rm CS}$(\phi)$ in \cite{BB2,BBK1}. For strongly local
Dirichlet forms the inequality corresponding to $\CSJ(\phi)$ is
called generalized capacity condition in \cite{GHL3}.
As we will see
in Theorem \ref{T:main-1} below,
  $\SCSJ(\phi)$ and  $\CSJ(\phi)$
 are equivalent under $\FK(\phi)$ (see Definition \ref{Faber-Krahnww} below) and $\J_{\phi,\le}$.
\item[(ii)] The main difference
between $\CSJ(\phi)$ here and $\CSA(\phi)$ in \cite{AB} is that the
integrals in the left hand side and in the second term of the right
hand side of the inequality \eqref{e:csj1} are over $B(x,R+(1+C_0)r)$ (containing
$U^*$) instead of over $U$ for \cite{AB}. Note that the integral
over $U^c$ is zero in the left hand side of \eqref{e:csj1} for the
case of strongly local Dirichlet forms. As we see in the arguments of the stability of heat kernel estimates for jump processes, it is important to fatten the annulus and integrate over
$U^*$ rather than over $U$. Another difference from $\CSA(\phi)$ is
that in \cite{AB} the first term of the right hand side is
$\frac{1}{8}\int_{U}\varphi^2\, d\Gamma (f,f)$. However,   we will prove
in Proposition \ref{L:cswk} that $\CSJ(\phi)$ implies the
stronger inequality $\CSJ(\phi)_+$ under some regular conditions
$\VD$, \eqref{polycon} and $\J_{\phi,\le}$.
See   \cite[Lemma 5.1]{AB} for the case of strongly local Dirichlet forms.

\item[(iii)] As will be proved in Proposition \ref{CSJ-equi}\,(4), under $\VD$ and \eqref{polycon}, if \eqref{e:csj1} holds for some $C_0\in(0,1]$,
then it holds for all $C_0'\in[C_0,1]$
(with possibly different $C_2>0$).

\item[(iv)] By the definition above, it is clear that if $\phi_1\le \phi_2$, then
$\CSJ(\phi_2)$ implies $\CSJ(\phi_1)$.

 \item[(v)] Denote by $\sF_{loc}$ the space of functions
locally in $\sF$; that is, $f\in \sF_{loc}$
if and only if for any relatively compact open set $U\subset M$ there exists $g\in \sF$ such that $f=g$ $\mu$-a.e. on $U$.
 Since each ball is relatively compact and
 \eqref{e:csj1} uses the property of $f$ on $B(x_0,R+(1+C_0)r)$ only, both $\SCSJ(\phi)$ and  $\CSJ(\phi)$ also hold for any $f\in \sF_{loc}.$

\end{itemize}
\end{remark}

\begin{remark}\label{thm:csjrem}\rm
Under $\VD$, \eqref{polycon} and $\J_{\phi,\le}$, $\SCSJ(\phi)$ always holds if $\beta_2<2$, where $\beta_2$ is the exponent in \eqref{polycon}. In particular, $\SCSJ(\phi)$  holds for $\phi (r)=r^\alpha$ with $\alpha< 2$. Indeed, for any fixed $x_0\in M$ and $r,R>0$, we choose a non-negative cut-off function $\vp(x)=h(d(x_0,x))$, where $h\in C^1([0,\infty))$ such that $0\leq h\leq 1$,
$h(s)=1$ for all $s\le R$, $h(s)=0$ for $s\ge R+r$ and $|h'(s)|\le 2/r$ for all $s\ge0.$ Then, by $\J_{\phi,\le}$,  for almost every $x\in M$,
\begin{align*}
\frac{d\Gamma (\vp, \vp)}{d\mu } (x)&=\int (\vp(x)-\vp(y))^2 J(x,y)\,\mu(dy)\\
&\le  \int_{\{d(x,y)\ge r\}} J(x,y)\,\mu(dy)+ \frac{4}{r^2}\int_{\{d(x,y)\le r\}} d(x,y)^2 J(x,y) \,\mu(dy) \\
&\le \int_{\{d(x,y)\ge r\}} J(x,y)\,\mu(dy)+ \frac{4}{r^2}\sum_{i=0}^\infty\int_{\{2^{-i-1}r< d(x,y)\le 2^{-i}r\}} d(x,y)^2 J(x,y) \,\mu(dy ) \\
&\le  \frac{c_1}{\phi(r)}+ \frac{c_1}{r^2} \sum_{i=0}^\infty \frac{V(x,2^{-i}r) 2^{-2i} r^2}{ V(x,2^{-i-1}r)\phi(2^{-i-1}r)}\\
&\le  \frac{c_1}{\phi(r)}+ \frac{c_2}{\phi(r)} \sum_{i=0}^\infty 2^{-i(2-\beta_2)}\le \frac{c_3}{\phi(r)},
\end{align*} where in the third inequality we have used Lemma \ref{intelem} below, and the fourth inequality is due to $\VD$ and \eqref{polycon}.
 Thus \eqref{e:csj1} holds.
\end{remark}

\medskip

We next introduce the Faber-Krahn inequality, see
\cite[Section 3.3]{GT} for more details. For $\lambda>0$, we define
$$
\sE_\lambda
(f, g)=\sE(f, g)+\lambda \int_M f(x)g(x)\,\mu (dx) \quad \hbox{for }
f, g\in \sF.
$$
For any open set $D \subset M$, $\sF_D$ is defined to be the
$\sE_1$-closure in $\sF$ of  $\sF\cap C_c(D)$.
Define
\be \label{e:lam1}
 \lam_1(D)
= \inf \left\{ \sE(f,f):  \,  f \in \sF_D \hbox{ with }  \|f\|_2 =1 \right\},
\ee
the bottom of the Dirichlet spectrum of $-\sL$ on $D$.

\begin{definition}\label{Faber-Krahnww}
{\rm The MMD space $(M,d,\mu,\sE)$ satisfies the {\em Faber-Krahn
inequality} $\FK(\phi)$, if there exist positive constants $C$ and
$\nu$ such that for any ball $B(x,r)$ and any open set $D \subset
B(x,r)$, \be \label{e:fki}
 \lam_1 (D) \ge \frac{C}{\phi(r)} (V(x,r)/\mu(D))^{\nu}.
\ee
} \end{definition}

We remark that since $V(x,r)\ge \mu(D)$
for $D\subset B(x, r)$,
 if \eqref{e:fki} holds for some $\nu=\nu_0>0$, it holds
for every $\nu \in (0, \nu_0)$.  So without loss of generality, we may and do assume $0<\nu<1$.

\medskip

Recall that $X=\{X_t\}$ is the Hunt process associated with the regular Dirichlet form $(\sE,\sF)$ on $L^2(M; \mu)$
with properly exceptional set $\sN$, and $M_0:= M\setminus \sN$.
For a set
$A\subset M$, define the exit time $\tau_A = \inf\{ t >0 : X_t
\notin A\}.$

\begin{definition}{\rm We say that $\E_\phi$ holds if
there is a constant $c_1>1$ such that for all $r>0$ and  all $x\in M_0$,
$$c_1^{-1}\phi(r)\le \bE^x [ \tau_{B(x,r)} ] \le c_1\phi(r).$$ We say that $\E_{\phi,\le}$ (resp. $\E_{\phi,\ge}$) holds
if the upper bound (resp. lower bound) in the inequality above holds. }
\end{definition}

Under \eqref{polycon}, it is easy to see that $\E_{\phi,\ge}$ and $\E_{\phi,\le}$ imply the following statements respectively:
\begin{align*}
\bE^y [ \tau_{B(x,r)} ] \geq c_2\phi(r)  &\quad\mbox{for   all } x\in M, \ y\in B(x,r/2)\cap M_0, \   r>0;\\
\bE^y [\tau_{B(x,r)} ] \le c_3\phi(r) &\quad\mbox{for all } x\in M, \ y\in M_0, \  r>0.
\end{align*}
Indeed, for $y\in B(x,r/2)\cap M_0$, we have
$\bE^y [\tau_{B(x,r)} ] \ge \bE^y [\tau_{B(y,r/2)}] \ge c_1^{-1}\phi(r/2)\ge c_2 \phi(r)$.
Similarly, for $y\in B(x,r)\cap M_0 $, we have
$\bE^y [ \tau_{B(x,r)} ] \le \bE^y [\tau_{B(y,2r)}] \le c_1\phi(2r)\le c_3 \phi(r)$ (and
$\bE^y [\tau_{B(x,r)}]=0$ for $y \in M_0\setminus B(x,r)$).

\begin{definition}\rm We say $\EP_{\phi,\le}$
holds if there is a constant $c>0$ such that for all $r,t>0$ and  all $x\in M_0$,
 $$
 \bP^x(  \tau_{B(x,r)} \le t ) \le \frac{ct}{\phi(r)}.
 $$
 We say  $\EP_{\phi,\le, \varepsilon}$ holds,
 if there exist constants $\varepsilon, \delta\in (0,1)$ such that for any ball $B=B(x_0, r)$ with radius $r>0$,
$$
\bP^x (  \tau_{B} \le \delta \phi(r) ) \le \eps
\quad \hbox{for all } x\in  B(x_0, r/4)  \cap M_0 .
$$
\end{definition}
It is clear that $\EP_{\phi,\le}$ implies $\EP_{\phi,\le,\eps}$. We will prove in Lemma \ref{E} below that under \eqref{polycon}, $\E_\phi$ implies $\EP_{\phi,\le, \eps}$.

\begin{definition}\label{D:1.11}   \rm \begin{description}
\item{\rm (i)} We say that $\HK(\phi)$ holds if there exists a heat kernel $p(t, x,y)$
of the semigroup $\{P_t\}$ associated with $(\sE,\sF)$,
 which has the
following estimates for all $t>0$ and all $x,y\in M_0$,
\begin{equation}\label{HKjum}\begin{split}
&   c_1\Big(\frac 1{V(x,\phi^{-1}(t))}  \wedge
\frac{t}{V(x,d(x,y))\phi(d(x,y))}\Big) \\
  & \le \   p(t, x,y)  \le c_2\Big(\frac 1{V(x,\phi^{-1}(t))}\wedge
\frac{t}{V(x,d(x,y))\phi(d(x,y))}\Big),
\end{split}
\end{equation}
where $c_1, c_2>0$ are constants independent of $x,y\in M_0$ and $t>0$.
Here the inverse function of the strictly increasing function
$t\mapsto \phi (t)$ is denoted by $\phi^{-1}(t)$.

\item{(ii)} We say $\UHK(\phi)$ (resp. $\LHK(\phi)$) holds if the
upper bound (resp. the lower bound) in \eqref{HKjum} holds.

\item{(iii)}  We say $\UHKD(\phi)$ holds if there is a constant $c>0$ such that for all $t>0$ and all $x\in M_0$,
$$p(t, x,x)\le \frac c{V(x,\phi^{-1}(t))}.$$
\end{description}
\end{definition}

\medskip

\begin{remark}\label{R:1.22}
\rm We have three remarks about this definition.
\begin{itemize}
\item[(i)] First, note that under $\VD$
\be\label{Vphidbif}
 \frac 1{V(y,\phi^{-1}(t))}\wedge\frac{t}{V(y,d(x,y))\phi(d(x,y))}
\asymp  \frac 1{V(x,\phi^{-1}(t))}\wedge
\frac{t}{V(x,d(x,y))\phi(d(x,y))} .
 \ee
Therefore we can replace  $V(x,d(x,y))$ by $V(y,d(x,y))$ in \eqref{HKjum} by modifying the values of
$c_1$ and $c_2$.
This is because
$$\frac 1{V(x,\phi^{-1}(t))}\le  \frac{t}{V(x,d(x,y))\phi(d(x,y))}$$ if and only if
$d(x,y)\le \phi^{-1}(t)$, and by \eqref{eq:vdeno},
\[
\tilde {C_\mu}^{-1}\Big(1+\frac{d(x,y)}{\phi^{-1}(t)}\Big)^{-d_2}\le
\frac{V(x,\phi^{-1}(t))}{V(y,\phi^{-1}(t))}\le \tilde {C_\mu}\Big(1+\frac{d(x,y)}{\phi^{-1}(t)}\Big)^{d_2}.
\]
This together with \eqref{eq:noefohrj} yields \eqref{Vphidbif}.

\item[(ii)] By the Cauchy-Schwarz inequality,  one can easily see that
$\UHKD(\phi)$ is equivalent to the existence of $c_1>0$ so that
\begin{equation*}\label{uodest-0}
p(t, x,y)\le \frac{c_1}{\sqrt{V(x,\phi^{-1}(t))V(y,\phi^{-1}(t))}}
\quad \hbox{for } x, y\in M_0 \hbox{ and } t>0.
\end{equation*}
Consequently,  by Remark \ref{R:1.2}(ii), under $\VD$, $\UHKD(\phi) $  implies that for every $c_1>0$ there is a constant $c_2>0$ so that
\begin{equation*} \label{e:6.2}
p(t, x, y) \leq \frac{c_2}{V(x,\phi^{-1}(t))} \quad \hbox{for } x, y\in M_0 \hbox{ with }
 d(x, y) \leq c_1 \phi^{-1}(t).
\end{equation*}

\item[(iii)]  It will be implied by Theorem \ref{T:main} and Lemma \ref{L:holder1} below that if $\VD$,  \eqref{polycon} and $\HK(\phi)$ hold, then the heat kernel $p(t, x,y)$ is H\"{o}lder continuous on $(x,y)$ for every $t>0$, and so \eqref{HKjum} holds for all $x,y\in M$.
\end{itemize}
\end{remark}

In the following, we say
{\it $(\sE, \sF)$ is conservative}   if its associated Hunt process $X$
has infinite lifetime.  This is equivalent to $P_t 1 =1$ a.e. on $M_0$ for every $t>0$.
It follows from Proposition \ref{P:3.1}(2) that $\LHK (\phi)$ implies
that $(\sE, \sF)$ is conservative.
We can now state the stability of the heat kernel estimates $\HK(\phi)$.
The following is the main result  of this  paper.

\begin{thm} \label{T:main}
Assume that the metric measure space $(M, d , \mu)$ satisfies $\VD$ and $\RVD$, and $\phi$ satisfies \eqref{polycon}.
Then the following are equivalent: \\
$(1)$ $\HK(\phi)$. \\
$(2)$ $\J_\phi$ and $\E_\phi$.  \\
$(3)$ $\J_\phi$ and $\SCSJ(\phi)$.\\
$(4)$ $\J_\phi$ and $\CSJ(\phi)$.
\end{thm}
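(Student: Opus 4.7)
The plan is to establish the cycle $(1) \Rightarrow (2) \Rightarrow (3) \Rightarrow (4) \Rightarrow (1)$, with $(3) \Rightarrow (4)$ immediate from the definitions. For $(1) \Rightarrow (2)$, the implication $\HK(\phi) \Rightarrow \J_\phi$ proceeds via the L\'evy system: the jumping measure $J(dx,dy)$ is the weak limit of $t^{-1} p(t,x,y)\,\mu(dx)\,\mu(dy)$ as $t \downarrow 0$, so the off-diagonal estimates in \eqref{HKjum} translate directly into \eqref{jsigm}. For $\HK(\phi) \Rightarrow \E_\phi$, I would write $\bE^x[\tau_{B(x,r)}] = \int_0^\infty \bP^x(\tau_{B(x,r)}>s)\,ds$ and bound the survival probability by comparison with $\int_{B(x,r)} p(s,x,y)\,\mu(dy)$: the upper bound on the expected exit time comes from $\UHK(\phi)$ together with a split at $s = \phi(r)$, and the lower bound comes from applying $\LHK(\phi)$ to $y$ in a small ball around $x$.

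The heart of the argument is $(4) \Rightarrow (1)$, which I would organize in four stages. \emph{Stage 1}: derive the Faber--Krahn inequality $\FK(\phi)$ from $\J_{\phi,\ge}$ and $\CSJ(\phi)$ by testing \eqref{e:csj1} against a function $f$ supported in a ball, using $\J_{\phi,\ge}$ to convert the cut-off energy bound into a lower bound on $\lam_1(D)$. \emph{Stage 2}: convert $\FK(\phi)$ into the on-diagonal estimate $\UHKD(\phi)$ by the standard Nash-inequality route. \emph{Stage 3}: upgrade $\UHKD(\phi)$ to the full $\UHK(\phi)$ using a Meyer-type decomposition $J = J^{(\rho)} + (J-J^{(\rho)})$, where $J^{(\rho)}$ truncates the jump size at $\rho \asymp d(x,y)$; Davies' method applied to the truncated form, with $\CSJ(\phi)$ furnishing the crucial control on the energy of exponential-type cut-offs $e^{\psi\vp}$, gives stretched-exponential off-diagonal decay for the truncated heat kernel, and the large-jump contribution reintroduces the correct polynomial tail $t/(V(x,d(x,y))\phi(d(x,y)))$ through $\J_{\phi,\le}$ and a L\'evy-system argument. \emph{Stage 4}: prove $\LHK(\phi)$ first near the diagonal by combining conservativeness (which follows from $\E_\phi$) with $\UHK(\phi)$ and a parabolic H\"older regularity argument, and then propagate to the off-diagonal regime by a chaining argument that exploits $\J_{\phi,\ge}$.

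The step I expect to be the main obstacle is $(2) \Rightarrow (3)$, which is where the theory genuinely moves beyond the case $\beta_2 < 2$ treated in Remark~\ref{thm:csjrem}. When $\beta_2 \geq 2$, a radial Lipschitz cut-off has $\Gamma$-energy that is too large, and $\vp$ must be constructed probabilistically. The plan is to partition the annulus $B(x_0,R+r) \setminus B(x_0,R)$ into $N$ thin concentric layers and to assemble $\vp$ from normalized exit-time functions $\bE^{\cdot}[\tau_{B_k}]/\phi(r/N)$ on the layers, so that $\vp$ interpolates smoothly from $1$ to $0$ and so that $\E_\phi$ yields the required pointwise bounds on the values of $\vp$. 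The essential technical hurdle is to show by a self-improvement/telescoping argument that the carr\'e du champ of this composite cut-off, tested against an arbitrary $f \in \sF$ as in \eqref{e:csj1}, is controlled by $C/\phi(r)$ rather than the na\"ive $C/\phi(r/N)$; this is precisely why the right-hand side of $\CSJ(\phi)$ is integrated over the fattened annulus $U^*$, so that the jumps of $f$ across the thin layers are absorbed into the jump-energy term on the right rather than appearing uncontrolled on the left.
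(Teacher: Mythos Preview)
Your cycle is $(1)\Rightarrow(2)\Rightarrow(3)\Rightarrow(4)\Rightarrow(1)$, whereas the paper runs $(1)\Rightarrow(3)\Rightarrow(4)\Rightarrow(2)\Rightarrow(1)$. This reshuffling creates one genuine gap and one misplaced emphasis.

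\textbf{The gap.} Your Stage~2 treats $\FK(\phi)\Rightarrow\UHKD(\phi)$ as ``the standard Nash-inequality route''. That route works when $V(x,r)\asymp V(r)$ is comparable to a function of $r$ alone, but under bare $\VD$/$\RVD$ it fails: the paper states explicitly that this implication is ``highly non-trivial'' in this generality (Section~\ref{FKEPHIj}), and the proof requires $\E_\phi$ and $\J_{\phi,\le}$ as additional inputs. The argument passes through the $\rho$-truncated form, uses $\E_\phi$ to get exponential exit-time bounds for the truncated process (Corollary~\ref{killing-3}), and then runs a delicate iteration over nested balls (Proposition~\ref{trun-ukdd}) together with Meyer's decomposition (Proposition~\ref{thm:VD-Meyer}). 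Since in your cycle $\E_\phi$ is not available at the point you invoke Stage~2, you must first extract it from $\FK(\phi)+\J_{\phi,\le}+\CSJ(\phi)$. This is the paper's Section~\ref{s:cac}--\ref{s:hku}: a Caccioppoli inequality and $L^1$-mean value inequality for $\sE^{(\rho)}$-subharmonic functions (where $\CSJ(\phi)$ enters essentially, via its self-improvement in Proposition~\ref{L:cswk}), applied to the inverse of a $\lambda$-resolvent to get $\E_{\phi,\ge}$. None of this machinery appears in your plan, and it is the technical core of the paper.

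\textbf{The misplaced emphasis.} You flag $(2)\Rightarrow(3)$ as the main obstacle and propose building $\vp$ from layered exit-time functions. The paper never proves $(2)\Rightarrow(3)$ directly; it goes $(2)\Rightarrow(1)\Rightarrow(3)$, and in $(1)\Rightarrow(3)$ the cut-off is built not from exit times on thin shells but from a single $\lambda$-resolvent $G^{D_0}_\lambda{\bf 1}_{D_1}$ on an annulus (Lemma~\ref{L:taub1}, Proposition~\ref{T:gdCS}), with $\UHK(\phi)$ supplying the needed survival estimate. Your layered construction is in the spirit of Andres--Barlow for diffusions but would need substantial adaptation to jumps; the resolvent approach is both cleaner and what is actually done.

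Two smaller points: your Stage~1 is off---$\FK(\phi)$ follows from $\J_{\phi,\ge}$ alone via a local Nash inequality (Lemma~\ref{T:3.2}); $\CSJ(\phi)$ plays no role there. And your Stage~3 (Davies' method on the truncated form with $\CSJ(\phi)$ controlling cut-off energy) is a legitimate alternative---it is the Murugan--Saloff-Coste approach the paper cites---but the paper itself gets $\UHK(\phi)$ from $\UHKD(\phi)+\J_{\phi,\le}+\E_\phi$ by iterating exit-time tail bounds (Proposition~\ref{thm:ujeuhkds}), not by Davies' method.
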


\begin{remark} \rm
\begin{description}
\item{(i)}
 When $\phi$ satisfies \eqref{polycon} with $\beta_2<2$,
by Remark \ref{thm:csjrem}, $\SCSJ(\phi)$ holds
and so in this case we have by Theorem \ref{T:main} that $\HK (\phi )
\Longleftrightarrow \J_\phi$.
Thus Theorem \ref{T:main}
not only recovers but also extends
 the main results in \cite{CK1, CK2}
 except for the cases where $J(x,y)$ decays exponentially when
$d(x,y)$ is large,
in the sense that the underlying spaces here are general metric measure spaces
satisfying VD and RVD.

\item{(ii)}
A new point of Theorem \ref{T:main} is that it
gives us the stability of heat kernel estimates for general
symmetric jump processes of mixed-type,  including
$\alpha$-stable-like processes with  $\alpha\ge 2$,
 on general metric measure spaces when the
underlying spaces have walk dimension larger than 2.
In particular, if $(M, d, \mu)$ is a metric measure space on which there is
an anomalous diffusion with walk dimension $d_w>2$ such as Sierpinski gaskets or carpets,
 one can deduce from
the subordinate anomalous diffusion the two-sided heat kernel estimates of
any symmetric jump processes with jumping
kernel $J(x, y)$ of $\alpha$-stable type or mixed stable type;   see Section \ref{Sectin-ex} for details.
This in particular answers a long standing problem
in the field.
\end{description}
\end{remark}

In the process of establishing  Theorem \ref{T:main}, we also obtain the following characterizations for $\UHK(\phi)$.

\begin{thm} \label{T:main-1}
Assume that the metric measure space $(M, d, \mu)$ satisfies $\VD$ and $\RVD$, and $\phi$ satisfies \eqref{polycon}.
Then the following are equivalent: \\
$(1)$ $\UHK(\phi)$ and
$(\sE, \sF)$ is conservative. \\
$(2)$ $\UHKD(\phi)$, $\J_{\phi,\le}$ and $\E_\phi$.\\
$(3)$ $\FK(\phi)$, $\J_{\phi,\le}$ and $\SCSJ(\phi)$.\\
$(4)$ $\FK(\phi)$, $\J_{\phi,\le}$ and $\CSJ(\phi)$.
\end{thm}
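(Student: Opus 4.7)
The plan is to prove the cycle $(1) \Longrightarrow (2) \Longrightarrow (3) \Longrightarrow (4) \Longrightarrow (1)$. Since $\SCSJ(\phi)$ trivially implies $\CSJ(\phi)$, the step $(3)\Longrightarrow(4)$ is immediate, and the real content lies in the other three implications.

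For $(1) \Longrightarrow (2)$, the bound $\UHKD(\phi)$ drops out of $\UHK(\phi)$ by setting $y=x$. To get $\J_{\phi,\le}$ I would invoke the L\'evy system of $X$, which identifies $J(dx,dy)$ as the vague limit of $t^{-1}p(t,x,y)\,\mu(dx)\mu(dy)$ off the diagonal; the off-diagonal part of $\UHK(\phi)$ then yields the required upper bound. The upper estimate $\E_{\phi,\le}$ follows from $\UHK(\phi)$ via the (standard) route through $\EP_{\phi,\le}$. The matching lower bound $\E_{\phi,\ge}$ is where conservativeness is used: since $\int p(t,x,y)\,\mu(dy)=1$ while $\UHK(\phi)$ gives $\int_{B(x,r/2)^c} p(t,x,y)\,\mu(dy)\le c_1 t/\phi(r)$, we deduce $\bP^x(X_t\in B(x,r/2))\ge 1/2$ for $t\le c_2\phi(r)$; combining this with a strong Markov argument at $\tau_{B(x,r)}$ produces $\bP^x(\tau_{B(x,r)}>t)\ge c_3$ for such $t$, and hence $\E_{\phi,\ge}$.

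For $(2) \Longrightarrow (3)$, I would derive $\FK(\phi)$ from $\UHKD(\phi)$ through the well-known equivalence with a Nash-type inequality on $\VD$ spaces. To construct the cutoff function witnessing $\SCSJ(\phi)$, given $B(x_0,R)\subset B(x_0,R+r)$ I would take $\vp$ to be a quasi-continuous version of the equilibrium potential of $B(x_0,R)$ relative to $B(x_0,R+(1+C_0)r)$. The estimate $\E_\phi$ translates into a capacity bound of order $\mu(B(x_0,R+r))/\phi(r)$, controlling the total energy of $\vp$. Inequality \eqref{e:csj1} is then obtained by splitting the double integral defining $\int_{B(x_0,R+(1+C_0)r)} f^2\,d\Gamma(\vp,\vp)$ according to the location of the second jump endpoint: pairs with both endpoints in $U^*$ are absorbed into the first term of \eqref{e:csj1}, while the remainder (where the partner variable lies far from the annulus) is controlled using $\J_{\phi,\le}$ together with the fact that $\vp\equiv1$ on $B(x_0,R)$ and $\vp\equiv0$ off $B(x_0,R+r)$, producing the $\phi(r)^{-1}\int f^2\,d\mu$ tail.

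The main obstacle is $(4) \Longrightarrow (1)$. First, from $\FK(\phi)$ together with $\VD$ one runs a Nash-type iteration to recover the on-diagonal bound $\UHKD(\phi)$. For the off-diagonal decay, the natural tool is Meyer's decomposition: truncate $J$ at scale $\rho$, and consider the associated truncated process $X^{(\rho)}$ whose jumping kernel is $J\cdot\indicator_{\{d(x,y)\le\rho\}}$. On $X^{(\rho)}$, $\CSJ(\phi)$ serves as a cutoff Sobolev inequality and enables a Davies-type weighted $L^2$ argument (with exponential weights $e^{\psi}$ adapted to the scale $\rho$) to yield
\[
p^{(\rho)}(t,x,y) \;\le\; \frac{c_1}{V(x,\phi^{-1}(t))}\,\exp\!\Big(-c_2\, d(x,y)/\rho\Big).
\]
Adding back the long-jump contribution, which is controlled directly by $\J_{\phi,\le}$, and then optimizing in $\rho$ produces the full $\UHK(\phi)$. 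Conservativeness of $(\sE,\sF)$ will be read off from the resulting uniform tail estimate $\int_{B(x,r)^c}p(t,x,y)\,\mu(dy)\le ct/\phi(r)$ by a Khasminskii-type argument. The delicate technical point is the Davies step: when $\beta_2\ge 2$ one cannot use gradient-type cutoff bounds, and the interplay between the weight $\psi$, the localizing term $\phi(r)^{-1}\int f^2\,d\mu$ in $\CSJ(\phi)$, and the truncation scale $\rho$ must be balanced very carefully, exploiting the freedom in $C_0$ highlighted in Remark~\ref{rek:scj}(iii) to iterate the energy inequality on a sequence of fattened annuli.
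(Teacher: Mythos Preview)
Your cycle $(1)\Rightarrow(2)\Rightarrow(3)\Rightarrow(4)\Rightarrow(1)$ differs from the paper's $(1)\Rightarrow(3)\Rightarrow(4)\Rightarrow(2)\Rightarrow(1)$, and the reordering hides a genuine gap in your $(4)\Rightarrow(1)$ step.

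The sentence ``from $\FK(\phi)$ together with $\VD$ one runs a Nash-type iteration to recover $\UHKD(\phi)$'' is precisely the step that fails in this generality. The paper explicitly flags this at the start of Section~\ref{FKEPHIj}: when $V(x,r)$ is \emph{not} uniformly comparable to a function of $r$ alone, passing from the ball-localized Nash inequality (equivalent to $\FK(\phi)$) to a global on-diagonal bound is highly nontrivial. The paper's route is to first derive $\E_\phi$ from $\FK(\phi)+\J_{\phi,\le}+\CSJ(\phi)$ (Proposition~\ref{P:exit}), and this is where the real work of $\CSJ(\phi)$ is done: it feeds into a Caccioppoli inequality (Lemma~\ref{L:ci}) and an $L^1$-mean-value inequality for $\sE^{(\rho)}$-subharmonic functions, which in turn give the two-sided exit time bound via the superharmonicity of $1/u_\lambda$ (Lemma~\ref{lower-e00}). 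Only \emph{after} $\E_\phi$ is in hand does the paper obtain $\UHKD(\phi)$, by a delicate Kigami--Grigor'yan--Hu style iteration on nested balls (Proposition~\ref{trun-ukdd}) that uses the exit-time bounds of Corollary~\ref{killing-3} to control the Dirichlet heat kernels $q^{(\rho),B_n}$. Your Davies approach for the off-diagonal part is in the spirit of \cite{MS1,MS2}, but it too presupposes an on-diagonal bound for the truncated process, and that bound is not available without $\E_\phi$.

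Your $(2)\Rightarrow(3)$ is closer to salvageable, since $\E_\phi$ is assumed there, but the description via ``equilibrium potential with capacity bound from $\E_\phi$'' is circular as written: the capacity bound \eqref{eq:fneoobo3} in the paper is proved \emph{from} $\CSJ(\phi)$, not the other way round. The paper instead builds the cut-off function from a $\lambda$-resolvent $G^{D_0}_\lambda {\bf 1}_{D_1}$ on an annulus (Lemma~\ref{L:taub1}, Proposition~\ref{T:gdCS}), and the lower bound on this resolvent comes from the exit-time estimate $\bP^x(\tau_B\le\delta\phi(r))\le 1/2$, which your hypothesis $\E_\phi$ does provide via Lemma~\ref{E}. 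So this step can be made to work along the paper's lines, but not via the equilibrium-potential/capacity route you sketch.
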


We point out that $\UHK(\phi)$ alone
does not imply the conservativeness of the associated Dirichlet form $(\sE, \sF)$.
For example,
censored (also called resurrected) $\alpha$-stable processes in  upper half spaces
with $\alpha \in (1, 2)$
enjoy  $\UHK(\phi)$ with $\phi(r)=r^\alpha$
but have finite lifetime;  see \cite[Theorem 1.2]{CT}.
 We also note that $\RVD$ are only used in the proofs of $\UHKD(\phi)\Longrightarrow \FK(\phi)$
 and $\J_{\phi,\ge}\Longrightarrow \FK(\phi)$.

\medskip

We emphasize again that in our main results above,
the underlying metric measure space $(M, d,\mu)$ is only assumed to satisfy the general VD and RVD.
We do not assume the uniform comparability of volume of balls; that is, we do not assume
the existence of a non-decreasing function $V$ on $[0, \infty)$ with $V(0)=0$ so that 
 $\mu(B(x,r))\asymp V(r)$ for all $x\in M$ and $r>0$.
Neither do we assume
$M$ to be connected nor $(M, d)$ to be geodesic.

\medskip

As mentioned earlier, parabolic
Harnack inequality is equivalent to the two-sided Aronson type heat kernel estimates
for diffusion processes.
In subsequent papers \cite{CKW,CKW2}, we study stability of parabolic Harnack inequality
and elliptic Harnack inequality respectively
for symmetric jump processes on metric measure spaces.

Let $Z:=\{V_s,X_s\}_{s\ge0}$ be the   space-time process where $V_s=V_0-s$ corresponding to $X$. The filtration generated by $Z$ satisfying the usual conditions will be denoted by $\{\widetilde{\mathcal{F}}_s;s\ge0\}$. The law of the space-time process $s\mapsto Z_s$ starting from $(t,x)$ will be denoted by $\bP^{(t,x)}$. For every open subset $D$ of $[0,\infty)\times M$, define
$\tau_D=\inf\{s>0:Z_s\notin D\}.$

\begin{definition} \rm \begin{description}
\item{(i)} We say that a Borel measurable function $u(t,x)$ on
$[0,\infty)\times M$ is \emph{parabolic} (or \emph{caloric}) on
$D=(a,b)\times B(x_0,r)$ for the process $X$ if there is a properly
exceptional set $\mathcal{N}_u$ associated with the process $X$ so that for every
relatively compact open subset $U$ of $D$,
$u(t,x)=\bE^{(t,x)}u(Z_{\tau_{U}})$ for every $(t,x)\in
U\cap([0,\infty)\times (M\backslash \mathcal{N}_u)).$

\item{(ii)}
We say that the \emph{parabolic Harnack inequality} ($\PHI(\phi)$) holds for the process $X$, if there exist constants $0<C_1<C_2<C_3<C_4$,  $C_5>1$ and  $C_6>0$ such that for every $x_0 \in M $, $t_0\ge 0$, $R>0$ and for
every non-negative function $u=u(t,x)$ on $[0,\infty)\times M$ that is parabolic on cylinder $Q(t_0, x_0,\phi(C_4R),C_5R):=(t_0, t_0+\phi(C_4R))\times B(x_0,C_5R)$,
\be\label{e:phidef}
  \esssup_{Q_- }u\le C_6 \,\essinf_{Q_+}u,
 \ee where $Q_-:=(t_0+\phi(C_1R),t_0+\phi(C_2R))\times B(x_0,R)$ and $Q_+:=(t_0+\phi(C_3R), t_0+\phi(C_4R))\times B(x_0,R)$.
\end{description}
\end{definition}

We note that the above $\PHI(\phi)$ is called a weak parabolic Harnack inequality in \cite{BGK2},
in the sense that \eqref{e:phidef} holds for some $C_1, \cdots, C_5$.
It is called a parabolic Harnack inequality in \cite{BGK2}
if \eqref{e:phidef} holds for any choice of positive constants $C_1, \cdots, C_5$
with $C_6=C_6(C_1, \dots, C_5)<\infty$. Since our underlying metric measure space may not be geodesic,
one can not expect to deduce  parabolic Harnack inequality from weak  parabolic Harnack inequality.

As a consequence of Theorem \ref{T:main} and various
equivalent characterizations of
parabolic Harnack
inequality established in \cite{CKW}, we have  the following.

\begin{theorem}\label{C:1.25} Suppose that the metric measure space  $(M, d,  \mu)$ satisfies $\VD$ and $\RVD$, and $\phi$ satisfies \eqref{polycon}. Then
 $$ \HK (\phi)\Longleftrightarrow\PHI(\phi)+ \J_{\phi,\ge}.$$
 \end{theorem}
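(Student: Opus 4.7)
The plan is to use Theorem \ref{T:main} as the pivot, and the stability results for $\PHI(\phi)$ established in the companion paper \cite{CKW}. In each direction the aim is to identify $\HK(\phi)$ with one of the equivalent systems of conditions in Theorem \ref{T:main}.

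\textbf{Direction $\HK(\phi)\Longrightarrow \PHI(\phi)+\J_{\phi,\ge}$.} The lower jumping kernel bound $\J_{\phi,\ge}$ is immediate, since by Theorem \ref{T:main} one has $\HK(\phi)\Longrightarrow \J_\phi$. For $\PHI(\phi)$, I would argue directly from \eqref{HKjum}. The on-diagonal and off-diagonal upper bounds, combined with a standard L\'evy-system truncation argument, give a H\"older oscillation estimate for bounded caloric functions and hence an upper bound on $\esssup_{Q_-}u$. The near-diagonal lower bound in \eqref{HKjum}, chained along a finite sequence of intermediate balls inside $B(x_0,C_5R)$, produces the matching lower bound for $\essinf_{Q_+}u$. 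Putting these two estimates together yields \eqref{e:phidef}.

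\textbf{Direction $\PHI(\phi)+\J_{\phi,\ge}\Longrightarrow \HK(\phi)$.} By Theorem \ref{T:main}(2) it suffices to establish $\J_{\phi,\le}$ and $\E_\phi$. The main result of \cite{CKW} characterizes $\PHI(\phi)$ by a system of conditions that includes $\FK(\phi)$, a cutoff Sobolev type inequality, and a pointwise upper bound on the jumping kernel, so that $\J_{\phi,\le}$ is read off directly. The mean exit time bound $\E_\phi$ is obtained by applying \eqref{e:phidef} to non-negative caloric functions built from the exit times of $X$ from $B(x_0,R)$ (essentially $u(t,x)=\bE^x[\tau_{B(x_0,R)}\wedge(\phi(C_4R)-t)_+]$) and comparing $\esssup_{Q_-}$ with $\essinf_{Q_+}$. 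Combining the given $\J_{\phi,\ge}$ with the derived $\J_{\phi,\le}$ and $\E_\phi$, Theorem \ref{T:main} then delivers $\HK(\phi)$.

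\textbf{Main obstacle.} The delicate step is the reverse implication: $\PHI(\phi)$ is a purely local regularity statement, whereas $\J_{\phi,\le}$ is a pointwise tail bound on the jumping kernel, and it is not at all clear a priori that one forces the other. The mechanism I rely on is the precise stability theorem of \cite{CKW}, which embeds $\J_{\phi,\le}$ as one of the equivalent conditions to $\PHI(\phi)$; without that result one would have to go through an independent L\'evy-system analysis coupled with $\UHK(\phi)$, using $J(x,y)=\lim_{t\downarrow 0}p(t,x,y)/t$ in a controlled way. A secondary issue is the construction of the exit-time caloric functions used to extract $\E_\phi$: one must check that the candidate functions lie in the right domain so that the weak Harnack comparison applies, and that the constants $C_1,\ldots,C_5$ in \eqref{e:phidef} can be tuned so that the $\esssup$/$\essinf$ comparison yields a two-sided estimate of the correct order $\phi(R)$.
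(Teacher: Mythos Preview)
Your strategy is exactly what the paper indicates: Theorem~\ref{C:1.25} is recorded there simply as a consequence of Theorem~\ref{T:main} together with the equivalent characterizations of $\PHI(\phi)$ from \cite{CKW}, and you pivot through both in precisely that way. The only wrinkle is your sketch of the forward implication $\HK(\phi)\Rightarrow\PHI(\phi)$: a H\"older oscillation estimate does not give an upper bound on $\esssup_{Q_-}u$, and the standard derivation instead represents the caloric $u$ at two space--time points via the exit distribution from a sub-cylinder and compares using the two-sided bounds \eqref{HKjum} (or one just invokes \cite{CKW} in this direction as well); since this implication is classical, there is no real gap.
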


Thus for symmetric jump processes, parabolic Harnack inequality
$\PHI (\phi)$ is strictly weaker than $\HK (\phi)$.
This fact was proved
for symmetric
jump processes on graphs with $V(x,r)\asymp r^d$ and $\phi(r)=r^\alpha$ for all $x\in M$, $r>0$ and some
$d\ge 1$, $\alpha\in (0,2)$ in
\cite[Theorem 1.5]{BBK2}.

\medskip

Some of the main results of this paper were presented
at the 38th Conference on Stochastic Processes and their Applications
held at the University of
 Oxford, UK from July 13-17,  2015 and at the International Conference
 on Stochastic Analysis and Related Topics held at Wuhan University, China
 from August 3-8, 2015.  While we were at the final stage of finalizing this paper, we received
a copy  of \cite{MS1,MS2} from M. Murugan.
Stability of discrete-time long range random walks of stable-like jumps
on infinite connected locally finite graphs  is studied in \cite{MS2}.
Their results are quite similar to ours when specialized to the case
of $\phi (r)=r^\alpha$ but the techniques and the settings are somewhat different.
They work on discrete-time random walks on infinite connected locally finite
graphs equipped with graph distance,
while we work on
continuous-time symmetric jump processes on
general metric measure space  and with much
more general jumping mechanisms. Moreover, it is assumed in \cite{MS2} that there is a constant $c\geq 1$ so that
$c^{-1}\leq \mu(\{x\})\leq c$ for every $x\in M$ and the $d$-set condition that there are constants $C\geq 1$ and $d_f>0$
so that  $ C^{-1}r^{d_f} \leq V(x, r) \leq C r^{d_f}$ for every $x\in M$ and $r\geq 1$,
while we only assume general VD and RVD.
 Technically, their approach is
 to generalize the so-called Davies' method (to obtain
the off-diagonal heat kernel upper bound from the on-diagonal upper bound) to be applicable
when $\alpha>2$ under the assumption of cut-off Sobolev inequalities.
Quite recently, we also learned from A. Grigor'yan \cite{GHH} that they are also
working on the same topic of this paper on metric measure spaces with the $d$-set condition
and the conservativeness assumption on $(\sE, \sF)$.
Their results are also quite similar to ours, again specialized to the case of $\phi (r)=r^\alpha$, but
the techniques are also somewhat different.
Their approach \cite{GHH} is
 to deduce a kind of weak Harnack inequalities
first from $\J_{\phi}$ and $\CSJ(\phi)$,
which they call generalized capacity condition.
They then obtain uniform H\"older continuity of harmonic functions,
which plays the key role for
 them to obtain the near-diagonal lower heat kernel bound that corresponds to \eqref{NDLdf1-ww}.
 As we see below, our approach is different from theirs.
We emphasize here that in this paper we do not assume a priori that $(\sE, \sF)$ is conservative.

\medskip

The rest of the paper is organized as follows.
In the next section, we present some preliminary results about $\J_{\phi,\le}$ and $\CSJ(\phi)$. In particular, in Proposition \ref{L:cswk} we show that the leading  constant in $\CSJ(\phi)$ is self-improving. Sections 3, 4 and 5 are devoted to the proofs of  (1) $\Longrightarrow$ (3),  (4) $\Longrightarrow$ (2) and (2) $\Longrightarrow$ (1) in Theorems \ref{T:main} and \ref{T:main-1}, respectively. Among them, Section 4 is the most difficult part, where in Subsection \ref{s:cac} we establish the Caccioppoli inequality
and the $L^p$-mean value inequality
for subharmonic functions associated with symmetric jump processes, and in Subsection \ref{FKEPHIj} Meyer's decomposition is realized for jump processes in the
$\VD$ setting.
Both subsections are of interest in their own.
In Section 6, some examples are
given to illustrate
the applications of our results, and a counterexample is also given to indicate that $\CSJ(\phi)$ is necessary for $\HK(\phi)$ in general setting.
For reader's convenience,  some known facts used in this paper are streamlined and collected in
Subsections 7.1-7.4
of the Appendix.
In connection with the implication of $(3) \Longrightarrow (1)$ in Theorem \ref{T:main-1},
we show in Subsection \ref{S:7.5} that
{ $\SCSJ(\phi)+ \J_{\phi,\le}\Longrightarrow
(\sE, \sF) $  is conservative; in other words $\FK (\phi)$ is not needed for establishing the conservativeness of $(\sE, \sF)$.
 We remark that,
 in order to increase the readability of the paper,
we have tried to make
the paper as self-contained as possible. Figure \ref{diagfig} illustrates
implications of various conditions and flow of our proofs.

\begin{figure}[t]
\centerline{\epsfig{file=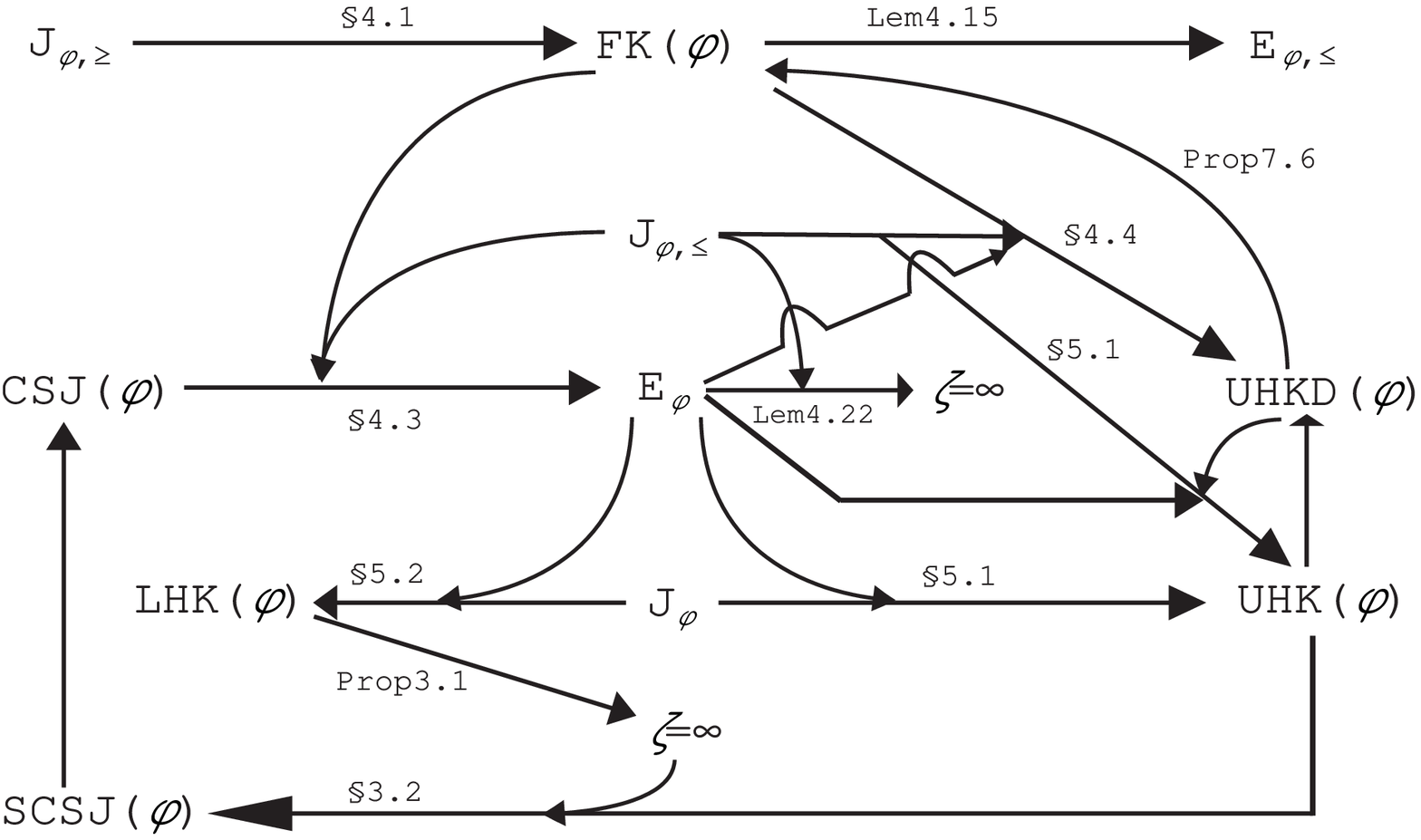, height=3in}}
\caption{diagram}\label{diagfig}
\end{figure}

Throughout this paper, we will use $c$, with or without subscripts,
to denote strictly positive finite constants whose values are
insignificant and may change from line to line.
For $p\in [1, \infty]$, we will use $\| f\|_p$ to denote the $L^p$-norm in $L^p(M;\mu)$.
For $B=B(x_0, r)$ and $a>0$, we use $a B$ to denote
the ball $B(x_0, ar)$,
and $\bar{B}:=\{x\in M:d(x,x_0)\le r\}$. For any subset $D$ of $M$,
$D^c$ denotes the complement of $D$
in $M$.

\section{Preliminaries}\label{section2}

For basic properties and definitions related to Dirichlet forms,
such as the relation between regular Dirichlet forms and Hunt
processes, associated semigroups, resolvents, capacity and
quasi-continuity, we refer the reader to \cite{CF, FOT}.

We begin with the following estimate, which is essentially given in \cite[Lemma 2.1]{CK2}.

 \begin{lemma}\label{intelem} Assume that $\VD$ and
 \eqref{polycon} hold. Then there exists a constant $c_0>0$ such that
\be \label{e:2.1}
\int_{B(x,r)^c}\frac1{V(x,d (x, y))\,\phi (d(x,y))}\,\mu (dy) \le \frac{c_0}{\phi (r)}
 \quad\mbox{for every }
x\in M \hbox{ and } r>0.
\ee
Thus if, in addition,   $\J_{\phi,\le}$ holds,
then there exists a constant $c_1>0$ such that
\[
 \int_{B(x,r)^c}
J(x,y)\,\mu (dy)\le \frac{c_1}{\phi (r)}
\quad\mbox{for every }
x\in M \hbox{ and } r>0.
\]
\end{lemma}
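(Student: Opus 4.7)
The plan is to prove \eqref{e:2.1} by an annular decomposition of $B(x,r)^c$. For each integer $k\ge 0$ set $A_k := B(x,2^{k+1}r)\setminus B(x,2^k r)$, so that $B(x,r)^c = \bigcup_{k\ge 0} A_k$ (up to a $\mu$-null set in case of boundary overlaps). On $A_k$ we have $d(x,y)\ge 2^k r$, hence by monotonicity of $V(x,\cdot)$ and of $\phi$,
\[
\frac{1}{V(x,d(x,y))\,\phi(d(x,y))} \;\le\; \frac{1}{V(x,2^k r)\,\phi(2^k r)} .
\]

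Integrating and using $\mu(A_k) \le V(x,2^{k+1}r)$ together with $\VD$ (which gives $V(x,2^{k+1}r)\le C_\mu V(x,2^k r)$), I obtain
\[
\int_{A_k}\frac{\mu(dy)}{V(x,d(x,y))\,\phi(d(x,y))} \;\le\; \frac{C_\mu}{\phi(2^k r)}.
\]
Then the lower polynomial bound in \eqref{polycon}, namely $\phi(2^k r)\ge c_1 2^{k\beta_1}\phi(r)$ with $\beta_1>0$, yields $1/\phi(2^k r)\le c_1^{-1} 2^{-k\beta_1}/\phi(r)$. Summing the geometric series over $k\ge 0$ produces
\[
\int_{B(x,r)^c}\frac{\mu(dy)}{V(x,d(x,y))\,\phi(d(x,y))} \;\le\; \frac{C_\mu}{c_1\,\phi(r)}\sum_{k=0}^\infty 2^{-k\beta_1} \;=\; \frac{c_0}{\phi(r)},
\]
which is \eqref{e:2.1}. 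The second assertion then follows immediately by inserting the upper bound $J(x,y)\le c_2/(V(x,d(x,y))\phi(d(x,y)))$ from $\J_{\phi,\le}$ into the integral.

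There is no real obstacle here; both ingredients ($\VD$ controlling volume growth on successive shells and $\beta_1>0$ in \eqref{polycon} giving summability) are essentially tailor-made for this dyadic estimate. The only minor care needed is to ensure that the comparison $V(x,2^{k+1}r)/V(x,2^k r)\le C_\mu$ is applied with the center at $x$ itself (so that no extra aspect-ratio factor from \eqref{eq:noefohrj} enters), which is exactly how $\VD$ is stated in \eqref{e:vd}.
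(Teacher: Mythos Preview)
Your proof is correct and is essentially identical to the paper's own argument: the paper also decomposes $B(x,r)^c$ into dyadic annuli $B(x,2^{i+1}r)\setminus B(x,2^i r)$, bounds the integrand on each shell, applies $\VD$ to control $V(x,2^{i+1}r)/V(x,2^i r)$, and sums the resulting geometric series using the lower exponent $\beta_1$ in \eqref{polycon}.
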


 \begin{proof}
For completeness, we present a proof here.
By $\J_{\phi,\le}$ and $\VD$, we have for every $x\in M$ and $r>0$,
\begin{align*}
& \int_{B(x,r)^c}\frac1{ V(x,d (x, y))\,\phi (d(x,y))}\,\mu (dy)\\
&= \sum_{i=0}^\infty\int_{B(x,2^{i+1}r)\backslash B(x,2^ir)}\frac1{V(x,d (x, y))\,\phi (d(x,y))}\,\mu (dy)\\
&\le  \sum_{i=0}^\infty \frac1{ V (x,2^ir)\,\phi (2^ir)}
   V (x,2^{i+1}r)\\
&\le c_2 \sum_{i=0}^\infty\frac{1} {\phi (2^ir)}
\le  \frac{c_3}{\phi (r)} \sum_{i=0}^\infty2^{-i\beta_1} \le  \frac{c_4}{\phi (r)},\end{align*}
where the lower bound in (\ref{polycon}) is used in the second to the last inequality.
\qed\end{proof}

Fix $\rho>0$ and define  a bilinear form $(\sE^{(\rho)}, \sF)$ by
\be\label{eq:rhoEdef}
\sE^{(\rho)}(u,v)=\int (u(x)-u(y))(v(x)-v(y)){\bf 1}_{\{d(x,y)\le \rho\}}\, J(dx,dy).
\ee
Clearly, the form $\sE^{(\rho)}(u,v)$ is well defined for $u,v\in \sF$, and $\sE^{(\rho)}(u,u)\le \sE(u,u)$ for all $u\in \sF$. Assume that $\VD$, \eqref{polycon}  and $\J_{\phi,\le}$ hold. Then  we have by  Lemma \ref{intelem}
  that for all $u\in \sF$,
\begin{equation}\label{dfcomp}\begin{split}
   \sE(u,u)-\sE^{(\rho)}(u,u)&= \int(u(x)-u(y))^2{\bf 1}_{\{d(x,y)>\rho\}}\,J(dx,dy)\\
&\le 4\int_Mu^2(x)\,\mu(dx)\int_{B(x,\rho)^c}J(x,y)\,\mu(dy)\le \frac{c_0\|u\|_{2}^2 }{\phi(\rho)}.
\end{split}\end{equation}
Thus $\sE_1 (u, u)$ is equivalent to $\sE^{(\rho)}_1(u,u):=
\sE^{(\rho)}(u,u)+ \|u\|_2^2$ for every $u\in \sF$. Hence $(\sE^{(\rho)},
\sF)$ is a regular Dirichlet form on $L^2(M; \mu)$.
Throughout this paper, we call $(\sE^{(\rho)}, \sF)$  $\rho$-truncated
Dirichlet form. The Hunt process associated with $(\sE^{(\rho)}, \sF)$ can be identified in distribution with
the Hunt process of the original Dirichlet form $(\sE, \sF)$ by removing those jumps of size larger than
 $\rho$.

 Assume that $\J_{\phi,\le}$ holds, and in particular \eqref{e:1.2} holds.
 Define
 $J(x,dy)=J(x,y)\,\mu(dy)$.
   Let $J^{(\rho)}(dx,dy)={\bf 1}_{\{d(x,y)\le \rho\}} J(dx,dy)$, $J^{(\rho)}(x,dy)={\bf 1}_{\{d(x,y)\le \rho\}} J(x,dy)$, and  $\Gamma^{(\rho)}(f,g)$ be the
    carr\'e du champ of the $\rho$-truncated Dirichlet form $(\sE^{(\rho)}, \sF)$; namely,
$$ \sE^{(\rho)}(f,g)=\int_M \,\mu(dx)\int_M(f(x)-f(y))(g(x)-g(y))\,J^{(\rho)}(x,dy)
=:\int_M d\Gamma^{(\rho)}(f,g). $$

We now define variants of $\CSJ(\phi)$.

\begin{definition} \rm
Let $\phi$ be an increasing function on $\bR_+$ with $\phi(0)=0$, and $C_0\in (0, 1]$.  For any $x_0\in M$ and $0<r\le R$, set
$U=B(x_0,R+r)\setminus B(x_0,R)$, $U^*=B(x_0,R+(1+C_0)r)\setminus B(x_0,R-C_0r)$ and ${U^*}'=B(x_0,R+2r)\setminus B(x_0,R-r)$.
\begin{description}
\item{(i)}  We say that condition $\CSJ^{(\rho)}(\phi)$ holds if the following holds: there exist constants $C_0\in (0,1]$ and $C_1, C_2>0$ such that for every
$0<r\le R$, almost all $x_0\in M$ and any $f\in \sF$, there exists
a cut-off function $\vp\in \sF_b$ for $B(x_0,R) \subset B(x_0,R+r)$ so that the following holds for all $\rho>0$:
\be \label{e:csj2} \begin{split}
 \int_{B(x_0,R+(1+C_0)r)} f^2 \, d\Gamma^{(\rho)}(\vp,\vp)
\le &C_1 \int_{U\times U^*}(f(x)-f(y))^2\,J^{(\rho)}(dx,dy) \\
&+ \frac{C_2}{\phi(r\wedge \rho)}  \int_{B(x_0,R+(1+C_0)r)} f^2  \,d\mu.\end{split}
\ee

\item{(ii)}  We say that condition $\CSAJ(\phi)$ holds if there exist constants $C_0\in (0,1]$ and $C_1, C_2>0$
such that for every
$0<r\le R$, almost all $x_0\in M$ and any $f\in \sF$, there exists
a cut-off function $\vp\in \sF_b$ for $B(x_0,R) \subset B(x_0,R+r)$ so that the following holds for all $\rho>0$:
\be \label{e:csaj1} \begin{split}
 \int_{U^*} f^2 \, d\Gamma (\vp,\vp)
\le &C_1 \int_{U\times U^*}(f(x)-f(y))^2\,J(dx,dy)+ \frac{C_2}{\phi(r)}  \int_{U^*} f^2  \,d\mu.\end{split}
\ee

\item{(iii)} We say that condition $\CSAJ^{(\rho)}(\phi)$ holds if the following holds: there exist constants $C_0\in (0,1]$ and $C_1, C_2>0$
such that for every
$0<r\le R$, almost all $x_0\in M$ and any $f\in \sF$, there exists
a cut-off function $\vp\in \sF_b$ for $B(x_0,R) \subset B(x_0,R+r)$ so that the following holds for all $\rho>0$:
$$
 \int_{U^*} f^2 \, d\Gamma^{(\rho)}(\vp,\vp)
\le c_1 \int_{U\times U^*}(f(x)-f(y))^2\,J^{(\rho)}(dx,dy) + \frac{C_2}{\phi(r\wedge \rho)}  \int_{U^*} f^2  \,d\mu.
$$

\item{(iv)} We say that condition $\CSJ^{(\rho)}(\phi)_+$
holds if the following holds: for any $\varepsilon>0$,
there exists a constant $ c_1(\varepsilon)>0$
such that for every $0<r\le R$, almost all $x_0 \in M$ and any $f\in \sF$, there exists
a cut-off function $\vp\in \sF_b$ for $B(x_0,R) \subset B(x_0,R+r)$ so that the following holds for all $\rho>0$:
\be \label{e:csj3} \begin{split}
 \int_{B(x_0,R+2r)} f^2 \, d\Gamma^{(\rho)}(\vp,\vp)
\le&  \varepsilon \int_{U\times {U^*}'} \vp^2(x)(f(x)-f(y))^2\, J^{(\rho)}(dx,dy) \\
&
+ \frac{c_1 (\eps) }{\phi(r\wedge \rho)}  \int_{B(x_0,R+2r)} f^2 \, d\mu.\end{split}
\ee

\item{(v)} We say that condition $\CSAJ^{(\rho)}(\phi)_+$ holds if the following holds: for any $\varepsilon>0$,
there exists a constant $ c_1(\varepsilon)>0$
such that for every $0<r\le R$, almost all $x_0 \in M$ and any $f\in \sF$, there exists
a cut-off function $\vp\in \sF_b$ for $B(x_0,R) \subset B(x_0,R+r)$ so that the following holds for all $\rho>0$:
\begin{align*}
 \int_{{U^*}'} f^2 \, d\Gamma^{(\rho)}(\vp,\vp)
\le & \varepsilon \int_{U\times {U^*}'} \vp^2(x)\, (f(x)-f(y))^2\, J^{(\rho)}(dx,dy) + \frac{c_1 (\eps)}{\phi(r\wedge \rho)}  \int_{{U^*}'} f^2 \, d\mu.
\end{align*}
\end{description}
\end{definition}

For open subsets $A$ and $B$ of $M$ with $A\subset B$, and for any $\rho>0$, define
\[
\mbox{{\rm Cap}}^{(\rho)} (A,B)=\inf\{\sE^{(\rho)}(\vp,\vp): \vp\in \sF, ~\vp|_A=1,~ \vp|_{B^c}=0\}.
\]

\begin{proposition}\label{CSJ-equi}
Let $\phi$ be an increasing function on $\bR_+$.
Assume that $\VD$, \eqref{polycon} and $\J_{\phi,\le}$ hold. The following hold.
\begin{description}
\item{\rm (1)} $\CSJ(\phi)$ is equivalent to $\CSJ^{(\rho)}(\phi)$.

\item{\rm (2)} $\CSJ(\phi)$ is implied by $\CSAJ(\phi)$.

\item{\rm (3)}  $\CSAJ(\phi)$ is equivalent to $\CSAJ^{(\rho)}(\phi)$.

\item{\rm (4)}  If $\CSJ^{(\rho)}(\phi)$ $($resp.\ $\CSAJ^{(\rho)}(\phi)$$)$ holds for some $C_0\in(0,1]$,
then for any $C_0'\in[C_0,1]$, there exist constants $C_1,C_2>0$ $($where $C_2$ depends on $C_0'$$)$ such that
$\CSJ^{(\rho)}(\phi)$ $($resp.\ $\CSAJ^{(\rho)}(\phi)$$)$ holds for $C_0'$.

\item{\rm  (5)} If $\CSJ(\phi)$ holds, then there is a constant $c_0>0$ such that for  every
$0<r\le R$, $\rho>0$ and almost all $x\in M$,
$$
\mbox{\rm Cap} ^{(\rho)} (B(x,R),B(x,R+r))\le c_0\frac{V(x,R+r)}{\phi(r\wedge \rho)}.
$$
In particular, we have
\be\label{eq:fneoobo3}
\mbox{\rm Cap} (B(x,R),B(x,R+r))\le c_0\frac{V(x,R+r)}{\phi(r)}.
\ee
\end{description}
\end{proposition}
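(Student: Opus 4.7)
The unifying tool I will deploy is the decomposition $J=J^{(\rho)}+(J-J^{(\rho)})$ combined with Lemma \ref{intelem}, which gives $\int_{B(x,\rho)^c}J(x,y)\,\mu(dy)\le c/\phi(\rho)$ under $\VD$, \eqref{polycon} and $\J_{\phi,\le}$. It follows that, for any $f\in\sF$, the discrepancy between $d\Gamma(\vp,\vp)$ and $d\Gamma^{(\rho)}(\vp,\vp)$ integrated against $f^2$, and likewise between the integrals of $(f(x)-f(y))^2$ against $J$ and $J^{(\rho)}$, is controlled by $c\|f\|_2^2/\phi(\rho)\le c\|f\|_2^2/\phi(r\wedge\rho)$. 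This single observation will drive parts (1)--(4).

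For (1), I prove the forward implication by noting $d\Gamma^{(\rho)}\le d\Gamma$ on the left-hand side, and splitting the jumping measure on the right via $(f(x)-f(y))^2\le 2f(x)^2+2f(y)^2$ on $\{d(x,y)>\rho\}$, after which Lemma \ref{intelem} absorbs the long-range piece into the $\|f\|_2^2/\phi(r\wedge\rho)$ term. The converse comes from choosing $\rho>r$ so that $\phi(r\wedge\rho)=\phi(r)$ in $\CSJ^{(\rho)}(\phi)$ and letting $\rho\to\infty$ using monotone convergence $J^{(\rho)}\uparrow J$; part (3) is handled by the same pair of arguments. For (2), the cut-off $\vp$ produced by $\CSAJ(\phi)$ already controls $\int_{U^*}f^2\,d\Gamma(\vp,\vp)$, so I only need to bound the remainder coming from $B(x_0,R-C_0r)\subset B(x_0,R)$: there $\vp\equiv 1$, so $(\vp(x)-\vp(y))^2$ vanishes unless $y\notin B(x_0,R)$, which forces $d(x,y)>C_0r$; Lemma \ref{intelem} together with \eqref{polycon} then gives $d\Gamma(\vp,\vp)/d\mu\le c/\phi(r)$, and this is absorbed into the second term of \eqref{e:csj1}. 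For (4), I reuse the cut-off $\vp$ from $\CSJ^{(\rho)}(\phi)$ at parameter $C_0$ and split $B(x_0,R+(1+C_0')r)=B(x_0,R+(1+C_0)r)\cup A$, where $A$ is the outer annulus. On $A$ we have $\vp\equiv 0$, and $\vp(y)\ne 0$ forces $y\in B(x_0,R+r)$ with $d(x,y)>C_0r$, so the tail estimate absorbs this piece. The inclusion $U^*\subset U^{*'}$ only enlarges the right-hand side, so the inequality survives. The argument for $\CSAJ^{(\rho)}(\phi)$ is identical.

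For (5), I apply $\CSJ^{(\rho)}(\phi)$ (available via (1)) with $f\equiv 1$; this is legitimate by Remark \ref{rek:scj}(v) since $1\in\sF_{loc}$. The first term on the right-hand side vanishes, leaving
\[
\int_{B(x_0,R+(1+C_0)r)}d\Gamma^{(\rho)}(\vp,\vp)\le \frac{C_2\, V(x_0,R+(1+C_0)r)}{\phi(r\wedge\rho)}.
\]
The remaining contribution to $\sE^{(\rho)}(\vp,\vp)=\int_M d\Gamma^{(\rho)}(\vp,\vp)$ from $x\notin B(x_0,R+(1+C_0)r)$ will be handled by Fubini: for such $x$, $\vp(x)=0$, so the integrand is nonzero only for $y\in B(x_0,R+r)$ with $d(x,y)>C_0r$, and Lemma \ref{intelem} bounds the resulting double integral by $cV(x_0,R+r)/\phi(r\wedge\rho)$. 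Combining with $\VD$ gives $\sE^{(\rho)}(\vp,\vp)\le c_0 V(x_0,R+r)/\phi(r\wedge\rho)$, and since $\vp$ is a valid test function for $\mbox{Cap}^{(\rho)}(B(x_0,R),B(x_0,R+r))$, this is the claim; \eqref{eq:fneoobo3} then follows by letting $\rho\to\infty$.

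The main obstacle, as typical for these cut-off inequalities, will be the bookkeeping of several annular regions and verifying that every boundary-layer tail is uniformly absorbed into the $1/\phi(r\wedge\rho)$ term via \eqref{polycon}; the one conceptual point is the substitution $f\equiv 1$ in (5), which relies on Remark \ref{rek:scj}(v) in order to read $\CSJ^{(\rho)}(\phi)$ for $\sF_{loc}$-functions.
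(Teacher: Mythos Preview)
Your proof of parts (1)--(4) is correct and follows the same decomposition-plus-tail-estimate strategy as the paper. One small omission: in (4) for $\CSAJ^{(\rho)}(\phi)$, the left-hand side is over $U^{*'}$, so besides the outer annulus $A$ you must also handle the inner annulus $D_2=B(x_0,R-C_0r)\setminus B(x_0,R-C_0'r)$, where $\vp\equiv 1$; but the same tail argument (with $d(x,y)\ge C_0r$ whenever $\vp(y)<1$) disposes of it.

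For part (5) your route differs from the paper's and is actually simpler. The paper chooses a compactly supported $f\in\sF$ with $f\equiv 1$ on $B_{R+2r}$ (so that $f$ genuinely lies in $\sF$), applies $\CSJ^{(\rho)}(\phi)$ to this $f$, and then---to pass from the bound on $\mbox{Cap}^{(\rho)}$ to \eqref{eq:fneoobo3}---invokes a Ces\`aro-mean compactness argument on the sequence of $\sE^{(\rho)}$-capacity potentials. Your choice $f\equiv 1\in\sF_{loc}$, justified via Remark~\ref{rek:scj}(v), makes the first right-hand term vanish without any work. And since the cut-off $\vp$ produced by $\CSJ^{(\rho)}(\phi)$ is, by definition, independent of $\rho$, your ``let $\rho\to\infty$'' is legitimate: for this fixed $\vp\in\sF_b$ one has $\sE^{(\rho)}(\vp,\vp)\uparrow\sE(\vp,\vp)$ by monotone convergence, and $\sE(\vp,\vp)$ directly bounds $\mbox{Cap}(B_R,B_{R+r})$. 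This bypasses the Ces\`aro argument entirely.
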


\begin{proof} (1)
Letting $\rho\to\infty$, we see that \eqref{e:csj2} implies \eqref{e:csj1}. Now, let $\rho>0$ and assume that \eqref{e:csj1} holds. Then
there exist constants $C_0\in (0,1]$ and $C_1,C_2>0$ such that for every $0<r\le R$, almost all $x_0\in M$ and any $f\in \sF$, there exists a cut-off function $\vp\in \sF_b$ for $B(x_0,R)\subset B(x_0, R+r)$ such that
 \begin{align*}
&\int_{{B(x_0,R+(1+C_0)r)}} f^2 \, d\Gamma^{(\rho)}(\vp,\vp) \\
&\le \int_{B(x_0,R+(1+C_0)r)} f^2 \, d\Gamma(\vp,\vp)\\
& \le  C_1 \int_{U\times U^*}(f(x)-f(y))^2\,J(dx,dy)+ \frac{C_2}{\phi(r)}  \int_{B(x_0,R+(1+C_0)r)} f^2  \,d\mu\\
& \le C_1 \int_{U\times U^*}(f(x)-f(y))^2\,J^{(\rho)}(dx,dy)
+ 2 C_1 \int_{U\times U^*}(f^2(x)+f^2(y)){\bf1}_{\{d(x,y)>\rho\}}\,J(dx,dy)
 \\
& +\frac{C_2}{\phi(r)}  \int_{B(x_0,R+(1+C_0)r)} f^2  \,d\mu\\
&  \le C_1 \int_{U\times U^*}(f(x)-f(y))^2\,J^{(\rho)}(dx,dy)
  +\frac{C_3}{\phi(r\wedge \rho)}  \int_{B(x_0,R+(1+C_0)r)} f^2  \,d\mu,
\end{align*}
where Lemma \ref{intelem} is used in the last inequality.

(2) Fix $x_0\in M$, $0<r\le R$ and $C_0\in (0,1]$. Let $\varphi\in \sF_b$ be a cut-off function for $B(x_0,R) \subset B(x_0,R+r)$. Since $\vp(x)=1$ on $x\in B(x_0,R)$,  we have for $f \in \sF$,
\begin{align*}
 \int_{B(x_0,R-C_0r)} f^2 \, d\Gamma(\vp,\vp)
 &= \int_{B(x_0,R-C_0r)} f^2(x)\,\mu(dx)\int_M(1-\vp(y))^2J(x,y)\,\mu(dy)\\
 &\le \int_{B(x_0,R-C_0r)} f^2(x)\,\mu(dx)\int_{B(x_0,R)^c}J(x,y)\,\mu(dy)\\
 &\le \int_{B(x_0,R-C_0r)} f^2(x) \,\mu(dx)\int_{B(x,C_0r)^c} J(x,y)\,\mu(dy)\\
 &\le \frac {c_1}{\phi(C_0r)}\int_{B(x_0,R-C_0r)} f^2\,d\mu \\
& \le  \frac {c_2}{\phi(r)}\int_{B(x_0,R-C_0r)} f^2\,d\mu,
\end{align*}
where we used Lemma \ref{intelem} and \eqref{polycon} in the last two inequalities.
This together with \eqref{e:csaj1} gives us the desired conclusion.

(3) This can be proved in the same way as (1).

(4) This is easy. Indeed, for $x_0\in M$, $0<r\le R$, $C_0\in (0,1]$ and $C_0'\in [C_0,1]$, set $D_1=B(x_0,R+(1+C_0')r)\setminus B(x_0,R+(1+C_0)r)$ and $D_2=B(x_0,R-C_0r)\setminus B(x_0,R-C_0'r)$. Let $\varphi\in \sF_b$ be a cut-off function for $B(x_0,R) \subset B(x_0,R+r)$.  Then  for any $f\in \sF$ and $\rho>0$,
\begin{align*}
\int_{D_1} f^2 \, d\Gamma^{(\rho)}(\vp,\vp)&
=\int_{D_1} f^2(x) \,\mu(dx)\int_{B(x_0,R+r)}\vp^2(y)J^{(\rho)}(x,y)\,\mu(dy)\\
&\le \int_{D_1} f^2(x) \,\mu(dx)\int_{B(x,C_0r)^c}J(x,y)\,\mu(dy) \\
&\le \frac {c_1}{\phi(r)}\int_{D_1} f^2 \,d\mu,
\end{align*}
where Lemma \ref{intelem} and \eqref{polycon} are used in the last inequality. Similarly, for any $f\in \sF$ and $\rho>0$,
$$\int_{D_2} f^2 \, d\Gamma^{(\rho)}(\vp,\vp)\le \frac {c_2}{\phi(r)}\int_{D_2} f^2 \,d\mu.$$
From both inequalities above we can get the desired assertion for $C_0'\ge C_0$.

(5) In view of (1) and (4),   $\CSJ^{(\rho)}(\phi)$ holds for every $\rho >0$ and we
can and do take
$C_0=1$ in \eqref{e:csj1}. Fix $x_0\in M$ and
write $B_s:=B(x_0,s)$ for $s\ge 0$. Let
$f\in \sF$ with $0\le f\le 1$ such that $f|_{B_{R+2r}}=1$ and $f|_{B_{R+3r}^c}=0$. For any $\rho>0$ and $0<r\le R$, let $\vp\in \sF_b$ be the cut-off function for $B_R\subset B_{R+r}$ associated with $f$ in $\CSJ^{(\rho)}(\phi)$. Then
\begin{align*}
\mbox{{\rm Cap}}^{(\rho)} (B_R, B_{R+r})\le& \int_{B_{R+2r}}\,d\Gamma^{(\rho)}(\vp,\vp)+\int_{B_{R+2r}^c}\,d\Gamma^{(\rho)}(\vp,\vp)\\
=&\int_{B_{R+2r}}f^2\,d\Gamma^{(\rho)}(\vp,\vp)+\int_{B_{R+2r}^c}\,d\Gamma^{(\rho)}(\vp,\vp)\\
\le &c_1\int_{(B_{R+r}\setminus B_R)\times (B_{R+2r}\backslash B_{R-r})}(f(x)-f(y))^2\,J^{(\rho)}(dx,dy)\\
&+\frac{c_2}{\phi(r\wedge \rho)}\int_{B_{R+2r}}f^2\,d\mu+\int_{B_{R+2r}^c}\,\mu(dx)\int_{B_{R+r}} \vp^2(y)J(x,y)\,\mu(dy)\\
\le&\frac{c_2\mu(B_{R+2r})}{\phi(r\wedge \rho)}+\frac{c_3\mu(B_{R+r})}{\phi(r)}\\
\le &\frac{c_4\mu(B_{R+r})}{\phi(r\wedge \rho)},
\end{align*}
where we used $\CSJ^{(\rho)}(\phi)$ in the second inequality and Lemma
\ref{intelem} with $\VD$ in the third inequality.

Now let $f_\rho$ be
the potential whose $\sE^{(\rho)}$-norm gives the capacity. Then the
Ces\`{a}ro mean of a subsequence of $f_\rho$ converges in
$\sE_1$-norm, say to $f$, and $\sE(f, f)$ is no less than the
capacity corresponding to $\rho =\infty$.  So \eqref{eq:fneoobo3} is
proved.\qed\end{proof}

We next show that the leading  constant in $\CSJ^{(\rho)}(\phi)$ (resp. $\CSAJ^{(\rho)}(\phi)$) is self-improving
in the following sense.

\begin{proposition} \label{L:cswk}
Suppose that $\VD$, \eqref{polycon} and $\J_{\phi,\le}$ hold. Then
the following hold.
\begin{description}
\item{\rm(1)} $\CSJ^{(\rho)}(\phi)$ is equivalent to $\CSJ^{(\rho)}(\phi)_+$.
\item{\rm(2)} $\CSAJ^{(\rho)}(\phi)$ is  equivalent to
$\CSAJ^{(\rho)}(\phi)_+$.
\end{description}
\end{proposition}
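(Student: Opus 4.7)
For $\CSJ^{(\rho)}(\phi)_+ \Longrightarrow \CSJ^{(\rho)}(\phi)$, I would invoke Proposition \ref{CSJ-equi}(4) to take $C_0 = 1$ in the basic statement, so that $U^* = {U^*}' = B(x_0, R+2r)\setminus B(x_0, R-r)$ and the LHS integration region coincides with that of the $+$ version. Choosing $\varepsilon = C_1$ in the $+$ inequality and bounding $\vp^2(x)\le 1$ pointwise then yields the basic inequality. The same reasoning delivers $\CSAJ^{(\rho)}(\phi)_+ \Longrightarrow \CSAJ^{(\rho)}(\phi)$.

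\textbf{Main direction: chaining construction.} For $\CSJ^{(\rho)}(\phi) \Longrightarrow \CSJ^{(\rho)}(\phi)_+$, I would fix $\varepsilon > 0$ and a large integer $N = N(\varepsilon)$ to be chosen. Set $R_i = R + ir/N$ for $0\le i \le N$, and apply $\CSJ^{(\rho)}(\phi)$ with $C_0 = 1$ to each of the nested pairs $B(x_0, R_i)\subset B(x_0, R_{i+1})$ at the finer scale $r/N$, obtaining cutoff functions $\eta_i \in \sF_b$. Define the averaged cutoff $\vp := N^{-1}\sum_{i=0}^{N-1}\eta_i$, which is a $[0,1]$-valued cutoff for $B(x_0, R)\subset B(x_0, R+r)$. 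The pointwise Cauchy--Schwarz bound $(\vp(x)-\vp(y))^2 \le N^{-1}\sum_i (\eta_i(x) - \eta_i(y))^2$ gives the measure inequality $d\Gamma^{(\rho)}(\vp, \vp) \le N^{-1}\sum_i d\Gamma^{(\rho)}(\eta_i, \eta_i)$; this is the source of the small prefactor $1/N$, which will ultimately become $\varepsilon$. I would then apply $\CSJ^{(\rho)}(\phi)$ to each $\eta_i$, extend the LHS integration from $B(x_0, R_{i+1}+r/N)$ to $B(x_0, R+2r)$ by noting $\eta_i \equiv 0$ outside $B(x_0, R_{i+1})$ and absorbing the excess contribution into the $\int f^2\, d\mu$ term via Lemma \ref{intelem} together with $\J_{\phi,\le}$, sum over $i$, exploit the disjointness of $U_i := B(x_0, R_{i+1})\setminus B(x_0, R_i)$ with $\bigsqcup_i U_i = U$ and the inclusion $U_i^* \subset {U^*}'$, and finally convert $1/\phi((r/N) \wedge \rho)$ to $c N^{\beta_2}/\phi(r\wedge \rho)$ via \eqref{polycon}, arriving at the \emph{unweighted} $+$ inequality
\begin{align*}
\int_{B(x_0, R+2r)} f^2\, d\Gamma^{(\rho)}(\vp,\vp) &\le \frac{C_1}{N}\int_{U\times {U^*}'} (f(x)-f(y))^2\, J^{(\rho)}(dx, dy) \\
&\quad + \frac{c N^{\beta_2}}{\phi(r\wedge \rho)} \int_{B(x_0, R+2r)} f^2\, d\mu.
\end{align*}

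\textbf{Recovering the $\vp^2(x)$ weight, part (2), and the main obstacle.} The chaining alone produces only the unweighted form; the hard step will be to insert the $\vp^2(x)$ weight on the first RHS term. I would decompose $U$ by the level sets of $\vp$: on the bulk $\{\vp\ge c_0\}\cap U$ the pointwise bound $\vp^2 \ge c_0^2$ permits replacing the integrand by $c_0^{-2}\vp^2(x)(f(x)-f(y))^2 J^{(\rho)}$; on the thin complementary shell $\{\vp < c_0\}\cap U$, which by construction of $\vp$ sits in an annulus of radial width $\asymp c_0 r$ adjacent to $\partial B(x_0, R+r)$, re-apply the chaining argument at the finer scale $c_0 r$ to push its contribution into the $\int f^2\, d\mu$ term. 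Balancing $N$ and $c_0$ as functions of $\varepsilon$ then delivers the declared constant $c_1(\varepsilon)$ in $\CSJ^{(\rho)}(\phi)_+$. Part (2), the equivalence $\CSAJ^{(\rho)}(\phi) \Longleftrightarrow \CSAJ^{(\rho)}(\phi)_+$, is proved in strict parallel by replacing the LHS integration region $B(x_0, R+2r)$ by $U^*$ (resp.\ ${U^*}'$) throughout; the domain-extension step then uses Lemma \ref{intelem} restricted to the annular region. The main obstacle is precisely this outer-shell treatment: on $\{\vp < c_0\}\cap U$ the weight $\vp^2(x)$ nearly vanishes while the unweighted $(f(x)-f(y))^2 J^{(\rho)}$ remains nontrivial, so its re-routing into the $L^2$-norm term requires careful book-keeping and is the technical crux of the argument.
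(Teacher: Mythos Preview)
Your chaining with uniform weights does produce the \emph{unweighted} form
\[
\int_{B(x_0,R+2r)} f^2\,d\Gamma^{(\rho)}(\vp,\vp)\le \frac{C_1}{N}\int_{U\times{U^*}'}(f(x)-f(y))^2\,J^{(\rho)}(dx,dy)+\frac{cN^{\beta_2}}{\phi(r\wedge\rho)}\int_{B(x_0,R+2r)}f^2\,d\mu,
\]
but your proposed upgrade to the weighted form has a genuine gap. On the outermost sub-annulus $U_{N-1}$ one has $\vp\le 1/N$, so $\vp^2\le 1/N^2$, while your coefficient in front of the $U_{N-1}$ energy is $C_1/N$; there is no way to have $C_1/N\le \varepsilon\vp^2(x)$ there. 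Your fix --- ``push the thin-shell contribution into the $\int f^2\,d\mu$ term'' --- cannot work: the quantity $\int_{(\{\vp<c_0\}\cap U)\times{U^*}'}(f(x)-f(y))^2\,J^{(\rho)}(dx,dy)$ is an intrinsic energy of $f$ on that region and is not dominated by $\|f\|_2^2/\phi(r\wedge\rho)$ in general (take $f$ highly oscillatory there). Re-chaining at a finer scale only produces new $d\Gamma^{(\rho)}(\tilde\vp,\tilde\vp)$ terms, not control of $d\Gamma^{(\rho)}(f,f)$. The obstruction is structural: any \emph{finite} convex combination $\vp=\sum_{i=0}^{N-1}w_i\eta_i$ has a last annulus on which $\vp\approx w_{N-1}$ while the coefficient is also $\approx w_{N-1}^2$, leaving no room for a small $\varepsilon$.

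The paper's key idea is to use an \emph{infinite} geometric convex combination. It partitions $U$ into infinitely many sub-annuli $U_n$ of exponentially shrinking widths $s_n=c_0(\lambda)\,r\,e^{-n\lambda/(2\beta_2)}$, applies $\CSJ^{(\rho)}(\phi)$ on each to get cutoffs $\vp_n$, and sets $\vp=\sum_{n\ge 1}(b_{n-1}-b_n)\vp_n$ with $b_n=e^{-n\lambda}$. On $U_n$ one then has $b_n\le\vp\le b_{n-1}$, and crucially
\[
(b_{n-1}-b_n)=(e^\lambda-1)b_n\le (e^\lambda-1)\,\vp\quad\text{on }U_n,
\]
which is exactly \eqref{e:vpne}. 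Hence the coefficient $(b_{n-1}-b_n)^2$ in front of $\int_{U_n\times U_n^*}(f(x)-f(y))^2\,J^{(\rho)}$ is bounded by $(e^\lambda-1)^2\vp^2(x)$, delivering the weight $\vp^2(x)$ automatically on every annulus; choosing $\lambda$ small makes $(e^\lambda-1)^2$ the required $\varepsilon$. The exponential decay of the $s_n$ is also what makes the far cross-terms $I_1$ (interactions between non-adjacent $\vp_n,\vp_m$) summable via \eqref{eq:noe210} and Lemma~\ref{intelem}. Finally, the paper works with $f_\theta=|f|+\theta$ to stay inside $\sF_{loc}$ and then optimizes over $\theta$ to return to $f$. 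Once you replace your uniform average by this exponential convex combination, the rest of your outline (including the proof of (2) by the same construction with the LHS restricted to ${U^*}'$) goes through.
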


\begin{proof} We only prove (1), since (2) can be verified similarly. It is clear that $\CSJ^{(\rho)}(\phi)_+$
implies
$\CSJ^{(\rho)}(\phi)$. Below, we assume that $\CSJ^{(\rho)}(\phi)$ holds.

Fix $x_0\in M$, $0<r\le R$ and $f\in \sF$. For $s >0$, set $B_{s}=B(x_0,s)$.
The goal is to construct a cut-off function $\vp\in \sF_b$ for  $B_R
\subset B_{R+r}$
so that \eqref{e:csj3} holds.
Without loss of generality, in the following we may and do assume that $\int_{B_{R+2r}} f^2\,d\mu>0$; otherwise, \eqref{e:csj3} holds trivially.

For $\lam>0$ whose exact value to be determined later,
let
$$ s_n = c_0 r e^{- n \lam/(2\beta_2)}, $$
where $c_0:=c_0(\lam)$ is chosen so that $\sum_{n=1}^\infty s_n =r $ and $\beta_2$
is given in \eqref{polycon}.
Set $r_0=0$ and
$$ r_n = \sum_{k=1}^n s_k,\quad n\ge 1.$$
Clearly, $R < R+r_1 < R+r_2 < \dots < R+r$.
For any $n\ge 0$, define $U_n:= B_{R+r_{n+1}} \setminus B_{R+r_n}$, and
$U_n^*:=B_{R+r_{n+1}+s_{n+1} } \setminus  B_{R+r_n-s_{n+1}}$.
Let $\theta>0$,
whose value also to be determined later,
 and define $f_\theta:=|f|+\theta$.
By $\CSJ^{(\rho)}(\phi)$ (with $C_0=1$; see Proposition \ref{CSJ-equi}\,(4)), there exists a cut-off function $\vp_n$ for
$B_{R+r_n} \subset B_{R+r_{n+1}}$
such that
\be \label{e:cswk}\begin{split}
 \int_{B_{R+r_{n+1}+s_{n+1}}}   f_\theta^2 \, d\Gamma^{(\rho)}(\vp_n,\vp_n)
\le &C_1 \int_{U_n\times U_{n}^*} (f_\theta(x)-f_\theta(y))^2\, J^{(\rho)}(dx,dy)  \\
&+ \frac{C_2 }{\phi(s_{n+1}\wedge \rho)} \int_{B_{R+r_{n+1}+s_{n+1}}}   f_\theta^2 \,d\mu,\end{split}
\ee
where $C_1,C_2$ are positive constants independent of $f_\theta$ and $\vp_n$.
Here, we mention that since $(\sE, \sF)$ is a regular Dirichlet form on $L^2(M,\mu)$, $f_\theta\in \sF_{loc}$, and so, by Remark \ref{rek:scj}(v), $\CSJ^{(\rho)}(\phi)$ can apply to $f_\theta.$

Let $b_n = e^{-n \lam}$ and define
\be \label{phi}
 \vp = \sum_{n=1}^\infty (b_{n-1} - b_n) \vp_n.
\ee
 Then  $\vp$ is a cut-off
function for $B_R \subset B_{R+r}$, because $\vp=1$ on $B_R$ and  $\vp=0$ on $B_{R+r}^c$.
On $U_n$ we have
$ \vp = (b_{n-1}-b_n) \vp_n + b_n$,
so that $b_n \le \vp \le b_{n-1}$ on $U_n$. In particular, on $U_n$
\be \label{e:vpne}
 b_{n-1}-b_n  \le  \frac{ \vp (b_{n-1}-b_n)}{b_n} = (e^\lam -1)\vp.
\ee

Below, we verify that the function $\vp$ defined by \eqref{phi} satisfies \eqref{e:csj3} and $\vp\in \sF_b$.
For this, we will make a non-trivial and substantial modification
of the proof of \cite[Lemma 5.1]{AB}.
Set $$F_{n,m}(x,y)=
f^2_\theta(x)(\vp_n(x)-\vp_n(y))(\vp_m(x)-\vp_m(y))$$ for any $n,m\ge 1$.  Then
\begin{align*}
 ~ \int_{B_{R+2r}} f_\theta^2 \, d\Gamma^{(\rho)}(\vp,\vp)
 =&   \int_{B_{R+2r}} f_\theta^2(x)\int_M\Big(\sum_{n=1}^\infty(b_{n-1}-b_n)(\vp_n(x)-\vp_n(y))\Big)^2\,J^{(\rho)}(dx,dy)\\
\le &\int_{B_{R+2r}} \int_M\bigg[2\sum_{n=3}^\infty\sum_{m=1}^{n-2}
(b_{n-1}-b_n)(b_{m-1}-b_m)F_{n,m}(x,y)\\
&\qquad\qquad\quad+2\sum_{n=2}^\infty(b_{n-1}-b_n)(b_{n-2}-b_{n-1})F_{n,n-1}(x,y)\\
&\qquad\qquad\quad+\sum_{n=1}^\infty(b_{n-1}-b_n)^2F_{n,n}(x,y)\bigg]\,J^{(\rho)}(dx,dy)\\
=&:I_1+I_2+I_3.
\end{align*}
For $n\ge m+2$, since $F_{n,m}(x,y)=0$ for $x,y\in B_{R+r_n}$ or $x,y\notin B_{R+r_{m+1}}$, we can deduce that
$F_{n,m}(x,y)\ne 0$ only if $x\in B_{R+r_{m+1}}, y\notin B_{R+r_n}$ or $x\notin B_{R+r_n}, y\in B_{R+r_{m+1}}$. Since
$|F_{n,m}(x,y)|\le f_\theta^2(x)$, using Lemma \ref{intelem}, we have
\begin{equation}\label{eq:noe20ed}\begin{split}
 \int_{B_{R+2r}}\int_M & F_{n,m}(x,y)\,J^{(\rho)}(dx,dy)\\
=&  \int_{B_{R+2r} \cap B_{R+r_{m+1}}}  \int_{B_{R+r_n}^c}\cdots+ \int_{B_{R+2r} \cap B_{R+r_n}^c}\int_{B_{R+r_{m+1}}}
\cdots\\
\le & \frac c{\phi(\sum_{k=m+2}^ns_k)}\int_{B_{R+2r}}f_\theta^2(x)\,\mu(dx)\\
\le & \frac c{\phi(s_{m+2})}\int_{B_{R+2r}} f_\theta^2(x)\,\mu(dx).
\end{split}
\end{equation}
Note that, according to \eqref{polycon}, we have
$$ \frac{ \phi(r)}{\phi( s_{k+2})}
 \le c'\Big(\frac{r}{c_0(\lam) r e^{- (k+2) \lam /(2\beta_2)}}\Big)^{\beta_2}
 = c'\frac{ e^{\lam} e^{k\lam/2} } {c_0(\lam)^{\beta_2}}
= \frac{c' { e^\lam }(e^{\lam}-1)^{1/2} }{ c_0(\lam)^{\beta_2}(b_{k-1}-b_k )^{1/2}}.
$$
Therefore,
\be\label{eq:noe210}
  (b_{k-1}-b_k)^{1/2} \phi(s_{k+2})^{-1} \le  c_1(\lam) \phi(r)^{-1}.
\ee
This together with \eqref{eq:noe20ed} implies
\begin{align*}
I_1 &\le 2\sum_{n=3}^\infty\sum_{m=1}^{n-2}(b_{n-1}-b_n)(b_{m-1}-b_m)
\frac c{\phi(s_{m+2})}\int_{B_{R+2r}}f_\theta^2(x)\,\mu(dx)\\
&\le \sum_{n=3}^\infty\sum_{m=1}^{n-2}(b_{n-1}-b_n)(b_{m-1}-b_m)^{1/2}
\frac {c_2(\lam)}{\phi(r)}\int_{B_{R+2r}}f_\theta^2(x)\,\mu(dx)\\
&\le \frac {c_3(\lam)}{\phi(r)}\int_{B_{R+2r}}f_\theta^2(x)\,\mu(dx),
\end{align*}
because $\sum_{m=1}^\infty(b_{m-1}-b_m)^{1/2}=c_4(\lam)$ and
$\sum_{n=1}^\infty(b_{n-1}-b_n)=1$. For $I_2$, by the Cauchy-Schwarz inequality, we have
\begin{align*}
I_2&\le 2\sum_{n=2}^\infty
\Big(\int_{B_{R+2r}}\int_M(b_{n-1}-b_n)^2F_{n,n}(x,y)^2\,J^{(\rho)}(dx,dy)\Big)^{1/2}\\
&\qquad \qquad\times \Big(\int_{B_{R+2r}}\int_M(b_{n-2}-b_{n-1})^2F_{n-1,n-1}(x,y)^2\,J^{(\rho)}(dx,dy)\Big)^{1/2}\\
&\le 2\,I_3,
\end{align*}
where we used $2(ab)^{1/2}\le a+b$ for $a,b\ge 0$ in the last inequality. For $I_3$,
\begin{align*}
&\int_{B_{R+2r}}\int_M F_{n,n}(x,y)\,J^{(\rho)}(dx,dy)\\
&=\Big(\int_{B_{R+r_{n+1}+s_{n+1}}} \int_M+\int_{B_{R+2r} \setminus B_{R+r_{n+1}+s_{n+1}} }
\int_M\Big) F_{n,n}(x,y)\,J^{(\rho)}(dx,dy)\\
&\le \int_{B_{R+r_{n+1}+s_{n+1}}}  \int_M F_{n,n}(x,y)\,J^{(\rho)}(dx,dy)+\frac c{\phi(s_{n+1})}
   \int_{B_{R+2r} } f_\theta^2(x)\,\mu(dx)\\
&\le  C_1 \int_{U_n\times U_{n}^*}(f_\theta(x)-f_\theta(y))^2\,J^{(\rho)}(dx,dy)
  + \frac {c+C_2}{\phi(s_{n+1}\wedge \rho)} \int_{B_{R+2r}}  f_\theta^2 (x)\, \mu (dx),
\end{align*}
where we used Lemma \ref{intelem} in the second line and \eqref{e:cswk} in the last line.
Using \eqref{e:vpne} and \eqref{eq:noe210}, and noting that $s_{k+1}\ge s_{k+2}$ and
$\sum_{m=1}^\infty(b_{m-1}-b_m)^{3/2}+\sum_{m=1}^\infty(b_{m-1}-b_m)^{2}=c_5(\lam)$, we have
\[
I_3\le C_3(e^\lam-1)^2 \int_{U\times {U^*}'} (f_\theta(x)-f_\theta(y))^2\,J^{(\rho)}(dx,dy) + \frac{c_6(\lam)}{\phi(r\wedge \rho)}
 \int_{B_{R+2r}} f_\theta^2\, d\mu ,
\]
where we used the facts that $\{U_n; n\geq 1\}$ are disjoint, $\bigcup_{n=1}^\infty  U_n=U$, and $U_{n}^*\subset {U^*}'$ for all $n\ge1$.
For any $\varepsilon>0$, we now choose $\lam$ so that $3C_3 (e^\lam-1)^2 =\varepsilon$, and obtain \eqref{e:csj3}
for $f_\theta$, i.e.,\
\begin{equation}\label{osos}\begin{split}\int_{B_{R+2r}} f_\theta^2 \, d\Gamma^{(\rho)}(\vp,\vp)\le& \varepsilon \int_{U\times {U^*}'} (f_\theta(x)-f_\theta(y))^2\,J^{(\rho)}(dx,dy) \\
&+ \frac{C_4(\varepsilon)}{\phi(r\wedge \rho)}
 \int_{B_{R+2r}} f_\theta^2(x)\, \mu (dx),\end{split}\end{equation} where the positive constant $C_4(\varepsilon)$ is independent of $\theta$.
It is clear that the left hand side of \eqref{osos} is bigger than $\int_{B_{R+2r}} f^2 \, d\Gamma^{(\rho)}(\vp,\vp)$. On the other hand, since for any $x,y\in M$ and $\theta>0$,
$|f_\theta(x)-f_\theta(y)|\le \big| |f|(x)-|f|(y)\big|\le |f(x)-f(y)|,$ it holds that
$$\int_{U\times {U^*}'} (f_\theta(x)-f_\theta(y))^2\,J^{(\rho)}(dx,dy)\le \int_{U\times {U^*}'} (f(x)-f(y))^2\,J^{(\rho)}(dx,dy).$$
Note that $$\int_{B_{R+2r}} f_\theta^2\, d\mu \le 2\left(\int_{B_{R+2r}} f^2\, d\mu +\theta^2 \mu(B_{R+2r})\right).$$ Then, by choosing $$\theta= \left(\frac{\int_{B_{R+2r}} f^2\, d\mu}{\mu(B_{R+2r})}\right)^{1/2}>0,$$ we have $$\int_{B_{R+2r}} f_\theta^2\, d\mu \le 4\int_{B_{R+2r}} f^2\, d\mu.$$ Hence, for this choice of $\theta$, we know that the left hand side of \eqref{osos} is smaller than
$$\varepsilon \int_{U\times {U^*}'} (f(x)-f(y))^2\,J^{(\rho)}(dx,dy) + \frac{4C_4(\varepsilon)}{\phi(r\wedge \rho)}
 \int_{B_{R+2r}} f^2(x)\, \mu (dx).$$ Combining both estimates above, we prove that \eqref{e:csj3} holds
for $f$.

Next, we prove that $\vp\in \sF_b$. Let $\vp^{(i)}=\sum_{n=1}^i(b_{n-1}-b_n)\vp_n$ for $i\ge 1$. It is clear that $\vp^{(i)}\in \sF_b$ and $\vp^{(i)}\to \vp$ as $i\to\infty$. So in order to prove $\vp\in \sF_b$, it suffices to verify that
\begin{equation}\label{proofclose}\lim_{i,j\to\infty}\sE(\vp^{(i)}-\vp^{(j)}, \vp^{(i)}-\vp^{(j)})=0.\end{equation}
Indeed, for any $i>j$, we can follow the arguments above and
obtain that
\begin{align*}
&\int_{B_{R+2r}}
\,d\Gamma(\vp^{(i)}-\vp^{(j)},\vp^{(i)}-\vp^{(j)})\\
&\le \theta^{-2}\int_{B_{R+2r}} f_\theta^2
\,d\Gamma(\vp^{(i)}-\vp^{(j)},\vp^{(i)}-\vp^{(j)})\\
&\le \theta^{-2} e^{-j\lambda}\left(c_7(\lambda)\int_{U\times
{U^*}'}(f_\theta(x)-f_\theta(y))^2\,J(dx,dy)+\frac{c_8(\lambda)}{\phi(r)}\int_{B_{R+2r}}f_\theta^2(x)\,\mu(dx)\right).
\end{align*}
 On the other hand, by Lemma \ref{intelem} and the fact that ${\rm supp}\, (\vp^{(i)}-\vp^{(j)}) \subset B_{R+r}$,
\begin{align*}\int_{B_{R+2r}^c} \,d\Gamma(\vp^{(i)}-\vp^{(j)},\vp^{(i)}-\vp^{(j)})\le &\left(\sum_{n=j+1}^i (b_{n-1}-b_n)\right)^2
\int_{B_{R+2r}^c} \int_{B_{R+r}}  J(x,y)\,\mu(dy)\,\mu(dx)\\
\le&  e^{-j\lambda}\frac{c_9(\lambda)}{\phi(r)}\mu(B_{R+r}).
\end{align*}
Combining with both inequalities above,
we obtain \eqref{proofclose}.    \qed
\end{proof}

As a direct consequence of Proposition \ref{CSJ-equi}(1) and Proposition \ref{L:cswk}(1), we have the following corollary.
\begin{corollary} \label{C:cswk-1}
Suppose that $\VD$, \eqref{polycon}, $\J_{\phi,\le}$ and $\CSJ(\phi)$ hold.
 Then  there exists a constant $c_1>0$
such that for every $0<r\le R$, almost all $x_0 \in M$ and any $f\in \sF$, there exists
a cut-off function $\vp\in \sF_b$ for $B(x_0,R) \subset B(x_0,R+r)$ so that the following holds for all $\rho\in (0,\infty]$:
\be \label{e:csa2} \begin{split}
 \int_{B(x_0,R+2r)} f^2 \, d\Gamma^{(\rho)}(\vp,\vp)&
\le \frac 18 \int_{U\times {U^*}'} \vp^2(x)(f(x)-f(y))^2\,J^{(\rho)}(dx,dy)\\
&\quad + \frac{c_1}{\phi(r\wedge \rho)}  \int_{B(x_0,R+2r)} f^2 \, d\mu,
\end{split}\ee where $U=B(x_0,R+r)\setminus B(x_0,R)$ and ${U^*}'=B(x_0,R+2r)\setminus B(x_0,R-r)$.\end{corollary}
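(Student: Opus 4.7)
The plan is to read the corollary as the concatenation of the two preceding equivalences, specialized to $\varepsilon=1/8$, and then to push the result from finite $\rho$ to $\rho=\infty$ by monotone convergence. First I would invoke Proposition \ref{CSJ-equi}(1): under $\VD$, \eqref{polycon} and $\J_{\phi,\le}$, the hypothesis $\CSJ(\phi)$ is equivalent to $\CSJ^{(\rho)}(\phi)$. Next, Proposition \ref{L:cswk}(1) upgrades $\CSJ^{(\rho)}(\phi)$ to its self-improved form $\CSJ^{(\rho)}(\phi)_+$, which for every prescribed $\varepsilon>0$ supplies a constant $c_1(\varepsilon)>0$ so that \eqref{e:csj3} holds. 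Applying this with $\varepsilon=1/8$ and setting $c_1:=c_1(1/8)$ yields exactly \eqref{e:csa2} for every finite $\rho>0$.

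The only subtle point is that the cut-off function $\vp$ must be chosen independently of $\rho$. Inspection of the construction in the proof of Proposition \ref{L:cswk}(1) shows that $\vp=\sum_{n\ge 1}(b_{n-1}-b_n)\vp_n$ is built from cut-offs $\vp_n$ supplied by $\CSJ^{(\rho)}(\phi)$; by the equivalence in Proposition \ref{CSJ-equi}(1) these may be taken to be the $\rho$-independent cut-offs provided by $\CSJ(\phi)$. The parameters $s_n,r_n,b_n$ depend only on $\lambda$, which in turn depends only on the fixed $\varepsilon=1/8$. Hence a single $\vp\in\sF_b$ for $B(x_0,R)\subset B(x_0,R+r)$ works simultaneously for all $\rho\in(0,\infty)$.

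To include the case $\rho=\infty$, I would keep the same $\vp$ and let $\rho\uparrow\infty$ in \eqref{e:csa2}. Since $J^{(\rho)}(dx,dy)=\mathbf{1}_{\{d(x,y)\le\rho\}}J(dx,dy)\uparrow J(dx,dy)$, the energy measure $\Gamma^{(\rho)}(\vp,\vp)$ increases monotonically to $\Gamma(\vp,\vp)$, and similarly the integrand $\vp^2(x)(f(x)-f(y))^2\,J^{(\rho)}(dx,dy)$ increases to $\vp^2(x)(f(x)-f(y))^2\,J(dx,dy)$. Finally $\phi(r\wedge\rho)\uparrow\phi(r)$. The monotone convergence theorem applied termwise then delivers \eqref{e:csa2} with $\rho=\infty$, i.e.\ with $d\Gamma$, $J$ and $\phi(r)$. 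I expect no genuine obstacle here; the main care is to verify the $\rho$-independence of $\vp$ asserted above, after which the argument is pure bookkeeping.
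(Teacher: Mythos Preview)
Your proposal is correct and follows exactly the route the paper takes: the corollary is stated as a direct consequence of Proposition~\ref{CSJ-equi}(1) and Proposition~\ref{L:cswk}(1), and you have unpacked precisely this, specializing $\CSJ^{(\rho)}(\phi)_+$ to $\varepsilon=1/8$. Your care about the $\rho$-independence of $\vp$ is well placed (and in fact is already built into the definition of $\CSJ^{(\rho)}(\phi)$, so the construction in Proposition~\ref{L:cswk}(1) automatically yields a single $\vp$ valid for all $\rho$); the passage to $\rho=\infty$ is immediate since $\phi(r\wedge\rho)=\phi(r)$ once $\rho\ge r$.
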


\begin{remark}\label{R:csj} \rm According to all the arguments above, we can easily obtain that Propositions \ref{CSJ-equi}, \ref{L:cswk} and Corollary  \ref{C:cswk-1} with small modifications (i.e. the cut-off function $\varphi \in \sF_b$ can be chosen to be independent of $f\in \sF$) hold for $\SCSJ(\phi)$.   \end{remark}

We close this subsection by the following statement.

\begin{lemma}\label{Conserv} Assume that
$\VD$, \eqref{polycon} and
$\UHK(\phi)$ hold and that $(\sE, \sF)$ is conservative.
Then $\EP_{\phi,\le}$ holds.
\end{lemma}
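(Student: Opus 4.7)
My plan is to deduce $\EP_{\phi,\le}$ from $\UHK(\phi)$ by a classical strong-Markov bootstrap, using the conservativeness of $(\sE,\sF)$ to convert the heat kernel upper bound into a probability tail bound, and \eqref{polycon} to absorb factors like $\phi(r/2)$ into $\phi(r)$. The argument proceeds in three short steps.

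\emph{Step 1: pointwise tail estimate.} I would first establish
$$\bP^y(X_s \notin B(y,\rho)) \le \frac{c_1 s}{\phi(\rho)} \qquad \hbox{for every } y \in M_0,\ s,\rho>0. \qquad (\ast)$$
Since $(\sE,\sF)$ is conservative and $M_0$ is $X$-invariant, $\bP^y(X_s \in M_0)=1$ for every $y\in M_0$, so $\bP^y(X_s \notin B(y,\rho)) = \int_{B(y,\rho)^c} p(s,y,z)\,\mu(dz)$. Bounding $p(s,y,z)$ from above by the off-diagonal piece $c\,s/(V(y,d(y,z))\phi(d(y,z)))$ of $\UHK(\phi)$ and applying Lemma \ref{intelem} (which needs only $\VD$ and \eqref{polycon}) yields $(\ast)$.

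\emph{Step 2: strong Markov decomposition.} Fix $x \in M_0$, $r,t>0$ and set $\tau := \tau_{B(x,r)}$. I would split
$$\bP^x(\tau \le t) \le \bP^x\!\bigl(X_t \notin B(x,r/2)\bigr) + \bP^x\!\bigl(\tau \le t,\ X_t \in B(x,r/2)\bigr).$$
The first summand is bounded by $c_2 t/\phi(r)$ via $(\ast)$ with $\rho=r/2$ and \eqref{polycon}. On the event in the second summand, right-continuity of $X$ together with closedness of $B(x,r)^c$ forces $X_\tau \in B(x,r)^c$, and then the triangle inequality gives $d(X_\tau,X_t)\ge r/2$. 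Applying the strong Markov property at $\tau$ (valid since $X_\tau \in M_0$) and using $(\ast)$ with $s=t-\tau\le t$ and $\rho=r/2$ yields
$$\bP^x\!\bigl(\tau \le t,\ X_t \in B(x,r/2)\bigr) \le \bE^x\!\Bigl[{\bf 1}_{\{\tau\le t\}}\,\bP^{X_\tau}\!\bigl(X_{t-\tau}\notin B(X_\tau,r/2)\bigr)\Bigr] \le \frac{c_3 t}{\phi(r)}\,\bP^x(\tau \le t).$$

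\emph{Step 3: closing the bootstrap.} Combining these inequalities gives $\bP^x(\tau\le t) \le c_2 t/\phi(r) + (c_3 t/\phi(r))\,\bP^x(\tau\le t)$. Choosing $\delta\in(0,1)$ with $c_3\delta \le 1/2$, I obtain $\bP^x(\tau\le t) \le 2c_2 t/\phi(r)$ whenever $t \le \delta\phi(r)$, while for $t > \delta\phi(r)$ the trivial bound $\bP^x(\tau\le t)\le 1 \le \delta^{-1}t/\phi(r)$ takes over; taking the maximum constant gives $\EP_{\phi,\le}$. There is no serious obstacle; the only delicate point is the use of conservativeness in Step 1, since without it the cemetery mass $\bP^y(X_s=\partial)$ would prevent the identification of $\bP^y(X_s\notin B(y,\rho))$ with $\int_{B(y,\rho)^c} p(s,y,z)\,\mu(dz)$, and the bootstrap would fail.
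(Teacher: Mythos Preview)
Your proof is correct and follows essentially the same strong-Markov decomposition as the paper, which splits according to whether $X_{2t}$ (rather than your $X_t$) lies in $B(x,r/2)$. Your bootstrap in Step~3 is actually unnecessary: once you have $\bP^x(\tau\le t)\le c_2 t/\phi(r) + (c_3 t/\phi(r))\,\bP^x(\tau\le t)$, simply bounding $\bP^x(\tau\le t)\le 1$ on the right already gives $\EP_{\phi,\le}$ with constant $c_2+c_3$.
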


\begin{proof}
We first verify that there is a constant $c_1>0$ such that for each $t,r>0$ and for almost all $x\in M$,
$$
\int_{B(x,r)^c} p(t, x,y)\,\mu(dy)\le \frac{c_1 t}{\phi(r)}.
$$
 Indeed, we only need to consider the case that $\phi(r)>t$; otherwise, the inequality above holds trivially with $c_1=1$.
 According to $\UHK(\phi)$, $\VD$ and \eqref{polycon}, for any $t,r>0$ with $\phi(r)>t$ and almost all $x\in M$,
\begin{align*}
\int_{B(x,r)^c}p(t, x,y)\,\mu(dy)&=\sum_{i=0}^\infty\int_{B(x,2^{i+1}r)\setminus B(x,2^ir)}p(t, x,y)\,\mu(dy)\\
&\le \sum_{i=0}^\infty \frac{c_2t V(x,2^{i+1}r)}{V(x,2^{i}r)\phi(2^ir)}\le \frac{c_3t}{\phi(r)}\sum_{i=0}^\infty 2^{-i\beta_1}\le \frac{c_4t}{\phi(r)}.
\end{align*}

Now, since $(\sE, \sF)$ is conservative, by the strong Markov property, for any each $t,r>0$ and for almost all $x\in M$,
\begin{align*}\bP^x(\tau_{B(x,r)}\le t)&=\bP^x(\tau_{B(x,r)}\le t, X_{2t}\in B(x,r/2)^c)+\bP^x(\tau_{B(x,r)}\le t, X_{2t}\in B(x,r/2))\\
&\le\bP^x( X_{2t}\in B(x,r/2)^c)+ \sup_{z\notin B(x,r)^c, s\le t}\bP^z( X_{2t-s}\in B(z,r/2)^c)\\
&\le \frac{c_5t}{\phi(r)},\end{align*} which yields
$\EP_{\phi,\le}$. (Note that the conservativeness of $(\sE, \sF)$ is used in the equality above. Indeed, without the conservativeness,
there must be an extra term $\bP^x(\tau_{B(x,r)}\le t, \zeta\le 2t)$
in the right hand side of the above equality, where $\zeta$ is the
lifetime of $X$.)\qed\end{proof}

\section{Implications of heat kernel estimates} \label{section3}

In this section, we will prove (1) $\Longrightarrow$ (3) in Theorems \ref{T:main} and \ref{T:main-1}.
We point out that, under $\VD$, $\RVD$ and \eqref{polycon},
$\UHK(\phi)\Longrightarrow\FK(\phi)$ is given in Proposition \ref{pi-e-pre}  in the Appendix.

\subsection{$\UHK(\phi) +
(\sE, \sF) \hbox{ is conservative}
\Longrightarrow \J_{\phi,\le}$,  \ and $\HK(\phi) \Longrightarrow \J_\phi$}

We first show the following, where, for future reference,
it is formulated  for a general
Hunt process $Y$ that admits no killings inside.

\begin{proposition} \label{P:3.1}
Suppose that $Y=\{ Y_t, t\ge0, \bP^x, x\in E\} $ is an arbitrary
Hunt process on a locally compact separable metric space $E$
that admits no killings inside $E$.
Denote its lifetime by $\zeta$.

\begin{description}
\item{\rm (1)}
If there is a constant $c_0>0$ so that
\begin{equation}\label{e:3.1}
\bP^x (\zeta=\infty ) \geq c_0 \quad \hbox{for every } x\in E,
\end{equation}
then $\bP^x (\zeta=\infty)=1$  for every $x\in E$.

\item{\rm (2)}
Suppose that $\VD$ holds, the heat kernel $p(t,x,y)$ of the process
$Y$ exists, and there exist constants $\eps\in (0,1)$ and $c_1>0$
such that for any $x\in E$ and  $t>0$, \be\label{NDLdf1-ww}
 p(t, x ,y)\ge \frac{c_1}{V(x, \phi^{-1}(t))} \quad \hbox{for }
 y\in B(x,\eps\phi^{-1}(t)),
 \ee
where $\phi: \bR_+\to \bR_+$ is a strictly increasing continuous
function  with $\phi (0)=0$. Then $\bP^x (\zeta  = \infty)=1$  for
every $x\in E$. In particular,   $\LHK(\phi)$ implies  $\zeta
=\infty$ a.s.
\end{description}
\end{proposition}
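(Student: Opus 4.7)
\textbf{Proof plan for Proposition \ref{P:3.1}.}

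For part (1), the aim is to upgrade the uniform lower bound $\bP^x(\zeta=\infty)\ge c_0>0$ to the equality $\bP^x(\zeta=\infty)=1$. The plan is to exploit the Markov property in combination with the no-killings-inside hypothesis. Setting $u(x):=\bP^x(\zeta<\infty)$, we have $u\le 1-c_0<1$, and the no-killings-inside assumption provides the structural fact that, on $\{\zeta<\infty\}$, the process $Y$ must leave every compact subset $K\subset E$ strictly before time $\zeta$. Writing $\tau_K$ for the exit time from $K$, this translates to the inclusion $\{\tau_K=\infty\}\subseteq\{\zeta=\infty\}$, and the strong Markov property at $\tau_{K_n}$ for an exhaustion $K_n\uparrow E$ yields the harmonicity identity
\[
u(x)=\bE^x\big[u(Y_{\tau_{K_n}});\,\tau_{K_n}<\infty\big]\qquad\text{for }x\in K_n.
\]
I plan to iterate this identity at a nested sequence of exit times and combine it with the bounded càdlàg martingale $M_t:=\bP^x(\zeta<\infty\mid\sigma(Y_s:0\le s\le t))$, which equals $u(Y_t)\mathbf{1}_{\{t<\zeta\}}+\mathbf{1}_{\{\zeta\le t\}}$ and converges almost surely to $\mathbf{1}_{\{\zeta<\infty\}}$, to force $u\equiv 0$. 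The hard part is the final step: on $\{\zeta<\infty\}$ the martingale $M$ satisfies $M_{\zeta-}\le 1-c_0$ while $M_\zeta=1$, and the no-killings-inside hypothesis must be invoked carefully to rule out this positive-probability upward jump at $\zeta$ except when $\bP^x(\zeta<\infty)=0$.

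For part (2), granted (1), the argument is short. Using \eqref{NDLdf1-ww} together with $\VD$, for every $x\in E$ and $t>0$ I estimate
\[
\bP^x(\zeta>t)=\int_M p(t,x,y)\,\mu(dy)\;\ge\;\int_{B(x,\eps\phi^{-1}(t))}p(t,x,y)\,\mu(dy)\;\ge\;c_1\,\frac{V(x,\eps\phi^{-1}(t))}{V(x,\phi^{-1}(t))}.
\]
By $\VD$ the volume ratio on the right is bounded below by a constant $c_0>0$ depending only on $c_1$, $\eps$ and the doubling constant, so $\bP^x(\zeta>t)\ge c_0$ uniformly in $x$ and $t$. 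Letting $t\to\infty$ yields $\bP^x(\zeta=\infty)\ge c_0$ for every $x\in E$, and part (1) then upgrades this to $\bP^x(\zeta=\infty)=1$. The final assertion that $\LHK(\phi)$ implies conservativeness is immediate: whenever $d(x,y)\le\phi^{-1}(t)$ the minimum in \eqref{HKjum} is realized by the $1/V(x,\phi^{-1}(t))$ term, so the lower bound in $\LHK(\phi)$ directly implies \eqref{NDLdf1-ww} (in fact with $\eps=1$).
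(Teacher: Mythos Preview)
Your approach is essentially the paper's, just with the complementary function: the paper works with $v(x)=\bP^x(\zeta=\infty)=1-u(x)$, notes that $v(Y_t)\mathbf{1}_{\{t<\zeta\}}=\bE^x[\mathbf{1}_{\{\zeta=\infty\}}\mid\sF^Y_t]$ is a bounded martingale, uses the same compact exhaustion with $\tau_{K_n}<\zeta$ and $\tau_{K_n}\to\zeta$, and applies optional stopping plus dominated convergence to get $v(x)\ge c_0\,\bP^x(\zeta<\infty)+v(x)$, hence $\bP^x(\zeta<\infty)=0$. Part~(2) is identical to the paper's.

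One comment on your ``hard part'': there is nothing delicate here, and framing it as ruling out a jump of $M$ at $\zeta$ risks sending you toward unneeded path-regularity arguments. From your own identity $u(x)=\bE^x[u(Y_{\tau_{K_n}});\,\tau_{K_n}<\infty]$ and the bound $u\le 1-c_0$, simply let $n\to\infty$: since $\tau_{K_n}\to\zeta$, on $\{\zeta=\infty\}$ your martingale gives $M_{\tau_{K_n}}\to M_\infty=0$, while on $\{\zeta<\infty\}$ each $M_{\tau_{K_n}}=u(Y_{\tau_{K_n}})\le 1-c_0$. Dominated convergence then yields $u(x)\le(1-c_0)\,\bP^x(\zeta<\infty)=(1-c_0)u(x)$, so $u\equiv 0$. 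No iteration, and no analysis of the jump structure of $M$ at $\zeta$, is required.
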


\begin{proof}  (1) Let $\{\FF^Y_t; t\geq 0\}$ be the minimal augmented filtration generated by
the Hunt process $Y$, and set $u(x):=\bP^x (\zeta =\infty)$. Then we have $u(x) \geq c_0>0$ for $x\in E$.
 Note that $$ u(Y_t)={\bf
1}_{\{\zeta >t\} } u(Y_t)=
\bE^x \left[ {\bf 1}_{\{\zeta = \infty\} }| \FF^Y_t \right]
$$
 is a bounded martingale with $\lim_{t\to \infty} u(Y_t)
= {\bf 1}_{\{\zeta = \infty\} }$. Let $\{K_j; j\geq 1\}$ be an
increasing sequence of   compact sets so that $\cup_{j=1}^\infty
K_j=E$ and define $\tau_j = \inf\{t\geq 0:  Y_t \notin K_j\}$.
Since the Hunt process $Y$ admits no killings inside $E$, we have
$\tau_j<\zeta$
a.s. for every $j\geq 1$.
Clearly $\lim_{j\to \infty}
\tau_j=\zeta$. By the optional stopping theorem,  we have for $x\in
E$,
\begin{align*}
u(x) & =  \lim_{j\to \infty} \bE^x u(Y_{\tau_j}) = \bE^x \left[
\lim_{j\to \infty} u(Y_{\tau_j} )\right]  \\
& =   \bE^x  \left[ \lim_{j\to
\infty} u(Y_{\tau_j} ) {\bf 1}_{\{\zeta < \infty\}}+\lim_{t\to
\infty} u(Y_{t})
{\bf 1}_{\{\zeta = \infty\}}\right] \\
 &\geq    c_0 \bP^x (\zeta <\infty) + \bP^x (\zeta =\infty ) \\
& = c_0 \bP^x (\zeta <\infty) +u(x).
\end{align*}
It follows that $\bP^x (\zeta <\infty)=0$ for every $x\in E$.

(2) By \eqref{NDLdf1-ww} and
the equivalent characterization \eqref{e:vd2} of VD,
we have for every $x\in E$ and $t>0$,
$$
\bP^x (\zeta >t ) \geq  \int_{B(x,
\eps\phi^{-1}(t))}p(t, x,y)\,\mu(dy) \geq \int_{B(x,
\eps\phi^{-1}(t))} \frac{c_1}{V(x, \phi^{-1}(t)) }\,\mu(dy) \ge c_2>0.
$$
Passing $t\to \infty$, we get $\bP^x (\zeta =\infty )\geq c_2$
for every $x\in E$. The conclusion now follows immediately from (1).
\qed
\end{proof}

\begin{remark}\label{R:3.2}  \rm
\begin{itemize}
\item[(i)] The condition that $Y$ admits no killings inside $E$
is needed for Proposition \ref{P:3.1} to hold.  That is, condition \eqref{e:3.1} alone does not  guarantee
$Y$ is conservative. Here is a counterexample. Let $Y$ be
the process obtained from a Brownian motion $W=\{W_t\}$ in $\bR^3$ killed according to the potential
$q(x):={\bf 1}_{B(0, 1)} (x)$. That is, for $f\geq 0$ on $\bR^3$,
\begin{equation}\label{e:3.3}
 \bE^x [ f(Y_t)] = \bE^x \left[ f(W_t) \exp \left(-\int_0^t
 {\bf 1}_{B(0, 1)}
  (W_s)\,ds \right) \right].
\end{equation}
Denote by $\zeta$  the lifetime of $Y$. We claim that \eqref{e:3.1} holds for $Y$.
Indeed, for three-dimensional Brownian motion $W$, we have
$$
\inf_{x\in \bR^3: |x|\geq 2} \bP^x \left(\sigma^W_{B(0, 1)} =\infty \right) =1-\sup_{x\in \bR^3: |x|\geq 2} \bP^x \left( \sigma^W_{B(0, 1)} <\infty \right)=1-\sup_{x\in \bR^3: |x|\geq 2}\frac1{|x|}=\frac{1}{2},
$$
where $\sigma^W_{B(0, 1)}=\inf\{t\geq 0: W_t \in B(0, 1)\}$.
Clearly for $x\in B(0, 2)^c$,
\begin{equation}\label{e:3.4}
 \bP^x (\zeta =\infty) \geq \bP^x \left(\sigma^W_{B(0, 1)} =\infty \right) \geq \frac{1}{2}.
 \end{equation}
On the other hand, if we use $p(t, x, y)$ and $p^0(t, x, y)$ to denote the transition density function of $Y$ and $W$
with respect to the Lebesgue measure
on $\bR^3$  respectively,
then we have by \eqref{e:3.3} that
$$
e^{-t} p^0(t, x, y) \leq p(t, x, y) \leq p^0(t, x, y) \quad \hbox{for } t>0 \hbox{ and } x, y \in \bR^3.
$$
Hence there is a constant $c_1\in (0, 1)$ so that
$$
\bP^x  \left( Y_1 \in \bR^3 \setminus B(0, 2) \right)  \geq c_1 \quad \hbox{for every } x\in B(0, 1).
$$
Using the Markov property of $Y$ at time 1, we have from \eqref{e:3.4} that
$\bP^x (\zeta =\infty ) \geq
c_1/2$
for every $x\in B(0, 1)$. This establishes \eqref{e:3.1} with
$c_0=c_1/2$.
However $\bP^x (\zeta <\infty) >0$ for every $x\in \bR^3$.

\medskip

\item[(ii)] In the setting of this paper, $X$ is
the symmetric Hunt process associated with the regular Dirichlet form $(\sE, \sF)$
given by \eqref{e:1.1} that has no killing term. So $X$ always admits no killings inside $M$.
\end{itemize}
\end{remark}

The next proposition in particular
shows that
$\UHK (\phi)$ implies \eqref{e:1.2}.

\begin{proposition}\label{l:jk}
Under $\VD$ and \eqref{polycon},
$$
\UHK(\phi)
\hbox{ and } (\sE, \sF) \hbox{ is conservative}
\Longrightarrow \J_{\phi,\le} ,
$$
and  $$\HK(\phi)  \Longrightarrow \J_\phi.$$\end{proposition}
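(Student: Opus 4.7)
The plan is to deduce pointwise bounds on $J(x,y)$ by testing $J$ against indicators of small product balls, via the L\'evy system formula for the symmetric Hunt process $X$: for any non-negative Borel $F$ on $M\times M$ vanishing on the diagonal and any $\{\FF_t\}$-stopping time $T$,
\begin{equation*}
\bE^x\Bigl[\sum_{s\le T} F(X_{s-},X_s)\Bigr] = \bE^x\Bigl[\int_0^T\!\!\int_M F(X_s,y)\,J(X_s,y)\,\mu(dy)\,ds\Bigr].
\end{equation*}
Throughout, fix $x_0,y_0\in M_0$ with $r:=d(x_0,y_0)>0$, pick small $\eps\in(0,1/4)$, set $A=B(x_0,\eps r)$ and $B=B(y_0,\eps r)$ (so $A\cap B=\emptyset$ and $d(x,y)\asymp r$ on $A\times B$), and choose $t=\delta\phi(\eps r)$ with $\delta$ small. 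The two bounds on $J(x_0,y_0)$ will follow from integral inequalities after dividing by $t\mu(A)\mu(B)$ and applying Lebesgue differentiation as $\eps\downarrow 0$.

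For the upper bound, first invoke Lemma~\ref{Conserv} to extract $\EP_{\phi,\le}$ from $\UHK(\phi)$ plus conservativeness. Apply the L\'evy system with $F={\bf 1}_{A\times B}$ and $T=t\wedge\tau_A$ to get $\bP^x(\tau_A\le t,\ X_{\tau_A}\in B) = \bE^x\!\int_0^{t\wedge\tau_A}\!\int_B J(X_s,y)\mu(dy)\,ds$. Integrating in $x\in A$ and using the symmetry of the killed semigroup on $A$, the right-hand side equals $\int_0^t\int_A \bP^z(\tau_A>s)\bigl(\int_B J(z,y)\mu(dy)\bigr)\mu(dz)\,ds$, which, via $\EP_{\phi,\le}$ with $\delta$ chosen small, is at least $(t/2)\int_{A'}\int_B J(z,y)\mu(dy)\mu(dz)$ for the sub-ball $A':=B(x_0,\eps r/2)$. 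The left-hand side is at most $\int_A\int_B p(t,x,y)\mu(dy)\mu(dx)$ plus $\int_A\bP^x(\tau_A\le t,\ X_{\tau_A}\in B,\ X_t\notin B)\mu(dx)$: the first piece is bounded by $c\,t\mu(A)\mu(B)/[V(x_0,r)\phi(r)]$ through $\UHK(\phi)$, while the second, by the strong Markov property at $\tau_A$ followed by $\EP_{\phi,\le}$ on $B$, is at most $c\delta$ times the left-hand side itself and hence absorbable. Shrinking $\eps$ produces $J(x_0,y_0)\le C/[V(x_0,r)\phi(r)]$.

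For the lower bound needed in $\HK(\phi)\Longrightarrow \J_\phi$, I would use the path observation that any trajectory starting at $x\in A$ with $X_t\in B$ must make at least one jump from $M\setminus B$ into $B$, so ${\bf 1}_{\{X_t\in B\}}\le \sum_{s\le t}{\bf 1}_{M\setminus B}(X_{s-}){\bf 1}_B(X_s)$. Taking expectations, applying the L\'evy system, integrating in $x\in A$, and exploiting symmetry yields
\begin{equation*}
\int_A\!\int_B p(t,x,y)\,\mu(dy)\,\mu(dx) \ \le\ \int_0^t\!\!\int_{M\setminus B} P_s{\bf 1}_A(z)\Bigl(\int_B J(z,y)\,\mu(dy)\Bigr)\mu(dz)\,ds.
\end{equation*}
The left-hand side is at least $c\,t\mu(A)\mu(B)/[V(x_0,r)\phi(r)]$ by $\LHK(\phi)$ (off-diagonal branch, valid since $\phi^{-1}(t)\le\eps r$). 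Split the right-hand side into $z\in A^*:=B(x_0,2\eps r)$, where $P_s{\bf 1}_A\le 1$ contributes $t\int_{A^*}\int_B J(z,y)\mu(dy)\mu(dz)$, and $z\in M\setminus(A^*\cup B)$, where $P_s{\bf 1}_A(z)=\int_A p(s,z,x)\mu(dx)$ is controlled by $\UHK(\phi)$ and $\int_B J(z,y)\mu(dy)$ by the just-proved $\J_{\phi,\le}$ together with Lemma~\ref{intelem}. A dyadic sum in $d(z,x_0)$ using $\VD$ and \eqref{polycon} shows the tail is of smaller or comparable order than the main term, so dividing by $t\mu(A^*)\mu(B)$ and letting $\eps\downarrow 0$ at a Lebesgue point of $J$ gives $J(x_0,y_0)\ge c'/[V(x_0,r)\phi(r)]$.

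The hardest part will be the tail bookkeeping in the lower-bound step: one needs to verify that the contribution from $z$ far from both $A$ and $B$ does not overwhelm the main term $t\int_{A^*}\int_B J\,\mu(dy)\mu(dz)$. This is where $\VD$, the polynomial growth of $\phi$ in \eqref{polycon}, and the already-established $\J_{\phi,\le}$ come together in the spirit of Lemma~\ref{intelem}, but now applied to the doubly-weighted integrand $P_s{\bf 1}_A(z)\int_B J(z,y)\mu(dy)$.
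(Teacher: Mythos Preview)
Your route is quite different from the paper's, which is purely analytic and avoids the L\'evy system entirely. The paper observes that the approximating forms $\sE^{(t)}(f,g):=t^{-1}\langle f-P_tf,g\rangle$ converge to $\sE(f,g)$ as $t\downarrow 0$; conservativeness yields the representation
\[
\sE^{(t)}(f,g)=\frac{1}{2t}\int_M\!\int_M (f(x)-f(y))(g(x)-g(y))\,p(t,x,y)\,\mu(dx)\,\mu(dy),
\]
so for $f,g\in\sF$ with disjoint compact supports $A,B$ this is $-t^{-1}\int_A\!\int_B f(x)g(y)p(t,x,y)\,\mu(dy)\,\mu(dx)$, and its limit equals $-\int_A\!\int_B f(x)g(y)\,J(dx,dy)$. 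Since $\HK(\phi)$ gives $p(t,x,y)\asymp t/[V(x,d(x,y))\phi(d(x,y))]$ uniformly on $A\times B$ for small $t$ (and $\UHK(\phi)$ gives the corresponding upper bound), the pointwise bounds on $J$ follow by varying $f,g$. No exit times, no tail bookkeeping.

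Your upper-bound argument is essentially right, but the absorption step is imprecise: for $z=X_{\tau_A}\in B$ near $\partial B$, $\bP^z(\tau_B\le s)$ need not be $O(\delta)$. Replacing the event $\{X_t\in B\}$ by $\{X_t\in B(y_0,2\eps r)\}$ fixes this, since then $\EP_{\phi,\le}$ applies on $B(z,\eps r)$ uniformly in $z\in B$.

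The lower-bound step, however, has a genuine gap. In your decomposition the tail region $M\setminus(A^*\cup B)$ contains points $z$ arbitrarily close to $\partial B$, and for such $z$ the factor $g(z)=\int_B J(z,y)\,\mu(dy)$ is only controlled (via $\J_{\phi,\le}$ and Lemma~\ref{intelem}) by $c/\phi(d(z,B))$, which blows up as $z\to\partial B$. A dyadic sum in $d(z,x_0)$ does not help: all these bad points sit in a single shell at distance $\asymp r$ from $x_0$. Concretely, rewriting the tail by Fubini as $\int_0^t\!\int_A\!\int_B\!\int_{M\setminus(A^*\cup B)} p(s,x,z)J(z,y)\,\mu(dz)\,\mu(dy)\,\mu(dx)\,ds$, the innermost $z$-integral has its singularity at $z=y$ only removed by $z\notin B$; for $y$ near $\partial B$ this is no removal at all, and indeed already in $\bR^d$ with $\phi(r)=r^\alpha$ the integral $\int_{|z-x|\ge\eps r}|x-z|^{-d-\alpha}|z-y|^{-d-\alpha}\,dz$ diverges when $y$ approaches $\partial B$. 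So the tail is not, as written, of controlled size, and you cannot conclude that the main term $t\int_{A^*}\!\int_B J$ is bounded below. A repair would require inserting a buffer annulus around $B$ (so that $d(z,y)$ is uniformly bounded below on the tail), but this in turn breaks the indicator inequality you rely on, since the process can enter $B$ through the buffer without making a long jump; sorting this out is nontrivial and is exactly the complication the paper's analytic argument sidesteps.
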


\begin{proof}  The proof is easy and standard, and we only consider $\HK(\phi)\Longrightarrow \J_\phi$ for simplicity.
Consider the form $\sE^{(t)}(f,g):=\langle f-P_{t}f,g\rangle /t$. Since $(\sE, \sF)$ is
conservative by Proposition \ref{P:3.1}(2), we can write
\begin{equation*}
\sE^{(t)}(f,g)=\frac{1}{2t}\int_{M}\int_{M}(f(x)-f(y))(g(x)-g(y))p(t,x,y)\,\mu
(dx)\,\mu (dy).
\end{equation*}
It is well known that $\lim_{t\rightarrow 0}\sE^{(t)}(f,g)=
\sE(f,g)$ for all $f,g\in \sF$. Let $A$, $B$ be disjoint compact sets, and
take $f,g\in \sF$ such that $\mathrm{supp}\,f\subset A$ and $\mathrm{supp}\, g\subset B$. Then
\begin{equation*}
\sE^{(t)}(f,g)=-\frac{1}{t}\int_{A}\int_{B}f(x)g(y)p(t,x,y)\,\mu (dy)\,\mu (dx)
\overset{t\rightarrow 0}{\longrightarrow }-\int_{A}\int_{B}f(x)g(y)\,J(dx,dy).
\end{equation*}
Using $\HK(\phi)$, we obtain
\begin{equation*}
\int_{A}\int_{B}f(x)g(y)\,J(dx,dy)\asymp\int_{A}\int_{B}\frac{f(x)g(y)}{
V(x,d(x,y))\phi(d(x,y))}\,\mu (dy)\,\mu (dx),
\end{equation*}
for all $f,g\in \sF$ such that $\mathrm{supp}\,f\subset A$ and $\mathrm{supp}\,g\subset B$.
Since $A$, $B$ are arbitrary disjoint compact sets,  it follows that
$J(dx,dy)$ is absolutely continuous w.r.t. $\mu (dx)\,\mu (dy)$, and
$\J_\phi$ holds.  \qed
\end{proof}

\subsection{$\UHK(\phi)
\hbox{ and } (\sE, \sF) \hbox{  is conservative}
\Longrightarrow\SCSJ(\phi)$}

In this subsection, we give the proof that $\UHK(\phi)$
and the conservativeness of $(\sE, \sF)$ imply $\SCSJ(\phi)$.
For $D \subset M$ and $\lam>0$, define
$$ G^{D}_\lam f(x) = \bE^x \int_0^{\tau_D} e^{-\lam t} f(X_t)\, dt,\quad x\in M_0. $$

\begin{lemma}\label{L:taub1}
Suppose that $\VD$, \eqref{polycon} and $\UHK(\phi)$
hold, and that $(\sE, \sF)$ is conservative.
Let $x_0 \in M$, $0<r\le R$, and define
\begin{align*}
D_0 &= B(x_0,R+ 9r/10 )\setminus \ol B(x_0, R + r/10),\\
D_1 &= B(x_0,R+ 4r/5 )\setminus\ol B(x_0, R + r/5),\\
D_2 &= B(x_0,R+ 3r/5 )\setminus\ol B(x_0, R + 2r/5).
\end{align*}
Let $\lam = \phi(r)^{-1}$, and set
$h = G^{D_0}_\lam {\bf1}_{D_1}$.
Then $h \in \sF_{D_0}$ and $h(x)\le \phi(r)$ for all $x\in M_0$. Moreover, there exists a constant $c_1>0$,
independent of $x_0$, $r$ and $R$, so that $ h(x) \ge c_1\phi(r)$  for all $x \in D_2\cap M_0$.
\end{lemma}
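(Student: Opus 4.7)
The plan is to address the three assertions in turn, with only the lower bound requiring real work. For $h\in\sF_{D_0}$, I would invoke the standard fact from Dirichlet form theory that the part process killed on exiting the relatively compact open set $D_0$ corresponds to the regular Dirichlet form $(\sE,\sF_{D_0})$ on $L^2(D_0;\mu)$, whose $\lam$-resolvent $G^{D_0}_\lam$ sends $L^2(D_0;\mu)$ into $\sF_{D_0}$; since ${\bf 1}_{D_1}\le {\bf 1}_{D_0}\in L^2(D_0;\mu)$ (using the relative compactness of $D_0$), this gives $h\in\sF_{D_0}$. The pointwise upper bound is immediate from $0\le {\bf 1}_{D_1}\le 1$ and $\lam=1/\phi(r)$:
$$h(x)\le\bE^x\int_0^\infty e^{-\lam t}\,dt=\frac{1}{\lam}=\phi(r).$$

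For the lower bound on $D_2\cap M_0$, my strategy is to truncate the defining integral to a short window $[0,t_0]$ with $t_0=\eta\phi(r)$ for a small universal $\eta>0$, so that $e^{-\lam t_0}\ge e^{-\eta}$ remains bounded away from zero, and to establish a uniform lower bound on $\bP^x(X_t\in D_1,\,t<\tau_{D_0})$ throughout this window. Two elementary geometric inclusions, which hold for every $x\in D_2$ because $d(x_0,x)\in(R+2r/5,R+3r/5)$, will be used: the triangle inequality yields $B(x,r/5)\subset D_1$ and $B(x,3r/10)\subset D_0$. Since $\{X_t\in B(x,r/5)\}\cap\{t<\tau_{B(x,3r/10)}\}\subset\{X_t\in D_1,\,t<\tau_{D_0}\}$, it follows that
$$h(x)\ge e^{-\eta}\int_0^{t_0}\Big[\bP^x(X_t\in B(x,r/5))-\bP^x(\tau_{B(x,3r/10)}\le t)\Big]\,dt.$$

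Both tail probabilities are then controlled by standard consequences of $\UHK(\phi)$ and conservativeness. Combining $\UHK(\phi)$ with Lemma \ref{intelem} and \eqref{polycon},
$$\bP^x(X_t\notin B(x,r/5))\le c_2 t\int_{d(x,y)\ge r/5}\frac{\mu(dy)}{V(x,d(x,y))\phi(d(x,y))}\le\frac{c_3 t}{\phi(r)},$$
and conservativeness then gives $\bP^x(X_t\in B(x,r/5))\ge 1-c_3 t/\phi(r)$. Lemma \ref{Conserv} supplies $\EP_{\phi,\le}$, whence $\bP^x(\tau_{B(x,3r/10)}\le t)\le c_4 t/\phi(r)$. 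Choosing $\eta$ so small that $(c_3+c_4)\eta\le 1/2$ makes the bracketed integrand at least $1/2$ throughout $[0,t_0]$, and integrating yields $h(x)\ge e^{-\eta}t_0/2=c_1\phi(r)$ uniformly in $x_0,r,R$. The only delicate step I foresee is verifying the annular geometric inclusions with the precise constants appearing in the definitions of $D_0,D_1,D_2$; beyond that the argument is mechanical, and I do not anticipate a serious obstacle.
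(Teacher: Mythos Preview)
Your proposal is correct and follows essentially the same strategy as the paper: both argue that for $x\in D_2$ the process stays inside a small ball contained in $D_1$ for time of order $\phi(r)$ with probability bounded below, relying on $\EP_{\phi,\le}$ from Lemma~\ref{Conserv}. The paper's version is a touch more economical---it uses only the single inclusion $B(x,r/5)\subset D_1$ and the exit-time bound $\bP^x(\tau_{B(x,r/5)}>\delta_{1/2}\phi(r/5))\ge 1/2$, so your separate control of $\bP^x(X_t\in B(x,r/5))$ via the off-diagonal part of $\UHK(\phi)$ and the second ball $B(x,3r/10)$ are not needed (on $\{\tau_{B(x,r/5)}>t\}$ one automatically has $X_t\in D_1$ and $t<\tau_{D_0}$).
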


\begin{proof}
That $h \in \sF_{D_0}$ follows by \cite[Theorem 4.4.1]{FOT}.
The definition of $h$ implies that $h(x)=0$ for $x \not\in \ol D_0$,
and the upper bound on $h$ is elementary, since $h \le G^{M}_\lam {\bf1} = \lam^{-1}=\phi(r)$.

By Lemma \ref{Conserv}, we can choose a constant $\delta_{1/2}>0$ such that for all $r>0$ and all $x\in M_0$,
$$\bP^x(\tau_{B(x,r)}\le \delta_{1/2} \phi(r))\le \frac{1}{2}.$$
For any $x \in D_2\cap M_0$, $B_1: =B(x,r/5) \subset D_1$.
Hence
\begin{align*}
  h(x) &= \bE^x \int_0^{\tau_{D_0}} e^{-\lam t}{\bf1}_{D_1}(X_t)\, dt\\
  & \geq \bE^x \left[ \int_0^{\tau_{B_1}} e^{-\lam t}{\bf1}_{B_1}(X_t)\, dt;  \
 \tau_{B_1} > \delta_{1/2} \phi(r/5) \right] \\
 &\ge \bP^x ( \tau_{B_1} > \delta_{1/2} \phi(r/5)) \left[ \int_0^{\delta_{1/2} \phi(r/5)}e^{-\lam t} \, dt\right]
   \geq c_1 \phi (r),
\end{align*}
where we used \eqref{polycon} in the last inequality.
  \qed\end{proof}

We also need the following property for non-local Dirichlet forms.

\begin{lemma}\label{gamma}
For each $f,g\in \sF_b$, $\eta>0$ and any subset $D \subset M$,
\be\label{eq:enowec}\begin{split}
&(1-\eta^{-1})\int_{D\times D} f^2(x)(g(x)-g(y))^2\,J(dx,dy)\\
&\le \int_{D\times D}(g(x)f^2(x)-g(y)f^2(y))(g(x)-g(y))\,J(dx,dy)\\
 &\quad +\eta\int_{D\times D} g^2(x)(f(x)-f(y))^2\,J(dx,dy)
\end{split}\ee
\end{lemma}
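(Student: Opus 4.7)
The proof will be a short algebraic manipulation followed by an application of Young's inequality, exploiting the symmetry of $J$ in the two variables.

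My first step is to expand the integrand on the right-hand side via the product-rule-type identity
\[
g(x)f^2(x)-g(y)f^2(y) = (g(x)-g(y))f^2(x) + g(y)(f(x)-f(y))(f(x)+f(y)).
\]
Multiplying by $(g(x)-g(y))$ and integrating against $J(dx,dy)$ over $D\times D$, the first piece yields exactly
\[
A := \int_{D\times D} f^2(x)(g(x)-g(y))^2\, J(dx,dy),
\]
while the second piece produces the cross term
\[
\mathrm{I} := \int_{D\times D} g(y)(f(x)-f(y))(f(x)+f(y))(g(x)-g(y))\, J(dx,dy).
\]
Thus if we denote the middle term in \eqref{eq:enowec} by $B$, we have $B = A + \mathrm{I}$, and the desired inequality reduces to showing
\[
-\mathrm{I} \le \eta^{-1} A + \eta \int_{D\times D} g^2(x)(f(x)-f(y))^2\,J(dx,dy).
\]

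The next step is to bound $|\mathrm{I}|$ by Young's inequality. For any $\alpha>0$, applying $|XY|\le \tfrac{\alpha^2}{2}X^2+\tfrac{1}{2\alpha^2}Y^2$ with $X=g(y)(f(x)-f(y))$ and $Y=(f(x)+f(y))(g(x)-g(y))$, together with $(f(x)+f(y))^2\le 2(f^2(x)+f^2(y))$, gives pointwise
\[
|g(y)(f(x)-f(y))(f(x)+f(y))(g(x)-g(y))| \le \tfrac{\alpha^2}{2}\,g^2(y)(f(x)-f(y))^2 + \tfrac{1}{\alpha^2}(f^2(x)+f^2(y))(g(x)-g(y))^2.
\]
Integrating against $J(dx,dy)$ and using the symmetry $J(dx,dy)=J(dy,dx)$ to replace $g^2(y)$ by $g^2(x)$ in the first summand and $(f^2(x)+f^2(y))$ by $2f^2(x)$ in the second, I obtain
\[
|\mathrm{I}| \le \tfrac{\alpha^2}{2}\int_{D\times D} g^2(x)(f(x)-f(y))^2\,J(dx,dy) + \tfrac{2}{\alpha^2}\,A.
\]

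Finally, choosing $\alpha^2 = 2\eta$ yields $|\mathrm{I}|\le \eta\,C + \eta^{-1}A$, where $C$ is the last integral in \eqref{eq:enowec}. Combined with $B = A + \mathrm{I}$ and $-\mathrm{I}\le |\mathrm{I}|$, this gives $(1-\eta^{-1})A\le B + \eta C$, which is exactly \eqref{eq:enowec}. There is no real obstacle here; the only delicate point is choosing the split so that Young's inequality produces precisely the weight $g^2$ on $(f(x)-f(y))^2$ and $f^2$ on $(g(x)-g(y))^2$, and calibrating $\alpha$ so that the factor of $2$ from $(f(x)+f(y))^2\le 2(f^2(x)+f^2(y))$ is absorbed into the balance between $\eta$ and $\eta^{-1}$.
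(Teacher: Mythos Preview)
Your proof is correct and follows essentially the same approach as the paper. The paper writes the identity in the symmetrized form $A = B - \tfrac12\int_{D\times D}(f^2(x)-f^2(y))(g^2(x)-g^2(y))\,J(dx,dy)$ and then bounds the cross term via Cauchy--Schwarz at the integral level followed by $(a+b)^2\le 2(a^2+b^2)$ and $ab\le\tfrac12(a^2+b^2)$; your decomposition $B=A+\mathrm{I}$ gives the same cross term after symmetrization, and your pointwise Young's inequality with $\alpha^2=2\eta$ is exactly the same estimate reorganized.
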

\begin{proof}
For any $f,g\in \sF_b$, we can easily get that
\be \label{eq:niobefw}\begin{split}
&\int_{D\times D}f^2(x)(g(x)-g(y))^2\,J(dx,dy)\\
&=\int_{D\times D}(g(x)f^2(x)-g(y)f^2(y))(g(x)-g(y))\,J(dx,dy)\\
&\quad  -\frac 12\int _{D\times D}(f^2(x)-f^2(y))(g^2(x)-g^2(y))\,J(dx,dy).
\end{split}\ee
 Then  according to the Cauchy-Schwarz inequality, for any $\eta>0$,
\begin{align*}
&\Big|\int _{D\times D}(f^2(x)-f^2(y))(g^2(x)-g^2(y))\,J(dx,dy)\Big|\\
&\le \left( \int_{D\times D}\eta(g(x)+g(y))^2(f(x)-f(y))^2\,\,J(dx,dy) \right)^{1/2} \\
&\quad\times \left( \int_{D\times D}\eta^{-1}(f(x)+f(y))^2(g(x)-g(y))^2\,J(dx,dy) \right)^{1/2} \\
&\le \left( \int_{D\times D}4\eta g^2(x)(f(x)-f(y))^2\,J(dx,dy)\right)^{1/2} \\
&\quad\times \left( \int_{D\times D}4\eta^{-1} f^2(x)(g(x)-g(y))^2\,J(dx,dy)\right)^{1/2} \\
&\le 2\eta\int_{D\times D} g^2(x)(f(x)-f(y))^2\,J(dx,dy)\\
&\quad +2\eta^{-1}\int_{D\times D} f^2(x)(g(x)-g(y))^2\,J(dx,dy),
\end{align*} where we have used the fact $ab\le \frac{1}{2}(a^2+b^2)$ for all $a,b\ge 0$ in the last inequality.
Plugging this into \eqref{eq:niobefw}, we obtain \eqref{eq:enowec}.
\qed\end{proof}

\begin{proposition} \label{T:gdCS}
Suppose that $\VD$, \eqref{polycon} and $\UHK(\phi)$
hold, and $(\sE, \sF)$ is conservative.
Then $\SCSJ(\phi)$ holds.
\end{proposition}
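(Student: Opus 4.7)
The plan is to construct a cutoff function $\varphi$ via the $\lambda$-resolvent of the killed process with $\lambda = 1/\phi(r)$, and then verify $\SCSJ(\phi)$ via Lemma \ref{gamma} together with the weak resolvent identity satisfied by $\varphi$. Since the resulting $\varphi$ will depend only on $(x_0, R, r)$ and not on $f$, this gives the strong version $\SCSJ(\phi)$ rather than merely $\CSJ(\phi)$.

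First, from $\UHK(\phi)$ and conservativeness, Lemma \ref{Conserv} yields $\EP_{\phi,\le}$, and hence a survival estimate: there exists $\delta>0$ such that $\bP^x(\tau_{B(x,s)}>\delta\phi(s))\ge 1/2$ for all $s>0$ and a.e.\ $x\in M$. Fix $x_0\in M$ and $0<r\le R$, and set $\lambda:=1/\phi(r)$, $B':=B(x_0,R+r)$, $B'':=B(x_0,R+r/2)$. Define
\[
h := G^{B'}_\lambda \mathbf{1}_{B''} \in \sF_{B'},
\]
so that $0\le h\le \phi(r)$ and $h\equiv 0$ on $(B')^c$. For $x\in B(x_0,R)$ one has $B(x,r/2)\subset B''$, and (arguing as in Lemma \ref{L:taub1})
\[
h(x) \ge \lambda^{-1}\bE^x\bigl[1-e^{-\lambda\tau_{B(x,r/2)}}\bigr] \ge c_1\phi(r)
\]
for a universal $c_1>0$, using the survival estimate and \eqref{polycon}. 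Define
\[
\varphi := (h/(c_1\phi(r))) \wedge 1 \in \sF_b,
\]
a normal contraction of $h$, which is a cutoff function for $B(x_0,R)\subset B(x_0,R+r)$ depending only on $(x_0,R,r)$.

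It remains to verify the $\SCSJ(\phi)$ inequality for this $\varphi$. I would split $\int f^2\,d\Gamma(\varphi,\varphi)$ by the location of the jump endpoints: contributions where both endpoints lie in $B(x_0,R)$ or both lie outside $B(x_0,R+r)$ vanish since $\varphi$ is constant there; contributions from ``long'' jumps where at least one endpoint sits outside $U^*=B(x_0,R+(1+C_0)r)\setminus B(x_0,R-C_0 r)$ are bounded by $C\phi(r)^{-1}\int f^2\,d\mu$ using $\J_{\phi,\le}$ (which follows from $\UHK(\phi)$ by Proposition \ref{l:jk}) together with Lemma \ref{intelem}; the ``local'' piece on $U\times U^*$ is treated by applying Lemma \ref{gamma} with $g=\varphi$ and $\eta=2$, reducing matters to controlling $\sE(\varphi,\varphi f^2)$ plus $2\int \varphi^2(f(x)-f(y))^2\,J(dx,dy)$, the latter being exactly the first term on the right of \eqref{e:csj1}. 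The energy term is handled via the weak resolvent identity $\sE(h,u)+\lambda\langle h,u\rangle = \langle \mathbf{1}_{B''},u\rangle$ for $u\in\sF_{B'}$, applied (after a suitable truncation) with $u=\varphi f^2$, combined with the pointwise domination $d\Gamma(\varphi,\varphi)\le (c_1\phi(r))^{-2}\,d\Gamma(h,h)$ coming from contractivity under normal contractions.

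The main obstacle is dealing with the cap $(\cdot)\wedge 1$: $\varphi$ itself does not satisfy a clean resolvent equation, so one must pass through $h$ and then translate back via the contraction inequality, which is the delicate bookkeeping step. A natural fallback, if the direct argument becomes cumbersome, is to first establish a weaker variant of the target inequality and then invoke the self-improvement Proposition \ref{L:cswk} (and Remark \ref{R:csj} for the $\SCSJ$ version) to promote it to the full statement. Either route yields $\SCSJ(\phi)$ with constants independent of $x_0, R, r$ and $f$.
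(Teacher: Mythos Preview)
Your overall strategy—build the cutoff from a $\lambda$-resolvent with $\lambda=1/\phi(r)$, then feed it through Lemma \ref{gamma} and the resolvent identity—is exactly the paper's approach. But there is a genuine gap in your choice of \emph{domain} for the resolvent, and it is not just bookkeeping.

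You take $h=G^{B'}_\lambda\mathbf{1}_{B''}$ with $B'=B(x_0,R+r)$ a full ball. Then $h$ (hence $g=h/(c_1\phi(r))$) is supported on all of $B'$ and is bounded below by a positive constant on $B(x_0,R)$. When you apply Lemma \ref{gamma}, the term $\int g^2(x)(f(x)-f(y))^2\,J(dx,dy)$ (or, if you work with $\varphi$, the term $\int \varphi^2(x)(f(x)-f(y))^2\,J$) therefore picks up contributions from $x\in B(x_0,R)$, not just from $x\in U=B_{R+r}\setminus B_R$. Your claim that this term ``is exactly the first term on the right of \eqref{e:csj1}'' is false: the right-hand side of \eqref{e:csj1} requires the integral over $U\times U^*$, and there is no way to control $\int_{B(x_0,R)\times M}(f(x)-f(y))^2\,J$ by that. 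Passing to $h$ via the contraction bound $d\Gamma(\varphi,\varphi)\le (c_1\phi(r))^{-2}\,d\Gamma(h,h)$ does not help—$h$ is even larger on $B(x_0,R)$ than $\varphi$ is. The self-improvement Proposition \ref{L:cswk} cannot rescue this either, since what you would obtain is not a version of $\CSJ(\phi)$ with a bad constant but an inequality of the wrong shape.

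The paper's fix is simple but essential: take the resolvent over an \emph{annulus} $D_0\subset U$ (specifically $D_0=B_{R+9r/10}\setminus\overline{B_{R+r/10}}$), so that $g=G^{D_0}_\lambda\mathbf{1}_{D_1}/(c^*\phi(r))$ is supported in $U$. Then $\varphi$ is defined piecewise as $1$ on $B_{R+r/2}$ and $1\wedge g$ outside, which is \emph{not} a global normal contraction of $g$; an ad hoc decomposition (the paper's $I_1$--$I_4$) replaces your contraction argument to pass from $\Gamma(\varphi,\varphi)$ to $\Gamma(g,g)$. Once one is working with $g$, both difficulties evaporate: the term $\int g^2\,d\Gamma(f,f)$ is automatically localized to $U\times U^*$ since $g$ vanishes outside $U$, and $f^2g\in\sF_{D_0}$ so the resolvent identity $\sE_\lambda(f^2g,h)=\langle f^2g,\mathbf{1}_{D_1}\rangle$ applies directly with no cap to undo. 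This yields $\CSAJ(\phi)$, and then $\SCSJ(\phi)$ via Proposition \ref{CSJ-equi}(2).
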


\begin{proof}  By the dominated convergence theorem, we only need to verify that $\SCSJ(\phi)$ holds for any $f\in \sF_b$.
For any $x_0\in M$ and $s>0$, let $B_s=B(x_0,s)$. For $0<r\le R$, let
$U=B_{R+r}\setminus B_R$ and
$U^*=B_{R+3r/2}\setminus B_{R-r/2}$. Let $D_i$  be those as in Lemma \ref{L:taub1}, and $\lambda=\phi(\lam)^{-1}$.
For $x\in M_0$, set
\begin{align*}
 g(x) &= \frac{ G^{D_0}_\lam {\bf1}_{D_1} (x)}{c^*\phi(r)},  \\
 \vp(x) &=
\begin{cases}
 1 \wedge g(x) & \text{ if } x \in B_{R+r/2}^c\cap M_0, \\
 \quad 1  & \text{ if } x \in B_{R+r/2}\cap M_0, \\
\end{cases}
\end{align*}where $c^*$ is the constant $c_1$ in Lemma \ref{L:taub1}.
 Then  by Lemma \ref{L:taub1}, $\vp =0 $ on $B_{R+r}^c\cap M_0$, and $\vp = 1$ on $B_R\cap M_0$.

We first claim
\be\label{eq:jooe}
\int_{U^*} f^2 \,d\Gam(\vp, \vp)\le \int_{U^*} f^2 \,d\Gam(g, g)+\frac {c_1}{\phi(r)}\int_{U^*}f^2\,d\mu, \quad f \in \sF_b.
\ee
Indeed, by decomposing the regions of integrals, we have
\begin{align*}
\int_{U^*} f^2\, d\Gam(\vp, \vp) &=\int_{B_{R+r/2}\setminus B_{R-r/2}}\int_{B_{R+r}\setminus B_{R+r/2}} +
\int_{B_{R+3r/2}\setminus B_{R+r/2}}\int_{B_{R+r/2}}\\
&\quad+\int_{B_{R+3r/2}\setminus B_{R+r/2}}\int_{B_{R+r}\setminus B_{R+r/2}}
+\int_{B_{R+3r/2}\setminus B_{R-r/2}}\int_{B_{R+r}^c}\\
&=: I_1+I_2+I_3+I_4,
\end{align*}
where the first integral of each term in the right hand side is with respect to $x$. Here we used the fact
\[
\int_{B_{R+r/2}\setminus B_{R-r/2}}f^2(x)\,\mu(dx)\int_{B_{R+3r/5}}(\vp(x)-\vp(y))^2J(x,y)\,\mu(dy)=0,
\]
because $\vp(x)=\vp(y)=1$ when $x,y\in B_{R+3r/5}\cap M_0$.
By Lemma \ref{intelem} and \eqref{polycon}, we have
\begin{align*}
I_1&=\int_{B_{R+r/2}\setminus B_{R-r/2}}f^2(x)\,\mu(dx)\int_{B_{R+r}\setminus B_{R+3r/5}}(1-\varphi(y))^2 J(x,y)\,\mu(dy)\\
&\le \frac {c_1}{\phi(r)}\int_{B_{R+r/2}\setminus B_{R-r/2}}f^2\,d\mu.
\end{align*}
Similarly,
\begin{align*}
I_2&=\int_{B_{R+3r/2}\setminus B_{ R+3r/5}}f^2(x)(\varphi(x)-1)^2\,\mu(dx)\int_{B_{R+r/2}} J(x,y)\,\mu(dy)\\
&\le \frac {c_2}{\phi(r)}\int_{B_{R+3r/2}\setminus B_{R+3r/5}}f^2\,d\mu,\\
I_4&=\int_{B_{R+9r/10}\setminus B_{R-r/2}}f^2(x)\vp^2(x)\,\mu(dx)\int_{ B_{R+r}^c} J(x,y)\,\mu(dy)\\
&\le \frac {c_3}{\phi(r)}\int_{B_{R+9r/10}\setminus B_{R-r/2}}f^2\,d\mu.
\end{align*}
Finally, we have
\begin{align*}
I_3&=\int_{B_{R+3r/2}\setminus B_{R+r/2}}f^2(x)\,\mu(dx)\int_{B_{R+r}\setminus B_{R+r/2}}(\varphi (x)-\varphi(y))^2 J(x,y)\,\mu(dy)\\
&\le \int_{B_{R+3r/2}\setminus B_{R+r/2}}f^2(x)\,\mu(dx)\int_{B_{R+r}\setminus B_{R+r/2}}(g(x)-g(y))^2 J(x,y)\,\mu(dy)\\
&\le \int_{U^*} f^2\, d\Gam(g,g),
\end{align*}
so that \eqref{eq:jooe} is proved.

Next, using Lemma \ref{intelem} and \eqref{eq:enowec} with $\eta=2$, we have for any $ f\in \sF_b$,
\be \begin{split}
\int_{U^*} f^2\, d\Gam(g, g)&
\le \int_{U^*\times U^*} f^2(x)(g(x)-g(y))^2\,J(dx,dy)\\
&\quad + \int_{U^*\times {U^*}^c} f^2(x)g^2(x)\,J(dx,dy)\\
&\le 2\int_{U^*\times U^*} (f^2(x)g(x)-f^2(y)g(y))(g(x)-g(y)) \,J(dx,dy)\\
&\quad + 4\int_{U^*\times U^*} g^2(x)(f(x)-f(y))^2\,J(dx,dy)
 + \frac{c_4}{\phi(r)}\int_U f^2\,d\mu,
\label{e:int1.1} \end{split}
\ee
where in the last inequality we have used the fact that $g$ is zero outside $U$.

 With $\lam = \phi(r)^{-1}$, we have for any $f\in \sF_b$,
\begin{equation}\label{e:cr1.1}\begin{split}
&\int_{U^*\times U^*} (f^2(x)g(x)-f^2(y)g(y))(g(x)-g(y)) \,J(dx,dy)\\
&\le \int_{(U^*\times U^* )\cup ({U^*}^c\times U^* )\cup (U^*\times {U^*}^c)} (f^2(x)g(x)-f^2(y)g(y))(g(x)-g(y)) \,J(dx,dy) \\
 &=\int_M d\Gam(f^2 g,g)= \sE(f^2 g,g)
 \le \sE_\lam(f^2 g,g) \\
&= (c^* \phi(r))^{-1} \sE_\lam(f^2 g,G^{D_0}_\lam {\bf1}_{D_1}) \\
&=  (c^* \phi(r))^{-1} \langle f^2 g,{\bf1}_{D_1} \rangle\\
&\le (c^* \phi(r))^{-1}  \int_U f^2 g\, d\mu .
\end{split}\end{equation}
Here we used \cite[Theorem 4.4.1]{FOT}
and the fact that $f^2 g \in \sF_{D_0}$ to obtain the third equality.
Plugging \eqref{e:cr1.1} into \eqref{e:int1.1}, and using the facts that
$g \le c_5$ and $g$ is zero outside $U$, we obtain
\begin{align*}
\int_{U^*} f^2 \, d\Gamma(g,g)
&\le 4\int_{U^*\times U^*} g^2(x)(f(x)-f(y))^2\,J(dx,dy)
  \\
 &\quad+\frac{2}{ c^* \phi(r)}
 \int_U f^2 g\, d\mu +  \frac{c_4}{\phi(r)}\int_U f^2\,d\mu \\
&\le 4c_5^2 \int_{U\times U^*} (f(x)-f(y))^2\,J(dx,dy)
+ \Big(\frac{2 c_5}{c^*}+c_4\Big) \frac{1}{\phi(r)}
\int_U f^2\, d\mu.
\end{align*}
This and \eqref{eq:jooe} imply $\CSAJ(\phi)$ for any $f\in \sF_b$ with the strong form (i.e. the cut-off function is independent of $f\in \sF_b$) with $C_0=\frac{1}{2}$. Therefore, the desired assertion follows from Proposition \ref{CSJ-equi}(2) and Remark \ref{R:csj}. \qed\end{proof}

As mentioned in the beginning of this section, $\UHK (\phi)$ implies $\FK (\phi)$   by Proposition
\ref{pi-e-pre} under $\VD$, $\RVD$ and \eqref{polycon}. This completes the proof of (1) $\Longrightarrow$ (3)
part in   Theorems \ref{T:main} and \ref{T:main-1}. Note also that (3) $\Longrightarrow$ (4)
part in   Theorems \ref{T:main} and \ref{T:main-1} holds trivially.

\section{Implications of $\CSJ (\phi)$  and  $\J_{\phi, \geq}$}\label{section4}

In this section, we will prove (4) $\Longrightarrow$ (2) in Theorems \ref{T:main} and \ref{T:main-1}.

\subsection{$\J_{\phi,\ge}\Longrightarrow \FK(\phi)$}\label{jFKnow}
We first prove that under $\VD$ and \eqref{polycon},
$\J_{\phi,\ge}$ implies the local Nash inequality introduced by Kigami (\cite{Ki}).
Note that 
when volume of balls is uniformly comparable, 
the following lemma was proved in \cite[Theorem 3.1]{CK2}.
The proof below is similar to that of \cite[Theorem 3.1]{CK2}.

\begin{lemma}\label{T:3.2}
Under $\VD$, \eqref{polycon} and $\J_{\phi,\ge}$, there is a constant $c_0>0$
such that
for any $s>0$,
\[
\|u\|_2^2\le c_0\Big(\frac {\|u\|_1^2}{\inf_{z\in {\rm supp}\,u}V(z,s)}+\phi(s)\sE(u,u)\Big),
\quad \forall u\in \sF\cap L^1(M;\mu).
\]

\end{lemma}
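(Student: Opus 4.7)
The plan is to estimate $u(x)^2$ pointwise (for $x\in \supp u$) using the jump kernel lower bound to produce the truncated Dirichlet form, with the remainder controlled by $\|u\|_1$.

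First I would start from the pointwise triangle inequality $|u(x)|\le |u(x)-u(y)|+|u(y)|$ for $y\in B(x,s)$ and multiply by $|u(x)|$ to get
\[
u(x)^2\le |u(x)|\,|u(x)-u(y)|+|u(x)|\,|u(y)|.
\]
Integrating in $y$ over $B(x,s)$ (with respect to $\mu$) yields
\[
V(x,s)u(x)^2\le |u(x)|\!\int_{B(x,s)}\!|u(x)-u(y)|\,\mu(dy)+|u(x)|\!\int_{B(x,s)}\!|u(y)|\,\mu(dy).
\]
Next, by Cauchy--Schwarz on the first term on the right and $ab\le\tfrac12 a^2+\tfrac12 b^2$ with $a=|u(x)|V(x,s)^{1/2}$, I can absorb a copy of $\tfrac12 V(x,s)u(x)^2$ into the left-hand side to obtain
\[
V(x,s)u(x)^2\le \int_{B(x,s)}\!(u(x)-u(y))^2\,\mu(dy)+2|u(x)|\!\int_{B(x,s)}\!|u(y)|\,\mu(dy).
\]

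Now I would divide through by $V(x,s)$ and integrate over $x\in \supp u$ (since both sides vanish off $\supp u$). The off-diagonal term is handled by the trivial bound $\int_{B(x,s)}|u(y)|\,\mu(dy)\le \|u\|_1$ together with $V(x,s)\ge V_*:=\inf_{z\in \supp u}V(z,s)$ on $\supp u$, giving a contribution of $2\|u\|_1^2/V_*$. The key step is to recognise the first integral as controlled by the jump form: when $d(x,y)\le s$, monotonicity of $V(x,\cdot)$ and $\phi$ gives $V(x,d(x,y))\phi(d(x,y))\le V(x,s)\phi(s)$, so $\J_{\phi,\ge}$ implies
\[
\frac{1}{V(x,s)}\le \frac{\phi(s)}{V(x,d(x,y))\phi(d(x,y))}\le \frac{\phi(s)}{c_1}J(x,y),
\qquad d(x,y)\le s,
\]
whence
\[
\int\!\int_{d(x,y)\le s}\frac{(u(x)-u(y))^2}{V(x,s)}\,\mu(dy)\mu(dx)\le \frac{\phi(s)}{c_1}\sE(u,u).
\]
Combining these two bounds gives the lemma with $c_0=\max(c_1^{-1},2)$.

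There is no real obstacle here; the whole argument is essentially one page once the correct splitting is chosen. The only point requiring care is the bookkeeping around the absorption step in the Cauchy--Schwarz estimate, and observing that $\VD$ and \eqref{polycon} are not really needed beyond the standing hypothesis that $\mu$ has full support (which guarantees $V_*>0$ whenever $\supp u\ne \emptyset$); they are listed in the statement only because they form the global running assumptions of the subsection. Note that the argument does not require the support of $u$ to be bounded, so the estimate is sharp in exactly the way needed for the ensuing Faber--Krahn derivation.
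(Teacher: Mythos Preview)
Your argument is correct and genuinely different from the paper's. The paper introduces the averaging operator $u_s(x)=V(x,s)^{-1}\int_{B(x,s)}u\,d\mu$, splits $\|u\|_2^2\le 2\|u-u_s\|_2^2+2\|u_s\|_2^2$, and controls $\|u_s\|_2^2\le \|u_s\|_\infty\|u_s\|_1$ via two separate applications of $\VD$ (one for $\|u_s\|_\infty$ and one for $\|u_s\|_1$), while $\|u-u_s\|_2^2$ is bounded by the truncated form via $\J_{\phi,\ge}$ exactly as you do. Your route bypasses the averaging operator entirely by working from a pointwise inequality, and as you correctly observe, it never invokes $\VD$ or \eqref{polycon}; only monotonicity of $r\mapsto V(x,r)$ and of $\phi$ is used in the step $V(x,d(x,y))\phi(d(x,y))\le V(x,s)\phi(s)$. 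This is a genuine economy: the paper's proof really does need $\VD$ for the $\|u_s\|_1$ bound.

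One small correction: your parenthetical ``since both sides vanish off $\supp u$'' is not quite right for the first term, as $\int_{B(x,s)}(u(x)-u(y))^2\,\mu(dy)$ need not vanish when $x\notin\supp u$ but $B(x,s)$ meets $\supp u$. This is harmless, however, since the $\J_{\phi,\ge}$ bound $\frac{1}{V(x,s)}\le \frac{\phi(s)}{c_1}J(x,y)$ holds for \emph{all} $x$ with $d(x,y)\le s$, so you may simply integrate that term over all of $M$ and still land in $\frac{\phi(s)}{c_1}\sE(u,u)$.
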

\begin{proof} For any $u\in \sF\cap L^1(M;\mu)$ and $s>0$, define
$$ u_s(x) := \frac1{V(x, s)} \int_{B(x, s)} u(z)\, \mu (dz) \quad \hbox{for }
   x\in M.
$$
For $A\subset M$ and $s>0$, denote $A^s:=\{z\in M: d(z,A)<s\}$. Using \eqref{eq:vdeno}, we have
$$ \| u_s \|_\infty \leq \frac{c_1 \| u\|_1}{\inf_{z_0\in ({\rm supp}\,u)^s}V(z_0,s)}
\leq \frac{c_1' \| u\|_1}{\inf_{z\in {\rm supp}\,u}V(z,2s)}
\leq \frac{c_1' \| u\|_1}{\inf_{z\in {\rm supp}\,u}V(z,s)}
$$
and  \begin{align*}\| u_s \|_1 \le &\int_{({\rm supp}\,u)^s} \frac{1}{V(x,s)}\,\mu(dx) \int_{B(x,s)} |u(z)|\,\mu(dz)\\
=&\int_{{\rm supp}\,u} |u(z)|\,\mu(dz)\int_{ ({\rm supp}\,u)^s \cap
B(z,s)}\frac{1}{V(x,s)}\,\mu(dx)
\\
\le& \int_{{\rm supp}\,u} |u(z)|\,\frac{V(z,s)}{\inf_{x\in B(z,s)} V(x,s)}\,\mu(dz)
\leq c_2 \| u \|_1,
\end{align*}
where in the last inequality we used the fact (due to \eqref{eq:vdeno} again) that for any $x\in B(z,s)$,
$$\frac{V(z,s)}{V(x,s)}\le \widetilde C_\mu\left(\frac{d(x,z)}{s}+1\right)^{d_2}\le 2^{d_2}\widetilde C_\mu.$$
In particular,
\begin{equation*}\label{eqn:3.3}
\| u_s\|_2^2 \leq \| u_s\|_\infty \| u_s\|_1 \leq \frac{c_3 \| u\|_1^2}{\inf_{z\in {\rm supp}\,u}V(z,s)}.
\end{equation*}
Therefore, for $u\in \sF\cap L^1(M;\mu)$, by $\J_{\phi,\ge}$,
\begin{equation*}\begin{split}
\| u \|_2^2 &\leq2 \|u-u_s\|_2^2 +2 \| u_s \|_2^2 \\
&\leq 2 \int_M \left( \frac1{V(x, s)} \int_{B(x, s)}
  (u(x)-u(y))^2 \mu (dy) \right)   \mu (dx) +
  \frac{2c_3 \| u\|_1^2}{\inf_{z\in {\rm supp}\,u}V(z,s)}\\
&\leq  c_4  \int_M \bigg( \frac1{V(x,s)} \int_{B(x, s)}
  (u(x)-u(y))^2 J (x, y) \, \phi (s) V(x, s)\, \mu (dy) \bigg) \,  \mu (dx)\\
  &\quad
  + \frac{2c_3 \| u\|_1^2}{\inf_{z\in {\rm supp}\,u}V(z,s)}\\
&\leq c_5  \, \phi (s)   \int_M  \int_{B(x, s)}
  (u(x)-u(y))^2 J (x, y)\, \mu (dy) \, \mu (dx)\\
  &\quad+\frac{2c_3 \| u\|_1^2}{\inf_{z\in {\rm supp}\,u}V(z,s)}  \\
&\leq  c_6 \left( \phi (s) \sE (u, u) +\frac{\| u\|_1^2}{\inf_{z\in {\rm supp}\,u}V(z,s)}  \right).
\end{split}
\end{equation*}
We thus obtain the desired inequality. \qed \end{proof}

We then conclude by  Proposition \ref{P:3.3-2} that $\J_{\phi,\ge}\Longrightarrow \FK(\phi)$
under $\VD$, $\RVD$ and \eqref{polycon}.

\medskip

By Proposition \ref{P:reg} in Appendix (see also \cite[Theorem 3.1]{BBCK}
and \cite[Section 2.2]{GT}),
it follows that there is a properly
exceptional set ${\cal N}$ so that the Hunt process
$\{X_t\}$ has a transition density function $p(t,x, y)$ for every $x,
y\in M\setminus {\cal N}$.

 \subsection{Caccioppoli and $L^1$-mean value inequalities} \label{s:cac}

 In this subsection, we establish mean value inequalities for subharmonic functions.
 Though in this paper,  we only need mean value inequalities for the $\rho$-truncated Dirichlet form $(\sE^{(\rho)},\sF)$,
 we choose to  first
 establish these inequalities for subharmonic functions of the original Dirichlet form $(\sE,\sF)$
 and then indicate how these proofs can be modified to establish similar inequalities for subharmonic functions
 of the $\rho$-truncated Dirichlet form $(\sE^{(\rho)},\sF)$.
 There are several reasons for doing so:
 (i) the mean value inequalities for the original Dirichlet form $(\sE,\sF)$ will be used as one of the key tools in the study of
 the stability of parabolic Harnack inequality in our subsequent paper \cite{CKW};
 (ii) since the proofs share many common parts and ideas in the truncated and non-truncated settings,
 it is more efficient to do
 it in this way;  (iii) although they share
 many common ideas in these two settings, there are also some differences; see the paragraph
   proceeding the statement of Proposition \ref{P:mvi2}, by putting together in one place
  clearly reveals differences and difficulties in the setting of jump processes as for the diffusion case.

\medskip

We first need to
introduce the analytic characterization of subharmonic functions
and to extend the definition of bilinear form $\sE$. Let $D$ be an open subset of $M$.
Recall that a function $f$ is said to be locally in $\sF_{D}$, denoted as $f\in \sF_{D}^{loc}$, if for every relatively compact subset $U$ of ${D}$, there is a function $g\in \sF_{D}$ such that $f=g$ $\mu$-a.e. on $U$.

The next lemma is proved in  \cite[Lemma 2.6]{Chen}.

\begin{lemma}  \label{L:4.2}
Let ${D}$ be an open subset of $M$. Suppose $u$ is a function in $\sF_{D}^{loc}$ that is locally bounded on ${D}$
and satisfies that
\begin{equation}\label{con-1}
\int_{U\times V^c} |u(y)|\,J(dx,dy)<\infty
\end{equation} for any relatively compact open sets $U$ and $V$ of $M$ with $\bar{U}\subset V \subset \bar{V} \subset {D}$.
 Then  for every $v\in C_c({D})\cap \sF$, the expression
$$
\int (u(x)-u(y))(v(x)-v(y))\,J(dx,dy)
$$ is well defined and finite; it will still be denoted as $\sE(u,v)$.
\end{lemma}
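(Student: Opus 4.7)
My plan is to exploit the compact support of $v$ to decompose $M\times M$ into a ``near'' region where $u$ can be replaced by a globally defined function in $\sF$, and a ``far'' region where the jumping kernel $J$ is integrable against both $1$ and $|u(y)|$.

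Let $K:=\mathrm{supp}(v)$, which is compact in $D$. By local compactness, choose relatively compact open sets $U,V$ with $K\subset U\subset\bar U\subset V\subset\bar V\subset D$. By the definition of $\sF_D^{loc}$ there is $g\in\sF_D$ with $u=g$ $\mu$-a.e.\ on $V$, and by regularity of $(\sE,\sF)$ we may pick a cut-off $\varphi\in \sF$ for $\bar U\subset V$ (so $0\le\varphi\le 1$, $\varphi=1$ on $\bar U$, $\varphi=0$ on $V^c$); in particular $\sE(\varphi,\varphi)<\infty$. Then split $M\times M$ into the four blocks $V\times V$, $V\times V^c$, $V^c\times V$ and $V^c\times V^c$. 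Since $v$ vanishes on $V^c$, the last block contributes zero.

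On $V\times V$, because the marginals of $J$ do not charge $\mu$-null sets (a standard property of the jump measure of a regular symmetric Dirichlet form), we may replace $u$ by $g$; Cauchy--Schwarz then bounds this piece in absolute value by $\sE(g,g)^{1/2}\sE(v,v)^{1/2}<\infty$. For the off-diagonal block $V\times V^c$ (the other is symmetric), $v(y)=0$ while $v(x)=0$ off $U$, so the integral reduces to $\int_{U\times V^c}(u(x)-u(y))v(x)\,J(dx,dy)$ and is bounded in absolute value by
\[
\|v\|_\infty\Big(\|u\|_{L^\infty(\bar U)}\int_{U\times V^c}J(dx,dy)+\int_{U\times V^c}|u(y)|\,J(dx,dy)\Big).
\]
The second summand is finite by hypothesis \eqref{con-1}; for the first, observe that $\varphi(x)-\varphi(y)=1$ on $U\times V^c$, whence $\int_{U\times V^c}J(dx,dy)\le \sE(\varphi,\varphi)<\infty$.

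The main technical point to watch is the replacement $u\to g$ on $V\times V$ under the measure $J$. Since $u=g$ is only stipulated $\mu$-a.e., one needs to know that $J$ does not charge sets of the form $N\times M$ or $M\times N$ with $N$ a $\mu$-null set; this is a standard feature of jumping measures of regular symmetric Dirichlet forms, and without it the Beurling--Deny representation would not be invariant under $\mu$-equivalence. Once this is granted, the remainder is bookkeeping: the near-diagonal part is absorbed by the energy of $g$ and $v$, while the far-diagonal part is controlled by \eqref{con-1} combined with the finite $\sE$-energy of the cut-off $\varphi$, and altogether these bounds show both integrability (in absolute value) and independence of the auxiliary choices of $U,V,g,\varphi$, so the expression is a well-defined finite number depending only on $u$ and $v$.
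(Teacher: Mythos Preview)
The paper does not supply its own proof of this lemma; it simply cites \cite[Lemma~2.6]{Chen}. Your argument is the natural one and is essentially what one finds in that reference: split $M\times M$ according to whether $(x,y)$ lies near or far from $\mathrm{supp}(v)$, control the near block via the local $\sF$-membership of $u$ and Cauchy--Schwarz, and control the far blocks via condition \eqref{con-1} together with the finite energy of a cut-off. The paper even records your cut-off estimate $\int_{U\times V^c}J(dx,dy)\le\sE(\psi,\psi)<\infty$ in the paragraph immediately following the lemma.

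One refinement worth making: your assertion that $J$ does not charge $N\times M$ for $\mu$-null $N$ is a bit stronger than what general Dirichlet form theory gives. What is true is that $J$ does not charge $N\times M$ when $N$ has zero \emph{capacity}; this is what legitimizes the substitution $u\to g$ on $V\times V$, because under the paper's standing convention both functions are taken in their quasi-continuous versions, and a quasi-continuous function that vanishes $\mu$-a.e.\ vanishes q.e. With that adjustment your argument is correct, and since you establish absolute integrability block by block, the value is indeed independent of the auxiliary choices $U,V,g,\varphi$.
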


As noted in \cite[(2.3)]{Chen}, since $(\sE,\sF)$ is a regular Dirichlet form on $L^2(M; \mu)$, for any relatively compact open sets $U$ and $V$ with $\bar{U}\subset V$, there is a function $\psi\in \sF\cap C_c(M)$ such that $\psi=1$ on $U$ and $\psi=0$ on $V^c$. Consequently,
$$\int_{U\times V^c}\,J(dx,dy)=\int_{U\times V^c}(\psi(x)-\psi(y))^2\,J(dx,dy)\le \sE(\psi,\psi)<\infty,$$
so each bounded function $u$ satisfies \eqref{con-1}.

\begin{definition}\rm
Let ${D}$ be an open subset of $M$.
\begin{description}
\item{\rm (i)} We say that a nearly Borel measurable function $u$ on $M$
is \emph{$\sE$-subharmonic}  (resp. \emph{$\sE$-harmonic, $\sE$-superharmonic})
in ${D}$ if $u\in\sF_{D}^{loc}$, satisfies condition \eqref{con-1} and
\begin{equation*}\label{an-har}
\sE(u,\varphi)\le 0 \quad (\textrm{resp.}\ =0, \ge0)
\end{equation*}
for any $0\le\varphi\in\sF_{D}.$

\item{\rm (ii)} A nearly Borel measurable function $u$ on $M$ is said to be \emph{subharmonic}  (resp. \emph{harmonic, superharmonic})
in ${D}$ (with respect to the process $X$)
if for any relatively compact subset $U\subset D$,
$t\mapsto   u (X_{t\wedge \tau_U}) $ is a uniformly integrable submartingale
(resp. martingale, supermartingale) under $\bP^x$ for q.e. $x\in M$.
\end{description}
\end{definition}

The following result is established in \cite[Theorem 2.11  and Lemma 2.3]{Chen} first for harmonic functions,
and then extended in \cite[Theorem 2.9]{ChK} to subharmonic functions.

\begin{theorem}\label{equ-har}Let ${D}$ be an open subset of $M$, and let $u$ be a bounded function.
Then $u$ is $\sE$-harmonic $($resp.  $\sE$-subharmonic$)$ in ${D}$ if and only if $u$ is  harmonic
 $($resp. subharmonic$)$
 in ${D}$.
 \end{theorem}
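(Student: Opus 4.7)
The plan is to establish the equivalence by exploiting Fukushima's decomposition together with a careful localization argument that handles the nonlocal tail of $u$ via the integrability assumption \eqref{con-1}. I will treat only the subharmonic case; the harmonic case is obtained by applying it to $u$ and $-u$.

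First I would reduce to a global setting. Fix a relatively compact open set $U$ with $\bar U\subset D$ and choose an intermediate open set $V$ with $\bar U\subset V\subset \bar V\subset D$. Since $u\in \sF_D^{loc}$ and $(\sE,\sF)$ is regular, I can pick $u^*\in \sF$ bounded with $u^*=u$ on $V$ (by multiplying a representative by a suitable $\psi\in \sF\cap C_c(M)$ equal to $1$ on $V$ with support in $D$, then adjusting on $D\setminus V$ using boundedness of $u$). The key property I would isolate is that for every $\varphi\in \sF$ with $\mathrm{supp}\,\varphi\subset U$, the difference $\sE(u,\varphi)-\sE(u^*,\varphi)$ equals a single boundary-type integral
\[
2\int_{U\times V^c}(u(y)-u^*(y))\varphi(x)\,J(dx,dy),
\]
which is finite by \eqref{con-1} and boundedness of $u,u^*$.

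Next I would apply Fukushima's decomposition to $u^*\in \sF$:
\[
u^*(X_t)-u^*(X_0)=M_t^{[u^*]}+N_t^{[u^*]},
\]
where $M^{[u^*]}$ is a martingale additive functional of finite energy and $N^{[u^*]}$ is a continuous additive functional of zero energy. The Revuz correspondence identifies $N^{[u^*]}$, when it is of bounded variation, with a signed smooth measure $\nu$ satisfying $\sE(u^*,\varphi)=-\int \tilde\varphi\,d\nu$ for quasi-continuous $\varphi\in \sF_U$. For the forward direction, using the boundary identity above together with the assumption $\sE(u,\varphi)\le 0$ for $0\le\varphi\in \sF_D$, one shows that the measure associated with the part of $N^{[u^*]}$ charging $U$ is non-positive, so that $N^{[u^*]}_{t\wedge\tau_U}$ is non-increasing, hence $u^*(X_{t\wedge\tau_U})=u(X_{t\wedge\tau_U})$ is a submartingale on $U$; uniform integrability comes from boundedness of $u$. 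For the converse, given that $t\mapsto u(X_{t\wedge\tau_U})$ is a uniformly integrable submartingale, the Doob--Meyer decomposition produces an increasing predictable process $A$; matching it with $-N^{[u^*]}_{t\wedge\tau_U}$ and inverting the Revuz correspondence yields $\sE(u^*,\varphi)\le 0$ for any $0\le\varphi\in \sF_V$, which, via the boundary identity, translates to $\sE(u,\varphi)\le 0$ for all $0\le\varphi\in \sF_U$. Since $U$ is an arbitrary relatively compact subset of $D$ and $\sF_D=\overline{\bigcup_{U\Subset D}\sF_U}$ in the $\sE_1$-norm, the $\sE$-subharmonicity on $D$ follows.

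The main obstacle is handling the nonlocal correction arising from the truncation $u\rightsquigarrow u^*$: unlike in the strongly local case, the jumps of $X$ from $U$ into $V^c$ carry information about $u$ on all of $M$, so $\sE(u^*,\varphi)$ is not simply $\sE(u,\varphi)$. Controlling this is precisely where condition \eqref{con-1} (together with the fact, noted after Lemma \ref{L:4.2}, that $\int_{U\times V^c} J(dx,dy)<\infty$) is indispensable: it makes the extra boundary integral absolutely convergent and, more importantly, expressible in purely probabilistic terms as the compensator of the jumps of $u(X)$ out of $V$, which is exactly the adjustment needed to match $N^{[u^*]}$ with the predictable dual of $u(X_{t\wedge\tau_U})$. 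Once this compatibility is in place, the equivalence follows from standard Fukushima--Revuz theory applied to the bounded function $u^*$.
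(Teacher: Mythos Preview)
The paper does not prove this theorem; it simply quotes it from \cite[Theorem 2.11 and Lemma 2.3]{Chen} and \cite[Theorem 2.9]{ChK}. Your overall strategy---localize, pass to a global $u^*\in\sF$, apply Fukushima's decomposition, and reconcile the nonlocal tail via the L\'evy system---is exactly the route taken in those references, so at the level of a plan you are on target.

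That said, the forward direction as you wrote it contains a concrete slip. You assert that from $\sE(u,\varphi)\le 0$ one deduces that the measure of $N^{[u^*]}$ on $U$ is non-positive and hence that ``$u^*(X_{t\wedge\tau_U})=u(X_{t\wedge\tau_U})$ is a submartingale.'' Both halves of this fail in the jump setting. First, $X_{\tau_U}$ can land in $V^c$, where $u^*\neq u$, so $u^*(X_{t\wedge\tau_U})$ and $u(X_{t\wedge\tau_U})$ are genuinely different processes. Second, the boundary correction $2\int_{U\times V^c}(u(y)-u^*(y))\varphi(x)\,J(dx,dy)$ has no sign, so $\sE(u,\varphi)\le 0$ does \emph{not} yield $\sE(u^*,\varphi)\le 0$; hence $N^{[u^*]}_{t\wedge\tau_U}$ need not be non-increasing. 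What actually works---and what you gesture at in your final paragraph without quite saying it---is to write
\[
u(X_{t\wedge\tau_U})=u^*(X_{t\wedge\tau_U})+(u-u^*)(X_{\tau_U})\,{\bf 1}_{\{\tau_U\le t,\ X_{\tau_U}\in V^c\}},
\]
decompose the second summand via the L\'evy system into a martingale plus the compensator $\int_0^{t\wedge\tau_U}\int_{V^c}(u-u^*)(y)\,N(X_s,dy)\,dH_s$, and observe that the Revuz measure of the \emph{combined} finite-variation part $N^{[u^*]}_{t\wedge\tau_U}+\text{(compensator)}$, tested against $\varphi\in\sF_U$, returns $-\sE(u^*,\varphi)$ plus the boundary term, i.e.\ exactly $-\sE(u,\varphi)\ge 0$. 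Only after this recombination does one get a non-decreasing predictable part for $u(X_{t\wedge\tau_U})$. A separate point you skate over is why $N^{[u^*]}$ is locally of bounded variation on $U$ at all (zero-energy CAFs need not be); in \cite{Chen} this is handled via a Riesz-type representation showing that $\varphi\mapsto -\sE(u,\varphi)$ on $\sF_U\cap C_c(U)$ extends to a positive Radon measure, which together with the bounded-density boundary term gives the needed smooth signed measure for $u^*$.
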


To establish the Caccioppoli inequality, we also need the following definition.

\begin{definition} \rm
For a Borel measurable function $u$ on $M$, we define its
\emph{nonlocal tail}   in the ball
$B(x_0,r)$ by
\begin{equation} \label{def-T}
\T\, (u; x_0,r)=\phi(r)\int_{B(x_0,r)^c}\frac{|u(z)|}{V(x_0,d(x_0,z))\phi(d(x_0,z))}\,\mu(dz).
\end{equation}
\end{definition}

Suppose that $\VD$ and \eqref{polycon} hold. Observe  that  in view of \eqref{e:2.1},  $\T\, (u; x_0,r)$ is finite if $u$ is bounded.
 Note also
that $\T\, (u; x_0,r)$ is finite by the H\"{o}lder inequality and \eqref{e:2.1} whenever
$u\in L^p(M;\mu)$ for any $p\in[1,\infty)$ and $r>0$. As mentioned in \cite{CKP},
the key-point in the present nonlocal setting is how to manage the
nonlocal tail.

We first show that $\CSJ(\phi)$ enables us to prove
a Caccioppoli
inequality for $\sE$-subharmonic functions. Note that the Caccioppoli
inequality below is different from that in \cite[Lemma 1.4]{CKP}, since our argument is heavily based on $\CSJ(\phi)$.

\begin{lemma} \label{L:ci}
{\bf (Caccioppoli inequality)}\, For $x_0 \in M$ and $s>0$, let $B_s=
B(x_0, s)$. Suppose that $\VD$, \eqref{polycon}, $\CSJ(\phi)$ and
$\J_{\phi,\le}$ hold. For $0<r <  R$, let $u$
be an
$\sE$-subharmonic function on $B_{R+r}$ for the Dirichlet
form $(\sE, \sF)$, and $v = (u-\theta)^+$ for $\theta>0$. Also,
let $\vp$ be the cut-off function for $B_{R-r} \subset B_{R}$ associated with $v$ in
$\CSJ(\phi)$. Then there exists a constant $c>0$ independent of
$x_0, R, r$ and $\theta$ such that
\begin{equation}\label{e:cacc}\begin{split}
\int_{B_{R+r}}\,d\Gamma(v\vp,v\vp)\le  \frac{c }{\phi(r)} \left[ 1+\frac{1}{\theta}\left(1+\frac{R}{r}\right)^{d_2+\beta_2-\beta_1}
\T\,(u; x_0, R+r)\right]\int_{B_{R+r}} u^2\,d\mu.
\end{split}\end{equation}
\end{lemma}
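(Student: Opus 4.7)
The approach is to test the $\sE$-subharmonicity of $u$ against the non-negative function $v\varphi^2\in\sF_{B_{R+r}}$ (which is supported in $\overline{B_R}$), extract $d\Gamma(v\varphi,v\varphi)$ by a pointwise algebraic identity, and control the remaining pieces via $\CSJ(\phi)$, the tail estimates implied by $\J_{\phi,\le}$, $\VD$ and \eqref{polycon}, and the elementary bound $v\le u^2/\theta$ on $\{u>\theta\}$. Since adding a constant to $u$ does not change $\sE(u,\cdot\,)$, we have $0\ge \sE(u,v\varphi^2)=\sE(w,v\varphi^2)$, where $w:=u-\theta$, $v=w^+$ and $w^-=(\theta-u)^+$. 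Using $w^-\cdot v\equiv 0$ one checks that for all $x,y\in M$,
\begin{align*}
(w(x)-w(y))(v(x)\varphi^2(x)-v(y)\varphi^2(y))
&=(v(x)\varphi(x)-v(y)\varphi(y))^2 - v(x)v(y)(\varphi(x)-\varphi(y))^2\\
&\quad + w^-(y)v(x)\varphi^2(x) + w^-(x)v(y)\varphi^2(y).
\end{align*}
Splitting $M\times M$ into $B_R\times B_R$, $B_R\times B_R^c$ and $B_R^c\times B_R$ (the integrand vanishes on $B_R^c\times B_R^c$ since $v\varphi^2\equiv 0$ there) and discarding the last two non-negative $w^-$-terms yields
\[
\int_{B_R\times B_R}(v\varphi(x)-v\varphi(y))^2\,J \le \int_{B_R\times B_R} v(x)v(y)(\varphi(x)-\varphi(y))^2\,J + 2\int_{B_R\times B_R^c}|u(x)-u(y)|\,v(x)\varphi^2(x)\,J.
\]

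I would next split $B_R^c=(B_{R+r}\setminus B_R)\cup B_{R+r}^c$. On the \emph{tail} region $B_R\times B_{R+r}^c$, Lemma \ref{intelem} combined with $\J_{\phi,\le}$, $\VD$ (with exponent $d_2$ from \eqref{e:vd2}) and \eqref{polycon} gives, for all $x\in B_R$,
\[
\int_{B_{R+r}^c} J(x,y)\,d\mu(y)\le \frac{c}{\phi(r)},\qquad \int_{B_{R+r}^c}|u(y)|\,J(x,y)\,d\mu(y) \le \frac{C(1+R/r)^{d_2+\beta_2-\beta_1}}{\phi(r)}\,\T(u;x_0,R+r),
\]
where the exponent $d_2+\beta_2-\beta_1$ is produced by comparing $V(y,d(x,y))\phi(d(x,y))$ with $V(x_0,d(x_0,y))\phi(d(x_0,y))$ for $y\in B_{R+r}^c$ (using Remark \ref{R:1.2}(ii) to switch centers and then $\phi(R+r)^{-1}\le c(1+R/r)^{-\beta_1}\phi(r)^{-1}$). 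The $\int_{B_R}v\varphi^2\,d\mu$ factor that arises upon pairing this with the cross integral is then converted using the key elementary bound
\[
v(x)=u(x)-\theta\le u(x)\le \theta\,(u(x)/\theta)^2=u(x)^2/\theta \qquad\text{on }\{u>\theta\},
\]
giving $\int_{B_R} v\varphi^2\,d\mu\le \theta^{-1}\int_{B_{R+r}} u^2\,d\mu$ and thereby the $1/\theta$ factor in \eqref{e:cacc}. The first right-hand side term satisfies $\int v(x)v(y)(\varphi(x)-\varphi(y))^2\,J\le \int v^2\,d\Gamma(\varphi,\varphi)$, which is controlled by the self-improved $\CSJ(\phi)_+$ from Corollary \ref{C:cswk-1} (available here since $\VD$, \eqref{polycon}, $\J_{\phi,\le}$ and $\CSJ(\phi)$ all hold), permitting an excess term $\epsilon\int\varphi^2(v(x)-v(y))^2\,J$ with arbitrarily small $\epsilon>0$. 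That excess is precisely what absorbs the \emph{near-annulus} contribution $2\int_{B_R\times(B_{R+r}\setminus B_R)}|u(x)-u(y)|v(x)\varphi^2(x)\,J$: Young's inequality decomposes this term into an "energy piece" comparable to $\int(v\varphi(x)-v\varphi(y))^2\,J$ (absorbable into the left-hand side) plus a remainder bounded by $(c/\phi(r))\int_{B_{R+r}}u^2\,d\mu$. Finally, to convert the resulting bound on $\sE(v\varphi,v\varphi)$ into one on $\int_{B_{R+r}}d\Gamma(v\varphi,v\varphi)$ as required, one observes that $\int_{B_{R+r}^c}d\Gamma(v\varphi,v\varphi)\le (c/\phi(r))\int_{B_{R+r}}u^2\,d\mu$ by Lemma \ref{intelem}, since $d(x,y)\ge r$ whenever $x\in B_{R+r}^c$ and $y\in\supp(v\varphi)\subset\overline{B_R}$.

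The principal obstacle is the near-annulus cross term, where $J(x,y)$ can be highly singular because $x\in B_R$ and $y\in B_{R+r}\setminus B_R$ may be arbitrarily close. Extracting the desired $\int(v\varphi(x)-v\varphi(y))^2\,J$ piece from it via Young's inequality unavoidably introduces an $\int\varphi^2(v(x)-v(y))^2\,J$ factor whose absorption into the left-hand side crucially relies on the arbitrary smallness of $\epsilon$ in Corollary \ref{C:cswk-1}; plain $\CSJ(\phi)$ would not suffice. The bookkeeping of iterated $\VD$ and $\phi$-doubling constants in the tail estimate is also delicate and is precisely what produces the sharp exponent $d_2+\beta_2-\beta_1$.
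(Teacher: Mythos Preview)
Your overall strategy is sound and is a genuine alternative to the paper's proof, but there is a real gap in the near-annulus step as you describe it. Once you pass to the crude bound $|u(x)-u(y)|$ on $B_R\times(B_{R+r}\setminus B_R)$ you cannot recover via Young's inequality: any application of Young produces a term $\varphi^2(x)(u(x)-u(y))^2$, and on the set where $u(y)\le\theta$ (so $v(y)=0$) this is \emph{not} controlled by $(v\varphi(x)-v\varphi(y))^2$ or by $(c/\phi(r))\int u^2$, since $(u(x)-u(y))^2$ can be arbitrarily large relative to $v(x)^2=(v\varphi(x)-v\varphi(y))^2$. The fix is simply not to take absolute values: your own pointwise identity is valid on $B_R\times B_R^c$ too. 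There $\varphi(y)=0$, so it reads
\[
(w(x)-w(y))\,v(x)\varphi^2(x)=(v\varphi(x)-v\varphi(y))^2 - v(x)v(y)(\varphi(x)-\varphi(y))^2 + w^-(y)v(x)\varphi^2(x),
\]
and the $(v\varphi(x)-v\varphi(y))^2$ piece moves to the left-hand side with the correct sign, while $v(x)v(y)(\varphi(x)-\varphi(y))^2$ merges with the $B_R\times B_R$ contribution and is bounded by $\int_{B_{R+r}} v^2\,d\Gamma(\varphi,\varphi)$. After this correction your absorption argument via Corollary~\ref{C:cswk-1} closes (one also needs to bound $\varphi^2(x)(v(x)-v(y))^2\le 2(v\varphi(x)-v\varphi(y))^2+2v^2(y)(\varphi(x)-\varphi(y))^2$ and iterate once, which your writeup elides).

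For comparison, the paper takes a different route: it splits the bilinear form at $B_{R+r}$ rather than $B_R$, so the cross term $I_2$ lives entirely in the tail region $B_R\times B_{R+r}^c$ and the near-annulus is absorbed into the ``diagonal'' term $I_1$ over $B_{R+r}\times B_{R+r}$. For $I_1$ the paper does \emph{not} use your exact identity but the pointwise inequality (assuming $u(x)\ge u(y)$)
\[
(u(x)-u(y))(\varphi^2v(x)-\varphi^2v(y))\ge \tfrac34\varphi^2(x)(v(x)-v(y))^2-\tfrac14\varphi^2(y)(v(x)-v(y))^2-2v^2(y)(\varphi(x)-\varphi(y))^2,
\]
which extracts $\int\varphi^2(v(x)-v(y))^2\,J$ rather than $\int(v\varphi(x)-v\varphi(y))^2\,J$. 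The paper then bounds $\int_{B_{R+r}}d\Gamma(v\varphi,v\varphi)$ separately by $2\int v^2\,d\Gamma(\varphi,\varphi)+2\int\varphi^2(x)(v(x)-v(y))^2\,J+\text{good}$, and combines the two inequalities with a free parameter $a$ so that after substituting Corollary~\ref{C:cswk-1} the $\int\varphi^2(v(x)-v(y))^2\,J$ terms cancel exactly (the choice $a=2/9$). Your identity-based approach is arguably cleaner once the near-annulus step is done correctly, since it produces the target quantity $(v\varphi(x)-v\varphi(y))^2$ directly; the paper's approach has the advantage that the near-annulus singularity never needs to be isolated.
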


\begin{proof} Since $u$ is $\sE$-subharmonic on $B_{R+r}$ for the Dirichlet
form $(\sE, \sF)$ and $\vp^2v\in \sF_{B_{R}}$,
we have $u\in \FF_{B_{R+r}}^{loc}$ and
\begin{equation}\begin{split}\label{e:ci1}0\ge \sE(u,\vp^2v)=&\int_{B_{R+r}\times B_{R+r}} (u(x)-u(y))(\vp^2(x)v(x)-\vp^2(y)v(y))\,J(dx,dy)\\
&+2\int_{B_{R+r}\times B_{R+r}^c} (u(x)-u(y)) \vp^2(x)v(x)\,J(dx,dy)\\
=&:I_1+2I_2.\end{split}\end{equation}
For $I_1$, we may and do assume without loss of generality that $u(x)\ge u(y)$; otherwise just exchange the roles of $x$ and $y$ below. We have
\begin{align*}
&(u(x)-u(y))(\vp^2 (x)  v(x)-\vp^2 (y) v(y))\\
 &= (u(x)-u(y)) \vp^2(x)  (v(x)-v(y)) + (u(x)-u(y))  (\vp^2 (x) -\vp^2 (y)) v(y)\\
&\geq \vp^2 (x)  (v(x)-v(y)) ^2 +  (v(x)-v(y))  (\vp^2 (x) -\vp^2 (y)) v(y)\\
&\ge \vp^2(x)(v(x)-v(y))^2 -\frac{1}{8} (\vp(x)+\vp(y))^2(v(x)-v(y))^2- 2 v^2(y)(\vp(x)-\vp(y))^2 \\
&\ge \frac34 \vp^2(x)(v(x)-v(y))^2 -\frac14 \vp^2(y)(v(x)-v(y))^2- 2 v^2(y)(\vp(x)-\vp(y))^2.
\end{align*}
where the first inequality follows from the facts that for any $x,y\in M$, $u(x)-u(y)\geq v(x)-v(y)$ and
$(u(x)-u(y))v(y)= (v(x)-v(y))  v(y)$,  while in the second and third equalities  we used the
facts that $ ab\geq  - \frac{1}{8}a^2- 2b^2$ and $(a+b)^2 \leq 2a^2+2b^2$, respectively, for all $a,b\in \bR$.
This together with the symmetry of $J(dx,dy)$  yields that
$$
I_1 \ge \frac{1}{2}\int_{B_{R+r}\times B_{R+r}} \vp^2(x)(v(x)-v(y))^2 \, J(dx,dy)
 -2\int_{B_{R+r}\times B_{R+r}} v^2(x)(\vp(x)-\vp(y))^2\, J(dx,dy).
$$

For $I_2$, note that
\begin{align*}
(u(x)-u(y)) \vp^2(x)v(x)
 = &((u(x)-\theta) -(u(y)-\theta)) \vp^2(x)v(x)\\
  \geq &(v(x)-v(y)) \vp^2(x)  v (x)  \geq - v(x) v(y).
 \end{align*}
 Note also that $v\le vu/\theta\le u^2/\theta$.
Hence we have
\begin{align*}
I_2=& \int_{B_{R}\times B_{R+r}^c}(u(x)-u(y)) \vp^2(x)v(x)\, J(dx,dy)\\
\ge &-\int_{B_{R}}v\,d\mu\left[\sup_{x\in B_{R}}\int_{B_{R+r}^c} v(y)\,J(x,dy)\right]\\
\ge&-\frac{1}{\theta}
\int_{B_{R}}u^2
\,d\mu\left[\sup_{x\in B_{R}}\int_{B_{R+r}^c} v(y)\, J(x,dy)\right]\\
\ge&-\frac{c_1 }{\theta\phi(r)}
\bigg[ \left(1+\frac{R}{r}\right)^{d_2+\beta_2-\beta_1} \phi(R+r)
\int_{B_{R+r}^c}\frac{|u(y)|}{V(x_0,d(x_0,y))\phi(d(x_0,y))}\,\mu(dy)
\bigg]
  \int_{B_{R}} u^2\,d\mu\\
=&-\frac{c_1 }{\theta\phi(r)}
\bigg[ \left(1+\frac{R}{r}\right)^{d_2+\beta_2-\beta_1}\!\! \T\,(u; x_0,
R+r)\bigg]\int_{B_{R}} u^2\,d\mu ,
\end{align*}
where  the last inequality follows from the fact  that $v\le |u|$, $\J_{\phi,\le}$
as well as \eqref{eq:vdeno} and \eqref{polycon} which imply that
 for any $x\in B_{R}$ and $y\in B_{R+r}^c$,
 $$
  \frac{V(x_0,d(x_0,y))\phi(d(x_0,y))}{V(x,d(x,y))\phi(d(x,y))}\le c'\Big(1+\frac{d(x_0,x)}{d(x,y)}\Big)^{d_2+\beta_2}
 \le c''\Big(1+\frac{R}{r}\Big)^{d_2+\beta_2}
 $$
 and
 $$\frac{\phi(r)}{\phi(R+r)}\le c'''\Big(1+\frac{R}{r}\Big)^{-\beta_1}.
 $$

Putting the estimates for $I_1$ and $I_2$ above into \eqref{e:ci1}, we arrive at
\begin{equation}\begin{split}\label{e:ci2}0
&\le 4\int_{B_{R+r}\times B_{R+r}} v^2(x)(\vp(x)-\vp(y))^2\, J(dx,dy)\\
&\quad
 - \int_{B_{R}\times B_{R+r}} \vp^2(x)(v(x)-v(y))^2 \, J(dx,dy)\\
&\quad + \frac{c_2 }{\theta\phi(r)}
\bigg[ \left(1+\frac{R}{r}\right)^{d_2+\beta_2-\beta_1} \T\,(u; x_0,
R+r)\bigg] \int_{B_{R}}
u^2\,d\mu\\
&\le 4\int_{B_{R+r}} v^2\, d\Gamma(\vp,\vp)
 -\int_{B_{R}\times B_{R+r}} \vp^2(x)(v(x)-v(y))^2 \, J(dx,dy)\\
&\quad + \frac{c_2 }{\theta\phi(r)}
\bigg[ \left(1+\frac{R}{r}\right)^{d_2+\beta_2-\beta_1} \T\,(u; x_0,
R+r)\bigg] \int_{B_{R}} u^2\,d\mu.
\end{split}\end{equation}

On the other hand, using the inequality $(a+b)^{2}\le 2(a^2+b^2)$
for all $a,b\in\bR$ and Lemma \ref{intelem}, we have
\begin{equation}\label{e:cac33}\begin{split}
\int_{B_{R+r}}\,&d\Gamma(v\vp,v\vp)\\
=& \,
\int_{B_{R+r}\times M}(v(x)\vp(x)-v(y)\vp(y))^2\,J(dx,dy)\\
\le &\int_{B_{R+r}\times B_{R+r}}\big(v(x)(\vp(x)-\vp(y))+\vp(y)(v(x)-v(y))\big)^2\,J(dx,dy)\\
& \, +\int_{B_{R}} v^2(x)\vp^2(x) \int_{B_{R+r}^c}\,J(dx,dy) \\
\le & \, 2\bigg[
\int_{B_{R+r}\times B_{R+r}}v^2(x)(\vp(x)\!-\!\vp(y))^2\,J(dx,dy)\\
&\quad +\int_{B_{R+r}\times
B_{R+r}}\vp^2(x)(v(x)\!-\!v(y))^2\,J(dx,dy)\bigg]+\frac{c_3}{\phi(r)}\int_{B_{R}}v^2\,d\mu\\
\le & \, 2\int_{B_{R+r}}v^2 \,d\Gamma(\vp,\vp)
+2\int_{B_{R}\times B_{R+r}}\vp^2(x)(v(x)\!-\!v(y))^2\,J(dx,dy)\\
&\,
+\frac{c_3}{\phi(r)}\int_{B_{R}}u^2\,d\mu.
\end{split}\end{equation}
Combining \eqref{e:ci2} with \eqref{e:cac33}, we have for $a>0$,
\begin{equation}\label{eq:nefow}\begin{split}
&\, a\int_{B_{R+r}}\,d\Gamma(v\vp,v\vp)\\
&\le (2a+4)\int_{B_{R+r}}v^2\,d\Gamma(\vp,\vp)
 +(2a-1)\int_{B_{R}\times B_{R+r}} \vp^2(x)(v(x)-v(y))^2\,J(dx,dy)\\
&\quad+\frac{c_4(1+a) }{\phi(r)}
\bigg[ 1+\frac{1}{\theta}\left(1+\frac{R}{r}\right)^{d_2+\beta_2-\beta_1}
\T\,(u; x_0, R+r)\bigg] \int_{B_{R}} u^2\,d\mu.
\end{split}\end{equation}

 Next by using \eqref{e:csa2} for $v$ with $\rho=\infty$,  we have
\begin{equation}\label{rem-csj}
\int_{B_{R+r}}v^2\,d\Gamma(\vp,\vp) \le \frac 18 \int_{B_{R} \times
B_{R+r}}\vp^2(x)(v(x)-v(y))^2\,J(dx,dy) + \frac{c_0}{\phi(r)} \int_{B_{R+r}}
v^2  \,d\mu.
\end{equation}
Plugging this into \eqref{eq:nefow} with $a=2/9$ (so that $(4+2a)/8+(2a-1)=0$), we obtain
\begin{align*}
\frac 29\int_{B_{R+r}}\,d\Gamma(v\vp,v\vp)\le  \frac{c_5 }{\phi(r)}
\bigg[1+\frac{1}{\theta}\left(1+\frac{R}{r}\right)^{d_2+\beta_2-\beta_1}
\T\,(u; x_0, R+r)\bigg]\int_{B_{R+r}} u^2\,d\mu,
\end{align*} which proves the desired assertion.
\qed\end{proof}

\begin{remark} {\rm In order to obtain \eqref{e:cacc} we need that the constant in the first term on the right hand side of \eqref{e:csa2} was less than $1/4$. On the other hand, we note that
\eqref{rem-csj} is weaker than \eqref{e:csa2} yielded by
$\CSJ(\phi)$, which can strengthen the first term in the right hand
side of \eqref{rem-csj} into $$\frac 18 \int_{U \times
U^*}\vp^2(x)(v(x)-v(y))^2\,J(dx,dy)$$ with $U=B_{R}\setminus B_{R-r}$ and $U^*=B_{R+r}\setminus
B_{R-2r}$. } \end{remark}

The key step in the proof of the mean value inequality is the
following comparison over balls. For a ball $B=B(x_0,r) \subset M $
and a function $w$ on $B$, write
$$ I(w,B) = \int_B w^2 \, d\mu. $$
The following lemma can be proved similarly to
that of \cite[Lemma 3.5]{AB} (see also \cite[Lemma 3.2]{Gr1}) with very minor corrections
due to $B_{R+r}$ instead of $B_{R}$.
For completeness, we give the proof below.

\begin{lemma}\label{oppo}  For $x_0 \in M$ and $s>0$, let $B_s=
B(x_0, s)$. Suppose $\VD$, \eqref{polycon}, $\FK(\phi)$,
$\CSJ(\phi)$ and $\J_{\phi,\le}$ hold. For $R, {r_1},{r_2}>0$  with
 ${r_1}\in [\frac12 R,R]$ and ${r_1}+{r_2}\le R$, let $u$ be an $\sE$-subharmonic
function on $B_R$ for the Dirichlet form $(\sE, \sF)$, and $v =
(u-\theta)_+$ for some $\theta>0$. Set $ I_0 = I(u,B_{{r_1}+{r_2}})$ and $ I_1
= I(v,B_{{r_1}})$. We have
\be \label{e:IIi}
 I_1\le
\frac{ c_1 }{ \theta^{2\nu} V(x_0,R)^\nu} I_0^{1+\nu}
\left(1+\frac{{r_1}}{{r_2}}\right)^{\beta_2} \left[ 1+
\left(1+\frac{{r_1}}{{r_2}}\right)^{d_2+\beta_2-\beta_1}
\frac{\T\,(u; x_0,R/2)}{\theta} \right],
\ee
where $\nu$ is the constant appearing in
the $\FK(\phi)$ inequality \eqref{e:fki}, $d_2$ is the constant in
\eqref{e:vd2} from  $\VD$, and $c_1$ is a constant independent of $\theta, x_0, R, {r_1}$
and ${r_2}$.
\end{lemma}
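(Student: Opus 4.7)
The plan is to combine the Caccioppoli inequality (Lemma~\ref{L:ci}) with the Faber--Krahn inequality $\FK(\phi)$, using Chebyshev to control the measure of the super-level set $\{u>\theta\}$. First I would apply Lemma~\ref{L:ci} with parameters chosen so that the inner ball is $B_{r_1}$ and the fattening is essentially $r_2$; concretely, take ``$R$'' in Lemma~\ref{L:ci} to be $r_1+r_2/2$ and ``$r$'' in that lemma to be $r_2/2$, so that the $\CSJ(\phi)$ cut-off $\vp$ satisfies $\vp=1$ on $B_{r_1}$, $\vp=0$ outside $B_{r_1+r_2/2}$, and the Caccioppoli inequality holds over the fattened ball $B_{r_1+r_2}$. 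Since $u$ is $\sE$-subharmonic on $B_R$ and $r_1+r_2\le R$, the hypothesis of Lemma~\ref{L:ci} is satisfied.

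Next, since $\vp\equiv 1$ on $B_{r_1}$, we obtain the trivial bound $I_1=\int_{B_{r_1}}v^2\,d\mu\le \|v\vp\|_2^2$. The function $v\vp$ is supported in $D:=\{u>\theta\}\cap B_{r_1+r_2}\subset B(x_0,R)$, so $v\vp\in \sF_D$ and by the variational characterization of $\lambda_1(D)$ together with $\FK(\phi)$,
\begin{equation*}
\|v\vp\|_2^2 \;\le\; \lambda_1(D)^{-1}\,\sE(v\vp,v\vp)
\;\le\; C^{-1}\phi(R)\bigl(\mu(D)/V(x_0,R)\bigr)^{\nu}\,\sE(v\vp,v\vp).
\end{equation*}
By Chebyshev, $\mu(D)\le \theta^{-2}\,I_0$, which converts the $\mu(D)^\nu$ factor into $\theta^{-2\nu}I_0^\nu$ and supplies the prefactor $1/(\theta^{2\nu}V(x_0,R)^\nu)$ in~\eqref{e:IIi}.

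Then I would substitute the Caccioppoli bound from Lemma~\ref{L:ci}, namely
\begin{equation*}
\sE(v\vp,v\vp)\;\le\;\frac{c}{\phi(r_2/2)}\Bigl[1+\tfrac{1}{\theta}\bigl(1+\tfrac{r_1+r_2/2}{r_2/2}\bigr)^{d_2+\beta_2-\beta_1}\T(u;x_0,r_1+r_2)\Bigr]\,I_0.
\end{equation*}
Using $\VD$ and~\eqref{polycon}, one has $\phi(r_2/2)\asymp \phi(r_2)$ and $1+(r_1+r_2/2)/(r_2/2)\asymp 1+r_1/r_2$. The remaining geometric ratio $\phi(R)/\phi(r_2)$ is bounded via~\eqref{polycon} by $c(R/r_2)^{\beta_2}$; and because $r_1\in[R/2,R]$ and $r_1+r_2\le R$, one has $R\le 2r_1$ and hence $R/r_2\le 2r_1/r_2\le 2(1+r_1/r_2)$, producing exactly the $(1+r_1/r_2)^{\beta_2}$ factor in~\eqref{e:IIi}. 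Finally, the tail term $\T(u;x_0,r_1+r_2)$ is replaced by $\T(u;x_0,R/2)$ using $r_1+r_2\ge r_1\ge R/2$ together with $\phi(r_1+r_2)/\phi(R/2)\le c$ from~\eqref{polycon}, which gives $\T(u;x_0,r_1+r_2)\le c\,\T(u;x_0,R/2)$.

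The main obstacle is purely bookkeeping: matching the radii of Lemma~\ref{L:ci} (which is stated with a symmetric ``inner/middle/outer'' triple $(R{-}r,R,R{+}r)$) to the asymmetric pair $(r_1,r_1+r_2)$ required here, and then propagating the resulting polynomial losses in $(1+r_1/r_2)$ consistently through both the $\phi(R)/\phi(r_2)$ factor and the tail-term prefactor without picking up spurious dependence on $R$ alone. Once the radii are aligned and~\eqref{polycon} plus the constraint $r_1\ge R/2$ are invoked to convert all remaining $R$-scales into $r_2$-scales and the $\T(u;x_0,R/2)$-scale, the combination of the three inequalities produces~\eqref{e:IIi} directly.
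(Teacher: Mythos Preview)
Your proposal is correct and follows essentially the same route as the paper: combine the Caccioppoli inequality of Lemma~\ref{L:ci} (applied with ``$R$''$=r_1+r_2/2$ and ``$r$''$=r_2/2$) with $\FK(\phi)$ on the super-level set $D=\{u>\theta\}\cap B_{r_1+r_2/2}$ and Chebyshev's inequality $\mu(D)\le\theta^{-2}I_0$. The only slip is that Lemma~\ref{L:ci} bounds $\int_{B_{r_1+r_2}}d\Gamma(v\vp,v\vp)$, not $\sE(v\vp,v\vp)$; since $v\vp$ is supported in $B_{r_1+r_2/2}$ one has $\sE(v\vp,v\vp)\le 2\int_{B_{r_1+r_2}}d\Gamma(v\vp,v\vp)$ (the paper records this as $2\int_D d\Gamma(v\vp,v\vp)\ge \sE(v\vp,v\vp)$), so the gap is closed by an absolute constant.
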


 \begin{proof} Set
$$ D =\{ x \in B_{{r_1}+{r_2}/2}: u(x) > \theta \}. $$
Let $\vp$ be a cut-off function for $B_{{r_1}} \subset B_{{r_1}+{r_2}/2}$ associated with $v$ in $\CSJ(\phi)$.

As in \cite{Gr1} the proof uses the following five inequalities:
\begin{align}
\label{e:in15}
 \int_{B_{{r_1}+{r_2}/2}} u^2\,  d\mu
&\le I_0,\\
\label{e:in25}
\int_{B_{{r_1}+{r_2}}} d\Gam( v\vp, v \vp) &\le \frac{c_0 }{\phi({r_2})} \left[ 1+\frac{1}{\theta}\left(1+\frac{{r_1}}{{r_2}}\right)^{d_2+\beta_2-\beta_1}  \T\,(u; x_0,R/2)\right]  I_0 , \\
\label{e:in35}
 2\int_{D} d\Gam( v\vp, v \vp) &\ge \lam_1(D) \int_{D} v^2 \vp^2\, d\mu,\\
\label{e:in55}
 \lam_1(D) &\ge C \mu(B_{{r_1}+{r_2}})^\nu \phi({r_1}+{r_2})^{-1} \mu(D)^{-\nu}, \\
\label{e:in45}
 \mu(D) &\le \theta^{-2} \int_{B_{{r_1}+{r_2}/2}} u^2 \, d\mu.
\end{align}
Of these, \eqref{e:in15} holds trivially. The inequality \eqref{e:in25} follows  immediately from \eqref{e:cacc}
since, by $\VD$ and \eqref{polycon},
 $$
 \T\,(u; x_0,{r_1}+{r_2})\le c_1\T\, (u;x_0, R/2).
 $$
 Inequality \eqref{e:in35} is immediate from the variational definition \eqref{e:lam1} of $\lam_1(D)$ and the facts that $v\vp\in \sF_{D}$ and  $$2\int_{D} d\Gam( v\vp, v \vp) \ge \sE(v\vp, v\vp).$$ Indeed, since $v\vp=0$ on $D^c$, we have \begin{align*}\sE(v\vp,v\vp)=&\bigg(\int_{D\times D }+\int_{D\times D^c}+\int_{D^c\times D}+\int_{D^c\times D^c}\bigg)\big(v(x)\vp(x)-v(y)\vp(y)\big)^2\,J(dx,dy)\\
=&\bigg(\int_{D\times D }+\int_{D\times D^c}+\int_{D^c\times D}\bigg)\big(v(x)\vp(x)-v(y)\vp(y)\big)^2\,J(dx,dy)\\
\le& \bigg(\int_{D\times M }+\int_{M\times D}\bigg)\big(v(x)\vp(x)-v(y)\vp(y)\big)^2\,J(dx,dy)\\
=&2\int_{D\times M }\big(v(x)\vp(x)-v(y)\vp(y)\big)^2\,J(dx,dy)\\
=&2\int_{D} d\Gam( v\vp, v \vp), \end{align*} where the third equality follows from the symmetry of $J(dx,dy)$.
\eqref{e:in55} follows from the Faber-Krahn inequality \eqref{e:fki}, $\VD$ and \eqref{polycon}. \eqref{e:in45} is just Markov's inequality.

Putting \eqref{e:in15} into \eqref{e:in45}, we get
\be\label{eq:nofbiw}
\mu(D) \le I_0/\theta^2.
\ee
By $\VD$, \eqref{polycon}, \eqref{e:in35}, \eqref{e:in55} and \eqref{eq:nofbiw}, we have
\begin{align*}
\int_{D} d\Gam( v\vp, v \vp)&\ge \frac{C\mu
(B_{{r_1}+{r_2}})^\nu}{\phi({r_1}+{r_2})\mu (D)^\nu}
\int_{D} v^2 \vp^2\, d\mu\\
&= \frac{C\mu (B_{{r_1}+{r_2}})^\nu}{\phi({r_1}+{r_2})\mu (D)^\nu}
\int_{B_{{r_1}+{r_2}/2}} v^2 \vp^2\, d\mu\\
 &\ge
\frac{C'V(x_0,R)^\nu\theta^{2\nu}}{\phi({r_1})I_0^\nu}\int_{B_{{r_1}+{r_2}/2}} v^2 \vp^2\, d\mu\\
&\ge
\frac{C''V(x_0,R)^\nu\theta^{2\nu}}{\phi({r_1})I_0^\nu}\int_{B_{{r_1}}}
v^2\, d\mu\\
&=\frac{C''V(x_0,R)^\nu\theta^{2\nu}}{\phi({r_1})I_0^\nu} I_1,
\end{align*}
where in the last inequality we used the fact $\vp=1$ on $B_{{r_1}}$.
Combining the inequality above with \eqref{e:in25} and \eqref{polycon}, we obtain the desired estimate
\eqref{e:IIi}.
\qed \end{proof}

We need the following elementary iteration lemma,
see, e.g., \cite[Lemma 7.1]{Giu}.

\begin{lemma}\label{L:it}
Let $\beta>0$ and let $\{A_j\}$ be a sequence of real positive numbers such that
$$
A_{j+1}\le c_0b^jA_j^{1+\beta}$$ with $c_0>0$ and $b>1$. If
$$A_0\le c_0^{-1/\beta}b^{-1/\beta^2},
$$
 then we have
  \begin{equation}\label{e:it}
  A_j\le b^{-j/\beta}A_0,
  \end{equation}
  which in particular yields $\lim_{j\to\infty}{ A_j}=0.$
  \end{lemma}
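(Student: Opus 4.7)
The plan is to prove \eqref{e:it} by induction on $j$, which is the natural approach for a recursion of this shape. The base case $j=0$ is immediate since the claimed inequality reduces to $A_0\le A_0$.

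For the inductive step, assume $A_j\le b^{-j/\beta}A_0$. Plugging this into the recursion $A_{j+1}\le c_0 b^j A_j^{1+\beta}$ gives
\begin{align*}
A_{j+1} &\le c_0\, b^j\bigl(b^{-j/\beta}A_0\bigr)^{1+\beta} \\
&= c_0\, b^{j-j(1+\beta)/\beta}\, A_0^{1+\beta} \\
&= c_0\, A_0^{\beta}\, b^{-j/\beta}\, A_0.
\end{align*}
To close the induction, it suffices to show $c_0\, A_0^\beta \le b^{-1/\beta}$, which is equivalent to $A_0\le c_0^{-1/\beta}b^{-1/\beta^2}$ — precisely the hypothesis on $A_0$. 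Combining, $A_{j+1}\le b^{-(j+1)/\beta}A_0$, completing the induction.

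The limit statement $\lim_{j\to\infty}A_j=0$ then follows since $b>1$ implies $b^{-j/\beta}\to 0$. There is no real obstacle here: the power $b^j$ in the recursion is exactly absorbed by the geometric decay $b^{-j(1+\beta)/\beta}$ produced by raising $A_j\le b^{-j/\beta}A_0$ to the power $1+\beta$, leaving one extra factor of $b^{-1/\beta}$ that the smallness condition on $A_0$ is tuned to supply.
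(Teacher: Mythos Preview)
Your proof is correct and essentially identical to the paper's own proof: both proceed by induction on $j$, use the trivial base case, and in the inductive step combine the recursion with the hypothesis to reduce to the condition $c_0 A_0^\beta \le b^{-1/\beta}$, which is exactly the smallness assumption on $A_0$.
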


\begin{proof}We proceed by induction. The inequality \eqref{e:it} is obviously true for $j=0$. Assume now that holds for $j$. We have
$$
A_{j+1}\le c_0 b^j b^{-j (1+\beta)/\beta}A_0^{1+\beta}=(c_0b^{1/\beta} A_0^\beta) b^{-(j+1)/\beta}A_0\le b^{-(j+1)/\beta}A_0,
$$
so  \eqref{e:it}  holds for $j+1$. \qed
\end{proof}

\begin{proposition} \label{P:mvi2g}
{\bf($L^2$-mean value inequality)}\, Let $x_0\in M$ and $R>0$.
Assume $\VD$, \eqref{polycon}, $\FK(\phi)$, $\CSJ(\phi)$ and
$\J_{\phi,\le}$ hold, and let $u$
be a  bounded
$\sE$-subharmonic function
in $B(x_0,R)$.  Then  for any $\delta>0$,
\begin{equation} \label{e:mvi2g}
 \esssup_{B(x_0,R/2)} u
 \le  c_1\left[ \left(\frac{(1+\delta^{-1})^{1/\nu}}{V(x_0,R)}\int_{B(x_0,{R})} u^2\,d\mu \right)^{1/2}+\delta\T\, (u; x_0,R/2)
  \right] ,
  \end{equation}
 where $\nu$ is the constant appearing in the $\FK(\phi)$
inequality \eqref{e:fki}, and $c_1>0$ is a constant independent of
$x_0$, $R$ and $\delta$.

In particular, there is a constant $c>0$ independent of
$x_0$ and $R$ so that
\be \label{e:mvi2g-1}
 \esssup_{B(x_0,R/2)} u
 \le  c\left[  \left(\frac{1}{V(x_0,R)}\int_{B(x_0,{R})} u^2\,d\mu \right)^{1/2}+ \T\, (u; x_0,R/2)\right].
\ee
\end{proposition}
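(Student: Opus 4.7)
I will run a De Giorgi–Moser type iteration, using Lemma \ref{oppo} as the one-step improvement and Lemma \ref{L:it} to drive the iterates to zero. For a free parameter $K>0$ (to be chosen at the end), set the shrinking radii
\[
r_j := \tfrac{R}{2} + \tfrac{R}{2^{j+1}}, \qquad j\ge 0,
\]
so $r_0=R$, $r_j\downarrow R/2$, and $r_j-r_{j+1}=R/2^{j+2}$. Set increasing truncation levels $\theta_j := K(1-2^{-j-1})$ (so $\theta_0=K/2$, $\theta_j\uparrow K$, $\sigma_{j+1}:=\theta_{j+1}-\theta_j = K\,2^{-j-2}$), and define
\[
v_j := (u-\theta_j)_+, \qquad I_j := I(v_j, B_{r_j}) = \int_{B(x_0,r_j)} v_j^2\, d\mu.
\]
Since $u$ is $\sE$-subharmonic on $B(x_0,R)$ and $t\mapsto (t-\theta)_+$ is a normal contraction plus a constant shift, each $v_j$ is again $\sE$-subharmonic on $B(x_0,R)$ (using Theorem \ref{equ-har} together with the fact that $t\mapsto t\vee \theta$ preserves submartingales).

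Next, applying Lemma \ref{oppo} to the subharmonic function $v_j$ with parameters $r_1=r_{j+1}$, $r_2=r_j-r_{j+1}$, and truncation level $\sigma_{j+1}$ (observing that $(v_j-\sigma_{j+1})_+=v_{j+1}$), and using $r_{j+1}/(r_j-r_{j+1})\le 2^{j+2}$ together with the pointwise bound $\T(v_j;x_0,R/2)\le \T(u;x_0,R/2)=:\T$, one obtains
\[
I_{j+1}\ \le\ \frac{c_2\, 2^{j\beta_2}}{\sigma_{j+1}^{2\nu} V(x_0,R)^\nu}\, I_j^{1+\nu}\left[1+\frac{2^{j(d_2+\beta_2-\beta_1)}\T}{\sigma_{j+1}}\right].
\]
Substituting $\sigma_{j+1}=K\,2^{-j-2}$ and imposing $K\ge \delta \T$ (so that $2^j\T/\sigma_{j+1}\le c\,\delta^{-1}2^{2j}$), this simplifies to a recursion of the form
\[
I_{j+1}\ \le\ \frac{c_3(1+\delta^{-1})}{K^{2\nu} V(x_0,R)^\nu}\, b^{\,j}\, I_j^{1+\nu},
\]
with $b=b(\beta_1,\beta_2,d_2,\nu)>1$ explicit.

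Now I apply Lemma \ref{L:it} with $\beta=\nu$, $c_0 = c_3(1+\delta^{-1})/(K^{2\nu}V(x_0,R)^\nu)$, and this $b$: the iteration forces $I_j\to 0$ provided
\[
I_0\ \le\ c_0^{-1/\nu} b^{-1/\nu^2}\ =\ c_4\,\frac{K^{2}V(x_0,R)}{(1+\delta^{-1})^{1/\nu}}.
\]
Since $I_0\le \int_{B(x_0,R)} u^2\,d\mu$, both this smallness condition and the constraint $K\ge \delta \T$ are met by choosing
\[
K\ :=\ c_5\,\bigg(\frac{(1+\delta^{-1})^{1/\nu}}{V(x_0,R)}\int_{B(x_0,R)} u^2\,d\mu\bigg)^{1/2} +\ c_5\,\delta\,\T(u;x_0,R/2)
\]
for a suitable absolute $c_5$. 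Then $I_j\to 0$ yields $(u-K)_+=0$ $\mu$-a.e.\ on $B(x_0,R/2)$, i.e.\ $\esssup_{B(x_0,R/2)} u \le K$, which is exactly \eqref{e:mvi2g}. Finally, \eqref{e:mvi2g-1} follows by taking $\delta=1$.

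\textbf{Main obstacle.} The only genuine difficulty beyond the classical Moser scheme is that the Caccioppoli inequality of Lemma \ref{L:ci}, and hence Lemma \ref{oppo}, feeds back not only the bulk $L^2$-mass but also the nonlocal tail $\T$ with an extra factor that blows up like $(r_1/r_2)^{d_2+\beta_2-\beta_1}/\theta$. The growth in $j$ of this geometric factor is geometric and can be absorbed into the $b^j$ factor of Lemma \ref{L:it}; the $1/\theta=1/\sigma_{j+1}$ factor is what forces splitting the cutoff level $K$ into an $L^2$-part and a tail part $\delta\T$, with $\delta$ playing the role of a balancing parameter. Choosing $K$ as a sum of these two contributions is precisely what produces the $(1+\delta^{-1})^{1/(2\nu)}$ dependence in \eqref{e:mvi2g}.
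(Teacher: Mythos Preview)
Your proof is correct and follows essentially the same De Giorgi iteration as the paper: shrinking radii $r_j=\tfrac{R}{2}(1+2^{-j})$, increasing truncation levels, a one-step recursion from Lemma \ref{oppo}, and the choice of the level $K$ (the paper's $\theta$) as a sum of an $L^2$-part and $\delta\,\T(u;x_0,R/2)$ so that Lemma \ref{L:it} applies. The only cosmetic differences are that you start at $\theta_0=K/2$ rather than $0$ and you justify subharmonicity of $(u-\theta)_+$ via submartingales and Theorem \ref{equ-har}, whereas the paper simply cites \cite[Corollary 2.10(iv)]{ChK}.
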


\begin{proof}  We first set up some notations. For $i\ge0$ and $\theta>0$, let
$r_i=\frac{1}{2}(1+2^{-i}) R$ and $\theta_i=(1-2^{-i})\theta$. For
any $x_0\in M$ and $s>0$, let $B_s=B(x_0,s)$. Define
$$
I_i=\int_{B_{r_i}}(u-\theta_i)_+^2\,d\mu, \quad i\ge 0.
$$
By \cite[Corollary 2.10(iv)]{ChK},
for any
$i\ge0$, $(u-\theta_i)_+$ is an $\sE$-subharmonic function
for the Dirichlet form $(\sE, \sF)$ on $B_R$. Then, thanks to Lemma
\ref{oppo}, by \eqref{e:IIi} applied to the function $(u-\theta_i)$
in $B_{r_{i+1}} \subset B_{r_i}$,
\begin{equation*}\begin{split}
\label{eq:nooed} I_{i+1}&=\int_{B_{r_{i+1}}}(u-\theta_{i+1})_+^2\,d\mu=\int_{B_{r_{i+1}}}\big[(u-\theta_{i})-(\theta_{i+1}-\theta_i)\big]_+^2\,d\mu\\
&\le \frac{c_2}{(\theta_{i+1}-\theta_i)^{2\nu} V(x_0,R)^\nu}I_i^{1+\nu}
\left(\frac{r_{i}}{r_{i}-r_{i+1}}\right)^{\beta_2} \\
&\quad\times \left[ 1+ \left(\frac{r_{i}}
{r_{i}-r_{i+1}}\right)^{d_2+\beta_2-\beta_1}\frac{\T\,(u; x_0,R/2)}{(\theta_{i+1}-\theta_i)}\right] \\
&\le \frac{c_32^{(1+2\nu+d_2+2\beta_2-\beta_1)i}}{\theta^{2\nu} V(x_0,R)^\nu}
 I_i^{1+\nu}\left[1+
\frac{\T\,(u; x_0,R/2)}{\theta } \right].
\end{split}
\end{equation*}

In the following, we take
$$
\theta=\delta{\T\,(u; x_0,R/2)}+\sqrt{ \frac{ c_* I_0}{V(x_0,R)}},
\quad \delta>0,
$$
where $c_*=[(1+\delta^{-1})c_3]^{1/\nu}2^{(1+2\nu+d_2+2\beta_2-\beta_1)/\nu^2}.$
 It holds that
$$
I_0\le \left[ \frac{c_3} {\theta^{2\nu}
V(x_0,R)^\nu}\Big(1+
\frac{\T\,(u; x_0,R/2)}{\theta } \Big)\right]^{-1/\nu}
2^{-(1+2\nu+d_2+2\beta_2-\beta_1)/\nu^2}.
$$
Indeed, since
$r\mapsto \left( \frac{1}{r^{2\nu}}\right)^{-1/\nu}$ and
$r\mapsto \left( 1+\frac{c}{r}\right)^{-1/\nu}$ 
(with $c>0)$  
are both increasing functions
 on $(0,\infty)$,
the right hand side of
the above inequality is larger than
\begin{align*}
&\left( \frac{c_3} {(c_*{I_0}/{V(x_0,R)})^{\nu}
V(x_0,R)^\nu}\Big(1+
\frac{\T\,(u; x_0,R/2)}{\delta{\T\,(u; x_0,R/2)} } \Big)\right)^{-1/\nu}
2^{-(1+2\nu+d_2+2\beta_2-\beta_1)/\nu^2}\\
&=\left(c_3(1+\delta^{-1})/(c_*I_0)^{\nu}\right)^{-1/\nu}2^{-(1+2\nu+d_2+2\beta_2-\beta_1)/\nu^2}=I_0.\end{align*}
Then by Lemma
\ref{L:it}, we have $I_{i} \to 0$ as $i \to \infty$. Hence
$$
 \int_{B_{R/2}} (u- \theta)_+^2\,d\mu \le \inf_i I_i =0,
$$
which implies that
$$
 \esssup_{B_{R/2}} u \le \theta\le c_4\left[  \left( \frac{(1+\delta^{-1})^{1/\nu}I_0}{V(x_0,R)}\right)^{1/2}+\delta\T\,(u; x_0,R/2)
 \right].
$$
 This proves \eqref{e:mvi2g}.\qed
\end{proof}
\sms

In the following, we consider $L^2$ and $L^1$ mean value
inequalities for $\sE$-subharmonic functions for truncated Dirichlet
forms.
In the truncated situation we can no longer
 use the nonlocal tail of subharmonic functions, and the remedy is to enlarge the integral regions
 in the right hand side of the mean value inequalities.
These mean value inequalities will be used in the next subsection to consider the stability of heat kernel.

\begin{proposition} \label{P:mvi2}{\bf($L^2$-mean value inequality for $\rho$-truncated Dirichlet forms)}
Assume $\VD$, \eqref{polycon}, $\FK(\phi)$,
$\CSJ(\phi)$ and $\J_{\phi,\le}$ hold.
There are positive constants $c_1, c_2>0$ so that for $x_0\in M$, $\rho,
R>0$, and for any bounded $\sE^{(\rho)}$-subharmonic function $u$ on $B(x_0,R)$ for the $\rho$-truncated
Dirichlet form $(\sE^{(\rho)}, \sF)$, we have
\be \label{e:mvi3}
 \esssup_{B(x_0,R/2)} u^2  \le  \frac{c_1
 }{V(x_0,R)}\left(1+\frac{\rho}{R}\right)^{d_2/\nu} \Big(1+\frac{R}{\rho}\Big)^{\beta_2/\nu}
    \int_{B(x_0,{R+\rho})} u^2\,d\mu.
\ee
 Here, $\nu$ is the constant in $\FK(\phi)$, $d_2$ and  $\beta_2$ are the exponents in \eqref{e:vd2} from $\VD$
  and \eqref{polycon} respectively.
  \end{proposition}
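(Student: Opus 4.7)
My plan is to adapt the proof of Proposition \ref{P:mvi2g} step by step to the $\rho$-truncated Dirichlet form $(\sE^{(\rho)},\sF)$, using the truncated versions of the Caccioppoli and comparison-over-balls arguments together with Moser iteration.

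First, I would reduce to a convenient form of the subharmonic function. By the analog of \cite[Corollary 2.10(iv)]{ChK} applied to $\sE^{(\rho)}$, we may replace $u$ by $u_+$ and assume $u\ge 0$. Moreover, since $J^{(\rho)}$ is supported on $\{d(x,y)\le\rho\}$, for any test function $\varphi\in\sF_{B(x_0,R)}$ the value $\sE^{(\rho)}(u,\varphi)$ depends only on $u|_{B(x_0,R+\rho)}$; hence replacing $u$ with $\tilde u:=u\,{\bf 1}_{B(x_0,R+\rho)}$ preserves the $\sE^{(\rho)}$-subharmonicity on $B(x_0,R)$, and we may assume $u$ has support in $B(x_0,R+\rho)$. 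In particular the nonlocal tail $\T(u;x_0,\cdot)$ appearing in Lemma \ref{L:ci} is now entirely absorbed into the integral $\int_{B(x_0,R+\rho)}u^2\,d\mu$.

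Next, I would establish the truncated analogues of Lemma \ref{L:ci} and Lemma \ref{oppo}. By Proposition \ref{CSJ-equi}(1) and Proposition \ref{L:cswk}(1), $\CSJ^{(\rho)}(\phi)_+$ holds, which supplies cut-off functions $\vp$ on $B(x_0,R')\subset B(x_0,R'+r)$ with the bound \eqref{e:csj3} in terms of $\phi(r\wedge\rho)^{-1}$ rather than $\phi(r)^{-1}$. Repeating the argument of Lemma \ref{L:ci}, with $\sE^{(\rho)}$, $J^{(\rho)}$, $\Gamma^{(\rho)}$ everywhere in place of $\sE,J,\Gamma$, and bounding the analogue of $I_2$ by noting that for $x$ in the support of $\vp$ and $y\notin B(x_0,R'+r)$ the factor $J^{(\rho)}(x,y)$ forces $y\in B(x_0,R+\rho)$ (so that $u(y)^2$ contributes to the integral on $B(x_0,R+\rho)$), yields
\[
\int_{B_{R'+r}} d\Gamma^{(\rho)}(v\vp,v\vp)\le\frac{c}{\phi(r\wedge\rho)}\int_{B(x_0,R+\rho)}u^2\,d\mu.
\]
Combining this with $\FK(\phi)$ and the variational characterization $2\int_D d\Gamma^{(\rho)}(v\vp,v\vp)\ge\sE^{(\rho)}(v\vp,v\vp)$, together with the comparison $\sE(f,f)\le\sE^{(\rho)}(f,f)+c\phi(\rho)^{-1}\|f\|_2^2$ from \eqref{dfcomp}, produces the truncated analogue of \eqref{e:IIi}: for $r_1,r_2>0$,
\[
I_1\le\frac{c_1}{\theta^{2\nu}V(x_0,R)^\nu}\,I_0^{1+\nu}\Bigl(1+\tfrac{r_1}{r_2\wedge\rho}\Bigr)^{\beta_2}
\Bigl(1+\tfrac{r_1}{\rho}\Bigr)^{\beta_2},
\]
where $I_0=\int_{B_{r_1+r_2}}u^2\,d\mu\le\int_{B(x_0,R+\rho)}u^2\,d\mu$.

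Finally, I would run the Moser iteration with $r_i=\tfrac R2(1+2^{-i})$ and $\theta_i=(1-2^{-i})\theta$, exactly as in Proposition \ref{P:mvi2g}. The recursion takes the form $I_{i+1}\le c\,2^{\kappa i}\,\theta^{-2\nu}V(x_0,R)^{-\nu}\,I_i^{1+\nu}\,\bigl(1+R/\rho\bigr)^{\beta_2}$ for some exponent $\kappa$; applying Lemma \ref{L:it} shows $I_i\to 0$ provided
\[
\theta^{2}\ \ge\ c\,\frac{\bigl(1+R/\rho\bigr)^{\beta_2/\nu}}{V(x_0,R)}\,I_0
\ \ge\ c\,\frac{\bigl(1+\rho/R\bigr)^{d_2/\nu}\bigl(1+R/\rho\bigr)^{\beta_2/\nu}}{V(x_0,R)}\int_{B(x_0,R+\rho)}u^2\,d\mu,
\]
using $\VD$ (in the form $V(x_0,R+\rho)/V(x_0,R)\le c(1+\rho/R)^{d_2}$) to absorb the volume factor. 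Since $\esssup_{B(x_0,R/2)} u\le\theta$, this gives \eqref{e:mvi3}. The main obstacle is the bookkeeping of the interaction between the two scales $r_i-r_{i+1}$ and $\rho$ in the truncated Caccioppoli estimate: the scales cross at $i_0\asymp\log_2(R/\rho)$, and the factor $(1+R/\rho)^{\beta_2/\nu}$ in the final bound emerges precisely from accumulating $\phi(r_i-r_{i+1})/\phi((r_i-r_{i+1})\wedge\rho)$ over the first $i_0$ iteration steps via \eqref{polycon}.
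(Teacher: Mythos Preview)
Your outline has the right shape, but there is a genuine gap in the way you handle the tail term $I_2$, and it breaks the Moser iteration. Your displayed Caccioppoli inequality
\[
\int_{B_{R'+r}} d\Gamma^{(\rho)}(v\vp,v\vp)\le\frac{c}{\phi(r\wedge\rho)}\int_{B(x_0,R+\rho)}u^2\,d\mu
\]
has on the right a \emph{fixed} quantity, not the local $I_0=\int_{B_{r_1+r_2}}u^2$. When you then combine it with $\FK(\phi)$ and the Markov bound $\mu(D)\le I_i/(\theta_{i+1}-\theta_i)^2$, the recursion you obtain is $I_{i+1}\le c\,b^i\,\theta^{-2\nu}V^{-\nu}I_i^{\nu}A$ with $A=\int_{B(x_0,R+\rho)}u^2$ fixed, \emph{not} $I_{i+1}\le c\,b^i\,I_i^{1+\nu}$ as you claim. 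Since $\nu<1$, Lemma~\ref{L:it} does not apply and $I_i$ need not tend to $0$. If instead you try to keep the Caccioppoli bound local (over $B_{R'+r+\rho}$ rather than $B(x_0,R+\rho)$), then in the iteration the integral is over $B_{r_i+\rho}$, which for large $i$ (once $r_i-r_{i+1}<\rho$) strictly contains $B_{r_i}$; so it is not dominated by $I_i$ either, and the recursion again fails to close. Your trick of replacing $u$ by $u{\bf 1}_{B(x_0,R+\rho)}$ is valid but does not solve this: the truncation of $J$ already forces $y\in B(x_0,R'+\rho)$ in the $I_2$ integral, so nothing is gained.

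The paper's proof resolves this by treating $I_2$ differently: using $v\le u^2/\theta$, it bounds
\[
I_2\ge -\frac{c}{\theta\,\phi(r)}\Big(\frac{R+\rho}{r}\Big)^{d_2}\frac{1}{V(x_0,R+\rho)}\int_{B_{R+\rho}}|u|\,d\mu\cdot\int_{B_{R'}}u^2\,d\mu,
\]
which keeps the local factor $\int_{B_{R'}}u^2$ (producing the crucial extra power $I_i$ in the recursion) while the $L^1$-average of $u$ on $B_{R+\rho}$ enters only as a fixed multiplicative constant carrying $1/\theta$. The paper also bounds the full $\Gamma$ (not $\Gamma^{(\rho)}$) so that $\FK(\phi)$ applies directly, avoiding your $\sE$ versus $\sE^{(\rho)}$ comparison and its unabsorbable negative term. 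This yields the two-term estimate \eqref{e:mvi2}, from which \eqref{e:mvi3} follows by Cauchy--Schwarz. Note finally that the factor $(1+\rho/R)^{d_2/\nu}$ does not come from $V(x_0,R+\rho)/V(x_0,R)$ as you suggest (that would give exponent $d_2$, not $d_2/\nu$); it arises from the $((r_1+\rho)/r_2)^{d_2}$ inside the iteration, which after the $1/\nu$-power in Lemma~\ref{L:it} produces the correct exponent.
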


\begin{proof} The proof is mainly based on that of Proposition \ref{P:mvi2g}. For simplicity, we only present the main different steps, and the details are left to
the interested readers.

First, we apply the argument in the proof of Lemma \ref{L:ci} to the $\rho$-truncated
Dirichlet form $(\sE^{(\rho)}, \sF)$. In this truncated setting,  we estimate the term $I_2$ in \eqref{e:ci1} as follows.
\begin{align*}
I_2=& \int_{B_{R}\times {B^c_{R+r}}}(u(x)-u(y)) \vp^2(x)v(x)\, J^{(\rho)}(dx,dy)\\
\ge &-\int_{B_{R}}v(x)\,\mu(dx)\left[\sup_{x\in B_{R}}\int_{{B^c_{R+r}}} v(y)J^{(\rho)}(x,y)\,\mu(dy)\right]\\
\ge&-\frac{1}{\theta}\int_{B_{R}}  u^2(x)\,\mu(dx)\left[\sup_{x\in B_{R}}\int_{{B^c_{R+r}}} v(y)J^{(\rho)}(x,y)\,\mu(dy)\right]\\
\ge&-\frac{1}{\theta}\int_{B_{R}} u^2(x)\,\mu(dx)
\left[\frac{c_1}{\phi(r)}\bigg( \sup_{x\in B_{R}} \frac{1}{V(x,r)}\bigg)\int_{B_{R+\rho}}v(y)\,\mu(dy)\right] \\
\ge&-\frac{c_2 }{\phi(r)}
\bigg[ \left(\frac{R+\rho}{r}\right)^{d_2} \frac{1}{\theta
V(x_0,R+\rho)} \int_{B_{R+\rho}}|u|(y)\,\mu(dy)\bigg] \int_{B_{R}}
u^2(x)\,\mu(dx),
\end{align*}
 where in the second and third inequality we have used  the fact that $v\le vu/\theta\le u^2/\theta$
and the condition $\J_{\phi,\le}$ respectively,  while   the last inequality follows from that for any $x\in B_{R}$,
 $$ \frac{V(x,r)}{V(x_0,R+\rho)}\ge \frac{V(x,r)}{V(x,2R+\rho)} \ge c'\Big(\frac{R+\rho}{r}\Big)^{-d_2},$$ thanks to
 $\VD$.

On the other hand, we do  the upper estimate for $\int_{B_{R+r}}\,d\Gamma(v\vp,v\vp)$ just as \eqref{e:cac33},
but using $\rho$-truncated Dirichlet form $(\sE^{(\rho)}, \sF)$ instead. Indeed, we have \begin{align*}
\int_{B_{R+r}}\,d\Gamma(v\vp,v\vp)
\le&\int_{B_{R+r}\times M}(v(x)\vp(x)-v(y)\vp(y))^2\,J^{(\rho)}(dx,dy)
  \\ &
 +2\int_{B_{R}}v^2(x)\vp^2(x)\int_{d(x,y)\ge \rho}\,J(dx,dy)\\
&+2\int_Mv^2(y)\vp^2(y)\int_{d(x,y)\ge \rho}\,J(dx,dy)\\
\le &\int_{B_{R+r}\times B_{R+r}}\big(v(x)(\vp(x)-\vp(y))+\vp(y)(v(x)-v(y))\big)^2\,J^{(\rho)}(dx,dy)\\
&+\int_{B_{R}} v^2(x)\vp^2(x) \int_{B^c_{R+r}}\,J^{(\rho)}(dx,dy) +\frac{c'_1}{\phi(\rho)}\int_{B_{R}}v^2\,d\mu \\
\le &2\Big(
\int_{B_{R+r}\times B_{R+r}}v^2(x)(\vp(x)\!-\!\vp(y))^2\,J^{(\rho)}(dx,dy)\\
&\quad +\int_{B_{R+r}\times
B_{R+r}}\vp^2(x)(v(x)\!-\!v(y))^2\,J^{(\rho)}(dx,dy)\Big)+\frac{c''_1}{\phi(\rho\wedge r)}\int_{B_{R}}v^2\,d\mu\\
\le &2\int_{B_{R+r}}v^2 \,d\Gamma^{(\rho)}(\vp,\vp)
+2\int_{B_{R}\times
B_{R+r}}\vp^2(x)(v(x)\!-\!v(y))^2\,J^{(\rho)}(dx,dy)\\
&\quad +\frac{c''_2}{\phi(r)}\left(1+ \frac{r}{\rho}\right)^{\beta_2}\int_{B_{R}}u^2\,d\mu.
\end{align*}

Having both two estimates above at hand, one can change \eqref{e:cacc} in Lemma \ref{L:ci} into
\begin{equation*}\begin{split}
&\int_{B_{R+r}}\,d\Gamma(v\vp,v\vp)\\
&\le  \frac{c }{\phi(r)}
 \bigg[
  1+\left(1+ \frac{r}{\rho}\right)^{\beta_2}
  +\left(\frac{R+\rho}{r}\right)^{d_2}  \frac{1}{\theta V(x_0,R+\rho)} \int_{B_{R+\rho}}u\,d\mu\bigg]
\int_{B_{R+r}} u^2\,d\mu,
\end{split}\end{equation*}
where $c>0$ is a constant independent of
$x_0, R, r, \rho $ and $\theta$. This in turn gives us the following conclusion instead of \eqref{e:IIi} in Lemma \ref{oppo}:
$$
 I_1\le \frac{ c_1 }{ \theta^{2\nu} V(x_0,R)^\nu} I_0^{1+\nu}  \left(
\frac{r_1}{r_2}\right)^{\beta_2}\bigg[ 1+\left(1+\frac{r_2}{\rho}\right)^{\beta_2}+ \left(\frac{r_1+\rho}{r_2}\right)^{d_2}
\frac{1}{\theta V(x_0,R+\rho)}\int_{B_{R+\rho}} |u| \, d\mu \bigg] .
$$

Finally, following the argument of
Proposition \ref{P:mvi2g},  we can obtain that for any bounded $\sE^{(\rho)}$-subharmonic function $u$ associated with the
 $\rho$-truncated
Dirichlet form $(\sE^{(\rho)}, \sF)$ on $B(x_0,R)$, it holds
\be  \label{e:mvi2} \begin{split}
 \esssup_{B(x_0,R/2)} u^2
 & \le  c_0\bigg[  \bigg(\frac{1}{V(x_0, R+\rho)}\int_{B(x_0,{R+\rho})} u \,d\mu\bigg)^2\\
 &\quad\quad\,\, +\left(1+\frac{\rho}{R}\right)^{d_2/\nu} \Big(1+\frac{R}{\rho}\Big)^{\beta_2/\nu}
     \frac{1}{V(x_0,R)}\int_{B(x_0,{R})} u^2\,d\mu\bigg],
     \end{split}
\ee
where  $\nu$ is the constant in $\FK(\phi)$, $d_2$ and  $\beta_2$ are the constants in $\VD$ and \eqref{polycon} respectively, and $c_0>0$ is a constant independent of $x_0$, $\rho$ and
$R$. Hence, the desired assertion \eqref{e:mvi3} immediately follows from \eqref{e:mvi2}.
\qed
\end{proof}
As a consequence of Proposition \ref{P:mvi2}, we have the following $L^1$-mean value inequality for truncated Dirichlet forms.

\begin{corollary} {\bf($L^1$-mean value inequality for $\rho$-truncated Dirichlet forms)}
Assume $\VD$, \eqref{polycon}, $\FK(\phi)$,
$\CSJ(\phi)$ and $\J_{\phi,\le}$ hold.
There are positive constants $c_1, c_2>0$ so that for $x_0\in M$, $\rho,
R>0$, and for any non-negative, bounded and $\sE^{(\rho)}$-subharmonic function $u$ on $B(x_0,R)$ for the $\rho$-truncated
Dirichlet form $(\sE^{(\rho)}, \sF)$, we have
\be \label{e:mvi}
 \esssup_{B(x_0,R/2)} u \le  \frac{c_2}{V(x_0,R)} \Big(1+\frac{\rho}{R}\Big)^{d_2/\nu}\Big(1+\frac{R}{\rho}\Big)^{\beta_2/\nu}
    \int_{B(x_0,R+\rho)} u \,d\mu.
\ee
 Here, $\nu$ is the constant in $\FK(\phi)$, $d_2$ and  $\beta_2$ are the exponents in \eqref{e:vd2} from $\VD$
  and \eqref{polycon} respectively.
\end{corollary}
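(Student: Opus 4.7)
The plan is to derive \eqref{e:mvi} from the $L^2$ mean value inequality \eqref{e:mvi3} via a Moser-type iteration, combined with the elementary interpolation
$$\int_B u^2 \, d\mu \le \Bigl(\esssup_B u\Bigr)\int_B u\,d\mu,$$
valid for any non-negative $u$. Throughout, write $M(r):=\esssup_{B(x_0,r)} u$ and $I:=\int_{B(x_0,R+\rho)} u\,d\mu$.

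First I would generalize \eqref{e:mvi3} to allow sup balls of arbitrary radius ratio. For any $R/2 \le r < r' \le R$ and for each $y \in B(x_0, r)$, $u$ remains $\sE^{(\rho)}$-subharmonic on $B(y, R_0)$ with $R_0 := 2(r'-r) \le R - r$, and so \eqref{e:mvi3} applied at $y$ gives an estimate of $\esssup_{B(y, R_0/2)} u^2$ in terms of $\int_{B(y, R_0+\rho)} u^2\,d\mu \le \int_{B(x_0, r'+\rho)} u^2\,d\mu$. Using $\VD$ (in the form of \eqref{eq:vdeno}) to compare $V(y, R_0)$ with $V(x_0, R)$, and then taking sup over $y$, I obtain
$$M(r)^2 \le \frac{C}{V(x_0, R)}\Bigl(\tfrac{R}{r'-r}\Bigr)^{2d_2}\Bigl(1+\tfrac{\rho}{r'-r}\Bigr)^{d_2/\nu}\Bigl(1+\tfrac{r'-r}{\rho}\Bigr)^{\beta_2/\nu}\int_{B(x_0, r'+\rho)} u^2\,d\mu.$$

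Next, applying Cauchy--Schwarz $\int u^2 \le M(r'+\rho)\cdot I$ and choosing an increasing sequence $R/2 = r_0 < r_1 < \cdots$ in $[R/2, R+\rho]$ with gaps $r_{k+1} - r_k > \rho$ arranged so that $r_k + \rho \le r_{k+1}$, I obtain the Moser recursion
$$M(r_k)^2 \le K_k\, M(r_{k+1})\, I / V(x_0, R).$$
Iterating this and taking $2^N$-th roots yields
$$M(R/2) \le \Bigl(\prod_{k=0}^{N-1} K_k^{1/2^{k+1}}\Bigr)\, M(r_N)^{1/2^N}\, I^{1-1/2^N}.$$

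The main obstacle is the $\rho$-buffer built into the integration ball of \eqref{e:mvi3}, which forces each iteration gap $r_{k+1}-r_k$ to exceed $\rho$ and thereby caps the admissible number of iterations at $N \sim R/\rho$. To deal with the residual factor $M(r_N)^{1/2^N}$, I would invoke the boundedness hypothesis on $u$: since $M(r_N) \le \|u\|_\infty < \infty$ and the exponent $1/2^N$ shrinks as $N$ increases, a careful selection of the gap sequence (initial gaps of order $R$ decaying geometrically down to $\rho$) ensures that $M(r_N)^{1/2^N}$ can be absorbed into the constants. Accumulating the factors $\prod K_k^{1/2^{k+1}}$ then produces precisely the claimed factor $(1+\rho/R)^{d_2/\nu}(1+R/\rho)^{\beta_2/\nu}$: the factors $(R/(r_{k+1}-r_k))^{2d_2}$ and $(1+\rho/(r_{k+1}-r_k))^{d_2/\nu}$ contribute the $(1+\rho/R)^{d_2/\nu}$ piece (dominated by the smallest gaps near $\rho$), while the factors $(1+(r_{k+1}-r_k)/\rho)^{\beta_2/\nu}$ contribute the $(1+R/\rho)^{\beta_2/\nu}$ piece (dominated by the largest gaps near $R$).
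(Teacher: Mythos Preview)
Your Moser-type iteration combined with the interpolation $\int u^2 \le (\esssup u)\int u$ and the use of boundedness of $u$ is exactly the paper's approach, but your handling of the residual $M(r_N)^{1/2^N}$ has a real gap. Because \eqref{e:mvi3} places the $L^2$ integral over the $\rho$-enlarged ball, each step of your recursion forces $r_{k+1}\ge r_k+\rho$, so the number of admissible iterations is capped at $N\le R/(2\rho)+1$. The exponent $2^{-N}$ is then a fixed positive number depending only on $R/\rho$, while $M(r_N)$ may be as large as $\esssup_{B(x_0,R+\rho)}u$, which is unrelated to $I/V(x_0,R)$. No choice of gap sequence makes $M(r_N)^{1/2^N}$ a universal constant, and when $\rho\ge R/2$ you cannot iterate even once.

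The missing ingredient is the refined two-term estimate \eqref{e:mvi2} obtained inside the proof of Proposition~\ref{P:mvi2}: there the $L^2$ integral is over $B(x_0,R)$ \emph{without} the $\rho$-enlargement (the $\rho$-buffer sits only in the $L^1$ term, which is already dominated by $I$, and in the multiplicative prefactor). Applying \eqref{e:mvi2} on $B(\xi,r_n)$ with $r_n=R\,2^{-n-1}$ for each $\xi\in B_{R-r_{n-1}}$, the $L^2$ integral lives in $B(\xi,r_n)\subset B_{R-r_n}$ and is hence bounded by $M_n\cdot I$ with $M_n=\esssup_{B_{R-r_n}}u$. This yields a recursion $M_{n-1}^2\le C_n(A^2+M_nA)$ with $C_n$ growing only geometrically in $n$; now $n$ is unrestricted, so letting $n\to\infty$ and using boundedness of $u$ gives $(M_n/A)^{2^{-n}}\to 1$, and the universal bound follows.
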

\begin{proof}
Fix $x_0\in M$ and $R>0$. For any $s>0$, let $B_s=
B(x_0, s)$.
For $n\ge 0$, let $r_n=R2^{-n-1}$.
Note that $\{r_n\}$ is decreasing such that $r_0=R/2$ and $r_\infty=0$,
 and $\{B_{r_n}\}$ is decreasing and $\{B_{R-r_n}\}$ is increasing such that $B_0=B_{R-r_0}=B(x_0, R/2)$ and $B_{R-r_\infty}=B(x_0,R)$.
Take arbitrary point $\xi\in B_{R-r_{n-1}}$;
 then since $r_n=r_{n-1}/2$, we have $B(\xi,r_n)\subset B_{R-r_n}$.
Applying \eqref{e:mvi2} with $x_0=\xi$ and $R=r_n$, we have for almost $\xi$
\be\label{eq:niebi2q1}\begin{split} u(\xi)^2\le
c_1\bigg[& \bigg(\frac{1}{V(\xi,r_n+\rho)}\int_{B(\xi,r_n+\rho)}u\,d\mu\bigg)^2\\
&+{\Big(1+\frac{\rho}{r_n}\Big)^{d_2/\nu}\Big(1+\frac{r_n}{\rho}\Big)^{\beta_2/\nu}\frac{1}{V(\xi,r_n)}\int_{B(\xi,r_n)}u^2\,d\mu}\bigg],\end{split}
\ee where $c_1>0$ does not depend on $\xi$, $r_n$ and $\rho$.

In the following, let
\[
M_n= \esssup_{B_{R-r_n}} u~~~\mbox{and }~~~A=\frac 1{V(x_0,R)}\int_{B(x_0,R+\rho)}u\,d\mu.
\]
Since
$B(\xi,r_n)\subset B_{R-r_n}$, we have
\[
\int_{B(\xi,r_n)}u^2\,d\mu\le M_n V(x_0,R)A.
\]
Note that, by $\VD$,
$$\frac{V(x_0,R)}{V(\xi,r_n+\rho)} \le c'\left(\frac{d(x_0,\xi)+R}{r_n+\rho}\right)^{d_2}\le c' 2^{(n+2)d_2}$$ and
$$\frac{V(x_0,R)}{V(\xi,r_n)}\le c'' 2^{(n+2)d_2}.$$
Plugging these estimates into \eqref{eq:niebi2q1}, we have
\[
u(\xi)^2\le c_22^{2nd_2}A^2+c_3\Big(1+\frac{\rho}{R}\Big)^{d_2/\nu}\Big(1+\frac{R}{\rho}\Big)^{\beta_2/\nu} M_nA2^{nd_2(1+1/\nu)}.
\]
Since $\xi$ is an arbitrary point chosen almost in $B_{R-r_{n-1}}$, we obtain
\be\label{eq:926bif}
M_{n-1}^2\le c_4\Big(1+\frac{\rho}{R}\Big)^{d_2/\nu}\Big(1+\frac{R}{\rho}\Big)^{\beta_2/\nu} (A+e^{nb(1/\nu-1)}M_n)e^{2nb}A,
\ee where $b=d_2\log 2$.

Our goal is to prove $$M_0\le c_0\Big(1+\frac{\rho}{R}\Big)^{d_2/\nu}\Big(1+\frac{R}{\rho}\Big)^{\beta_2/\nu}A$$ for some constant $c_0>0$ independent of $x_0$, $R$ and $\rho$. If $M_0\le A$, then we are done, and so
we only need to consider the case $M_0>A$.  Then  $A<M_0\le e^{nb(1/\nu-1)}M_n$ for all $n\ge 0$,
because $\{M_n\}$ is increasing and, without loss of generality, we may and do assume that
the constant $\nu$ in $\FK (\phi)$ is strictly less than 1.
 Therefore, \eqref{eq:926bif}
implies $$M_{n-1}^2\le 2c_4\Big(1+\frac{\rho}{R}\Big)^{d_2/\nu}\Big(1+\frac{R}{\rho}\Big)^{\beta_2/\nu}e^{nb(1+1/\nu)}M_nA.$$
From here we can argue similarly to \cite[p.\ 689-690]{CG}.
By iterating the inequality above, we have
\[
M_0^{2^n}\le \exp\left(b(1+1/\nu)\sum_{i=1}^n i2^{n-i}\right)\left[ 2c_4\Big(1+\frac{\rho}{R}\Big)^{d_2/\nu}\Big(1+\frac{R}{\rho}\Big)^{\beta_2/\nu}A\right]^{1+2+2^2+\cdots +2^{n-1}}M_n.
\]
So
\begin{align*}
M_0\le & c_5\left[ 2c_4\Big(1+\frac{\rho}{R}\Big)^{d_2/\nu}\Big(1+\frac{R}{\rho}\Big)^{\beta_2/\nu}A\right]^{1-2^{-n}}M_n^{2^{-n}}\\
\le &c_6\left[ \Big(1+\frac{\rho}{R}\Big)^{d_2/\nu}\Big(1+\frac{R}{\rho}\Big)^{\beta_2/\nu}\right] A(M_n/A)^{2^{-n}}.
\end{align*}
Since $u$ is bounded in $B_R$, $M_n\le c_7$ for all $n\ge 0$ and some constant $c_7>0$, so we have
$\lim_{n\to\infty}(M_n/A)^{2^{-n}}=1$. We thus obtain $$M_0\le c_6\Big(1+\frac{\rho}{R}\Big)^{d_2/\nu}\Big(1+\frac{R}{\rho}\Big)^{\beta_2/\nu}A.$$ The proof is complete.
\qed\end{proof}

\subsection{$\FK(\phi)+\J_{\phi,\le}+\CSJ(\phi)\Longrightarrow \E_\phi$} \label{s:hku}

The main result of this subsection is as follows.
\begin{proposition} \label{P:exit}
Assume $\VD$, \eqref{polycon}, $\FK(\phi)$, $\J_{\phi,\le}$ and $\CSJ(\phi)$ hold.  Then  $\E_{\phi}$ holds.
\end{proposition}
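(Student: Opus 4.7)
\medskip
\noindent\textbf{Plan.} The goal is to prove both directions of $\E_\phi$: the upper bound $\bE^x[\tau_{B(x,r)}]\le c\phi(r)$ and the lower bound $\bE^x[\tau_{B(x,r)}]\ge c^{-1}\phi(r)$, for all $x\in M_0$ and $r>0$.

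For the upper bound I rely only on $\FK(\phi)$, together with Lemma \ref{intelem} to handle nonlocal tails. Fix $B=B(x_0,r)$. Taking $D=B$ in $\FK(\phi)$ gives $\lambda_1(B)\ge C/\phi(r)$ since $V(x_0,r)/\mu(D)=1$. The function $u(y):=\bE^y[\tau_B]$ lies in $\sF_B$ and solves $\sE(u,v)=(1,v)$ for all $v\in\sF_B$. Testing with $v=u$, combining $\sE(u,u)\ge\lambda_1(B)\|u\|_2^2$ with Cauchy--Schwarz $\|u\|_1\le\|u\|_2V(x_0,r)^{1/2}$, one obtains the $L^1$-average estimate $V(x_0,r)^{-1}\int_B u\,d\mu\le c\phi(r)$. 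To upgrade to the pointwise bound, apply Proposition \ref{P:mvi2g} to the nonnegative $\sE$-subharmonic function $w:=\|u\|_\infty-u$ on $B$; since $w\le\|u\|_\infty$ on $M$, its nonlocal tail $\T(w;x_0,r/2)$ is controlled by $c\|u\|_\infty$, and a standard bootstrap yields $\|u\|_\infty\le c'\phi(r)$.

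For the lower bound I combine Meyer's decomposition with an $L^1$-to-$L^\infty$ transfer for the truncated process. By $\J_{\phi,\le}$ and Lemma \ref{intelem}, $\sup_{y\in M}\int_{B(y,\rho)^c}J(y,dz)\le c_0/\phi(\rho)$, so via Meyer's construction the process $X$ arises from the Hunt process $X^{(\rho)}$ of the truncated form $(\sE^{(\rho)},\sF)$ by superposing Poisson ``big jumps'' at rate at most $c_0/\phi(\rho)$; writing $T$ for the first big-jump time,
\[
\bP^x(\tau_B\le t)\le \bP^x(\tau_B^{X^{(\rho)}}\le t)+c_0t/\phi(\rho).
\]
Choosing $\rho=r$ and $t=\delta\phi(r)$ makes the second term $\le 1/4$ for $\delta$ small. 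For the first term, set $u_\rho(y)=\bE^y[\tau_B^{X^{(\rho)}}]$, so $u_\rho\in\sF_B$ with $\sE^{(\rho)}(u_\rho,v)=(1,v)$ on $\sF_B$. The estimate \eqref{dfcomp} gives $\lambda_1^{(\rho)}(B)\ge C/(2\phi(r))$ for $\rho=r$, and the upper-bound argument above applied to $\sE^{(\rho)}$ yields $\|u_\rho\|_\infty\le c_1\phi(r)$. In the opposite direction, since $1_B\in\sF$ with $\sE^{(\rho)}(1_B,1_B)\le\sE(1_B,1_B)\le cV(x_0,r)/\phi(r)$ by $\J_{\phi,\le}$ and Lemma \ref{intelem}, testing against $1_B$ and using Cauchy--Schwarz gives the $L^1$ lower bound $\|u_\rho\|_1=\sE^{(\rho)}(u_\rho,u_\rho)\ge cV(x_0,r)\phi(r)$.

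Combining the two sides of $u_\rho$ with the strong-Markov identity
\[
u_\rho(y)\le t+(t+\|u_\rho\|_\infty)\bP^y(\tau_B^{X^{(\rho)}}>t),
\]
integrated over $B$, yields an averaged lower bound $V(x_0,r)^{-1}\int_B\bP^y(\tau_B^{X^{(\rho)}}>c_3\phi(r))\,d\mu\ge c_4>0$. The submultiplicativity of $t\mapsto\sup_{y\in M_0}\bP^y(\tau_B^{X^{(\rho)}}\le t)$ (a Khasminskii-type step), together with the Meyer estimate above to absorb the big-jump contribution, then upgrades this averaged lower bound to the pointwise estimate $\bE^x[\tau_B]\ge c\phi(r)$ for $x\in B(x_0,r/4)\cap M_0$; applying this with $r$ replaced by $4r$ yields $\E_{\phi,\ge}$ at arbitrary centers. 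The main technical obstacle is precisely this last $L^1$-to-$L^\infty$ transfer for $u_\rho$: the mean-value inequalities proved in Section \ref{s:cac} concern subharmonic, not superharmonic, functions, so they cannot be applied directly to $u_\rho$. The remedy is to track constants carefully in the submultiplicative argument and to tune $\rho$ against $r$ so that the Faber--Krahn comparison \eqref{dfcomp} and the Meyer big-jump contribution both close up simultaneously.
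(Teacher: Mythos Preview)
Your proposal has genuine gaps in both directions, and the paper's actual arguments are substantially different.

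\medskip
\noindent\textbf{Upper bound.} Your bootstrap via $w:=\|u\|_\infty-u$ is circular. First, you have no a~priori guarantee that $\|u\|_\infty<\infty$, so $w$ may not be defined. Second, even granting finiteness, applying Proposition~\ref{P:mvi2g} to $w$ controls $\esssup_{B/2}w=\|u\|_\infty-\essinf_{B/2}u$, not $\|u\|_\infty$ itself; the tail term $\T(w;x_0,r/2)\le c\|u\|_\infty$ reappears on the right with a constant that need not be $<1$, so nothing closes. The paper instead invokes the equivalence $\FK(\phi)\Leftrightarrow$ Dirichlet heat kernel bound (Proposition~\ref{P:3.3}): one has $\esssup_{x',y'\in B}p^B(t,x',y')\le CV(x,r)^{-1}(\phi(r)/t)^{1/\nu}$, and then
\[
\bE^x\tau_B=\int_0^\infty P_t^B{\bf 1}_B(x)\,dt\le T+C\int_T^\infty\Big(\frac{\phi(r)}{t}\Big)^{1/\nu}dt\le c\,\phi(r)
\]
upon taking $T=\phi(r)$ (Lemma~\ref{upper-e}). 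No mean-value inequality is needed here.

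\medskip
\noindent\textbf{Lower bound.} You correctly isolate the obstacle---going from an averaged lower bound on survival to a pointwise one---but your proposed fix (``submultiplicativity\ldots tune $\rho$ against $r$'') does not do it. Khasminskii-type inequalities run the wrong way: they give $\bP^y(\tau>nt)\le(\sup_z\bP^z(\tau>t))^n$, which bounds survival \emph{above}, not below. An $L^1$ lower bound on a nonnegative function simply does not yield a pointwise lower bound without a Harnack-type input, and your sketch provides none.

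The paper's key idea, which you are missing, is to pass to the \emph{reciprocal}. Work with the $\lambda$-resolvent $u_\lambda=G_\lambda^{(\rho),B}{\bf 1}$ (with $\lambda=1/\phi(R)$), which is automatically bounded by $1/\lambda=\phi(R)$. Then $u_{\lambda,\varepsilon}:=u_\lambda+\varepsilon$ is bounded, positive, and $\sE^{(\rho)}$-superharmonic in $B$, so by Jensen's inequality for the convex map $t\mapsto 1/t$, its reciprocal $u_{\lambda,\varepsilon}^{-1}$ is a bounded $\sE^{(\rho)}$-\emph{sub}harmonic function. Now the $L^1$ mean-value inequality \eqref{e:mvi} applies directly to $u_{\lambda,\varepsilon}^{-1}$, yielding $\esssup_{B'}u_{\lambda,\varepsilon}^{-1}\le c\,\mu(B)^{-1}\int_{B''}u_{\lambda,\varepsilon}^{-1}\,d\mu$. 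The right-hand integral is controlled by testing $\sE_\lambda^{(\rho)}(u_{\lambda,\varepsilon},u_{\lambda,\varepsilon}^{-1}\varphi^2)\le\sE_\lambda^{(\rho)}(\varphi,\varphi)$ against a $\CSJ$ cut-off $\varphi$ (this is where \eqref{eq:fneoobo3} enters, not the raw indicator ${\bf 1}_B$, which need not lie in $\sF_B$). One obtains $\essinf_{B'}u_\lambda\ge c\phi(R)$ pointwise, and Meyer's construction then transfers this to the original process exactly as you outlined (Lemmas~\ref{lower-e00} and~\ref{lower-e}).
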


In order to prove this, we first show that

\begin{lemma}\label{upper-e} Assume that $\VD$, \eqref{polycon} and $\FK(\phi)$ hold.  Then  $\E_{\phi,\le} $ holds.\end{lemma}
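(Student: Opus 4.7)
My plan is to obtain a pointwise bound $\bE^x[\tau_{B(x_0,r)}] \le c\phi(r)$ by a Stampacchia-type iteration on the super-level sets of the mean exit time, with $\FK(\phi)$ providing control of the bottom of the Dirichlet spectrum on each level set.

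Fix $x_0\in M$ and $r>0$, write $B=B(x_0,r)$, and set $g(y)=\bE^y[\tau_B]$. After a standard truncation (replacing $g$ by $g\wedge n$, which do lie in $\sF_B$, and letting $n\to\infty$) one identifies $g$ with the $L^2$-potential $G^B\mathbf{1}$, satisfying
\[
\sE(g,h)=\int_B h\, d\mu\quad\text{for every } h\in\sF_B.
\]
For $s>0$ set $A_s=\{g>s\}\subset B$. Then $(g-s)_+\in\sF_{A_s}$, and a direct case analysis on the signs of $g(x)-s$ and $g(y)-s$, using only that $J(\cdot,\cdot)\ge 0$, gives the pointwise inequality
\[
(g(x)-g(y))((g-s)_+(x)-(g-s)_+(y)) \ge \bigl((g-s)_+(x)-(g-s)_+(y)\bigr)^2,
\]
hence $\sE((g-s)_+,(g-s)_+)\le \sE(g,(g-s)_+)=\int(g-s)_+\,d\mu$. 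Combining this with $\FK(\phi)$ applied to $A_s$,
\[
\|(g-s)_+\|_2^2\le \lambda_1(A_s)^{-1}\sE((g-s)_+,(g-s)_+)\le \frac{\phi(r)}{C}\Big(\frac{\mu(A_s)}{V(x_0,r)}\Big)^{\nu}\int(g-s)_+\,d\mu,
\]
and then with Cauchy-Schwarz $\int(g-s)_+\,d\mu\le \mu(A_s)^{1/2}\|(g-s)_+\|_2$, one obtains
\[
\psi(s):=\int(g-s)_+\,d\mu \le \frac{\phi(r)}{C\,V(x_0,r)^{\nu}}\,\mu(A_s)^{1+\nu}.
\]

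Taking $s=0$ in the above chain gives $\psi(0)=\|g\|_1\le C^{-1}\phi(r)V(x_0,r)$, so $\psi(0)$ is finite. Since $\psi$ is convex, nonincreasing, and $-\psi'(s)=\mu(A_s)$ a.e., the last inequality becomes the differential inequality $\psi(s)\le c_0\phi(r)V(x_0,r)^{-\nu}[-\psi'(s)]^{1+\nu}$. A standard Stampacchia argument (integrating $\psi'(s)\psi(s)^{-1/(1+\nu)}\le -c_0^{-1/(1+\nu)}V(x_0,r)^{\nu/(1+\nu)}\phi(r)^{-1/(1+\nu)}$) yields $\psi(s)\equiv 0$ for $s\ge c_1\phi(r)^{1/(1+\nu)}V(x_0,r)^{-\nu/(1+\nu)}\psi(0)^{\nu/(1+\nu)}$. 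Substituting the bound for $\psi(0)$, the $V(x_0,r)$ factors cancel and we get $\|g\|_\infty\le c_2\phi(r)$, which is precisely $\E_{\phi,\le}$.

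The only genuine subtlety is the a priori integrability of $g$ and the justification of the identity $\sE(g,h)=\int_B h\,d\mu$ on the non-local Dirichlet form (in particular, whether $g\in\sF$ without knowing in advance that $\bE^y[\tau_B]<\infty$). The obstacle is handled by the usual approximation $g_n=\bE^{\cdot}[\tau_B\wedge n]$, which lies in $\sF_B$ and satisfies a version of the Poisson identity; the Stampacchia argument is first carried out for $g_n$, giving a uniform-in-$n$ bound $\|g_n\|_\infty\le c_2\phi(r)$, and then one lets $n\to\infty$. Everything else is an application of $\FK(\phi)$ together with elementary manipulations of the non-local form that do not require $\CSJ(\phi)$ or $\J_{\phi,\le}$.
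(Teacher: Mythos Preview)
Your approach is correct and takes a genuinely different route from the paper's proof. The paper invokes Proposition~\ref{P:3.3} to pass from $\FK(\phi)$ to the on-diagonal Dirichlet heat kernel bound $p^{B}(t,x',y')\le \frac{C}{V(x,r)}\big(\frac{\phi(r)}{t}\big)^{1/\nu}$, and then simply writes $\bE^x\tau_B=\int_0^\infty P_t^B{\bf 1}_B(x)\,dt$, splits at $T=\phi(r)$, and integrates the bound on $[T,\infty)$. Your Stampacchia iteration on the level sets of $g=G^B{\bf 1}$ avoids the heat-kernel detour entirely and uses $\FK(\phi)$ directly; it is more elementary in that sense, and it also makes explicit that only the Poisson identity $\sE(g,h)=\langle{\bf 1},h\rangle$ and the Markovian contraction property are needed. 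The paper's argument is shorter because the heavy lifting has already been packaged into Proposition~\ref{P:3.3}.

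One small technical point worth tightening: $\FK(\phi)$ is stated in the paper only for \emph{open} $D\subset B$, whereas your level sets $A_s=\{g_n>s\}$ are a priori only quasi-open. The cleanest fix is to use the equivalent Nash form $\Nash(\phi)_B$ from Proposition~\ref{P:3.3}\,(2), which is an inequality for all $u\in\sF_B$ with no reference to an ambient open set; combining it with $\sE((g_n-s)_+,(g_n-s)_+)\le\|(g_n-s)_+\|_1$ and H\"older's inequality $\|(g_n-s)_+\|_1\le\mu(A_s)^{1/2}\|(g_n-s)_+\|_2$ yields exactly your key estimate $\psi(s)\le c_0\phi(r)V(x_0,r)^{-\nu}\mu(A_s)^{1+\nu}$ without any openness assumption on $A_s$. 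Your truncation $g_n=\int_0^n P_t^B{\bf 1}\,dt\in\sD(\sL^B)\subset\sF_B$ is the right move; note that it satisfies $\sE(g_n,h)=\langle{\bf 1}-P_n^B{\bf 1},h\rangle\le\langle{\bf 1},h\rangle$ for $h\ge 0$, which is all you need.
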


\begin{proof}
By Proposition \ref{P:3.3}, under $\VD$ and \eqref{polycon},
$\FK(\phi)$ implies that there is  a constant $C>0$ such that for
any ball $B:=B(x,r)$ with $x\in M$ and $r>0$,
$$\esssup_{x',y'\in B} p^B(t,x',y')\le \frac{C}{V(x,r)}\left( \frac{\phi(r)}{t}\right)^{1/\nu},$$ where $\nu$ is the constant in $\FK(\phi)$.
 Then  for any $T\in(0,\infty)$ and all $x\in M_0$,
 \begin{align*}
\bE^x\tau_B &=\int_0^\infty P_t^B{\bf 1}_{B}(x)\,dt=\int_0^T  P_t^B{\bf 1}_{B}(x)\,dt+ \int_T^\infty  P_t^B{\bf 1}_{B}(x)\,dt\\
& \le T+\int_T^\infty \int_B p^B(t,x,y)\,\mu(dy)\,dt\\
& \le T+C\int_T^\infty \left( \frac{\phi(r)}{t}\right)^{1/\nu}\,dt
\le T+C_1\phi(r)^{1/\nu}T^{1-1/\nu},
\end{align*}
 where in the last inequality we have used the fact that the constant $\nu$ in $\FK(\phi)$ can be assumed that $\nu\in(0,1)$.
Setting $T=\phi(r)$, we conclude that
$\bE^x\tau_B\le C_2\phi(r).$ This proves $\E_{\phi,\le}$. \qed\end{proof}

Let $\{X_t^{(\rho)}\}$ be the Hunt process associated with the
$\rho$-truncated Dirichlet form $(\sE^{(\rho)}, \sF)$. For $\lambda>0$, let
$\xi_\lambda$ be an exponential distributed random variable with
mean $1/\lambda$, which is independent of the $\rho$-truncated process
$\{X_t^{(\rho)}\}$.

\begin{lemma}\label{lower-e00} Assume that $\VD$, \eqref{polycon}, $\FK(\phi)$,  $\J_{\phi,\le}$  and $\CSJ(\phi)$ hold.
 Then  for any $c_0\in (0,1)$, there exists a constant $c_1>0$ such
that for all $R>0$ and all $x\in M_0$,
\[
\bE^x\Big[ \tau^{(c_0R)}_{B(x,R)}\wedge \xi_{\phi(R)^{-1}}\Big] \ge  c_1\phi(R).
\]
\end{lemma}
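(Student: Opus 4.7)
My plan is to interpret the quantity of interest as the value at $x$ of the $\lambda$-resolvent
\[
u(y) := G^{B,(c_0R)}_\lambda \mathbf{1}_B(y) = \bE^y\bigl[\tau^{(c_0R)}_{B(x,R)}\wedge \xi_\lambda\bigr],
\]
with $B:=B(x,R)$ and $\lambda:=1/\phi(R)$, of the $(c_0R)$-truncated process killed on exiting $B$. Standard Dirichlet-form theory then provides $0\le u\le \phi(R)$, $u\equiv 0$ on $B^c$, and the weak resolvent identity $\lambda\langle u,\psi\rangle + \sE^{(c_0R)}(u,\psi) = \langle \mathbf{1}_B,\psi\rangle$ for every $\psi\in \sF_B$; testing with $\psi=u$ yields the energy bound $\sE^{(c_0R)}(u,u)\le\int_B u\,d\mu$. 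The goal is to show $u(x)\ge c_1\phi(R)$.

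The first step I would carry out is to establish an integral lower bound $\int_B u\,d\mu \ge c_6(c_0)\,V(x,R)\phi(R)$. I would choose a cut-off function $\varphi\in \sF_b$ for $B(x,R/2)\subset B$ whose $(c_0R)$-truncated capacity is controlled by Proposition \ref{CSJ-equi}(5); combined with \eqref{polycon} this gives $\sE^{(c_0R)}(\varphi,\varphi)\le C(c_0)V(x,R)/\phi(R)$, with $0\le\varphi\le 1$ and $\varphi\equiv 1$ on $B(x,R/2)$. Testing the resolvent identity against $\varphi$ produces $\lambda\langle u,\varphi\rangle = \langle \mathbf{1}_B,\varphi\rangle - \sE^{(c_0R)}(u,\varphi)$; the first right-hand term is $\ge V(x,R/2)\ge c_{\mathrm{VD}}V(x,R)$ by $\VD$, while Cauchy--Schwarz gives $|\sE^{(c_0R)}(u,\varphi)|\le\sqrt{C(c_0)V(x,R)I/\phi(R)}$ with $I:=\int_B u\,d\mu$. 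Together with $\langle u,\varphi\rangle\le I$, this leads to the quadratic inequality $I/\phi(R)+\sqrt{C(c_0)V(x,R)I/\phi(R)}\ge c_{\mathrm{VD}}V(x,R)$, whose positive root yields the asserted bound on $I$.

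Next I would exploit that $w:=\phi(R)-u$ is non-negative, bounded by $\phi(R)$, equal to $\phi(R)$ on $B^c$, and $\sE^{(c_0R)}$-subharmonic on $B$, since $\sL^{(c_0R)}w = 1-\lambda u\ge 0$ there. Applying the $L^1$-mean value inequality for the $(c_0R)$-truncated form (the corollary to Proposition \ref{P:mvi2}) with centre $x$, radius $R$ and truncation $c_0R$, and using that $u$ vanishes on $B^c$, gives
\[
\esssup_{B(x,R/2)} w \;\le\; c_7(c_0)\phi(R)\Bigl[\tfrac{V(x,R+c_0R)}{V(x,R)} - \tfrac{I}{V(x,R)\phi(R)}\Bigr] \;\le\; c_7(c_0)\bigl(K(c_0)-c_6(c_0)\bigr)\phi(R),
\]
where $K(c_0)$ is the volume-doubling constant for the dilation factor $1+c_0$.

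The hard part will be ensuring that the right-hand side of the above is strictly less than $\phi(R)$, since the product $c_7(c_0)K(c_0)$ is not automatically at most $1$, and degrades as $c_0\to 0$. I anticipate that the required gain comes either from iterating the integral lower bound of the first step along a dyadic sequence of shrinking sub-balls centred at $x$ (thereby capturing more of the mass of $u$ near its support), or from coupling the mean-value step with the sharper $L^2$-mean value inequality of Proposition \ref{P:mvi2} applied to $w$, combined with the extra relation $\|u\|_2^2\le\phi(R)\int u$ coming from the energy identity. Either refinement will render the upper bound at most $(1-c_1)\phi(R)$ for some $c_1 = c_1(c_0)>0$, producing $\inf_{B(x,R/2)}u \ge c_1\phi(R)$ and in particular $u(x)\ge c_1\phi(R)$, which is the claim.
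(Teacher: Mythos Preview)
Your setup is correct and your first step (the integral lower bound $\int_B u\,d\mu\ge c_6V(x,R)\phi(R)$ via the capacity estimate and the resolvent identity) is fine. The genuine gap is exactly where you locate it: applying the $L^1$-mean value inequality to $w=\phi(R)-u$ gives
\[
\esssup_{B(x,R/2)} w \;\le\; c_7(c_0)\bigl(K(c_0)-c_6(c_0)\bigr)\,\phi(R),
\]
and there is no mechanism forcing $c_7(K-c_6)<1$. The constant $c_7$ in \eqref{e:mvi} depends on $\nu$, $d_2$, $\beta_2$ and is typically large; $K(c_0)\ge 1$; and $c_6\le 1$ since $u\le\phi(R)$. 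Your two proposed fixes do not close this: iterating on dyadic sub-balls reproduces the same obstacle at each scale, and the $L^2$ version gives $\esssup w^2\le c_7(K-J)\phi(R)^2$ with the identical defect. The structural problem is that the mean value constant is necessarily $\ge 1$, so a subharmonic function already known to be $\le\phi(R)$ cannot be improved to $\le(1-c_1)\phi(R)$ by an MVI alone.

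The paper avoids this by applying the mean value inequality not to $\phi(R)-u$ but to $u_{\lambda,\varepsilon}^{-1}$, where $u_{\lambda,\varepsilon}=u+\varepsilon$. Since $u_{\lambda,\varepsilon}$ is a bounded $\sE^{(\rho)}$-superharmonic function bounded below by $\varepsilon$, Jensen's inequality makes $u_{\lambda,\varepsilon}^{-1}$ a bounded $\sE^{(\rho)}$-subharmonic function on $B$. The crucial input replacing your Step~1 is an \emph{integral} bound on the reciprocal,
\[
\int_{\frac{1+c_0}{2}B} u_{\lambda,\varepsilon}^{-1}\,d\mu \;\le\; \frac{c_2\,\mu(B)}{\phi(R)},
\]
obtained by testing the resolvent identity against $\psi=u_{\lambda,\varepsilon}^{-1}\varphi^2\in\sF_B$ (with $\varphi$ the cut-off for $\frac{1+c_0}{2}B\subset B$ from Proposition~\ref{CSJ-equi}(5)) and the elementary inequality
\[
(a-b)\Bigl(\tfrac{\varphi(x)^2}{a}-\tfrac{\varphi(y)^2}{b}\Bigr)\le(\varphi(x)-\varphi(y))^2,\qquad a,b>0,
\]
which yields $\sE^{(\rho)}_\lambda(u_{\lambda,\varepsilon},u_{\lambda,\varepsilon}^{-1}\varphi^2)\le\sE^{(\rho)}_\lambda(\varphi,\varphi)\le c\mu(B)/\phi(R)$. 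Now the $L^1$-MVI \eqref{e:mvi} applied to $u_{\lambda,\varepsilon}^{-1}$ on the ball $\frac{1-c_0}{2}B$ (so that adding the truncation radius $\rho=c_0R$ lands in $\frac{1+c_0}{2}B$) gives $\esssup_{\frac{1-c_0}{4}B}u_{\lambda,\varepsilon}^{-1}\le c_4/\phi(R)$, i.e.\ $\essinf u_{\lambda,\varepsilon}\ge c_5\phi(R)$. Here the size of the MVI constant is harmless because $u_{\lambda,\varepsilon}^{-1}$ has no a~priori upper bound competing with it. Letting $\varepsilon\to0$ finishes the proof.
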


\begin{proof} For fixed $c_0\in (0,1)$ and $R>0$, set $\rho=c_0R$. Set $B=B(x,R)$, $\lam=1/\phi(R)$ and
$u_\lam(x)=\bE^x\big(\tau^{(\rho)}_{B}\wedge \xi_\lam\big)$ for $x\in M_0$; here
and in the following we make some abuse of notation and use $\bE^x$
for the expectation of the product measure of the truncated process
$\{X_t^{(\rho)}\}$ and $\xi_\lam$.  Then  for all $x\in M_0$,
\[
u_\lam(x)=\bE^x\left[ \int_0^{\tau^{(\rho)}_{B}\wedge
\xi_\lam}{\bf1}(X_t^{(\rho)})\,dt\right] =
\bE^x\left[ \int_0^{\tau^{(\rho)}_{B}}e^{-\lam
t}{\bf1}(X_t^{(\rho)})\,dt\right] =G_\lam^{(\rho),B}{\bf1}(x),
\]
where $G_\lam^{(\rho),B}$ is the $\lam$-order resolvent for the
truncated process $\{X_t^{(\rho)}\}$ killed on exiting $B$.
Clearly $u_\lambda$ is bounded and  is in $\sF^{(\rho)}_B$. Moreover, $u_\lambda (X^{(\rho)}_{t\wedge \tau_B^{(\rho)}})$
is a bounded supermartingale under $\bP^x$ for every $x\in B\cap M_0$.

Set $u_{\lam,\varepsilon}=u_\lam+\varepsilon$ for any
$\varepsilon>0$.
Note that $u_{\lam,\varepsilon} \in \FF^{loc}$ as  $u_\lam\in \sF_B \subset \sF$.
Since $t\mapsto u_{\lam,\varepsilon} (X^{(\rho)}_{t\wedge \tau_B^{(\rho)}})$
is a bounded supermartingale under $\bP^x$ for every $x\in B\cap M_0$,
we have by Theorem \ref{equ-har} that
$u_{\lam,\varepsilon}$ is
$\sE^{(\rho)}$-superharmonic in $B$.
 By $\J_{\phi,\le}$, $\CSJ(\phi)$ and  Proposition
\ref{CSJ-equi}(5), we can choose a non-negative cut-off function
$\varphi\in \sF^{(\rho)}_{B}$ for $\frac{1+c_0}{2}B\subset B$ such that
$$\sE^{(\rho)}(\varphi,\varphi)\le \frac{c_1\mu(B)}{\phi(R)}$$ and
so
$$
\sE_\lam^{(\rho)}(\varphi,\varphi)=\sE^{(\rho)}(\varphi,\varphi)
+\lam\langle \varphi,\varphi\rangle\le \frac{c_1\mu(B)}{\phi(R)}+
\lam\mu(B)\le\frac{c_2\mu(B)}{\phi(R)}.
$$
Furthermore, choose a continuous function $g$ on $[0,\infty)$ such that $g(0)=0$, $g(t)=\varepsilon^2/t$ for $t\ge \varepsilon$ and
 $|g(t)-g(s)|\le |t-s|$ for all $t, s\ge0$. According to \cite[Theorem 1.4.2 (v) and (iii)]{FOT},
 $u_{\lam,\varepsilon}^{-1}= g(u_{\lam,\varepsilon}) /\eps^2$ is a bounded function in $\sF^{loc}$. It follows then 
$u_{\lam,\varepsilon}^{-1}\vp^2\in \sF_{B} = \sF^{(\rho)}_{{B}}$, 
 since  $\varphi$ 
   is a bounded element in $\sF_{B}$
and balls are relatively compact in $M$.  
We deduce from
$$(u_{\lam,\varepsilon}(x)-u_{\lam,\varepsilon}(y))(u_{\lam,\varepsilon}(x)^{-1}\varphi^2(x)-u_{\lam,\varepsilon}(y)^{-1}\varphi^2(y) )\le (\varphi(x)-\varphi(y))^2$$ that
$$\sE^{(\rho)}_\lam(u_{\lam,\varepsilon}, u_{\lam,\varepsilon}^{-1}\varphi^2)=\sE^{(\rho)}(u_{\lam,\varepsilon},
u_{\lam,\varepsilon}^{-1}\varphi^2)+\lam\langle
u_{\lam,\varepsilon}, u_{\lam,\varepsilon}^{-1}\varphi^2\rangle \le \sE^{(\rho)}(\varphi,\varphi)+\lam\langle
\varphi,\varphi\rangle=\sE^{(\rho)}_\lam(\varphi,\varphi).$$
Therefore,
$$\sE^{(\rho)}_\lam(u_{\lam,\varepsilon}, u_{\lam,\varepsilon}^{-1}\varphi^2)\le \frac{c_2\mu(B)}{\phi(R)}.$$
On the other hand,  noticing again that $u_{\lam,\varepsilon}^{-1}\varphi^2\in \sF^{(\rho)}_{B},$
\begin{align*}
\sE^{(\rho)}_\lam(u_{\lam,\varepsilon}, u_{\lam,\varepsilon}^{-1}\varphi^2)&=\varepsilon\sE^{(\rho)}_\lam(1, u_{\lam,\varepsilon}^{-1}\varphi^2)+\sE^{(\rho)}_\lam(u_\lambda, u_{\lam,\varepsilon}^{-1}\varphi^2)\\
&= \varepsilon\lambda\langle
1,u_{\lam,\varepsilon}^{-1}\varphi^2\rangle+
\langle 1,u_{\lam,\varepsilon}^{-1}\varphi^2\rangle\\
&\ge\langle 1,u_{\lam,\varepsilon}^{-1}\varphi^2\rangle\ge
\int_{\frac{1+c_0}{2}B}u_{\lam,\varepsilon}^{-1}\,d\mu,
\end{align*}
and so
$$\int_{\frac{1+c_0}{2}B}u_{\lam,\varepsilon}^{-1}\,d\mu\le \frac{c_2\mu(B)}{\phi(R)}.$$

Noting that $u_{\lam,\varepsilon}$ is
 bounded and  $\sE^{(\rho)}$-superharmonic  in $B$,
$u_{\lam,\varepsilon}(X^{(\rho)}_{t\wedge \tau_U^{(\rho)}})$ is a uniformly integrable $\bP^x$-supermartingale for any relatively compact open subset $U$ of $B$ and q.e.\ $x\in B$ by Theorem \ref{equ-har}.
Since  $u_{\lam,\varepsilon}\geq \eps$,
we have by  Jensen's inequality applied to the convex function $1/x$ that
$u^{-1}_{\lam,\varepsilon}(X^{(\rho)}_{t\wedge \tau_U^{(\rho)}})$ is a uniformly integrable $\bP^x$-submartingale for any relatively compact open subset $U$ of $B$ and q.e.\ $x\in B$.
Using
 Theorem \ref{equ-har} again, we can conclude that $u_{\lam,\varepsilon}^{-1}$
is an $\sE^{(\rho)}$-subharmonic function in $B$.
 Applying the
$L^1$-mean value inequality \eqref{e:mvi} to
$u_{\lam,\varepsilon}^{-1}$ on $\frac{1-c_0}{2}B$, we get that
$$ \esssup_{ \frac{1-c_0}{4}B}u_{\lam,\varepsilon}^{-1}\le \frac{c_3}{\mu(B)}\int_{\frac{1+c_0}{2}B}u_{\lam,\varepsilon}^{-1}\,d\mu\le \frac{c_4}{\phi(R)}.$$
Whence,
$\essinf_{\frac{1-c_0}{4}B}u_{\lam,\varepsilon}\ge
c_5\phi(R)$.  Letting $\varepsilon\to 0$, we get
$ \essinf_{ \frac{1-c_0}{4}B} u_\lam\ge  c_5\phi(R)$.
This yields the desired estimate.
\qed \end{proof}

 The next lemma is standard.

\begin{lemma}\label{E}
If $\E_{\phi}$ holds, then for all $x\in M_0$ and $r, t>0$,
\be \label{e:4.29}
\bP^x (\tau_{B(x,r)}\le t) \le 1- \frac{c_1\phi(r)}{\phi(2r)}+ \frac{c_2t}{\phi(2r)}.
\ee
In particular, if \eqref{polycon} and $\E_{\phi}$ hold, then
$\EP_{\phi, \le, \varepsilon}$ holds, i.e.\ for any ball
$B:=B(x_0,r)$ with $x_0\in M$ and radius $r>0$, there are constants
$\delta, \varepsilon \in (0,1)$ such that
\begin{equation}\label{l:prob}
\bP^x (\tau_{B}\le t) \le \varepsilon  \quad \textrm{ for all }x\in B(x_0,r/4)\cap M_0
\end{equation}
provided that $t\le \delta \phi(r)$. \end{lemma}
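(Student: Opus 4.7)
\medskip
\noindent\textbf{Proof plan.} The first inequality \eqref{e:4.29} is a standard consequence of the strong Markov property combined with the two-sided estimate $\E_\phi$. Write $B = B(x,r)$ and decompose
\[
 \bE^x \tau_B \;=\; \bE^x[\tau_B \wedge t] \;+\; \bE^x\bigl[(\tau_B - t)\,;\, \tau_B > t\bigr] .
\]
The first term is bounded by $t$. For the second, apply the strong Markov property at time $t$ to get
\[
 \bE^x\bigl[(\tau_B - t)\,;\, \tau_B > t\bigr] \;=\; \bE^x\bigl[\bE^{X_t}\tau_B\,;\, \tau_B > t\bigr] \;\le\; \bigl(\sup_{y\in B\cap M_0} \bE^y \tau_B\bigr)\,\bP^x(\tau_B > t).
\]
For every $y \in B$ we have $B \subset B(y,2r)$, so $\bE^y\tau_B \le \bE^y\tau_{B(y,2r)} \le c'\phi(2r)$ by the upper half of $\E_\phi$. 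Combining, and using the lower half $\bE^x\tau_B \ge c''\phi(r)$ (valid for $x$ at the center of $B$), yields
\[
 c''\phi(r) \;\le\; t + c'\phi(2r)\bigl(1 - \bP^x(\tau_B \le t)\bigr),
\]
which rearranges to \eqref{e:4.29} with $c_1 = c''/c'$ and $c_2 = 1/c'$.

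\medskip
\noindent For the ``in particular'' statement, fix $B = B(x_0,r)$ and $x \in B(x_0,r/4)\cap M_0$. Since $d(x,x_0) \le r/4$, an easy triangle-inequality check gives $B(x,r/2) \subset B(x_0,r) = B$, so $\tau_{B(x,r/2)} \le \tau_B$ and therefore
\[
 \bP^x(\tau_B \le t) \;\le\; \bP^x\bigl(\tau_{B(x,r/2)} \le t\bigr).
\]
Apply \eqref{e:4.29} to the ball $B(x,r/2)$ (whose double is $B(x,r)$) to obtain
\[
 \bP^x(\tau_B \le t) \;\le\; 1 - \frac{c_1\phi(r/2)}{\phi(r)} + \frac{c_2 t}{\phi(r)} .
\]

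\medskip
\noindent Now invoke \eqref{polycon}: the lower bound there gives $\phi(r/2)/\phi(r) \ge c_1 2^{-\beta_1} =: \kappa \in (0,1)$. Hence
\[
 \bP^x(\tau_B \le t) \;\le\; (1 - c_1 \kappa) + \frac{c_2 t}{\phi(r)} .
\]
Choose $\eps \in (c_1\kappa\text{-complement}, 1)$ explicitly, for instance $\eps := 1 - c_1\kappa/2 \in (0,1)$, and pick $\delta \in (0,1)$ small enough so that $c_2\delta \le c_1\kappa/2$; then for any $t \le \delta\phi(r)$ the right-hand side is at most $\eps$, which is exactly \eqref{l:prob}. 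There is no conceptual obstacle — both parts are bookkeeping exercises in the Markov property and the doubling property of $\phi$ — the only care needed is the geometric inclusion $B(x,r/2)\subset B(x_0,r)$ that allows us to reduce the starting-point condition $x\in B(x_0,r/4)$ to the centered-ball estimate \eqref{e:4.29}.
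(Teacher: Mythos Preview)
Your proof is correct and follows essentially the same route as the paper's: the same Markov-property decomposition $\tau_B \le t + (\tau_B - t)\mathbf{1}_{\{\tau_B > t\}}$ combined with the inclusion $B(x,r) \subset B(y,2r)$ for the first part, and the reduction to a centered ball for the second part. The only cosmetic difference is that the paper uses the inclusion $B(x,3r/4)\subset B(x_0,r)$ rather than your $B(x,r/2)\subset B(x_0,r)$; both work equally well.

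One small bookkeeping slip: when you invoke \eqref{polycon} to bound $\phi(r/2)/\phi(r)$ from below, it is the \emph{upper} bound in \eqref{polycon} (with exponent $\beta_2$) that gives $\phi(r)/\phi(r/2)\le c_2 2^{\beta_2}$ and hence $\phi(r/2)/\phi(r)\ge c_2^{-1}2^{-\beta_2}$; the lower bound goes the wrong way. This does not affect the argument, since all you need is some positive lower bound $\kappa$ for the ratio. Also note you have reused the symbol $c_1$ for two different constants (the one from \eqref{e:4.29} and the one from \eqref{polycon}); this is harmless here but worth cleaning up.
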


\begin{proof} Suppose that there are constants $c_2\ge c_1>0$ such that for all $x\in M_0$ and $r>0$, $$c_1 \phi(r)\le \bE^x\tau_{B(x,r)} \le c_2 \phi(r) .$$ Since for any $t>0$, $\tau_{B(x,r)}\le t+(\tau_{B(x,r)}-t){\bf 1}_{\{\tau_{B(x,r)}> t\}},$ we have by the Markov property
\begin{align*}
 \bE^x\tau_{B(x,r)}&\le t+ \bE^x\Big[{\bf 1}_{\{\tau_{B(x,r)}>t\}} \bE^{X_t}[ \tau_{B(x,r)}-t ]\Big]
 \le t+ \bP^x(\tau_{B(x,r)}>t) \sup_{z\in B(x,r)} \bE^{z}\tau_{B(x,r)}\\
 &\le t+ \bP^x(\tau_{B(x,r)}>t) \sup_{z\in B(x,r)} \bE^{z}\tau_{B(z,2r)}
 \le t+ c_2\bP^x(\tau_{B(x,r)}>t)  \phi(2r).
\end{align*}
Then for all $x\in M_0$, $c_1\phi(r)\le \bE^x\tau_{B(x,r)}\le  t+  c_2\bP^x(\tau_{B(x,r)}>t) \phi(2r)$,
proving \eqref{e:4.29}.   Since
$$
\bP^x (\tau_{B(x_0,r)}\le t) \le \bP^x(\tau_{B(x,3r/4)}\le t),
\quad x\in B(x_0,r/4)\cap M_0.
$$
inequality \eqref{l:prob} follows from   \eqref{e:4.29}  and \eqref{polycon}.    \qed
\end{proof}

\begin{lemma}\label{lower-e} Assume that $\VD$, \eqref{polycon}, $\FK(\phi)$,  $\J_{\phi,\le}$  and $\CSJ(\phi)$ hold.  Then  there exists a constant $c_1>0$ such that for all $x\in M_0$ and all $R>0$,
$$
\bE^x\tau_{B(x,R)}\ge c_1\phi(R).$$
\end{lemma}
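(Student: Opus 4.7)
The strategy is to deduce this from Lemma \ref{lower-e00} by using Meyer's decomposition of $X$ relative to the truncated process $X^{(\rho)}$. Fix any $c_0\in (0,1)$ and set $\rho = c_0 R$. By Lemma \ref{intelem} together with \eqref{polycon}, the large-jump intensity
\[
\lambda^*(y) := \int_{B(y,\rho)^c} J(y,z)\,\mu(dz)
\]
satisfies $\lambda^*(\cdot) \le C^*/\phi(R)$ for some constant $C^* = C^*(c_0) > 0$ independent of $R$.

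Meyer's decomposition, realized in this setting in Subsection \ref{FKEPHIj}, allows us to couple $X$ with $X^{(\rho)}$ so that the two processes agree up to the first time $T_1$ of an ``added back'' big jump, whose conditional intensity along the path of $X^{(\rho)}$ is $\lambda^*(X^{(\rho)}_t)\,dt$. In particular, $\tau_{B(x,R)} \ge \tau^{(\rho)}_{B(x,R)} \wedge T_1$. Since $\lambda^*(\cdot)\le C^*/\phi(R)$ yields $\bP^x(T_1 > t \mid X^{(\rho)}) \ge e^{-C^*t/\phi(R)}$, taking $\xi$ to be an independent exponential random variable of rate $C^*/\phi(R)$ and using Fubini, I would conclude
\[
\bE^x \tau_{B(x,R)} \ge \bE^x \int_0^{\tau^{(\rho)}_{B(x,R)}} \bP(T_1 > t \mid X^{(\rho)})\,dt \ge \bE^x\big[\tau^{(\rho)}_{B(x,R)} \wedge \xi\big].
\]

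To finish, it suffices to lower-bound the right-hand side by a multiple of $\phi(R)$. This can be done by repeating the argument of Lemma \ref{lower-e00} verbatim with the choice $\lambda = C^*/\phi(R)$ in place of $\lambda = 1/\phi(R)$: the only change is that the bound $\sE^{(\rho)}_\lambda(\varphi,\varphi) \le c\mu(B)/\phi(R) + \lambda \mu(B)$ now reads $\le c'(C^*)\, \mu(B)/\phi(R)$, which propagates through the capacity estimate, the subharmonicity of $u_{\lambda,\varepsilon}^{-1}$, and the $L^1$-mean value inequality \eqref{e:mvi} to give $\essinf_{\frac{1-c_0}{4}B} u_\lambda \ge c_1(c_0,C^*)\,\phi(R)$, hence the claimed bound.

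The main (and essentially only delicate) ingredient is the careful realization of Meyer's decomposition in the present $\VD$-setting, together with the accompanying coupling of the added-jump intensity with the independent exponential $\xi$; once that is in hand, the display above immediately yields $\bE^x \tau_{B(x,R)} \ge c_1 \phi(R)$ as required. All other steps are routine adaptations of the already-established Lemma \ref{lower-e00}.
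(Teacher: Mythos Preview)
Your proof is correct and follows the same overall skeleton as the paper's: both use Meyer's decomposition to compare $\tau_{B(x,R)}$ with $\tau^{(\rho)}_{B(x,R)}\wedge T_\rho$, and both ultimately rest on Lemma~\ref{lower-e00}. The connecting step is organized differently, however. The paper first uses the trivial upper bound $\bE^x[\tau^{(\rho)}_B\wedge\xi_\lambda]\le\phi(R)$ together with Lemma~\ref{lower-e00} to get $\bE^x[\tau^{(\rho)}_B\wedge\xi_\lambda]\asymp\phi(R)$, then runs a Lemma~\ref{E}-type argument to convert this into a tail bound $\bP^x(\tau^{(\rho)}_B\ge c_3\phi(R))\ge c_4$, and finally multiplies by the conditional survival probability $\bP^x(T_\rho>c_3\phi(R)\mid\sF_\infty^{X^{(\rho)}})\ge c_6$ to obtain $\bP^x(\tau_B>c_3\phi(R))\ge c_4c_6$. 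Your route is more economical: you stay at the level of expectations and use the pointwise inequality $\bP^x(T_1>t\mid X^{(\rho)})\ge e^{-C^*t/\phi(R)}$ inside the time integral to get $\bE^x\tau_B\ge\bE^x[\tau^{(\rho)}_B\wedge\xi_{C^*/\phi(R)}]$ directly, then invoke (a re-run of) Lemma~\ref{lower-e00}. This avoids the detour through tail probabilities at the cost of having to re-run Lemma~\ref{lower-e00} with $\lambda=C^*/\phi(R)$ rather than $\lambda=1/\phi(R)$; as you note, that change is harmless since the proof of Lemma~\ref{lower-e00} works verbatim for any $\lambda=c/\phi(R)$.
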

\begin{proof}Let $B=B(x,R)$, $\rho=cR$ for some $c\in (0,1)$ and $\lambda=1/\phi(R)$. Recall that
$\xi_\lambda$ is an exponential distributed random variable with
mean $1/\lambda$, which is independent of the $\rho$-truncated process
$\{X_t^{(\rho)}\}$. Since it is clear that for all $x\in M_0$,
$$
\bE^x\Big[\tau_{B}^{(\rho)}\wedge \xi_\lam\Big]\le \bE^x \xi_\lam =\phi(R),
$$
using Lemma \ref{lower-e00}, we have $$
\bE^x\Big[ \tau_{B}^{(\rho)}\wedge \xi_\lam\Big] \asymp \phi(R).$$
So by an argument similar to that of Lemma \ref{E},  we have for all $x\in M_0$,
\[
\bP^x\big(\tau_{B}^{(\rho)}\wedge \xi_\lam\le t\big)\le
1-c_1+c_2t/\phi(R).
\]
In particular, choosing $c_3>0$
small enough, we have
$$\bP^x(\tau_{B}^{(\rho)} \ge c_3\phi(R))\ge  \bP^x\big(\tau_{B}^{(\rho)}\wedge \xi_\lam \ge c_3\phi(R)\big)\ge c_4>0.$$

Next, let $T_\rho$ be the first time that the size of jump bigger than $\rho$ occurs for the original process $\{X_t\}$, and let $\{X_t^{(\rho)}\}$ be the truncated process associated with $\{X_t\}$.
Then, as in the proof of \cite[Lemma 3.1(a)]{BGK1}, we have
\[
\bP^x(T_\rho>t|\sF_\infty^{X^{(\rho)}})=\exp\left(-\int_0^t {\cal J} (X^{(\rho)}_s)\,ds\right)\ge
e^{-c_5t/\phi(\rho)},
\] where
$$
 {\cal J} (x):=\int_{B(x,\rho)^c}J(x,y)\,\mu(dy)\le c_5/\phi(\rho),
$$
  thanks to Lemma \ref{intelem}.  So
 $$
 \bP^x(T_\rho> c_3\phi(R)|\sF_\infty^{X^{(\rho)}})\ge c_6.
 $$
This implies
\begin{align*}
\bP^x\big(\tau_B^{(\rho)}\wedge T_\rho>
c_3\phi(R)\big)=\bE^x\Big[ {\bf1}_{\{\tau_{B}^{(\rho)}\ge
c_3\phi(R)\}} \bE^x\Big[ {\bf1}_{\{T_\rho>
c_3\phi(R)\}}|\sF_\infty^{X^{(\rho)}}\Big] \Big] \ge c_4c_6>0.
\end{align*}

Note that $\tau_B\ge \tau_B^{(\rho)}\wedge T_\rho$. (In fact, if
$\tau_B^{(\rho)}<T_\rho$, then $\tau_B= \tau_B^{(\rho)};$ if
$\tau_B^{(\rho)}\ge T_\rho$, then, by the fact that the truncated
process $\{X^{(\rho)}_t\}$ coincides with the original $\{X_t\}$
till $T_\rho$, we also have $\tau_B\ge T_\rho$.)  We obtain
$$\bP^x(\tau_B> c_3\phi(R))\ge c_4c_6,$$ and so the desired estimate holds.
\qed\end{proof}

\subsection{$\FK(\phi)+\E_\phi+\J_{\phi,\le}\Longrightarrow \UHKD(\phi)$}\label{FKEPHIj}

If $V(x,r)\asymp r^d$ for each $r>0$ and $x\in M$ with some constant $d>0$, then
$\FK(\phi)\Longrightarrow \UHKD(\phi)$ is well-known; e.g.\ see the remark in the proof of \cite[Theorem 4.2]{GT}. 
However, in the setting when the volume function $V(x, r)$ is not comparable to a non-decreasing function $V(r)$ independent of $x$ as is the case in this paper, 
it is highly non-trivial to establish the on-diagonal upper bound estimate $\UHKD(\phi)$ from $\FK(\phi)$. Below, we will adopt the truncating argument and significantly modify the  iteration techniques in \cite[Proof of Theorem 2.9]{Ki} and \cite[Lemma 5.6]{GH}. Without further mention, throughout the proof we will assume that $\mu$ and $\phi$ satisfy $\VD$ and \eqref{polycon}, respectively.

 Recall that for   $\rho>0$, $(\sE^{(\rho)}, \sF)$ is the $\rho$-truncated Dirichlet form
 defined
as in \eqref{eq:rhoEdef}.
It is clear that for any function $f,g\in \sF$ with ${\rm dist} (\textrm{supp}\, f, \textrm{supp} \, g)>\rho$, $\sE^{(\rho)}(f,g)=0$.
For any non-negative open set ${D} \subset M$, denote by $\{P_t^{D}\}$ and $\{Q_t^{(\rho),{D}}\}$ the semigroups of $(\sE, \sF_{D})$ and $(\sE^{(\rho)}, \sF_{D})$, respectively. We write $\{Q_t^{(\rho),M}\}$ as $\{Q_t^{(\rho)}\}$ for simplicity.

We next give the following preliminary heat kernel estimate.

\begin{lemma}\label{odupper} Suppose that  $\VD$, \eqref{polycon}, $\FK(\phi)$ and $\J_{\phi,\le}$ hold. For any ball $B=B(x,r)$ with $x\in M$ and $r>0$, the semigroup  $\{Q_t^{(\rho),B}\}$ possesses the heat kernel $q^{(\rho),B}(t,x,y)$, which satisfies that there exist constants $C, c_0, \nu>0$ $($independent of $\rho$$)$ such that for all $t>0$ and $x',y'\in B \cap M_0$,
$$ q^{(\rho),B}(t,x',y')\le \frac{C}{V(x,r)} \left(\frac{\phi(r)}{t}\right)^{1/\nu
} \exp\left( \frac{ c_0t }{\phi(\rho)}\right).$$      \end{lemma}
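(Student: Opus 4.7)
The plan is to reduce the bound for the truncated semigroup $\{Q_t^{(\rho),B}\}$ to the on-diagonal bound for the un-truncated Dirichlet semigroup $\{P_t^B\}$ on $B$, which, under $\VD$, \eqref{polycon} and $\FK(\phi)$, is a direct consequence of Proposition \ref{P:3.3} (cf.\ the proof of Lemma \ref{upper-e}) and gives
\[
\esssup_{x',y'\in B}p^B(t,x',y')\le \frac{C}{V(x,r)}\Big(\frac{\phi(r)}{t}\Big)^{1/\nu}.
\]
The only additional tool required is the form comparison \eqref{dfcomp}, which, under the assumptions of the lemma, reads $\sE(u,u)\le \sE^{(\rho)}(u,u)+c\,\phi(\rho)^{-1}\|u\|_2^2$ for every $u\in \sF$, with a constant $c>0$ independent of $\rho$.

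Let $L^B$ and $L^{(\rho),B}$ denote the non-negative self-adjoint generators of $(\sE,\sF_B)$ and $(\sE^{(\rho)},\sF_B)$, respectively. Restricting the form inequality above to the common core $\sF_B$ yields $L^B\le L^{(\rho),B}+c/\phi(\rho)$ in the quadratic-form sense on $L^2(B,\mu)$. The spectral theorem applied to these two non-negative self-adjoint operators then produces the semigroup ordering
\[
P_t^B \;\ge\; e^{-c t/\phi(\rho)}\,Q_t^{(\rho),B}
\]
as bounded operators on $L^2(B,\mu)$. Since both semigroups are Markovian, symmetric and positivity preserving, and since $P_t^B$ admits a bounded symmetric integral kernel $p^B(t,\cdot,\cdot)$ by the first display, a standard duality/density argument yields the existence of a symmetric kernel $q^{(\rho),B}(t,\cdot,\cdot)$ for $Q_t^{(\rho),B}$ together with the pointwise comparison
\[
e^{-c t/\phi(\rho)}\,q^{(\rho),B}(t,x',y')\le p^B(t,x',y') \qquad \mu\times\mu\text{-a.e. on } B\times B.
\]

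Combining the two displays above gives the claimed estimate with $c_0=c$. The only point that requires a little care is the passage from the form ordering to the pointwise kernel domination; this is routine once \eqref{dfcomp} is in hand, and the existence of $q^{(\rho),B}$ is then automatic from the kernel domination together with the boundedness of $p^B$ provided by $\FK(\phi)$. I anticipate no serious obstacle: this lemma needs neither $\CSJ(\phi)$, nor a Nash iteration, nor any Davies-type weighted heat kernel argument, and the exponential factor $\exp(c_0 t/\phi(\rho))$ appears precisely as the cost of the constant killing term introduced by the form comparison \eqref{dfcomp}.
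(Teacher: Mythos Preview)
Your overall strategy --- combine the $\FK(\phi)$-derived on-diagonal bound for $p^B$ with the form comparison \eqref{dfcomp} --- is the right one, and is also what the paper does. The gap is in the step where you invoke ``the spectral theorem'' to pass from the quadratic-form inequality $L^B\le L^{(\rho),B}+c/\phi(\rho)$ to the operator inequality $P_t^B\ge e^{-ct/\phi(\rho)}Q_t^{(\rho),B}$. This implication is false in general: for non-commuting non-negative self-adjoint operators, $A\le B$ in the form sense does \emph{not} imply $e^{-tA}\ge e^{-tB}$; the map $x\mapsto e^{-tx}$ is not operator monotone. Simple $2\times 2$ matrix counterexamples exist. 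So the ``spectral theorem'' does not hand you the semigroup ordering.

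The paper circumvents this by staying at the level of Nash inequalities rather than operators. By Proposition~\ref{P:3.3}, $\FK(\phi)$ is equivalent to the Nash-type inequality $\Nash(\phi)_B$: for every $u\in\sF_B$,
\[
\frac{V(x,r)^\nu}{\phi(r)}\|u\|_2^{2+2\nu}\le C_1\,\sE(u,u)\,\|u\|_1^{2\nu}.
\]
Inserting \eqref{dfcomp} directly into the right-hand side yields the same inequality with $\sE(u,u)$ replaced by $\sE^{(\rho)}(u,u)+c_0\phi(\rho)^{-1}\|u\|_2^2=:\sE^{(\rho)}_{c_0/\phi(\rho)}(u,u)$. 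This is a Nash inequality for the Dirichlet form $\sE^{(\rho)}_{c_0/\phi(\rho)}$, whose semigroup is $e^{-c_0 t/\phi(\rho)}Q_t^{(\rho),B}$. Applying Proposition~\ref{P:3.3} once more (direction $(2)\Rightarrow(3)$) to this form gives the heat kernel bound for $e^{-c_0t/\phi(\rho)}Q_t^{(\rho),B}$, from which the stated estimate with the exponential factor follows at once. The key point is that the Nash$\Leftrightarrow$ultracontractivity machinery only needs the single quadratic form, never a comparison of two non-commuting operators.

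As an aside, the kernel domination you wrote down, $q^{(\rho),B}(t,x',y')\le e^{c_0t/\phi(\rho)}p^B(t,x',y')$, is in fact true, but the correct justification is probabilistic via Meyer's construction (Lemma~\ref{meyer}(2), adapted to the $B$-killed processes: on the event that no jump of size larger than $\rho$ has occurred by time $t$, the truncated and original processes --- and hence also their exit times from $B$ --- coincide). That is a genuine argument, not a consequence of the spectral theorem.
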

\begin{proof}First, by Proposition \ref{P:3.3}, $\FK(\phi)$ implies that
there exist constants $C_1,\nu>0$ such that
for any ball $B=B(x,r)$,
\[
\frac{V(x,r)^\nu}{\phi(r)}\|u\|_2^{2+2\nu}\le C_1\sE(u,u)\|u\|_1^{2\nu},\quad \forall u\in \sF_B.
\]
According to \eqref{dfcomp}, there is a constant $c_0>0$ such that
$$ \frac{V(x,r)^\nu}{\phi(r)}\|u\|_2^{2+2\nu} \|u\|_1^{-2\nu}\le C_1
\Big(\sE^{(\rho)}(u,u)+\frac{c_0\|u\|^2_{2}}{ \phi(\rho)}\Big)
=:C_1\sE^{(\rho)}_{c_0/\phi(\rho)}(u,u),\quad \forall u\in \sF^{(\rho)}_B.$$
According to Proposition \ref{P:3.3} again (to the Dirichelt form $\sE^{(\rho)}_{c_0/\phi(\rho)}$),
this yields the required assertion.\qed
\end{proof}

Let $\{X_t^{(\rho)}\}$ be the Hunt process associated with
the Dirichlet form $(\sE^{(\rho)}, \sF)$. For any open
subset $D$, let $\tau^{(\rho)}_{D}$ be the first exit time from $D$ by
the Hunt process $\{X_t^{(\rho)}\}$.

\begin{lemma}\label{killing} Suppose that $\VD$, \eqref{polycon}, $\E_\phi$ and $\J_{\phi,\le}$ hold.  Then  there are constants $c_1,c_2>0$ such that for any $r,t,\rho>0$,
$$\bP^x(\tau^{(\rho)}_{B(x,r)}\le t)\le 1- c_1+ \frac{c_2t}{\phi(2r)\wedge \phi(\rho)},\quad x\in M_0.$$ \end{lemma}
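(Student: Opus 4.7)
\medskip\noindent
\textbf{Proof proposal.} The plan is to reduce the statement to Lemma \ref{E} for the non-truncated process by exploiting the fact that $X$ and $X^{(\rho)}$ coincide up to the first jump of size larger than $\rho$. Let $T_\rho$ denote the first time that the original process $\{X_t\}$ makes a jump of size bigger than $\rho$, so that (as already recalled in the proof of Lemma \ref{lower-e}) $X_s=X^{(\rho)}_s$ for all $s<T_\rho$. In particular, on the event $\{T_\rho>t\}$ the stopping times $\tau_{B(x,r)}$ and $\tau^{(\rho)}_{B(x,r)}$ coincide on $[0,t]$.

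With this observation, I would split
\[
\bP^x\big(\tau^{(\rho)}_{B(x,r)}\le t\big)
\le \bP^x\big(\tau^{(\rho)}_{B(x,r)}\le t,\,T_\rho>t\big)+\bP^x(T_\rho\le t)
\le \bP^x\big(\tau_{B(x,r)}\le t\big)+\bP^x(T_\rho\le t).
\]
The first term is controlled directly by Lemma \ref{E}: since $\E_\phi$ is assumed, \eqref{e:4.29} applies, and combining it with \eqref{polycon} (which gives $\phi(r)/\phi(2r)\ge c_0$ for some absolute constant $c_0>0$) yields
\[
\bP^x\big(\tau_{B(x,r)}\le t\big)\le 1-c_1+\frac{c_2\,t}{\phi(2r)}.
\]

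For the second term, I would use exactly the rate estimate already recorded in the proof of Lemma \ref{lower-e}: conditionally on $\sF^{X^{(\rho)}}_\infty$,
\[
\bP^x(T_\rho>t\mid \sF^{X^{(\rho)}}_\infty)=\exp\Big(-\int_0^t \mathcal{J}(X^{(\rho)}_s)\,ds\Big),
\qquad \mathcal{J}(z)=\int_{B(z,\rho)^c}J(z,y)\,\mu(dy),
\]
and $\mathcal{J}(z)\le c_3/\phi(\rho)$ for all $z$ by Lemma \ref{intelem}. Using $1-e^{-u}\le u$ for $u\ge 0$, this gives
\[
\bP^x(T_\rho\le t)\le 1-\exp(-c_3 t/\phi(\rho))\le \frac{c_3\,t}{\phi(\rho)}.
\]
Adding the two bounds and absorbing $\frac{c_2 t}{\phi(2r)}+\frac{c_3 t}{\phi(\rho)}\le \frac{c_4 t}{\phi(2r)\wedge\phi(\rho)}$ completes the proof.

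There is essentially no obstacle here; the only mild subtlety is to be clear that the coupling between $X$ and $X^{(\rho)}$ up to $T_\rho$ justifies replacing $\tau^{(\rho)}_{B(x,r)}$ by $\tau_{B(x,r)}$ on $\{T_\rho>t\}$, which is a standard consequence of Meyer's construction of $X$ from $X^{(\rho)}$ and has already been used in this form in Lemma \ref{lower-e}. All other inputs are direct invocations of Lemmas \ref{intelem} and \ref{E} together with the doubling property \eqref{polycon} of $\phi$.
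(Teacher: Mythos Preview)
Your proof is correct and uses essentially the same mechanism as the paper. The paper routes the argument through the semigroup comparison Lemma~\ref{L:semicomp1} (which itself is proved via the $T_\rho$-coupling and the bound $\bP^x(T_\rho\le t)\le c\,t/\phi(\rho)$) to obtain $\bP^x(\tau^{(\rho)}_B\le t)\le \bP^x(\tau_B\le t)+c_3t/\phi(\rho)$, and then applies Lemma~\ref{E}; you simply inline that coupling argument directly rather than citing the comparison lemma, arriving at the same inequality.
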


\begin{proof} First, by \eqref{polycon}, $\E_\phi$ and Lemma \ref{E}, we know that for all $x\in M_0$ and $r, t>0$,
$$\bP^x (\tau_{B(x,r)}\le t) \le 1- c_1+ \frac{c_2t}{\phi(2r)}.$$
Denote by $B=B(x,r)$ for $x\in M$ and $r>0$. According to Lemma \ref{L:semicomp1}, for all $t>0$ and all $x\in M_0$,
\be\label{eq:compatr}
P_t^B{\bf 1}_B(x)\le Q_t^{(\rho),B}{\bf 1}_B(x)+\frac{ c_3t }{\phi(\rho)}.
\ee
Combining both estimates above with the facts that
$$
1-P_t^B{\bf 1}_B(x)= \bP^x(\tau_{B}\le t),\quad 1-Q_t^{(\rho),B}{\bf 1}_B(x)= \bP^x(\tau^{(\rho)}_{B}\le t),$$
we prove the desired assertion.  \qed \end{proof}

\begin{lemma}\label{killing-1}
Suppose that $\VD$, \eqref{polycon}, $\E_\phi$ and $\J_{\phi,\le}$ hold.  Then  there are constants $\varepsilon\in(0,1)$ and $c>0$ such that for any $r,\lambda,\rho>0$ with $\lambda\ge \frac{c}{\phi(r\wedge \rho)},$
$$
\bE^x [ e^{-\lambda\tau^{(\rho)}_{B(x,r)}}] \le 1-\varepsilon,\quad x\in M_0.
$$
 \end{lemma}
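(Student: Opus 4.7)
The plan is to reduce everything to Lemma \ref{killing} by splitting the expectation $\bE^x[e^{-\lambda\tau^{(\rho)}_{B(x,r)}}]$ according to whether the exit time is small or large, and then choosing the threshold correctly in terms of $\phi(r\wedge\rho)$.

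First, by Lemma \ref{killing}, I can fix $\delta\in(0,1)$ small enough (depending only on $c_1,c_2$ from that lemma) such that, setting $t_0:=\delta\big(\phi(2r)\wedge \phi(\rho)\big)$, one has
$$
\bP^x\big(\tau^{(\rho)}_{B(x,r)}\le t_0\big)\le 1-\tfrac{c_1}{2},\qquad x\in M_0.
$$
Since $\phi$ is increasing, $\phi(2r)\wedge \phi(\rho)\ge \phi(r)\wedge\phi(\rho)=\phi(r\wedge\rho)$, so $t_0\ge \delta\,\phi(r\wedge\rho)$.

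Next, using $e^{-\lambda \tau}\le 1$ on $\{\tau\le t_0\}$ and $e^{-\lambda\tau}\le e^{-\lambda t_0}$ on $\{\tau>t_0\}$, I would bound
$$
\bE^x\big[e^{-\lambda\tau^{(\rho)}_{B(x,r)}}\big]\le \bP^x\big(\tau^{(\rho)}_{B(x,r)}\le t_0\big)+e^{-\lambda t_0}\,\bP^x\big(\tau^{(\rho)}_{B(x,r)}> t_0\big)\le 1-\tfrac{c_1}{2}\big(1-e^{-\lambda t_0}\big).
$$
Now, under the hypothesis $\lambda\ge c/\phi(r\wedge\rho)$, one has $\lambda t_0\ge c\delta$. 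Choosing $c>0$ large enough that $e^{-c\delta}\le 1/2$ (note $\delta$ has already been fixed), this yields
$$
\bE^x\big[e^{-\lambda\tau^{(\rho)}_{B(x,r)}}\big]\le 1-\tfrac{c_1}{4}=:1-\varepsilon,
$$
as required.

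There is no substantial obstacle here: the only point to take care of is matching the quantity $\phi(2r)\wedge \phi(\rho)$ in Lemma \ref{killing} with the quantity $\phi(r\wedge\rho)$ appearing in the hypothesis, which is immediate from monotonicity of $\phi$ (no use of \eqref{polycon} is actually needed for this step). The constants $\varepsilon$ and $c$ produced in this way are absolute in the sense that they depend only on the structural constants from $\VD$, $\E_\phi$, $\J_{\phi,\le}$ and $\phi$.
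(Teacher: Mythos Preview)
Your argument is correct and follows essentially the same approach as the paper's own proof: both split the expectation at a threshold $t_0$ comparable to $\phi(r\wedge\rho)$, bound $\bP^x(\tau^{(\rho)}_{B(x,r)}\le t_0)$ via Lemma~\ref{killing}, and then choose $\lambda$ large enough to make $e^{-\lambda t_0}$ small. Your presentation is slightly tidier in that you keep the factor $\bP^x(\tau>t_0)$ on the $e^{-\lambda t_0}$ term and combine to get $1-\tfrac{c_1}{2}(1-e^{-\lambda t_0})$, whereas the paper simply bounds that factor by $1$; the resulting constants ($\varepsilon=c_1/4$) coincide.
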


\begin{proof} Denote by $B=B(x,r)$. Using Lemma \ref{killing}, we have for any $t>0$ and all $x\in M_0$,
\begin{align*}
\bE^x \left[ e^{-\lambda\tau^{(\rho)}_B} \right]
&=\bE^x  \left[ e^{-\lambda\tau^{(\rho)}_B}{\bf 1}_{\{\tau^{(\rho)}_B<t\}} \right]
+\bE^x \left[ e^{-\lambda\tau^{(\rho)}_B}{\bf 1}_{\{\tau^{(\rho)}_B\ge t\}} \right] \\
&\le \bP^x(\tau^{(\rho)}_B<t)+e^{-\lambda t}
\le 1- c_1+ \frac{c_2t}{\phi(2r)\wedge \phi(\rho)}+e^{-\lambda t} .
\end{align*}

Next, set $\varepsilon=\frac{c_1}{4}>0.$ Taking $t=c_3\phi(r\wedge \rho)$ for some $c_3>0$ such that $\frac{c_2t}{\phi(2r)}+ \frac{ c_2t }{\phi(\rho)}\le 2\varepsilon$, and $\lambda>0$ such that $e^{-\lambda t}\le \varepsilon$ in the inequality above, we  obtain the desired assertion. \qed
 \end{proof}

The following lemma furthermore improves the estimate established in Lemma \ref{killing-1}.

\begin{lemma}\label{killing-2} Suppose that $\VD$, \eqref{polycon}, $\E_\phi$ and $\J_{\phi,\le}$ hold.  Then there exist constants $C, c_0>0$ such that for all $x\in M_0$ and $R, \rho>0$
\be\label{eq:nobf30ob}
\bE^x\left[ e^{- \frac{c}{\phi(\rho)} \tau^{(\rho)}_{B(x,R)}}\right]
\le C\exp\left(-c_0  R/ \rho \right),
\ee where $c>0$ is the constant in Lemma $\ref{killing-1}$.
 In particular, $(\sE, \sF)$ is conservative.
 \end{lemma}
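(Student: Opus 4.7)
The plan is to exploit the fact that the $\rho$-truncated process $X^{(\rho)}$ has jumps of size at most $\rho$, so that starting from $x$, escaping the ball $B(x,R)$ requires roughly $R/\rho$ successive exits from balls of radius $\rho$. Each such one-step exit is controlled by Lemma \ref{killing-1} applied with $r=\rho$ and $\lambda = c/\phi(\rho)$, which gives $\bE^y [e^{-\lambda \tau^{(\rho)}_{B(y,\rho)}}] \le 1-\varepsilon$ uniformly in $y \in M_0$. Iterating this bound via the strong Markov property should yield a geometric factor $(1-\varepsilon)^N$ with $N \asymp R/\rho$, producing the exponential decay in \eqref{eq:nobf30ob}.

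Concretely, I would define the sequence of stopping times $\sigma_0 = 0$ and $\sigma_{k+1} = \inf\{t > \sigma_k : X^{(\rho)}_t \notin B(X^{(\rho)}_{\sigma_k},\rho)\}$. Since the jumps of $X^{(\rho)}$ are bounded by $\rho$ in size, $d(X^{(\rho)}_{\sigma_{k+1}}, X^{(\rho)}_{\sigma_k}) \le 2\rho$, and the trajectory between $\sigma_k$ and $\sigma_{k+1}$ lies within $B(X^{(\rho)}_{\sigma_k},\rho)$. Hence for $N=\lfloor R/(3\rho)\rfloor$ one has $\sigma_N \le \tau^{(\rho)}_{B(x,R)}$. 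By the strong Markov property at each $\sigma_k$ and Lemma \ref{killing-1},
$$
\bE^x \bigl[ e^{-\lambda \sigma_N} \bigr]
= \bE^x \Bigl[ e^{-\lambda \sigma_{N-1}} \bE^{X^{(\rho)}_{\sigma_{N-1}}} \bigl[ e^{-\lambda \tau^{(\rho)}_{B(X^{(\rho)}_{\sigma_{N-1}},\rho)}} \bigr] \Bigr]
\le (1-\varepsilon)\, \bE^x \bigl[ e^{-\lambda \sigma_{N-1}} \bigr],
$$
and induction gives $\bE^x[e^{-\lambda \sigma_N}] \le (1-\varepsilon)^N$. This yields \eqref{eq:nobf30ob} with $c_0 = -\tfrac{1}{3}\log(1-\varepsilon)$ and a suitable $C$ that absorbs the case $R < \rho$ (where the bound is trivial).

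For the conservativeness claim, let $T_\rho$ denote the first time the original process $X$ makes a jump of size greater than $\rho$; the coupling used in the proof of Lemma \ref{lower-e} shows that $X$ and $X^{(\rho)}$ coincide on $[0,T_\rho)$, so $\tau_{B(x,R)} \ge \tau^{(\rho)}_{B(x,R)} \wedge T_\rho$. Combining Markov's inequality applied to \eqref{eq:nobf30ob} with the exponential tail $\bP^x(T_\rho \le t) \le c' t/\phi(\rho)$ obtained from Lemma \ref{intelem}, I get
$$
\bP^x(\tau_{B(x,R)} \le t) \le C\, e^{c t/\phi(\rho) - c_0 R/\rho} + \frac{c' t}{\phi(\rho)}.
$$
Since $\{\zeta \le t\} \subset \{\tau_{B(x,R)} \le t\}$ for every $R$, taking $R \to \infty$ (with $\rho$ fixed) kills the first term, and then letting $\rho \to \infty$ kills the second term by virtue of \eqref{polycon}. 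Hence $\bP^x(\zeta \le t) = 0$ for every $t > 0$, i.e.\ $(\sE,\sF)$ is conservative. The main obstacle I anticipate is not the iteration itself but bookkeeping the precise constants so that the particular $c$ from Lemma \ref{killing-1} appears in the exponent $c/\phi(\rho)$ in \eqref{eq:nobf30ob}, and checking that $\sigma_N \le \tau^{(\rho)}_{B(x,R)}$ holds with the right choice of $N$ taking account of the $+\rho$ overshoot at each step.
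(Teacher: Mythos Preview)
Your proof is correct and follows essentially the same strategy as the paper: iterate Lemma~\ref{killing-1} (applied with $r=\rho$, $\lambda=c/\phi(\rho)$) via the strong Markov property, using that the $\rho$-truncated process cannot overshoot by more than $\rho$. The implementations differ only in bookkeeping. The paper works with the function $u(y)=\bE^y[e^{-\lambda\tau^{(\rho)}_{B(z,R)}}]$, sets $m_k=\sup_{\overline{B(z,kr)}}u$, picks near-maximizers $x_k$, and obtains the recursion $(1-\varepsilon')m_k\le (1-\varepsilon)m_{k+2}$; you instead chain the successive exit times $\sigma_k$ from balls of radius $\rho$ and bound $\bE^x[e^{-\lambda\sigma_N}]\le(1-\varepsilon)^N$ directly. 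Both give the same geometric decay, and your version is arguably cleaner since it avoids the $\varepsilon'$-approximation of suprema.

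For conservativeness the paper is slightly more direct: it lets $R\to\infty$ in \eqref{eq:nobf30ob} to conclude $\zeta^{(\rho)}=\infty$ a.s., and then invokes Meyer's construction (Section~\ref{Sect9.2}) to transfer this to $(\sE,\sF)$. Your route through $\tau_{B(x,R)}\ge\tau^{(\rho)}_{B(x,R)}\wedge T_\rho$ and the two-step limit $R\to\infty$, $\rho\to\infty$ is equally valid; it just mixes the truncated and original processes earlier. Either way the argument is complete.
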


\begin{proof} We only need to consider the case that $\rho\in(0,R/2)$, since the conclusion holds trivially when $\rho\ge R/2$. For simplicity, we drop the superscript $\rho$ from $\tau^{(\rho)}$. For any $z\in M_0$ and $R>0$, set $\tau=\tau_{B(z,R)}$. For any fixed $0<r<\frac{R}{2}$, set $n=\big[ \frac{R}{2r}\big]$. Let
$  u(x)=\bE^x [ e^{-\lambda \tau}] $ for $x\in M_0$ and $\lam>0$,
and
$m_k=\|u\|_{L^\infty(\overline{B(z,kr)}; \mu)}$, $k=1,2,\cdots,n$.
For any $0<\varepsilon'<\varepsilon$ where $\varepsilon$ is the constant for Lemma \ref{killing-1}, we can choose  $x_k\in \overline{B(z,kr)}\cap M_0$ such that
$(1-\varepsilon')m_k\le u(x_k)\le m_k$. For any $k\le n-1$, $B(x_k,r)\subset B(z,(k+1)r)\subset B(z,R)$.

Next, we consider the following function in $B(x_k,r)\cap M_0$:
$$
v_k(x)=\bE^x [e^{-\lambda \tau_k}],
$$
 where $\tau_k=\tau_{B(x_k,r)}$.
Recall that $\{X^{(\rho)}_t\}$ is the Hunt process
associated with the semigroup $\{Q_t^{(\rho)}\}$. By the strong
Markov property, for any $x\in B(x_k,r)\cap M_0$,
\begin{align*}
u(x) &=\bE^x [e^{-\lambda \tau}] =\bE^x \left[ e^{-\lambda \tau_k}e^{-\lambda(\tau-\tau_k)}\right]\\
&=\bE^x \left[e^{-\lambda \tau_k}\bE^{X^{(\rho)}_{\tau_k}}(e^{-\lambda\tau})\right]
=\bE^x \left[e^{-\lambda\tau_k}u(X^{(\rho)}_{\tau_k}) \right]\\
&\le \bE^x \left[ e^{-\lambda \tau_k} \right] \|u\|_{L^\infty(\overline{B(x_k,r+\rho)}; \mu)}= v_k(x) \|u\|_{L^\infty(\overline{B(x_k,r+\rho)}; \mu)},
\end{align*}
where we have used the fact that $X^{(\rho)}_{\tau_k}\in \overline{B (x_k,r+\rho)}$ in the inequality above.
It follows that for any $0<\rho\le r$,
$$u(x_k)\le v_k(x_k)\|u\|_{L^\infty(\overline{B(x_k,r+\rho)}; \mu)}\le v_k(x_k) m_{k+2},$$ hence
$$(1-\varepsilon')m_k\le  v_k(x_k) m_{k+2}.$$

According to Lemma \ref{killing-1}, if $\lambda\ge \frac{c}{\phi(\rho)}$ and $0<\rho\le r$ (here $c$ is the constant in Lemma \ref{killing-1}), then
$$
(1-\varepsilon')m_k\le (1- \varepsilon)m_{k+2},
$$
 whence it follows by iteration that
$$
u(z)\le m_1\le \left(\frac{1-\epsilon}{1-\varepsilon'} \right)^{n-1} m_{2n-1}\le C\exp\left( -c_0\frac{R}{r} \right),
$$
 where in the last inequality we have used that $n> \frac{R}{2r} -1$, $m_{2n-1}\le 1$ and $c_0:=\frac{1}{2}\log \frac{1-\varepsilon'}{1-\varepsilon}.$ This completes the proof of \eqref{eq:nobf30ob}.

To see that this implies
that $(\sE, \sF)$ is conservative,
take $R\to\infty$ in
\eqref{eq:nobf30ob}. Then one has $\bE^x\left(e^{-
\frac{c}{\phi(\rho)}\zeta^{(\rho)}}\right)=0$ for all $x\in M_0$, where
$\zeta^{(\rho)}$ is the lifetime of  $\{X^{(\rho)}_t\}$. So
we conclude $\zeta^{(\rho)}=\infty$ a.s. This together with Lemma
\ref{intelem} implies that $(\sE, \sF)$ is conservative.
Indeed, the process $\{X_t\}$ can be obtained from $\{X^{(\rho)}_t\}$ through
Meyer's construction as discussed in Section \ref{Sect9.2},
 and therefore the conservativeness of $(\sE, \sF)$ follows immediately from that of $(\sE^{(\rho)}, \sF)$ corresponding to the process $\{X^{(\rho)}_t\}$.
\qed
\end{proof}

Since $\J_{\phi,\ge}$ implies $\FK(\phi)$ under an additional assumption $\RVD$
(see Subsection \ref{jFKnow}) and
$\FK(\phi)+\J_{\phi,\le}+\CSJ(\phi)$ imply
$\E_\phi$ (see Subsection \ref{s:hku}), together with the above lemma, we see that each of
Theorem \ref{T:main} (2), (3), (4) and
Theorem \ref{T:main-1} (2), (3), (4) implies
the conservativeness of $(\sE, \sF)$.\\

As a direct consequence of Lemma \ref{killing-2}, we have the following corollary.

\begin{corollary}\label{killing-3}  Suppose that  $\VD$, \eqref{polycon}, $\E_\phi$ and $\J_{\phi,\le}$ hold.  Then there exist constants $C, c_1,c_2>0$ such that for any $R,\rho>0$ and for all $x\in M_0$,
\begin{equation}\label{esexittime}\bP^x(\tau^{(\rho)}_{B(x,R)}\le t)\le C\exp\left(-c_1 \frac{R}{\rho}+c_2\frac{t}{\phi(\rho)} \right) .\end{equation}
In particular, for any $\varepsilon>0$, there is a constant
$c_0>0$
such that for any ball $B=B(x,R)$ with $x\in M_0$ and $R>0$, and any $\rho>0$ with $\phi(\rho)\ge t$ and $R\ge c_0 \rho$,
$$\bP^z(\tau^{(\rho)}_B\le t)\le \varepsilon\quad\textrm{  for all }z\in B(x,R/2)\cap M_0.$$
\end{corollary}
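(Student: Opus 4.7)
The estimate \eqref{esexittime} should follow by a standard exponential Chebyshev application to Lemma \ref{killing-2}. For the constant $c>0$ appearing in Lemma \ref{killing-1} (and in the exponent of Lemma \ref{killing-2}), I would write, for each $t,\rho,R>0$ and $x\in M_0$,
\begin{align*}
\bP^x(\tau^{(\rho)}_{B(x,R)}\le t)
&= \bP^x\!\left(e^{-\frac{c}{\phi(\rho)}\tau^{(\rho)}_{B(x,R)}}\ge e^{-\frac{ct}{\phi(\rho)}}\right) \\
&\le e^{\frac{ct}{\phi(\rho)}}\,\bE^x\!\left[e^{-\frac{c}{\phi(\rho)}\tau^{(\rho)}_{B(x,R)}}\right]
\le C\exp\!\left(-c_0\,\frac{R}{\rho}+c\,\frac{t}{\phi(\rho)}\right),
\end{align*}
where the last step is exactly the conclusion of Lemma \ref{killing-2}. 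Setting $c_1:=c_0$ and $c_2:=c$ gives \eqref{esexittime}.

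For the second assertion, I would fix $\varepsilon>0$, a ball $B=B(x,R)$ with $x\in M_0$, $R>0$, and $\rho>0$ satisfying $\phi(\rho)\ge t$ and $R\ge c_0\rho$ (the constant $c_0$ to be chosen below). For any $z\in B(x,R/2)\cap M_0$, the triangle inequality gives $B(z,R/2)\subset B(x,R)$, so the monotonicity of exit times in the domain yields
$$
\tau^{(\rho)}_{B(z,R/2)}\le \tau^{(\rho)}_{B(x,R)}\qquad \bP^z\text{-a.s.},
$$
and hence $\{\tau^{(\rho)}_B\le t\}\subseteq\{\tau^{(\rho)}_{B(z,R/2)}\le t\}$. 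Applying \eqref{esexittime} with $B(z,R/2)$ in place of $B(x,R)$,
$$
\bP^z(\tau^{(\rho)}_B\le t)\le \bP^z(\tau^{(\rho)}_{B(z,R/2)}\le t)
\le C\exp\!\left(-\frac{c_1R}{2\rho}+\frac{c_2 t}{\phi(\rho)}\right).
$$
Since $\phi(\rho)\ge t$, the second term in the exponent is bounded by $c_2$, while the first term is at most $-c_1 c_0/2$ by the assumption $R\ge c_0\rho$. Choosing $c_0=c_0(\varepsilon)>0$ so that $C\exp(c_2-c_1c_0/2)\le \varepsilon$ (for instance $c_0:=\tfrac{2}{c_1}(c_2+\log(C/\varepsilon))$) yields the desired uniform bound.

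There is no real obstacle here; the corollary is a genuinely routine repackaging of Lemma \ref{killing-2}, and the main content of the subsection is the earlier bootstrap (Lemmas \ref{killing}–\ref{killing-2}) that converts the single-scale exit estimate coming from $\E_\phi$ into an exponential tail at scale $R/\rho$. The only mild care needed is tracking that the constant $c$ in the exponent of Lemma \ref{killing-2} matches the one used in Markov's inequality, and verifying the elementary containment $B(z,R/2)\subset B(x,R)$ for $z\in B(x,R/2)$ so that the estimate transfers to points $z$ that are not centers of $B$.
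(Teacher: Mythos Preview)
Your proof is correct and follows exactly the same route as the paper: the exponential Chebyshev application of Lemma~\ref{killing-2} for \eqref{esexittime}, and the containment $B(z,R/2)\subset B(x,R)$ together with \eqref{esexittime} for the second assertion. The only cosmetic difference is that you spell out the explicit choice of $c_0(\varepsilon)$, which the paper leaves implicit.
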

\begin{proof} Denote by $B=B(x,R)$ for $x\in M$ and $R>0$. Using Lemma \ref{killing-2}, we obtain that, for any $t,\rho>0$ and all $x\in M_0$,
\begin{align*}
\bP^x(\tau^{(\rho)}_B\le t)=&\bP^x(e^{-\frac{c}{\phi(\rho)}\tau^{(\rho)}_B}\ge e^{-c\frac{t}{\phi(\rho)}})
\le   e^{c\frac{t}{\phi(\rho)}} \bE^x(e^{-\frac{c}{\phi(\rho)}\tau^{(\rho)}_B})\\
\le & C\exp\left( -c_1\frac{R}{\rho}+ c\frac{t}{\phi(\rho)}\right).
\end{align*} This proves the first assertion. The second assertion immediately follows from the first one and the fact that $\bP^z(\tau^{(\rho)}_B\le t) \le \bP^z(\tau^{(\rho)}_{B(z,R/2)}\le t)$ for all $z\in B(x,R/2)\cap M_0.$
\qed \end{proof}

Given the above control of the exit time, we now aim to prove $\UHKD(\phi)$.
As the first step, we obtain the on-diagonal upper bound for the heat kernel of $\{Q_t^{(\rho)}\}$.
 The proof is a non-trivial modification of \cite[Lemma 5.6]{GH}. For any open subset ${D}$ of $M$ and any $\rho>0$,
 we define ${D}_\rho=\{x\in M: d(x,{D})<\rho\}.$ Recall that, for $B=B(x_0, r)$ and $a>0$, we use $a B$ to denote
the ball $B(x_0, ar)$.

\begin{proposition}\label{trun-ukdd}  Suppose that $\VD$, \eqref{polycon}, $\FK(\phi)$, $\E_\phi$ and $\J_{\phi, \le}$ hold.  Then  the semigroup $\{Q_t^{(\rho)}\}$ possesses the heat kernel $q^{(\rho)}(t,x,y)$, and there is a constant $C>0$ such that for any $x\in M$ and $\rho, t>0$ with $\phi(\rho)\ge t$,
\be\label{eq:trunwono}
\esssup_{x',y'\in B(x,\rho)}
q^{(\rho)}(t,x',y')\le \frac{C}{V(x,\rho)}\left(\frac{\phi(\rho)}{t}\right)^{1/\nu}.
\ee
\end{proposition}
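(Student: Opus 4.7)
My plan is to prove the bound in three stages: reduce to a diagonal estimate, obtain a Dirichlet heat kernel bound on a moderately enlarged ball via Lemma \ref{odupper}, and pass to the global heat kernel by an iteration driven by the exit-time control of Corollary \ref{killing-3}. First, by Chapman--Kolmogorov and Cauchy--Schwarz, $q^{(\rho)}(t,x',y')^2 \le q^{(\rho)}(t,x',x')\, q^{(\rho)}(t,y',y')$, and $\VD$ gives $V(x',\rho) \asymp V(y',\rho) \asymp V(x,\rho)$ for $x',y' \in B(x,\rho)$; hence it suffices to establish, for every $x_0 \in M_0$ and $t \in (0,\phi(\rho)]$, the diagonal estimate $q^{(\rho)}(t,x_0,x_0) \le C V(x_0,\rho)^{-1} (\phi(\rho)/t)^{1/\nu}$.

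For the diagonal bound, fix $x_0 \in M_0$ and a constant $K \ge 4$ to be chosen, and put $B = B(x_0, K\rho)$. Lemma \ref{odupper} applied with $r = K\rho$ yields
\[
q^{(\rho), B}(t, x_0, x_0) \le \frac{C_0}{V(x_0, K\rho)} \left(\frac{\phi(K\rho)}{t}\right)^{1/\nu}\exp\!\left(\frac{c_0 t}{\phi(\rho)}\right);
\]
for $t \le \phi(\rho)$ the exponential is bounded by $e^{c_0}$, and combining $V(x_0, K\rho) \ge V(x_0, \rho)$ with $\phi(K\rho) \le c K^{\beta_2} \phi(\rho)$ (from \eqref{polycon}) gives $q^{(\rho), B}(t, x_0, x_0) \le C_1(K) V(x_0, \rho)^{-1} (\phi(\rho)/t)^{1/\nu}$. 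To upgrade to $q^{(\rho)}$ I invoke the Dynkin--Hunt decomposition
\[
q^{(\rho)}(t, x_0, x_0) = q^{(\rho), B}(t, x_0, x_0) + \bE^{x_0}\!\left[q^{(\rho)}(t-\tau, X^{(\rho)}_\tau, x_0);\, \tau \le t\right], \qquad \tau := \tau^{(\rho)}_B,
\]
and iterate on the functional
\[
A(t) := \sup_{y \in M_0}\sup_{s \in (0,t]} V(y,\rho)\, q^{(\rho)}(s,y,y)\,(\phi(\rho)/s)^{-1/\nu},
\]
which is a priori finite for each $t$ thanks to the ultracontractivity of $Q_t^{(\rho)}$ coming from the Nash inequality equivalent to $\FK(\phi)$ (together with the perturbation bound \eqref{dfcomp}). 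Using Cauchy--Schwarz once more to reduce the integrand $q^{(\rho)}(t-\tau, X^{(\rho)}_\tau, x_0)$ to diagonal values bounded by $A(t)$, $\VD$ with the jump-size bound $d(X^{(\rho)}_\tau, x_0) \le (K+1)\rho$ (since $X^{(\rho)}$ has jumps of size $\le \rho$) to compare $V(X^{(\rho)}_\tau, \rho)$ to $V(x_0, \rho)$, and the exit-time tail $\bP^{x_0}(\tau \le t) \le Ce^{-c_1 K + c_2 t/\phi(\rho)}$ from Corollary \ref{killing-3}, I expect to derive $A(t) \le C_1(K) + \eta(K) A(t)$ with $\eta(K) \to 0$ as $K \to \infty$. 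Choosing $K$ large enough that $\eta(K) < 1/2$ then yields $A(t) \le 2C_1(K)$, which is precisely the desired diagonal bound.

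The principal obstacle is the singular factor $(\phi(\rho)/(t-\tau))^{1/\nu}$ that appears when one bounds the integrand via $A(t)$: it blows up as $\tau \uparrow t$, so naive use of $\bP^{x_0}(\tau \le t) \le \varepsilon$ does not suffice to close the iteration. Handling it requires splitting the expectation at $\tau = t/2$: on $\{\tau \le t/2\}$ one has $(\phi(\rho)/(t-\tau))^{1/\nu} \le (2\phi(\rho)/t)^{1/\nu}$, whereas on $\{\tau > t/2\}$ one must pair the singularity with the full exponential tail of $\tau$ from Corollary \ref{killing-3} via a layer-cake or integration-by-parts computation. This is precisely the step where the sharp exponential exit-time estimate of Corollary \ref{killing-3}, rather than the weaker qualitative $\EP_{\phi, \le, \eps}$, becomes essential.
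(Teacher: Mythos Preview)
Your contraction argument on $A(t)$ has two genuine gaps, and both are precisely the obstacles that force the paper to proceed differently.

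First, the claim that $A(t)$ is a priori finite is not justified. $\FK(\phi)$ (equivalently the Nash inequality of Proposition~\ref{P:3.3}(2)) is a \emph{localized} statement: it holds only for $u\in\sF_{B(x,r)}$, with a constant depending on the ball through $V(x,r)$. It yields ultracontractivity of the killed semigroups $Q_t^{(\rho),B}$ (this is exactly Lemma~\ref{odupper}), but \emph{not} of $Q_t^{(\rho)}$ itself; under mere $\VD$/$\RVD$ there is no uniform lower bound on $V(y,\rho)$ and hence no global Nash inequality. So you have no a priori bound on $q^{(\rho)}(s,y,y)$ uniform in $y\in M_0$, and the inequality $A(t)\le C_1(K)+\eta(K)A(t)$ is vacuous if $A(t)=\infty$. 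Note also that the very existence of $q^{(\rho)}$ is part of the conclusion; the paper obtains it only at the end, as the monotone limit of the $q^{(\rho),B_n}$.

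Second, even granting finiteness, your proposed handling of the singularity $(\phi(\rho)/(t-\tau))^{1/\nu}$ on $\{t/2<\tau\le t\}$ does not work. Corollary~\ref{killing-3} gives only the one-sided bound $\bP^{x_0}(\tau\le s)\le Ce^{-c_1K+c_2 s/\phi(\rho)}$, which is \emph{increasing} in $s$ and carries no information about $\bP^{x_0}(t-\eps<\tau\le t)$ for small $\eps$. A layer-cake computation therefore yields at best $\bP^{x_0}(\cdots)\le C'e^{-c_1K}$ for every level set, and the resulting $\lambda$-integral diverges since $1/\nu>1$. There is no off-diagonal input available at this stage to exploit that $d(X_\tau^{(\rho)},x_0)\ge K\rho$ when $t-\tau$ is small.

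The paper circumvents both issues simultaneously. It never works with the global $q^{(\rho)}$ directly: instead it iterates the domain comparison of Lemma~\ref{L:com-domain},
\[
\esssup_{\frac14 U_\rho} q^{(\rho),D}(t+s,\cdot,\cdot)\le \esssup_{U_\rho} q^{(\rho),U_\rho}(t,\cdot,\cdot)+\eps\,\esssup_{U_\rho}q^{(\rho),D}(s,\cdot,\cdot),
\]
over a geometrically growing chain of balls $B_k=B(x_0,4^kc_0\rho)$ and times $t_k=\tfrac12(1+2^{-k})t$, with $D=B_n$ fixed. All quantities are Dirichlet heat kernels on balls and hence finite by Lemma~\ref{odupper}, so the geometric series closes; the heat kernel $q^{(\rho)}$ is then obtained as the increasing limit $q^{(\rho),B_n}\uparrow q^{(\rho)}$. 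Crucially, Lemma~\ref{L:com-domain} uses a \emph{deterministic} time split $t+s$ together with the monotonicity $s\mapsto \esssup q^{(\rho),D}(s,\cdot,\cdot)$ decreasing, so the remaining time never drops below the fixed $s>0$ and no singularity arises.
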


\begin{proof} Fix $x_0\in M$. For any $t>0$, $R>r+\rho$ and $r\ge \rho$, set $U=B(x_0,r)$ and ${D}=B(x_0,R)$.
Then $\frac{1}{4}U_\rho\subset \frac{1}{2}U$. By Corollary \ref{killing-3}, for any $\varepsilon\in(0,1)$ (which is assumed to be chosen small enough),
 there is  a constant $c_0:=c_0(\eps)>1$ large enough such that for all $\phi(\rho)\ge t$ and $r\ge (c_0-1) \rho$,
\begin{align*}
\esssup_{x\in\frac{1}{4}U_\rho} (1- Q_t^{(\rho),U}{\bf 1}_U(x))\le &\esssup_{x\in\frac{1}{2}U} (1- Q_t^{(\rho),U}{\bf 1}_U(x))\\
 = &\esssup_{x\in\frac{1}{2}U}
\bP^x(\tau^{(\rho)}_U\le t)\le \varepsilon.
\end{align*}
 Then  by \eqref{uvcom} in Lemma \ref{L:com-domain} with $V=\frac{1}{4}U_\rho$, we have for any $t,s>0$, $\phi(\rho)\ge t$ and $r\ge (c_0-1) \rho$,\begin{align*} \esssup_{x,y\in \frac{1}{4}U_\rho}q^{(\rho),{D}}(t+s,x,y)&\le \esssup_{x,y\in U} q^{(\rho),U}(t,x,y) +\varepsilon\, \esssup_{x,y\in U_\rho}q^{(\rho),{D}}(s,x,y)\\
&\le \esssup_{x,y\in U_\rho} q^{(\rho),U_\rho}(t,x,y) +\varepsilon \,\esssup_{x,y\in U_\rho}q^{(\rho),{D}}(s,x,y)
.\end{align*} Furthermore,  due to Lemma \ref{odupper}, there exist constants $c_1$, $\nu>0$ (independent of $c_0$) such that for any $r,\rho, t>0$ with $\phi(\rho)\ge t$ and $r\ge (c_0-1) \rho$,
$$ \esssup_{x,y\in U_\rho} q^{(\rho),U_\rho}(t,x,y)\le \frac{c_1}{V(x_0,r)} \left(\frac{\phi(r)}{t}\right)^{1/\nu
}:=Q_t(r) .$$ According to both inequalities above, we obtain that for any $t,s>0$,  $R>r+\rho$, $\phi(\rho)\ge t$ and $r\ge (c_0-1) \rho$,
\begin{equation}\label{key-1} \esssup_{x,y\in \frac{1}{4}U_\rho}q^{(\rho),{D}}(t+s,x,y)\le  Q_t(r) +\varepsilon \,\esssup_{x,y\in U_\rho}q^{(\rho),{D}}(s,x,y).  \end{equation}

Now, for fixed $t>0$, let $\phi(\rho)\ge t$ and  $$t_k=\frac{1}{2}(1+2^{-k})t, \quad r_k=4^kc_0\rho-\rho,\quad B_k=B(x_0,r_k+\rho)$$ for $k\ge0$. In particular, $t_0=t$, $r_0=(c_0-1)\rho$ and $B_0=B(x_0,c_0\rho)$.

Applying \eqref{key-1} with $r=r_{k+1}$, $s=t_{k+1}$ and $t+s=t_k$ yielding that  \begin{equation}\label{key-2} \esssup_{x,y\in B_k}q^{(\rho),{D}}(t_k,x,y)\le  Q_{2^{-(k+2)}t}(r_{k+1}) +\varepsilon \,\esssup_{x,y\in B_{k+1}}q^{(\rho),{D}}(t_{k+1},x,y), \end{equation} where we have used the facts that $\phi(\rho)\ge t\ge t_k$ and $r_k\ge (c_0-1) \rho$ for all $k\ge 0$. Note that, by \eqref{polycon},
\begin{align*} Q_{2^{-(k+2)}t}(r_{k+1})&= \frac{c_1}{V(x_0,r_{k+1})}\left( \frac{\phi(r_{k+1})}{2^{-(k+2)}t}\right)^{1/\nu}\\
&\le  \frac{c_1}{V(x_0,r_{k})}\left( \frac{\phi(r_{k})}{2^{-(k+1)}t}\right)^{1/\nu} 2^{1/\nu} c'\left(\frac{r_{k+1}}{r_k}\right)^{\beta_2/\nu}\\
&\le L  Q_{2^{-(k+1)}t}(r_{k}),
\end{align*} where $L$ is a constant independent of $c_0$ and $x_0$. Without loss of generality, we may and do assume that $\eps$ is small enough and  $L\ge 2^{1/\nu}$ such that $\varepsilon L\le \frac{1}{2}.$ By this inequality, we can get that
$$ Q_{2^{-(k+2)}t}(r_{k+1})\le L  Q_{2^{-(k+1)}t}(r_{k})\le L^2 Q_{2^{-k}t}(r_{k-1})\le \cdots\le L^{k+1} Q_{t/2}(r_0).$$ Hence, it follows from \eqref{key-2} that
$$\esssup_{x,y\in B_k}q^{(\rho),{D}}(t_k,x,y)\le L^{k+1} Q_{t/2}(r_0)+ \varepsilon\,\esssup_{x,y\in B_{k+1}}q^{(\rho),{D}}(t_{k+1},x,y),$$ which gives by iteration that for any positive integer $n$,
\begin{equation}\begin{split}\label{key-4} \esssup_{x,y\in B_0}q^{(\rho),{D}}(t_0,x,y)\le & L(1+L\eps+(L\eps)^2+\cdots) Q_{t/2}(r_0)\\
&+ \eps^n\,\esssup_{x,y\in B_n}q^{(\rho),{D}}(t_n,x,y)\\
\le& 2L Q_{t/2}(r_0)+\varepsilon^n\, \esssup_{x,y\in B_n}q^{(\rho),{D}}(t_n,x,y),\end{split} \end{equation} as long as $B_n\subset {D}$.

 By Lemma \ref{odupper}, $\VD$ and \eqref{polycon},
 there exists a constant $L_1>0$ (also independent of $c_0$) such that
$$\esssup_{x,y\in B_n}q^{(\rho),B_n}(t_n,x,y)\le c''Q_{t_n}(r_n)\le c'''L_1^n Q_{t}(r_0) .$$ Again, without loss of generality, we may and do assume that $L_1\le L$ and so $0<\varepsilon L_1\le \frac{1}{2}$; otherwise, we replace $L$ with $L+L_1$ below. In particular,
$$\lim_{n\to\infty} \varepsilon^n\, \esssup_{x,y\in B_n}q^{(\rho),B_n}(t_n,x,y)\le c'''  Q_{t}(r_0)\lim_{n\to \infty}(\eps L_1)^n=0.$$ Putting both estimates above into \eqref{key-4} with ${D}=B_n$, we find that
\begin{equation}\label{key-3}\limsup_{n\to\infty}\esssup_{x,y\in B_0}q^{(\rho),B_n}(t,x,y)\le  2L Q_{t/2}((c_0-1)\rho).\end{equation}

 Having \eqref{key-3} at hand, we can follow the argument of
 \cite[Lemma 5.6]{GH} to complete the proof, see \cite[p.\ 540]{GH}.  Indeed,
 the sequence $\{q^{(\rho),B_n}(t,\cdot,\cdot)\}$ increases as $n\to\infty$ and converges almost everywhere on $M\times M$ to a non-negative measurable function $q^{(\rho)}(t,\cdot,\cdot)$;
  see \cite[Theorem 2.12 (b) and (c)]{GT}.
 The function $q^{(\rho)}(t,\cdot,\cdot)$ is finite almost everywhere
since $$\int_{B_n}q^{(\rho),B_n}(t,x,y)\,\mu(dy)\le 1.$$
For any non-negative function $f\in L^2(M; \mu)$, we have by the monotone convergence theorem,
 $$
 \lim_{n\to\infty} \int_{B_n} q^{(\rho),B_n}(t,x,y)f(y)\,\mu(dy)=\int q^{(\rho)}(t,x,y)f(y)\,\mu(dy).
 $$
On the other hand,
$$
\lim_{n\to\infty}\int_{B_n}  q^{(\rho),B_n}(t,x,y)f(y)\,\mu(dy)=\lim_{n\to\infty}Q^{(\rho), B_n}_tf(x)=Q^{(\rho)}_tf(x),
$$
 see \cite[Theorem 2.12(c)]{GT}
 again. Hence, $q^{(\rho)}(t,x,y)$ is the heat kernel of $\{Q^{(\rho)}_t\}$.
 Thus
 it follows from \eqref{key-3} that there exists a constant $C>0$ (independent of $\rho$) such that
\eqref{eq:trunwono} holds for all $x_0\in M$ and $\rho, t>0$ with $\phi(\rho)\ge t$.
\qed\end{proof}

For any $\rho>0$ and $x,y\in M$, set
$$
 J_\rho (x,y):=J(x,y){\bf1}_{\{d(x,y)> \rho\}}.
$$
 Using the Meyer's decomposition and Lemma \ref{meyer}(1),
we have the following estimate
\be\label{eq:MeyerHK}
p(t, x,y)\le q^{(\rho)}(t,x,y)+\bE^x\Big[\int_0^t\int_M J_\rho(Y_s,z)p_{t-s}(z,y)\,\mu(dz)\,ds\Big], \quad x,y\in M_0.
\ee

The following is a key proposition.

\begin{proposition}\label{thm:VD-Meyer}
Suppose that  $\VD$, \eqref{polycon}, $\E_\phi$ and $\J_{\phi,\le}$ hold.  Then  there exists a constant
$c_1>0$ such that the following estimate holds for all $t,\rho>0$ and all $x\in M_0$,
\[
\bE^x\ \left[ \int_0^t\int_M
J_\rho(X^{(\rho)}_s,z)p(t-s, z,y)\,\mu(dz)\right] \le
\frac{c_1t}{V(x,\rho)\phi(\rho)}
\exp\Big(c_1\frac{t}{\phi(\rho)}\Big).
\]
\end{proposition}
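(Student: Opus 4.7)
The plan is to apply Fubini--Tonelli (valid since the integrand is non-negative) to rewrite the left-hand side as $\int_0^t \bE^x[g_s(X^{(\rho)}_s)]\,ds$, where $g_s(w):=\int_M J_\rho(w,z)p(t-s,z,y)\,\mu(dz)$. I would then decouple the estimate into a pointwise bound on $g_s$ and a moment estimate for $1/V(X^{(\rho)}_s,\rho)$ under $\bP^x$. The pointwise bound on $g_s$ should come quickly: on the set $\{d(w,z)>\rho\}$, $\J_{\phi,\le}$ combined with monotonicity of $V(w,\cdot)$ and $\phi$ gives $J(w,z)\le c/(V(w,\rho)\phi(\rho))$; pulling this sup out of the integral and using the sub-Markov property $\int p(t-s,z,y)\,\mu(dz)=P_{t-s}\mathbf{1}(y)\le 1$ will yield $g_s(w)\le c/(V(w,\rho)\phi(\rho))$, so the task reduces to estimating $\bE^x[1/V(X^{(\rho)}_s,\rho)]$.

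For the latter I would first use $\VD$ to compare $V(w,\rho)$ with $V(x,\rho)$: since $V(x,\rho)\le V(w,\rho+d(x,w))$, the consequence \eqref{eq:vdeno} of $\VD$ gives
\begin{equation*}
\frac{1}{V(w,\rho)}\le \frac{c\,(1+d(x,w)/\rho)^{d_2}}{V(x,\rho)}.
\end{equation*}
Then I would slice $M$ into the annuli $A_k:=\{k\rho\le d(\cdot,x)<(k+1)\rho\}$ and invoke the exponential exit-time tail bound from Corollary~\ref{killing-3}, namely $\bP^x(d(X^{(\rho)}_s,x)\ge k\rho)\le \bP^x(\tau^{(\rho)}_{B(x,k\rho)}\le s)\le C\exp(-c_1 k+c_2 s/\phi(\rho))$. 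The geometric decay in $k$ will absorb the polynomial factor $(k+2)^{d_2}$ from the volume comparison, producing
\begin{equation*}
\bE^x\Big[\frac{1}{V(X^{(\rho)}_s,\rho)}\Big]\le \frac{C'}{V(x,\rho)}\exp\!\Big(\frac{c_2 s}{\phi(\rho)}\Big).
\end{equation*}

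Combining the two steps yields $\bE^x[g_s(X^{(\rho)}_s)]\le C''\,V(x,\rho)^{-1}\phi(\rho)^{-1}\exp(c_2 s/\phi(\rho))$, and integrating over $s\in[0,t]$ together with the elementary inequality $e^u-1\le ue^u$ (applied with $u=c_2 t/\phi(\rho)$) delivers the target bound. The substantive step is the second one: since the right-hand side of the target features $V(x,\rho)$ rather than $V(X^{(\rho)}_s,\rho)$, the crucial input is the strong exponential concentration of the truncated process near its starting point, furnished by Corollary~\ref{killing-3} (which itself rests on $\E_\phi$ and $\J_{\phi,\le}$). Beyond this, the argument is routine bookkeeping to ensure the geometric--polynomial sums converge and that the constants are uniform in the parameters.
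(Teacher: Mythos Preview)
Your proposal is correct and follows essentially the same route as the paper: bound $J_\rho(w,z)\le c/(V(w,\rho)\phi(\rho))$ via $\J_{\phi,\le}$, use symmetry and the sub-Markov property to dispose of $\int p(t-s,z,y)\,\mu(dz)$, then control $1/V(X^{(\rho)}_s,\rho)$ by comparing volumes via $\VD$ and invoking the exponential exit-time bound of Corollary~\ref{killing-3}. The only cosmetic difference is that the paper partitions the expectation according to the events $\{\tau^{(\rho)}_{B(x,k\rho)}\ge t>\tau^{(\rho)}_{B(x,(k-1)\rho)}\}$ (so the $t$-factor appears directly), whereas you fix $s$, slice on the position $d(X^{(\rho)}_s,x)$, and then integrate in $s$; the two organizations are equivalent.
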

\begin{proof} By $\J_{\phi,\le}$,
$J_\rho(x,y)\le \frac{c_1}{V(x,\rho) \phi(\rho)}$  for all $x,y\in M$.
By the fact that $p(t, z,y)=p(t, y,z)$, for all $x\in M_0$,
\begin{align*}
&\bE^x \left[ \int_0^t\int_M  J_\rho(X^{(\rho)}_s,z)p(t-s, z,y)\,\mu(dz)\right] \\
&\le c_1 \bE^x\left[ \int_0^t \frac{1}{V(X^{(\rho)}_s,\rho)\phi(\rho)}\,ds\right] \\
&=c_1\sum_{k=1}^\infty \bE^x \left[\int_0^t \frac{1}{V(X^{(\rho)}_s,\rho)\phi(\rho)}\,ds; \tau_{B(x,k\rho)}^{(\rho)}\ge t>\tau_{B(x,(k-1)\rho)}^{(\rho)} \right] \\
&=: c_1\sum_{k=1}^\infty I_k.
\end{align*}

If $t\le \tau_{B(x,k\rho)}^{(\rho)}$, then $d(X^{(\rho)}_s, x)\le
k\rho$ for all $s\le t$. This along with $\VD$ yields that for all
$k\ge 1$,
\begin{align*}\frac{1}{V(X^{(\rho)}_s,\rho)\phi(\rho)}\le &\frac{c_2 k^{d_2}}{V(X^{(\rho)}_s, 2k\rho)\phi(\rho)}\le \frac{c_2 k^{d_2}}{\inf_{d(z,x)\le k\rho}V(z, 2k\rho)\phi(\rho)}\\
\le &\frac{c_2 k^{d_2}}{V(x, k\rho)\phi(\rho)}\le \frac{c_2 k^{d_2}}{V(x, \rho)\phi(\rho)}.\end{align*} In particular, we have
 $$I_1\le  \frac{c_2 t }{V(x, \rho)\phi(\rho)}.$$ Thus, by Corollary \ref{killing-3}, for all $k\ge 2$,
\begin{align*}I_k\le & \frac{c_3 t k^{d_2}}{V(x, \rho)\phi(\rho)}\bP^x(\tau_{B(x,(k-1)\rho)}^{(\rho)}<t)\\
\le & \frac{c_4 t }{V(x, \rho)\phi(\rho)} e^{c_5\frac{t}{\phi(\rho)}} k^{d_2} e^{-c_6k}\le \frac{c_4 t }{V(x, \rho)\phi(\rho)} e^{c_5\frac{t}{\phi(\rho)}} e^{-c_7k}.\end{align*}
This yields the desired assertion.
\qed\end{proof}

Given all the above estimates, we can obtain the main theorem in this subsection.

\begin{theorem}\label{ehi-ukdd}  Suppose that  $\VD$, \eqref{polycon}, $\FK(\phi)$, $\E_\phi$ and $\J_{\phi, \le}$ hold. Then $\UHKD(\phi)$ is satisfied, i.e. there is a constant $c>0$ such that for all $x\in M_0$ and $t>0$,
$$p(t, x,x)\le \frac{c}{V(x,\phi^{-1}(t))}.$$  \end{theorem}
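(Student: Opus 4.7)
The proof is a short combination of the two key preparatory propositions that have just been established, together with the right choice of truncation radius.

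Fix $x\in M_0$ and $t>0$. The plan is to apply Meyer's decomposition \eqref{eq:MeyerHK} with $y=x$ and with the specific truncation parameter $\rho:=\phi^{-1}(t)$, so that $\phi(\rho)=t$. This gives
\[
p(t,x,x)\le q^{(\rho)}(t,x,x)+\bE^x\!\left[\int_0^t\!\!\int_M J_\rho(X^{(\rho)}_s,z)\,p(t-s,z,x)\,\mu(dz)\,ds\right].
\]

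For the first term, I would invoke Proposition \ref{trun-ukdd}: since $\phi(\rho)=t$, the factor $(\phi(\rho)/t)^{1/\nu}$ equals $1$, and the essential supremum bound over $B(x,\rho)$ specializes at the diagonal point to
\[
q^{(\rho)}(t,x,x)\le \frac{C}{V(x,\rho)}=\frac{C}{V(x,\phi^{-1}(t))}.
\]
For the second term, Proposition \ref{thm:VD-Meyer} directly yields
\[
\bE^x\!\left[\int_0^t\!\!\int_M J_\rho(X^{(\rho)}_s,z)\,p(t-s,z,x)\,\mu(dz)\,ds\right]
\le \frac{c_1 t}{V(x,\rho)\phi(\rho)}\exp\!\Big(c_1\tfrac{t}{\phi(\rho)}\Big)=\frac{c_1 e^{c_1}}{V(x,\phi^{-1}(t))}.
\]
Adding the two bounds gives $\UHKD(\phi)$ with constant $c=C+c_1 e^{c_1}$.

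There is no real obstacle remaining at this stage; the balancing choice $\rho=\phi^{-1}(t)$ makes both the polynomial factor $(\phi(\rho)/t)^{1/\nu}$ in Proposition \ref{trun-ukdd} and the exponential factor $\exp(c_1 t/\phi(\rho))$ in Proposition \ref{thm:VD-Meyer} harmless constants, which is precisely the point of having set those propositions up with the exponential-in-$t/\phi(\rho)$ form. The only mild care needed is that the bound on $q^{(\rho)}$ is an essential supremum over $B(x,\rho)$ and the estimate in Proposition \ref{thm:VD-Meyer} holds for all $x\in M_0$, so the pointwise inequality at the diagonal holds after enlarging the properly exceptional set $\sN$ in the standard way (using the regularization of the heat kernel via the Chapman--Kolmogorov equation as discussed around \eqref{e:ck}).
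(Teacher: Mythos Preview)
Your proof is correct and follows essentially the same approach as the paper: choose $\rho=\phi^{-1}(t)$, apply Meyer's decomposition \eqref{eq:MeyerHK} at the diagonal, bound $q^{(\rho)}(t,x,x)$ via Proposition \ref{trun-ukdd}, and bound the remainder via Proposition \ref{thm:VD-Meyer}. Your added remark about the esssup versus pointwise evaluation is a reasonable caution but does not differ from the paper's treatment.
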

\begin{proof} For each $t>0$, set $\rho=\phi^{-1}(t)$.
 Then  by Proposition \ref{trun-ukdd}, for all $x\in M_0$,
\[
q^{(\rho)}(t,x,x)\le \frac{c_1}{V(x,\phi^{-1}(t))}.
\]
Using this, \eqref{eq:MeyerHK} and Proposition \ref{thm:VD-Meyer}, for all $x\in M_0$,
we have
\[
p(t, x,x)\le q^{(\rho)}(t,x,x)+\frac{c_2t}{V(x,\rho)\phi(\rho)}
\exp\Big(c_2\frac{t}{\phi(\rho)}\Big)\le \frac{c_3}{V(x,\phi^{-1}(t))},
\]
thanks to $\phi(\rho)=t$, $\VD$ and \eqref{polycon}.
\qed\end{proof}

 \section{Consequences of condition $\J_{\phi}$ and mean exit time condition $\E_{\phi}$}\label{section5}

In this section, we will first prove (2) $\Longrightarrow$ (1) in Theorem \ref{T:main-1} and then prove (2) $\Longrightarrow$ (1) in Theorem \ref{T:main}. Without any mention, throughout the proof we will assume that $\mu$ and $\phi$ satisfy $\VD$, $\RVD$  and \eqref{polycon} respectively. (Indeed, $\RVD$ is only used in the proof of $\J_{\phi,\ge}\Longrightarrow \FK(\phi)$.)
We note that (2) implies the conservativeness of $(\sE, \sF)$
due to Lemma \ref{killing-2}.

Recall again that, for any $\rho>0$,
$(\sE^{(\rho)}, \sF)$ defined in \eqref{eq:rhoEdef} denotes the $\rho$-truncated Dirichlet form obtained by $\rho$-truncation for the jump density of the original Dirichlet form $(\sE, \sF)$.
Let $\{X_t^{(\rho)}\}$ be the Hunt process associated with the
$\rho$-truncated Dirichlet form $(\sE^{(\rho)}, \sF)$. For any open subset
$D\subset M$, let $\tau_D^{(\rho)}$ be the first exit time of the
process $\{X_t^{(\rho)}\}$. For any open subset $D\subset M$ and
$\rho>0$, set $D_\rho=\{x\in M: d(x,D)<\rho\}$.

 \subsection{$\UHKD(\phi)+\J_{\phi,\le}+\E_{\phi} \Longrightarrow\UHK(\phi)$,
 $\J_\phi+ \E_\phi \Longrightarrow \UHK(\phi)$}\label{5-15-1}

We begin with the following improved statement for $\UHKD(\phi)$.
\begin{lemma}\label{P:ondiag} Under $\VD$ and \eqref{polycon}, if
$\UHKD(\phi)$, $\J_{\phi,\le}$ and $\E_{\phi}$ hold, then there is a constant $c>0$ such that for any $t> 0$ and all $x,y\in M_0$,
$$p(t, x,y)\le c\left(\frac{1}{V(x,\phi^{-1}(t))}\wedge \frac{1}{V(y,\phi^{-1}(t))}\right).$$
\end{lemma}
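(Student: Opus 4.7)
The plan is to establish the two individual estimates $p(t,x,y)\le c/V(x,\phi^{-1}(t))$ and $p(t,x,y)\le c/V(y,\phi^{-1}(t))$; by the symmetry $p(t,x,y)=p(t,y,x)$, the second is obtained from the first by interchanging the roles of $x$ and $y$, and the minimum bound is then immediate. Fix $t>0$ and $x,y\in M_0$, and write $r:=\phi^{-1}(t)$. The argument splits according to the size of $d(x,y)/r$.

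In the near-diagonal regime $d(x,y)\le r$, the conclusion is immediate from Remark~\ref{R:1.22}(ii): Cauchy--Schwarz applied to $p(t,x,y)\le\sqrt{p(t,x,x)\,p(t,y,y)}$ together with $\UHKD(\phi)$ gives $p(t,x,y)\le c/\sqrt{V(x,r)V(y,r)}$, and since $d(x,y)\le r$, $\VD$ (via Remark~\ref{R:1.2}(ii)) forces $V(x,r)\asymp V(y,r)$, whence $p(t,x,y)\le c/V(x,\phi^{-1}(t))$.

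In the far-diagonal regime $d(x,y)>r$, invoke Meyer's decomposition~\eqref{eq:MeyerHK} with truncation level $\rho:=r=\phi^{-1}(t)$:
\[
 p(t,x,y)\le q^{(\rho)}(t,x,y) + \bE^x\!\left[\int_0^t\!\!\int_M J_\rho(X_s^{(\rho)},z)\,p(t-s,z,y)\,\mu(dz)\,ds\right].
\]
Since $\phi(\rho)=t$, Proposition~\ref{thm:VD-Meyer} controls the Meyer term by $c_1\,t/(V(x,\rho)\phi(\rho))\exp(c_1 t/\phi(\rho))=c\,e^{c_1}/V(x,\phi^{-1}(t))$. By the $\mu$-symmetry of $p$ and $q^{(\rho)}$, running Meyer's decomposition from $y$ produces the companion bound in which the Meyer term is dominated by $c/V(y,\phi^{-1}(t))$. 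It remains to estimate the truncated kernel $q^{(\rho)}(t,x,y)$. The on-diagonal bound from Proposition~\ref{trun-ukdd} (applicable at any center with $\phi(\rho)\ge t$) gives $q^{(\rho)}(t,z,z)\le C/V(z,\rho)$, which combined with Chapman--Kolmogorov and Cauchy--Schwarz yields $q^{(\rho)}(t,x,y)\le C/\sqrt{V(x,\rho)V(y,\rho)}$. Because the $\rho$-truncated process has jumps of size at most $\rho$, the exit-time estimate of Corollary~\ref{killing-3} supplies strong (exponential in $d(x,y)/\rho$) concentration within $B(x,\rho)$ on the time scale $t=\phi(\rho)$; chaining this decay with the diagonal bound upgrades the geometric-mean estimate to $q^{(\rho)}(t,x,y)\le c/V(x,\phi^{-1}(t))\wedge c/V(y,\phi^{-1}(t))$. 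Adding the two Meyer bounds yields the claim.

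The main obstacle is precisely the off-diagonal control of the truncated kernel $q^{(\rho)}(t,x,y)$ when $d(x,y)>\rho$. A direct Cauchy--Schwarz produces only the geometric mean $1/\sqrt{V(x,\rho)V(y,\rho)}$, which is strictly weaker than the desired minimum whenever $V(x,\rho)$ and $V(y,\rho)$ differ substantially; in particular this bound lies between $1/V(x,\rho)$ and $1/V(y,\rho)$ and in general exceeds their minimum. Bridging this gap requires exploiting the finite range of the truncated process, where Corollary~\ref{killing-3} (a consequence of $\E_\phi$ and $\J_{\phi,\le}$ through Proposition~\ref{trun-ukdd}) provides the exponential decay in $d(x,y)/\rho$ that absorbs the polynomial growth in the volume ratio coming from $\VD$.
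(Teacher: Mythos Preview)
Your overall strategy matches the paper's---Meyer's decomposition at level $\rho=\phi^{-1}(t)$, Proposition~\ref{thm:VD-Meyer} for the remainder, and exponential off-diagonal decay of $q^{(\rho)}$ via Corollary~\ref{killing-3}---but two steps need repair.

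First, you invoke Proposition~\ref{trun-ukdd} for the on-diagonal bound of $q^{(\rho)}$, but that proposition requires $\FK(\phi)$, which is \emph{not} among the hypotheses of the lemma (and deriving $\FK(\phi)$ from $\UHKD(\phi)$ via Proposition~\ref{pi-e-pre} needs $\RVD$, which the paper explicitly records as unnecessary here). The paper instead gets the truncated on-diagonal bound directly from $\UHKD(\phi)$ using Lemma~\ref{meyer}(2): $q^{(\rho)}(t,x,x)\le p(t,x,x)\,e^{t\|\mathcal J\|_\infty}\le c\,V(x,\phi^{-1}(t))^{-1}\exp(c\,t/\phi(\rho))$. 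This is the correct entry point; the detour through Proposition~\ref{trun-ukdd} is not available under the stated hypotheses. (Incidentally, Corollary~\ref{killing-3} does not go through Proposition~\ref{trun-ukdd} either; it follows from $\E_\phi$ and $\J_{\phi,\le}$ via Lemmas~\ref{killing}--\ref{killing-2}.) Second, ``chaining this decay with the diagonal bound'' is too vague. The paper makes this precise with Lemma~\ref{hk-exittime}: taking $U=B(x,r)$, $V=B(y,r)$, $r=d(x,y)/4$, one splits $q^{(\rho)}(2t,x,y)$ into two terms weighted by $\bP^x(\tau_U^{(\rho)}\le t)$ and $\bP^y(\tau_V^{(\rho)}\le t)$, bounds the exit probabilities by Corollary~\ref{killing-3}, and controls the residual $L^\infty$-norms by the Cauchy--Schwarz estimate~\eqref{remark-u1}. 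This yields a factor $(1+(r+\rho)/\phi^{-1}(t))^{d_2}\exp(-c\,r/\rho)$; with $\rho=\phi^{-1}(t)$ the function $(2+s)^{d_2}e^{-cs}$ is bounded for all $s\ge 0$, which handles near- and far-diagonal at once without a case split. Your separate near-diagonal argument via Remark~\ref{R:1.22}(ii) is correct but unnecessary once this is in place.
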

\begin{proof}First, using the first conclusion in Lemma \ref{meyer}(2), Lemma \ref{intelem} and $\UHKD(\phi)$, we can easily see that the
$\rho$-truncated Dirichlet form $(\sE^{(\rho)}, \sF)$ has the heat kernel $q^{(\rho)}(t,x,y)$, and
$$q^{(\rho)}(t,x,x)\le
p(t, x,x)\exp\Big(c_1\frac{t}{\phi(\rho)}\Big)\le
\frac{c_2}{V(x,\phi^{-1}(t))}\exp\Big(c_1\frac{t}{\phi(\rho)}\Big),
$$
for all $t>0$ and all $x\in M_0$, where $c_1, c_2>0$ are independent of $\rho$.  Then  by the symmetry of $q^{(\rho)}(t,x,y)$ and the Cauchy-Schwarz inequality, for all $t>0$ and all $x,y\in M_0$,
\begin{equation}\label{remark-u1} q^{(\rho)}(t,x,y)\le \sqrt{q^{(\rho)}(t,x,x)q^{(\rho)}(t,y,y)}\le \frac{c_2}{\sqrt{V(x,\phi^{-1}(t))V(y,\phi^{-1}(t))}}\exp\Big(c_1\frac{t}{\phi(\rho)}\Big).\end{equation}

Second, let $U$ and $V$ be two open subsets of $M$ such that $U_\rho$ and $V_\rho$ are precompact, and $U\cap V=\emptyset$. According to Lemma \ref{hk-exittime}, for any $t>0$ and all $x\in U\cap M_0$ and $y\in V\cap M_0$,
\begin{align*}
q^{(\rho)}(2t,x,y)\le& \bP^x(\tau^{(\rho)}_U\le t) \esssup_{t\le t'\le 2t}\|q^{(\rho)}(t',\cdot,y)\|_{L^\infty(U_\rho,\mu)}\\
&+\bP^y(\tau^{(\rho)}_V\le t) \esssup_{t\le t'\le 2t}\|q^{(\rho)}(t',\cdot,x)\|_{L^\infty(V_\rho;\mu)} \\
\le& \left(\bP^x(\tau^{(\rho)}_U\le t)+\bP^y(\tau^{(\rho)}_V\le t)\right) \esssup_{x'\in U_\rho, y'\in V_\rho, t\le t'\le 2t}q^{(\rho)} (t',x',y').  \end{align*}
 Then  taking $U=B(x,r)$ and  $V=B(y,r)$ with $r=\frac{1}{4}d(x,y)$ in the inequality above, and using Corollary \ref{killing-3} and \eqref{remark-u1}, we find that for any $t, \rho>0$ and all $x,y\in M_0$,
\begin{align*} q^{(\rho)}(2t,x,y) \le &c_3 \exp\Big(-c_4\frac{r}{\rho} +c_5\frac{ t}{\phi(\rho)}\Big) \esssup_{x'\in B(x,r+\rho), y'\in B(y,r+\rho)}\frac{1}{\sqrt{V(x',\phi^{-1}(t))V(y',\phi^{-1}(t))}}\\
\le & \frac{c_6}{V(x,\phi^{-1}(t))}\left(1+ \frac{r+\rho}{\phi^{-1}(t)}\right)^{d_2} \exp\Big(-c_4\frac{r}{\rho} +c_5\frac{ t}{\phi(\rho)}\Big).\end{align*}
This along with \eqref{eq:MeyerHK} and Proposition \ref{thm:VD-Meyer} yields that  for any $t, \rho>0$ and all $x,y\in M_0$,
$$
p(2t, x,y)\le  c_7\left[ \frac{1}{V(x,\phi^{-1}(t))}\left(1+ \frac{r+\rho}{\phi^{-1}(t)}\right)^{d_2} \exp\Big(-c_4\frac{r}{\rho} \Big)+   \frac{t}{V(x,\rho) \phi(\rho)}\right] \exp\Big(c_8\frac{t}{\phi(\rho)}\Big).
$$
Taking $\rho=\phi^{-1}(t)$ and using the fact that the function $f(r)=(2+r)^{d_2} e^{-c_4r}$ is bounded on $[0,\infty)$, we furthermore get that for all $t>0$ and all $x,y\in M_0$,
$$ p(2t, x,y)\le  \frac{c_9}{V(x,\phi^{-1}(t))},$$ which in turn gives us the desired assertion by the symmetry of $p(t, x,y)$, $\VD$ and \eqref{polycon}.   \qed
\end{proof}

\begin{lemma}\label{P:truncated} Under $\VD$ and \eqref{polycon}, if
$\UHKD(\phi)$, $\J_{\phi,\le}$ and $\E_{\phi}$ hold, then the $\rho$-truncated Dirichlet form $(\sE^{(\rho)}, \sF)$ has the heat kernel $q^{(\rho)}(t,x,y)$, and it satisfies that for any $t> 0$ and all $x,y\in M_0$,
$$q^{(\rho)}(t,x,y)\le c_1\bigg(\frac{1}{V(x,\phi^{-1}(t))}+ \frac{1}{V(y,\phi^{-1}(t))}\bigg) \exp\Big(c_2\frac{t}{\phi(\rho)}-c_3\frac{d(x,y)}{\rho}\Big),$$ where $c_1,c_2,c_3$ are positive constants independent of $\rho$.

Consequently, for any $t> 0$ and all $x,y\in M_0$,
$$q^{(\rho)}(t,x,y)\le \frac{c_4}{V(x,\phi^{-1}(t))}\bigg(1+ \frac{d(x,y)}{\phi^{-1}(t)}\bigg)^{d_2} \exp\Big(c_2\frac{t}{\phi(\rho)}-c_3\frac{d(x,y)}{\rho}\Big).$$
\end{lemma}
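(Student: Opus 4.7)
My plan closely parallels the proof of Lemma~\ref{P:ondiag}, except that the truncation parameter $\rho$ is kept free throughout rather than being specialized to $\rho = \phi^{-1}(t)$. The starting point is the symmetric Cauchy--Schwarz diagonal bound
\[
q^{(\rho)}(t, x, y) \le \frac{C}{\sqrt{V(x,\phi^{-1}(t))\,V(y,\phi^{-1}(t))}}\exp\Big(c_1 \tfrac{t}{\phi(\rho)}\Big),
\]
which is exactly \eqref{remark-u1}; its derivation is identical to the first paragraph of the proof of Lemma~\ref{P:ondiag}, using Meyer's decomposition (Lemma~\ref{meyer}(1)), Lemma~\ref{intelem}, the hypothesis $\UHKD(\phi)$, and $q^{(\rho)}(t,x,y) \le \sqrt{q^{(\rho)}(t,x,x)\,q^{(\rho)}(t,y,y)}$. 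An AM--GM step $1/\sqrt{ab}\le (1/a+1/b)/2$ converts the geometric-mean denominator into the sum form $V(x,\phi^{-1}(t))^{-1}+V(y,\phi^{-1}(t))^{-1}$; in the regime $d(x,y)\le\rho$, this already yields the target inequality, since $\exp(-c_3 d(x,y)/\rho)\ge e^{-c_3}$ is bounded below and can be freely inserted after adjusting the constant.

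To produce the exponential spatial decay when $d(x,y) > \rho$, I would insert an exit-time splitting of exactly the form used in Lemma~\ref{P:ondiag}. Applying Lemma~\ref{hk-exittime} with $U = B(x, r)$, $V = B(y, r)$ and $r = d(x,y)/4$ (so that $U\cap V=\emptyset$ and $U_\rho, V_\rho$ are precompact), bounding the exit probabilities by Corollary~\ref{killing-3} in the form
\[
\bP^x(\tau_U^{(\rho)} \le t) + \bP^y(\tau_V^{(\rho)}\le t) \le C\exp\Big({-}c_4 r/\rho + c_5 t/\phi(\rho)\Big),
\]
and controlling $\esssup_{t\le t'\le 2t}\|q^{(\rho)}(t',\cdot,y)\|_{L^\infty(U_\rho)}$ (and its symmetric counterpart) via the first-step diagonal bound, together with $\VD$ to compare $V(x',\phi^{-1}(t'))$ to $V(x,\phi^{-1}(t))$ for $x'\in U_\rho$ and \eqref{polycon} to absorb the variation $t'\in[t,2t]$, produces after an AM--GM step
\[
q^{(\rho)}(2t, x, y) \le C\Big(\tfrac{1}{V(x,\phi^{-1}(t))} + \tfrac{1}{V(y,\phi^{-1}(t))}\Big)\Big(1 + \tfrac{r+\rho}{\phi^{-1}(t)}\Big)^{d_2}\exp\Big({-}c_4\tfrac{r}{\rho} + c_6\tfrac{t}{\phi(\rho)}\Big).
\]

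It then remains to absorb the polynomial prefactor $(1+(r+\rho)/\phi^{-1}(t))^{d_2}$ into the two exponentials. I would split it as $(1+r/\phi^{-1}(t))^{d_2}(1+\rho/\phi^{-1}(t))^{d_2}$ and combine the elementary inequality $s^{d_2}e^{-cs}\le C_{d_2}e^{-c's}$ (for any $c'<c$) with a case analysis on $\rho\le\phi^{-1}(t)$ versus $\rho>\phi^{-1}(t)$: the $r$-factor is absorbed into $\exp(-c_4 r/\rho)$ using $r/\phi^{-1}(t)\le r/\rho$ when $\rho\le\phi^{-1}(t)$ and the lower bound $r>\rho/4$ available in the complementary subregime (since we may assume $d(x,y)>\rho$); the $\rho$-factor is at most $2^{d_2}$ when $\rho\le\phi^{-1}(t)$, while in the opposite regime one falls back on the unsplit diagonal bound from the first paragraph (where $\exp(-c_3 d(x,y)/\rho)$ is bounded below whenever $d(x,y)\lesssim\rho$) so that no polynomial residue survives. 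After relabeling constants and replacing $2t$ by $t$ using \eqref{polycon}, this yields the first estimate. The ``consequently'' bound is then immediate from the first by \eqref{eq:vdeno}, which supplies $V(y,\phi^{-1}(t))^{-1}\le C(1+d(x,y)/\phi^{-1}(t))^{d_2}/V(x,\phi^{-1}(t))$. The main technical obstacle is the polynomial absorption step: the $(1+\rho/\phi^{-1}(t))^{d_2}$ residue cannot be absorbed into a universal exponential in the regime $\rho\gg\phi^{-1}(t)$ with $d(x,y)\asymp\rho$, and one must verify that the diagonal-plus-AM--GM route and the exit-time route glue together with absolute constants $c_1, c_2, c_3$ across the regime boundary.
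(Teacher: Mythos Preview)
Your polynomial absorption step genuinely fails, exactly where you flag it: in the regime $\rho\gg\phi^{-1}(t)$ with $d(x,y)\asymp C\rho$ for moderate $C$, the factor $(1+\rho/\phi^{-1}(t))^{d_2}$ cannot be absorbed into either exponential, and your ``fallback to the diagonal bound'' does not apply since you are in the off-diagonal regime $d(x,y)>\rho$. The gap is not a technicality; it is a missing idea.

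The fix, and what the paper does, is to replace your geometric-mean bound \eqref{remark-u1} by the full conclusion of Lemma~\ref{P:ondiag}, namely
\[
q^{(\rho)}(t,x,y)\le c_1\Big(\frac{1}{V(x,\phi^{-1}(t))}\wedge \frac{1}{V(y,\phi^{-1}(t))}\Big)\exp\Big(c_2\frac{t}{\phi(\rho)}\Big).
\]
The point of the minimum (as opposed to the geometric mean) is that it lets you bound $q^{(\rho)}(t,z,y)$ by $c_1/V(y,\phi^{-1}(t))\cdot\exp(c_2t/\phi(\rho))$, a quantity \emph{independent of $z$}. The paper then uses the Chapman--Kolmogorov identity $q^{(\rho)}(2t,x,y)=\int q^{(\rho)}(t,x,z)q^{(\rho)}(t,z,y)\,\mu(dz)$, splits the integral over $B(x_0,r)^c$ and $B(y_0,r)^c$ with $r=\tfrac12 d(x_0,y_0)$, pulls out the $z$-independent factor, and recognizes the remaining integral $\int_{B(x_0,r)^c}q^{(\rho)}(t,x,z)\,\mu(dz)=Q_t^{(\rho)}{\bf 1}_{B(x_0,r)^c}(x)$ as bounded by the exit probability from Corollary~\ref{killing-3}. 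No polynomial prefactor ever appears. Your route via Lemma~\ref{hk-exittime} would also work \emph{if} you fed it the $\wedge$ bound: then $\esssup_{U_\rho}\|q^{(\rho)}(t',\cdot,y)\|_\infty\le c/V(y,\phi^{-1}(t))\cdot e^{2c_2t/\phi(\rho)}$ directly, with no $\VD$ comparison over $U_\rho$ and hence no $(1+(r+\rho)/\phi^{-1}(t))^{d_2}$. Either way, the key input you are missing is the minimum bound from Lemma~\ref{P:ondiag}, not just Cauchy--Schwarz.
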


\begin{proof} (i) The existence of $q^{(\rho)}(t,x,y)$ has been mentioned in the proof of Lemma \ref{P:ondiag}. Furthermore, according to Lemma \ref{meyer}(2), Lemma \ref{intelem} and Lemma \ref{P:ondiag}, there exist $c_1,c_2>0$ such that for all $t>0$ and all $x,y\in M_0$,
\begin{equation}\label{truncated:diag} q^{(\rho)}(t,x,y)\le c_1\left(\frac{1}{V(x,\phi^{-1}(t))}\wedge \frac{1}{V(y,\phi^{-1}(t))}\right)\exp\Big(c_2\frac{t}{\phi(\rho)}\Big).\end{equation}
Therefore, in order to prove the desired assertion, below we only need to consider the case that $d(x,y)\ge 2\rho$.

By Corollary \ref{killing-3}, for any ball $B(x,r)$, $t>0$ and all $z\in B(x,\rho)\cap M_0$ with $r> \rho$,
\begin{equation}\label{truncated:diag2}\begin{split}Q_t^{(\rho)}{\bf 1}_{B(x,r)^c}(z)\le &\bP^z(\tau^{(\rho)}_{B(x,r)}\le t)\le  \bP^z(\tau^{(\rho)}_{B(z,r-\rho)}\le t)\\
 \le& c_3\exp\left(-c_4\frac{r}{\rho}+c_3\frac{t}{\phi(\rho)}\right),\end{split}\end{equation} where $c_3,c_4>0$ are independent of $\rho$.

(ii) Fix $x_0,y_0\in M$ and $t>0$. Set $r= \frac{1}{2}d(x_0,y_0)$. By the semigroup property, we have that
\begin{align*} q^{(\rho)}(2t,x,y)=&\int_M q^{(\rho)}(t,x,z)q^{(\rho)}(t,z,y)\,\mu(dz)\\
\le&\int_{B(x_0,r)^c}q^{(\rho)}(t,x,z)q^{(\rho)}(t,z,y)\,\mu(dz)+ \int_{B(y_0,r)^c}q^{(\rho)}(t,x,z)q^{(\rho)}(t,z,y)\,\mu(dz).\end{align*}
Using \eqref{truncated:diag} and \eqref{truncated:diag2}, we obtain that
\begin{align*} \int_{B(x_0,r)^c}q^{(\rho)}(t,x,z)q^{(\rho)}(t,z,y)\,\mu(dz)&\le \frac{c_1}{V(y,\phi^{-1}(t))}\exp\Big(c_2\frac{t}{\phi(\rho)}\Big)\int_{B(x_0,r)^c} q^{(\rho)}(t,x,z)\,\mu(dz)\\
&\le\frac{c_5}{V(y,\phi^{-1}(t))}\exp\Big(c_5\frac{t}{\phi(\rho)}-c_4\frac{r}{\rho}\Big) \end{align*} for $\mu$-almost all $x\in B(x_0,\rho)$ and $y\in M$. Similarly,
by the symmetry of $q^{(\rho)}(t,z,y)$,
\begin{align*} \int_{B(y_0,r)^c}q^{(\rho)}(t,x,z)q^{(\rho)}(t,z,y)\,\mu(dz)&\le \frac{c_1}{V(x,\phi^{-1}(t))}\exp\Big(c_2\frac{t}{\phi(\rho)}\Big)\int_{B(y_0,r)^c} q^{(\rho)}(t,z,y)\,\mu(dz)\\
&= \frac{c_1}{V(x,\phi^{-1}(t))}\exp\Big(c_2\frac{t}{\phi(\rho)}\Big)\int_{B(y_0,r)^c} q^{(\rho)}(t,y,z)\,\mu(dz)\\
&\le\frac{c_6}{V(x,\phi^{-1}(t))}\exp\Big(c_6\frac{t}{\phi(\rho)}-c_4\frac{r}{\rho}\Big) \end{align*} for $\mu$-almost all $y\in B(x_0,\rho)$ and $x\in M$. Hence, since $x_0$ and $y_0$ are arbitrary, we get the first required assertion by $\VD$ and \eqref{polycon}.  Then  the second one immediately follows from the first one and $\VD$. \qed
\end{proof}

Now, we can prove the following main result.

\begin{proposition}\label{thm:ujeuhkds}
Under $\VD$ and \eqref{polycon}, if $\UHKD(\phi)$, $\J_{\phi,\le}$ and $\E_{\phi}$ hold, then we have  $\UHK(\phi)$.
\end{proposition}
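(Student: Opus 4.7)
The on-diagonal bound from Lemma \ref{P:ondiag} already gives $p(t,x,y)\le c/V(x,\phi^{-1}(t))$. It therefore suffices to establish the off-diagonal bound
\[
p(t,x,y)\le \frac{c\,t}{V(x,d(x,y))\phi(d(x,y))}.
\]
For the regime $d(x,y)\le \phi^{-1}(t)$, this is automatic: $\VD$ and \eqref{polycon} give $V(x,d(x,y))\phi(d(x,y))\le c\,t\,V(x,\phi^{-1}(t))$, and Lemma \ref{P:ondiag} finishes the case. So throughout the rest of the argument I will assume $d(x,y)>\phi^{-1}(t)$ and write $\sigma:=d(x,y)/\phi^{-1}(t)\ge 1$.

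The main tool is Meyer's decomposition \eqref{eq:MeyerHK} applied with truncation $\rho=\phi^{-1}(t)$. Lemma \ref{P:truncated} then supplies the stretched-exponential off-diagonal bound
\[
q^{(\rho)}(t,x,y)\le \frac{c(1+\sigma)^{d_2}}{V(x,\phi^{-1}(t))}\exp\!\bigl(c_2-c_3\sigma\bigr).
\]
Using the elementary inequality $s^{d_2+\beta_2}e^{-c_3s/2}\le C$ for $s\ge 1$, together with $\VD$ and \eqref{polycon} (which give $V(x,d(x,y))/V(x,\phi^{-1}(t))\le c\sigma^{d_2}$ and $\phi(d(x,y))/t\le c\sigma^{\beta_2}$), the truncated term is immediately seen to be dominated by $c\,t/(V(x,d(x,y))\phi(d(x,y)))$, exactly as wanted.

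The genuine difficulty lies in the big-jump term of \eqref{eq:MeyerHK}, since a direct application of Proposition \ref{thm:VD-Meyer} with $\rho=\phi^{-1}(t)$ only yields $c/V(x,\phi^{-1}(t))$, which is purely on-diagonal and therefore too weak once $\sigma$ is large. The remedy is to refine this bound by going back to the raw Meyer expression $\bE^x\!\int_0^t\!\int J_\rho(X^{(\rho)}_s,z)p(t-s,z,y)\mu(dz)\,ds$ and splitting the integration according to the positions of $X^{(\rho)}_s$ and $z$: (i) on the event that $X^{(\rho)}_s$ stays inside $B(x,d(x,y)/4)$ and $z\in B(y,d(x,y)/4)$, one has $d(X^{(\rho)}_s,z)\ge d(x,y)/2$, so $J(X^{(\rho)}_s,z)\le c/(V(x,d(x,y))\phi(d(x,y)))$ by $\J_{\phi,\le}$ with $\VD$ and \eqref{polycon}, and since $\int p(t-s,z,y)\mu(dz)\le 1$ this piece integrates to the target; (ii) the complementary region where $X^{(\rho)}_s\notin B(x,d(x,y)/4)$ is controlled by Corollary \ref{killing-3}, which gives an exponentially small probability in $\sigma$; (iii) the complementary region where $z\notin B(y,d(x,y)/4)$ is handled by combining Lemma \ref{P:ondiag} with the exit-time estimate $\bP^y(X_{t-s}\notin B(y,d(x,y)/4))\le c(t-s)/\phi(d(x,y))$ (available from $\E_\phi$ via Lemma \ref{E}), together with Lemma \ref{intelem} to bound $\int_{d(w,z)>\rho}J(w,z)\mu(dz)\le c/\phi(\rho)$.

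\textbf{Main obstacle.} The principal technical point is this refined estimate of the big-jump term: one has to simultaneously localize the truncated process near $x$ and the post-jump endpoint near $y$, and all three pieces above must be absorbed into a common bound of order $t/(V(x,d(x,y))\phi(d(x,y)))$. Balancing these estimates when $\beta_2\ge 1$ — so that $\rho=\phi^{-1}(t)$ cannot be replaced by a fraction of $d(x,y)$ without losing either the off-diagonal decay in $q^{(\rho)}$ or the conservativeness of the constants $C(\lambda)$ coming from Proposition \ref{thm:VD-Meyer} — is the most delicate bookkeeping, and is where the scaling relation \eqref{polycon} is used most heavily.
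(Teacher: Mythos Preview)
Your overall plan is reasonable and piece (i) and the truncated-kernel bound are fine, but piece (iii) contains a genuine circularity. You claim that
\[
\bP^y\bigl(X_{t-s}\notin B(y,d(x,y)/4)\bigr)\le \frac{c(t-s)}{\phi(d(x,y))}
\]
follows from $\E_\phi$ via Lemma \ref{E}. It does not: Lemma \ref{E} only yields \eqref{e:4.29}, i.e.\ $\bP^y(\tau_{B(y,r)}\le t)\le 1-c_1\phi(r)/\phi(2r)+c_2t/\phi(2r)$, which for $t\ll\phi(r)$ is bounded away from zero by a constant, not by $ct/\phi(r)$. The linear-in-$t$ estimate you invoke is $\EP_{\phi,\le}$, and in the paper that is derived in Lemma \ref{Conserv} \emph{assuming} $\UHK(\phi)$, which is what you are trying to prove. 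Concretely: in piece (iii) you need to control $\int_{B(y,d(x,y)/4)^c}p(t-s,z,y)\,\mu(dz)$ for $z$ far from $y$, and the only pointwise bound on $p$ available at this stage is the on-diagonal one from Lemma \ref{P:ondiag}; any off-diagonal decay of $p$ is exactly the conclusion.

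The paper's proof circumvents this by a bootstrap. Step (i) establishes a \emph{weaker} polynomial tail estimate \eqref{tail},
\[
\int_{B(x,r)^c}p(t,x,y)\,\mu(dy)\le C_0\bigl(\phi^{-1}(t)/r\bigr)^{\theta}
\quad\text{with }\theta=\beta_1-(\beta_1+d_2)/N<\beta_1,
\]
obtained by applying \eqref{eq:MeyerHK} on each dyadic annulus $B(x,2^{n+1}r)\setminus B(x,2^nr)$ with a scale-adapted truncation $\rho_n=2^{n\alpha}r^{1-1/N}\phi^{-1}(t)^{1/N}$; this uses only Lemma \ref{P:truncated} and Proposition \ref{thm:VD-Meyer}. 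Step (ii) feeds \eqref{tail} through \eqref{step2} and Lemma \ref{probability-exittime} to iterate the truncated-kernel bound to arbitrary polynomial order in $\rho/\phi^{-1}(t)$, yielding \eqref{niceone}. Step (iii) then chooses $\rho$ to be a fixed fraction of $d(x,y)$ (not $\phi^{-1}(t)$) and $k$ large enough so that the polynomial exponent beats $2d_2+\beta_2$; with this choice the Meyer big-jump term from Proposition \ref{thm:VD-Meyer} already has the correct size $ct/(V(x,d(x,y))\phi(d(x,y)))$ directly, without needing any spatial decomposition of the post-jump position $z$. Your piece (iii) is trying to shortcut this bootstrap, and that shortcut is not available with the stated hypotheses.
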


\begin{proof} (i) We first prove that there are $N\in \bN$ with $N>(\beta_1+d_2)/\beta_1$  and $C_0\ge 1$ such that for each $t,r>0$ and all $x\in M_0$,
\begin{equation}\label{tail}
\int_{B(x,r)^c} p(t, x,y)\,\mu(dy)\le C_0\bigg(\frac{\phi^{-1}(t)}{r}\bigg)^\theta,
\end{equation} where $\theta=\beta_1-(\beta_1+d_2)/N$, and  $d_2$ and $\beta_1$ are constants from $\VD$ and \eqref{polycon} respectively.
Indeed, we only need to consider the case that $r>\phi^{-1}(t)$. For any $\rho, t>0$ and all $x,y\in M_0$, by
\eqref{eq:MeyerHK} and Proposition \ref{thm:VD-Meyer}, we have
\[
p(t, x,y)\le q^{(\rho)}(t,x,y)+\frac{c_1t}{V(x,\rho)\phi(\rho)}\exp\left(\frac{c_2t}{\phi(\rho)}\right),
\] where $c_1,c_2>0$ are constants independent of $\rho$.
Now, for fixed large $N\in \bN$  (which will be specified later), define
\[
\rho_n=2^{n\alpha}r^{1-1/N}\phi^{-1}(t)^{1/N},\quad n\in \bN,
\]
where $\alpha\in (d_2/(d_2+\beta_1)\vee 1/2,1)$. Since $r>\phi^{-1}(t)$ and $2\alpha\ge 1$, we have
\begin{equation}\label{booefb}
\phi^{-1}(t)\le \rho_n\le 2^nr,\quad\frac{2^nr}{\rho_n}\le \frac{\rho_n}{\phi^{-1}(t)}.
\end{equation}
In particular, $t/\phi(\rho_n)\le 1$. Plugging these into Lemma \ref{P:truncated}, we have that there are constants $c_3,c_4>0$ such that for every $t>0$ and all $x,y\in M_0$ with $2^nr\le d(x,y)\le 2^{n+1} r$,
$$q^{(\rho_n)}(t,x,y)\le \frac{c_3}{V(x,\phi^{-1}(t))}\left( \frac{2^nr}{\phi^{-1}(t)}\right)^{d_2}\exp\left(-\frac{c_4 2^nr}{\rho_n}\right).$$
Thus, there are constants $c_5, c_6>0$ such that for every $t>0$ and all $x\in M_0$,
\begin{align*}
\int_{B(x,r)^c}p(t, x,y)\,\mu(dy)
=&\sum_{n=0}^\infty\int_{B(x,2^{n+1}r)\setminus B(x,2^nr)} p(t, x,y)\,\mu(dy)\\
\le &\sum_{n=0}^\infty \frac{c_5}{V(x,\phi^{-1}(t))}\left( \frac{2^nr}{\phi^{-1}(t)}\right)^{d_2}\exp\left(-\frac{c_4 2^nr}{\rho_n}\right)V(x,2^nr)
\\
&+\sum_{n=0}^\infty\frac{c_6tV(x,2^nr)}{V(x,\rho_n)\phi(\rho_n)}\\
=&:I_1+I_2.
\end{align*}
We first estimate $I_2$. Take $N$ large enough so that $\beta_1-(\beta_1+d_2)/N>0$.  Then
using $\VD$, \eqref{polycon} and \eqref{booefb}, we have
\begin{align*}
I_2\le&  c_7\sum_{n=0}^\infty\Big(\frac{\phi^{-1}(t)}{\rho_n}\Big)^{\beta_1}\Big(\frac{2^nr}{\rho_n}\Big)^{d_2}\\
=&c_7\Big(\frac{\phi^{-1}(t)}{r}\Big)^{\beta_1-(\beta_1+d_2)/N}\sum_{n=0}^\infty2^{n(d_2-\alpha(d_2+\beta_1))}\\
\le & c_8\Big(\frac{\phi^{-1}(t)}{r}\Big)^{\beta_1-(\beta_1+d_2)/N},
\end{align*}
where in the last inequality we used the fact $d_2-\alpha(d_2+\beta_1)<0$ due to the choice of $\alpha$.
We next estimate $I_1$. Note that for each $K\in \bN$, there exists a constant $c_K>0$ such that
$e^{-x}\le c_Kx^{-K}$ for all $x\ge 1$. Now choose $K$ large enough so that $K/N>2d_2+\beta_1$ and $(1-\alpha)K>2d_2$.
 Then  using $\VD$, \eqref{polycon} and \eqref{booefb} again, we have
\begin{align*}
I_1\le &\sum_{n=0}^\infty \frac{c_{9,K}}{V(x,\phi^{-1}(t))}\Big(\frac{2^nr}{\phi^{-1}(t)}\Big)^{d_2}\Big(\frac{\rho_n}{2^nr}\Big)^KV(x,2^nr)\\
\le & c_{10,K}\sum_{n=0}^\infty\Big(\frac{2^nr}{\phi^{-1}(t)}\Big)^{2d_2}
\Big(\frac{\phi^{-1}(t)^{1/N}}{2^{n(1-\alpha)}r^{1/N}}\Big)^K\\
=&c_{10,K}\Big(\frac{\phi^{-1}(t)}r\Big)^{K/N-2d_2}\sum_{n=0}^\infty 2^{n(2d_2-(1-\alpha)K)}\\
\le& c_{11,K}\Big(\frac{\phi^{-1}(t)}r\Big)^{K/N-2d_2}\le c_{11,K}\Big(\frac{\phi^{-1}(t)}r\Big)^{\beta_1}.
\end{align*}
Combining with all estimations above, we obtain the desired estimate \eqref{tail}.

(ii) For any ball $B$ with radius $r$, by \eqref{tail}, there is a constant $c_1>0$ such that
\be\label{eq:noohes}
1-P_t^B{\bf 1}_B(x)=\bP^x(\tau_B\le t)\le c_1 \left( \frac{r}{\phi^{-1}(t)}\right)^{-\theta}\textrm{ all }x\in \frac{1}{4}B\cap M_0,
\ee e.g.\ see the proof of Lemma \ref{Conserv}.
(Note that due to Lemma \ref{killing-2},  $(\sE, \sF)$ is conservative.)

Combining \eqref{eq:noohes} with \eqref{eq:compatr}, we find that
\begin{equation}\label{step2}
1-Q_t^{(\rho),B}{\bf 1}_B(x)\le c_2 \left[ \left( \frac{r}{\phi^{-1}(t)}\right)^{-\theta}+ \frac{t}{\phi(\rho)}\right]
\quad\hbox{for  all  } x\in \frac{1}{4}B \cap M_0,
\end{equation}
where $Q_t^{(\rho),B}$ is the semigroup for the
$\rho$-truncated Dirichlet form $(\sE^{(\rho)}, \sF_B)$, and the constant $c_2$ is independent of $\rho.$

Next, we prove the following improvement of estimate in Lemma \ref{P:truncated}: for all $t>0$, $k\ge 1$, and all $x_0,y_0\in M$ with $d(x_0,y_0)>4k\rho$, \begin{equation}\begin{split}\label{niceone} q^{(\rho)}(t,x,y)\le c_3(k)&\left(\frac{1}{V(x,\phi^{-1}(t))}+ \frac{1}{V(y,\phi^{-1}(t))}\right)\exp\Big(c_4\frac{t}{\phi(\rho)}\Big)\left(1+ \frac{\rho}{\phi^{-1}(t)}\right)^{-(k-1)\theta}\end{split}\end{equation} for almost all $x\in B(x_0,\rho)$ and $y\in B(y_0,\rho)$. By \eqref{truncated:diag}, it suffices to consider the case that $\rho\ge \phi^{-1}(t).$ Indeed, fix $k\ge 1$, $t>0$ and $x_0,y_0\in M_0$. Set $r=\frac{1}{2} d(x_0,y_0)>2k\rho$. By \eqref{step2} and Lemma \ref{probability-exittime},
$$
Q^{(\rho)}_t{\bf 1}_{B(x_0,r)^c}(x)\le c_5(k) \left[ \left( \frac{\rho}{\phi^{-1}(t)}\right)^{-\theta}+ \frac{t}{\phi(\rho)}
\right]^{k-1}\quad\hbox{for  almost all } x\in B(x_0,\rho).
$$
It is easy to see that $$\left(\frac{\rho}{\phi^{-1}(t)}\right)^{-\theta}\ge c_3\frac{t}{\phi(\rho)}\quad\textrm{ for all } \rho\ge\phi^{-1}(t),$$ (here $c_3$ is the constant in \eqref{polycon}) and so for almost all $x\in B(x_0,\rho)$,
$$Q^{(\rho)}_t{\bf 1}_{B(x_0,r)^c}(x)\le c_6(k) \left( \frac{\rho}{\phi^{-1}(t)}\right)^{-(k-1)\theta}.$$
  Then  using \eqref{truncated:diag} and the estimate above, we can follow part (ii) in the proof of Lemma \ref{P:truncated} to obtain  \eqref{niceone}.

(iii) Finally we prove the desired upper bound for $p(t, x,y)$. For any fixed $x_0,y_0\in M$, let $r=\frac{1}{2} d(x_0,y_0)$. We only need to show that
$$p(t, x,y)\le \frac{C}{V(x,\phi^{-1}(t))}\bigg(1\wedge \frac{V(x,\phi^{-1}(t))t}{V(x,r)\phi(r)}\bigg)$$ for all $t>0$,
small enough $\rho \in (0, r)$ and almost all $x\in B(y_0, \rho)$ and $y\in B(x_0,\rho)$.
 As before, by Lemma \ref{P:ondiag}, without loss of generality we may and do assume that $r/\phi^{-1}(t)\ge 1$.
Take $k=2+ [(2d_2+\beta_2)/\theta]$
and $\rho=r/(8k)$.
Using \eqref{eq:MeyerHK}, Proposition \ref{thm:VD-Meyer} and \eqref{niceone}, we obtain that for all $t>0$, and almost all $x\in B(x_0,\rho)$ and $y\in B(y_0,\rho)$,
 \begin{align*}
 p(t, x,y) &\le \frac{c_7(k)}{V(x,\phi^{-1}(t))}\left(1+\frac{d(x,y)}{\phi^{-1}(t)} \right)^{d_2}\left( \frac{\rho}{\phi^{-1}(t)}+1\right)^{-(k-1)\theta} +\frac{c_0't}{V(x,\rho)\phi(\rho)}\\
 &\le c_8(k) \left[ \frac{1}{V(x,\phi^{-1}(t))}\left( \frac{r}{\phi^{-1}(t)}\right)^{-(k-1)\theta+d_2}+\frac{t}{V(x,r)\phi(r)}
 \right] \\
 &=   \frac{c_8(k)t}{V(x,r)\phi(r)} \left[ 1+ \frac{V(x,r)}{V(x,\phi^{-1}(t))}\frac{\phi(r)}{t}\left( \frac{r}{\phi^{-1}(t)}\right)^{-(k-1)\theta+d_2}
 \right]\\
 & \le\frac{c_9(k)t}{V(x,r)\phi(r)}\left[ 1+\left( \frac{r}{\phi^{-1}(t)}\right)^{-(k-1)\theta+2d_2+\beta_2}
 \right]\\
 & \le\frac{c_{10}(k)t}{V(x,r)\phi(r)},
 \end{align*} where in the third inequality we used $\VD$ and \eqref{polycon}.
 The proof is complete.
\qed\end{proof}

$\J_{\phi,\ge}\Longrightarrow \FK(\phi)$ has been proved in
Subsection \ref{jFKnow} by the additional assumption $\RVD$, and $\FK(\phi)+\E_\phi+\J_{\phi,\le}\Longrightarrow \UHKD(\phi)$ has been proved in Subsection \ref{FKEPHIj}. Combining these with Proposition \ref{thm:ujeuhkds}, we also obtain
 $\J_\phi+ \E_\phi \Longrightarrow \UHK(\phi)$.

\subsection{$\J_\phi+ \E_\phi \Longrightarrow \LHK(\phi)$}\label{subsection:lower}

\begin{proposition}\label{ejlhk} If $\VD$, \eqref{polycon}, $\E_\phi$ and $\J_{\phi}$ hold, then we have $\LHK(\phi)$.  \end{proposition}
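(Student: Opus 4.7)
The plan is to establish $\LHK(\phi)$ in three successive stages --- on-diagonal, near-diagonal, and off-diagonal lower bounds --- using throughout that $(\sE,\sF)$ is conservative (a consequence of Lemma \ref{killing-2} in our setting) and that the matching upper bound $\UHK(\phi)$ has been established in Subsection \ref{5-15-1}.

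\emph{Step 1 (on-diagonal).} For fixed $x\in M_0$ and $t>0$, set $r=A\phi^{-1}(t)$ with $A>1$ to be chosen large. Lemma \ref{E} together with \eqref{polycon} gives
$$
\bP^x(\tau_{B(x,r)}\le t)\le 1-\frac{c_1\phi(r)}{\phi(2r)}+\frac{c_2 t}{\phi(2r)}\le 1-c_3
$$
once $A$ is large enough, i.e.\ $P_t^{B(x,r)}\mathbf{1}_{B(x,r)}(x)\ge c_3$. Applying the Cauchy--Schwarz inequality to the symmetric kernel $p^{B(x,r)}$ and using $\VD$ then yields
$$
p(2t,x,x)\ge p^{B(x,r)}(2t,x,x)\ge \frac{(P_t^{B(x,r)}\mathbf{1}_{B(x,r)}(x))^2}{\mu(B(x,r))}\ge \frac{c_4}{V(x,\phi^{-1}(t))}.
$$

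\emph{Step 2 (near-diagonal).} I upgrade Step 1 to
$$
p(t,x,y)\ge \frac{c_5}{V(x,\phi^{-1}(t))}\quad\text{whenever}\quad d(x,y)\le \varepsilon_0\phi^{-1}(t),
$$
for suitable $\varepsilon_0,c_5>0$. Set $V=V(x,\phi^{-1}(t/2))$ and $f=p(t/2,x,\cdot)$. Step 1 gives $\|f\|_2^2=p(t,x,x)\ge c/V$, conservativeness gives $\|f\|_1=1$, and $\UHK(\phi)$ with $\VD$ gives both the pointwise bound $\|f\|_\infty\le C/V$ and the spatial localization $\{f\ge \alpha/V\}\subset B(x,C_*(\alpha)\phi^{-1}(t))$ --- the off-diagonal term of $\UHK(\phi)$ combined with $\RVD$ and \eqref{polycon} forces $V(x,d(x,z))\phi(d(x,z))\lesssim Vt/\alpha$, hence $d(x,z)\lesssim \alpha^{-1/(d_1+\beta_1)}\phi^{-1}(t)$. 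A distribution-function argument applied to
$$
\int f^2\,d\mu=\int_{\{f\ge \alpha/V\}}f^2\,d\mu+\int_{\{f<\alpha/V\}}f^2\,d\mu
$$
then produces $\mu(A_x)\ge \beta(\alpha)V$ for the superlevel set $A_x:=\{f\ge \alpha/V\}$. Repeating the construction around $y$ yields an analogous $A_y\subset B(y,C_*(\alpha)\phi^{-1}(t))$; when $\varepsilon_0$ is small, both sets lie in a common ball of radius $(C_*(\alpha)+1)\phi^{-1}(t)$, whose measure by $\VD$ is at most $C_{\#}(\alpha)V$. A quantitative pigeonhole together with a careful choice of $\alpha$ so that $2\beta(\alpha)>C_{\#}(\alpha)$ then delivers $\mu(A_x\cap A_y)\ge \eta V$. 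Chapman--Kolmogorov yields
$$
p(t,x,y)\ge \int_{A_x\cap A_y}p(t/2,x,z)p(t/2,z,y)\,\mu(dz)\ge \frac{c_5}{V(x,\phi^{-1}(t))}.
$$

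\emph{Step 3 (off-diagonal).} For $d(x,y)>\varepsilon_0\phi^{-1}(t)$, put $r=d(x,y)/8$, $B=B(x,r)$, and $D=B(y,\varepsilon_0\phi^{-1}(t)/2)$, disjoint from $B$. For $u\in B$ and $z\in D$, the triangle inequality gives $d(u,z)\asymp d(x,y)$, so by $\J_{\phi,\ge}$ and $\VD$ one has $J(u,z)\ge c_6/[V(x,d(x,y))\phi(d(x,y))]$. Since $\phi(r)\gtrsim t$ in this regime (with $\varepsilon_0$ chosen above the structural threshold needed by Step 2), $\E_{\phi,\ge}$ yields $\bE^x[\tau_B\wedge(t/4)]\ge c_7 t$, and the Meyer--L\'evy system formula gives
$$
\bP^x(\tau_B\le t/4,\,X_{\tau_B}\in D)\ge \frac{c_8\,t\,V(y,\phi^{-1}(t))}{V(x,d(x,y))\phi(d(x,y))}.
$$
Combining the strong Markov property at $\tau_B$ with Step 2 applied between the landing point in $D$ and $y$ over a remaining time $\ge t/2$ (using Lemma \ref{E} to ensure the process stays in $B(y,\varepsilon_0\phi^{-1}(t))$ at time $t/2$ with positive probability) then produces
$$
p(t,x,y)\ge \frac{c_5}{V(y,\phi^{-1}(t))}\,\bP^x\bigl(X_{t/2}\in B(y,\varepsilon_0\phi^{-1}(t))\bigr)\ge \frac{c_9\,t}{V(x,d(x,y))\phi(d(x,y))}.
$$

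The principal obstacle is Step 2: the superlevel-set overlap must be made quantitatively precise against the competing constraints that the localization radius $C_*(\alpha)$ grows as $\alpha$ shrinks, while the pigeonhole demands $2\beta(\alpha)$ dominate the enclosing ball's doubling measure. Balancing these parameters via $\VD$, $\RVD$ and \eqref{polycon} (and, if needed, iterating through the truncated Dirichlet form $(\sE^{(\rho)},\sF)$ with $\rho\asymp\phi^{-1}(t)$) is the technical heart. Once Step 2 is in hand, Step 3 is a direct one-jump L\'evy system argument, parallel in spirit to the derivation of $\J_\phi$ from $\HK(\phi)$ in Proposition \ref{l:jk}.
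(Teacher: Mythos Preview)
Your Steps 1 and 3 are sound and parallel the paper's argument closely: the on-diagonal lower bound via $\E_\phi$ and Cauchy--Schwarz is exactly the paper's opening move, and the off-diagonal one-jump argument via the L\'evy system and $\J_{\phi,\ge}$ is the paper's Step (ii). The real issue is Step 2.

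The superlevel-set pigeonhole you propose does not close in this generality. With $f=p(t/2,x,\cdot)$, the bounds $\|f\|_2^2\ge c/V$, $\|f\|_\infty\le C/V$, $\|f\|_1=1$ give $\mu(A_x)\ge \beta V$ with $\beta\le c/C^2$, a quantity governed entirely by the ratio of the on-diagonal lower and upper constants and \emph{not} improvable by sending $\alpha\to 0$. Meanwhile the localization radius $C_*(\alpha)\asymp\alpha^{-1/(d_1+\beta_1)}$ forces $C_\#(\alpha)\asymp C_*(\alpha)^{d_2}\ge 1$ to grow as $\alpha\downarrow 0$, and taking $\alpha$ near $c$ kills $\beta(\alpha)$. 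There is no regime in which $2\beta(\alpha)>C_\#(\alpha)$ is guaranteed from $\VD$, $\RVD$ and \eqref{polycon} alone; the inequality would require the \emph{a priori} relation $c/C^2\gtrsim 1$, which nothing in the hypotheses provides. Your own closing paragraph flags exactly this obstruction but does not overcome it, and the suggested fallback to the truncated form $(\sE^{(\rho)},\sF)$ does not help, since the on-diagonal lower constant for $q^{(\rho)}$ is no better tied to its upper constant.

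The paper avoids this entirely by proving H\"older continuity of the heat kernel in $(x,y)$ (Lemma \ref{L:holder1}): one establishes a Krylov-type hitting estimate \eqref{proof-holder-1} using $\J_{\phi,\ge}$ and the L\'evy system, together with the exit-from-annulus bound \eqref{proof-holder-2} from $\J_{\phi,\le}$ and $\E_{\phi,\le}$, and runs the standard oscillation-decay argument for bounded parabolic functions. This yields
\[
|p(2t,x,y)-p(2t,x,x)|\le \frac{c_3}{V(x,\phi^{-1}(t))}\Big(\frac{d(x,y)}{\phi^{-1}(t)}\Big)^\theta,
\]
which combined with the on-diagonal bound from Step 1 gives the near-diagonal lower bound immediately for $d(x,y)\le\delta\phi^{-1}(t)$ with $\delta$ small. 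This is the missing ingredient; $\J_{\phi,\ge}$ enters precisely here, through the Krylov estimate, and not merely in Step 3.
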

\begin{proof}
 The proof is split into two steps, and the first one is concerned with the near-diagonal lower bound estimate.

(i) The argument for the near-diagonal lower bound estimate is
standard; we present it here for the sake of completeness. It
follows from $\E_\phi$ and Lemma \ref{E} that there exist  constants $c_0\geq 1$ and
$c_1\in (0,1)$ so that for all $x\in M_0$ and
$t,r>0$ with $r\ge c_0 \phi^{-1}(t)$,
$$
\int_{B(x,r)^c} p(t, x,y)\,\mu(dy)\le \bP^x(\tau_{B(x,r)}\le t)\le c_1 .
$$
 This and the conservativeness of $(\sE, \sF)$(which is due to
Lemma \ref{killing-2}) imply that
$$\int_{B(x, c_0\phi^{-1}(t))} p(t, x,y)\,\mu(dy)\ge 1-c_1.$$ By the semigroup property and the Cauchy-Schwarz inequality, we get for all $x\in M_0$
\begin{equation}\label{odle} \begin{split}
p (2t,  x,x)= &\int_Mp(t, x,y)^2\,\mu(dy)\ge \frac{1}{V(x,c_0\phi^{-1}(t))} \bigg(\int_{B(x, c_0\phi^{-1}(t))} p(t, x,y)\,\mu(dy)\bigg)^2\\
 \ge &\frac{c_2}{V(x,\phi^{-1}(t))}.
 \end{split}\end{equation}
  Furthermore, by \eqref{eq:holdboef} below, we can take $\delta>0$ small enough and find that for almost all
  $y\in B(x,\delta\phi^{-1}(t))$,
$$p (2t, x,y)\ge p (2t, x,x)- \frac{c_3}{V(x,\phi^{-1}(t))} \delta^\theta\ge  \frac{c_4}{V(x,\phi^{-1}(t))}.$$ By $\VD$ and \eqref{polycon}, there are constants ${\delta_1},c_5>0$ such that for all $t>0$, almost all $x\in M$ and $y\in B(x, {\delta_1}\phi^{-1}(t))$,
$$p(t, x,y)\ge \frac{c_5}{V(x,\phi^{-1}(t))}.$$

(ii) The argument below is motivated by \cite[Section 4.4]{CZ}. According to the result in Subsection \ref{5-15-1},
Lemma \ref{killing-2} and Lemma \ref{Conserv}, $\UHK(\phi)$ and so $\EP_{\phi,\le}$ holds, i.e.\  for all $x\in M_0$ and $t,r>0$,
$$\bP^x(\tau_{B(x,r)}\le t)\le c_6t/\phi(r).$$
In particular, for any ${\delta_2}\in(0, {\delta_1})$, there is a constant $a\in (0,1/2)$ such that for all $x\in M_0$ and $t>0$,
$$\bP^x( \tau_{B(x,2{\delta_2}\phi^{-1}(t)/3)}\le at )\le c_6at/\phi(2{\delta_2}\phi^{-1}(t)/3 )\le c' a\delta_2^{-\beta_2}\le {1}/{2},$$ where we used \eqref{polycon} in the second inequality.

In the following, we fix $\delta_2\in (0,\delta_1)$. By taking a sufficiently small $a\in(0,1/2)$, below we may and do assume that ${\delta_1}\phi^{-1}((1-a)t)\ge {\delta_2}\phi^{-1}(t)$.
 For $A\subset M$, let $$\sigma_A=\inf\{t>0: X_t\in A\}.$$  Now, by the strong Markov property, for
all $x \in M_0$ and $y\in M$ with $d(x,y)\ge {\delta_1}\phi^{-1}(t),$
\begin{align*} \bP^x(X_{at}\in& B(y, {\delta_1}\phi^{-1}((1-a)t)))\\
\ge &\bP^x(X_{at}\in B(y, {\delta_2}\phi^{-1}(t)))\\ \ge &\bP^x\bigg(\sigma_{B(y,{\delta_2}\phi^{-1}(t)/3)}\le a t; \sup_{s\in[\sigma_{B(y,{\delta_2}\phi^{-1}(t)/3)}, at]}
d(X_s,X_{\sigma_{B(y,{\delta_2}\phi^{-1}(t)/3)}})< 2{\delta_2}\phi^{-1}(t)/3\bigg)\\
\ge& \bP^x(\sigma_{B(y,{\delta_2}\phi^{-1}(t)/3)}\le at)\inf_{z\in B(y, {\delta_2}\phi^{-1}(t)/3)}\bP^z(\tau_{B(z,2{\delta_2}\phi^{-1}(t)/3)}>at)\\
\ge& \frac{1}{2}\bP^x(\sigma_{B(y,{\delta_2}\phi^{-1}(t)/3)}\le at)\\
\ge& \frac{1}{2}\bP^x \Big(X_{(at)\wedge \tau_{B(x,2{\delta_2}\phi^{-1}(t)/3)}}\in B(y, {\delta_2}\phi^{-1}(t)/3)\Big).\end{align*}  For any $x, y\in M$ with $d(x,y)\ge {\delta_1}\phi^{-1}(t)\ge {\delta_2}\phi^{-1}(t),$
$B(y, {\delta_2}\phi^{-1}(t)/3)\subset B(x, 2{\delta_2}\phi^{-1}(t)/3)^c$.  Then  by $\J_{\phi,\ge}$ and Lemma \ref{Levy-sys}, for all $x\in M_0$,
\begin{align*}
&\bP^x \Big(X_{(at)\wedge \tau_{B(x,2{\delta_2}\phi^{-1}(t)/3)}}\in B(y, {\delta_2}\phi^{-1}(t)/3)\Big)\\
&= \bE^x\left[ \sum_{s\le{(at)\wedge \tau_{B(x,2{\delta_2}\phi^{-1}(t)/3)}}}{\bf 1}_{\{X_s\in B(y, {\delta_2}\phi^{-1}(t)/3)\}}  \right]\\
&\ge\bE^x\left[\int_0^{{(at)\wedge \tau_{B(x,2{\delta_2}\phi^{-1}(t)/3)}}}ds \int_{B(y, {\delta_2}\phi^{-1}(t)/3)} J(X_s,u)\,\mu(du)\right]\\
&\ge c_7 \bE^x\left[ \int_0^{{(at)\wedge \tau_{B(x,2{\delta_2}\phi^{-1}(t)/3)}}}ds \int_{B(y, {\delta_2}\phi^{-1}(t)/3)}\frac{1}{V(u,d(X_s,u))\phi(d(X_s,u))}\,\mu(du)\right]\\
&\ge c_8 \bE^x\left[ {{(at)\wedge \tau_{B(x,2{\delta_2}\phi^{-1}(t)/3)}}}\right] V(y, {\delta_2}\phi^{-1}(t)/3)\frac{1}{V(y,d(x,y))\phi(d(x,y))}\\
&\ge c_8 at\bP^x\left[{{\tau_{B(x,2{\delta_2}\phi^{-1}(t)/3)}\ge at}}\right] V(y, {\delta_2}\phi^{-1}(t)/3)\frac{1}{V(y,d(x,y))\phi(d(x,y))}\\
&\ge  \frac{c_9 tV(y,\phi^{-1}(t))}{V(x,d(x,y))\phi(d(x,y))},
\end{align*}
where
in the third inequality we have used the fact that
$$
d(X_s,u)\le d(X_s, x)+d(x,y)+d(y,u)\le d(x,y)+{\delta_2}\phi^{-1}(t)\le 2d(x,y).
$$

Therefore, from (i),  for almost all $x, y\in M$ with $d(x,y)\ge {\delta_1}\phi^{-1}(t),$
\begin{align*}
p(t, x,y) &\ge \int_{B(y, {\delta_1}\phi^{-1}(t))} p(at,x,z) p((1-a)t,z,y)\,\mu(dz)\\
&\ge\inf_{z\in B(y, {\delta_1}\phi^{-1}((1-a)t))} p({(1-a)t},z,y)\int_{B(y, {\delta_1}\phi^{-1}((1-a)t))} p(at,x,z)\,\mu(dz)\\
&\ge\frac{c_{10}}{V(y,\phi^{-1}(t))}\cdot\frac{c_9 tV(y,\phi^{-1}(t))}{V(x,d(x,y))\phi(d(x,y))}\\
&=\frac{c_{11} t}{V(x,d(x,y))\phi(d(x,y))}.
\end{align*}
The proof is complete.  \qed
\end{proof}

\begin{remark}\rm
We emphasis that the on-diagonal lower bound estimate \eqref{odle} is based on $\E_\phi$ only. \end{remark}

The following lemma has been used in the proof above.
\begin{lemma}\label{L:holder1} Under $\VD$, \eqref{polycon}, $\J_\phi$ and $\E_\phi$, the heat kernel $p(t, x,y)$ is H\"{o}lder continuous with respect to $(x,y)$. More explicitly, there exist constants $\theta\in(0,1)$ and $c_3>0$ such that for all $t>0$ and $x,y, z\in M$,
 \be\label{eq:holdboef}
|p(t, x,y)- p(t, x,z)| \le  \frac{c_3}{V(x,\phi^{-1}(t))} \left(\frac{d(y,z)}{\phi^{-1}(t)}\right)^\theta.
\ee
 \end{lemma}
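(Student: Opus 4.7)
The plan is to derive \eqref{eq:holdboef} from a parabolic oscillation decrement for bounded caloric functions, a standard Bass--Levin / Chen--Kumagai type argument whose ingredients are all already available under our hypotheses. Note first that by the result of Subsection~5.1 (namely Proposition~\ref{thm:ujeuhkds} combined with Lemma~\ref{killing-2} which gives conservativeness), $\J_\phi+\E_\phi$ implies $\UHK(\phi)$ and hence $\EP_{\phi,\le}$ by Lemma~\ref{Conserv}. In particular, $\|p(t,x,\cdot)\|_\infty \le c/V(x,\phi^{-1}(t))$, and the function $u(s,z):=p(t-s,x,z)$ on $[0,t)\times M$ is bounded and parabolic for $X$ (by the Markov property and the semigroup identity).

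The core step is to prove the following oscillation estimate: there exist $\theta\in(0,1)$ and $c>0$ such that for every $z_0\in M$, $R>0$, $t_0\ge 0$ and every bounded parabolic function $u$ on $Q_R:=(t_0,t_0+\phi(R))\times B(z_0,R)$, and every $0<r\le R/2$,
\[
 \essosc_{(t_0+\phi(R)-\phi(r),\,t_0+\phi(R))\times B(z_0,r)} u \;\le\; c\,(r/R)^\theta\,\|u\|_{L^\infty(Q_R)}.
\]
The argument is the usual geometric iteration: set $r_k=R\,\eta^k$ for some small $\eta\in(0,1)$, $t_k=t_0+\phi(R)-\phi(r_k)$, and $Q_k=(t_k,t_0+\phi(R))\times B(z_0,r_k)$, and show that
\(\essosc_{Q_{k+1}} u\le \lambda\,\essosc_{Q_k} u\) for some $\lambda<1$ independent of $k$. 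Writing $M_k=\operatorname{ess\,sup}_{Q_k}u$, $m_k=\operatorname{ess\,inf}_{Q_k}u$ and $v:=(u-(M_k+m_k)/2)/(\essosc_{Q_k}u/2)$, one may assume $|v|\le 1$ on $Q_k$ and that the set $A:=\{v\le 0\}\cap Q_k$ occupies at least half the space-time measure of $Q_k$. Starting $X$ from $(s,y)\in Q_{k+1}$, the probability that $Z=(V,X)$ exits $Q_k$ through the top without leaving the space ball $B(z_0,r_k)$ is bounded below by $\EP_{\phi,\le}$ (applied at scale $r_k$, since $\phi(r_k)-\phi(r_{k+1})$ is a definite fraction of $\phi(r_k)$ by \eqref{polycon}); on the complementary event, using the L\'evy system of $X$ together with the lower bound $\J_{\phi,\ge}$, one shows that with probability bounded below by a universal constant the process jumps directly into $A$ before exiting. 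This gives $\bE^{(s,y)}v(Z_{\tau_{Q_k}})\le 1-\delta$ for some $\delta>0$, which, since $u$ is parabolic, yields $M_{k+1}\le M_k-\delta(\essosc_{Q_k}u)/2$, i.e.\ $\essosc_{Q_{k+1}} u\le(1-\delta/2)\essosc_{Q_k} u$. Iterating gives the claimed H\"older bound with $\theta=-\log(1-\delta/2)/\log(1/\eta)$.

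The main obstacle is a careful bookkeeping of the ``jump into $A$'' estimate: because $X$ is a jump process, one must use the L\'evy system in the form
\[
 \bE^{(s,y)}\Big[\sum_{u\le \tau_{Q_k}}{\bf 1}_{A}(V_u,X_u)\Big] \;\ge\; \bE^{(s,y)}\!\!\int_0^{\tau_{Q_k}\wedge\delta_0\phi(r_k)}\!\!\!\!\!\mu(A_s)\,\essinf_{w\in B(z_0,r_k)}\!\!\int_{A_s} J(w,z)\,\mu(dz)\,ds,
\]
and then apply $\J_{\phi,\ge}$ together with $\VD$ to bound the inner integral uniformly from below by $c/\phi(r_k)$ times a volume factor, which must be combined with the lower bound on $\tau_{Q_k}\wedge\delta_0\phi(r_k)$ furnished by $\EP_{\phi,\le}$. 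This is routine given the results already established but requires careful choice of the constants $\eta,\delta_0$ so that all the geometric factors depending on $R/r_k$ cancel and only dimensional constants remain.

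Finally, to deduce \eqref{eq:holdboef} apply the oscillation estimate to $u(s,z)=p(t/2+s,x,z)$ on the cylinder $(0,\phi(R))\times B(y,R)$ with $R=\phi^{-1}(t)$, using the $L^\infty$-bound $\|u\|_\infty\le c/V(x,\phi^{-1}(t))$ coming from $\UHK(\phi)$ (and the comparability in Remark~\ref{R:1.22}(i)); the case $d(y,z)>\phi^{-1}(t)$ is immediate from $\UHK(\phi)$ alone.
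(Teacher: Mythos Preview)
Your overall strategy coincides with the paper's: both derive \eqref{eq:holdboef} from a parabolic oscillation lemma in the spirit of \cite[Theorem~4.14]{CK1}, using the exit-time lower bound (from $\EP_{\phi,\le}$, which the paper records as \eqref{diff}) and the Krylov hitting estimate (from $\J_{\phi,\ge}$ via the L\'evy system, which is \eqref{proof-holder-1}). However, your iteration sketch has a genuine gap that is specific to the non-local setting and would not close as written.

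The issue is the claim $\bE^{(s,y)}v(Z_{\tau_{Q_k}})\le 1-\delta$. Having normalized so that $|v|\le 1$ on $Q_k$, you have no control on $v$ off $B(z_0,r_k)$: when $X$ exits $Q_k$ by a jump landing outside that ball, $v(Z_{\tau_{Q_k}})$ may be as large as $2\|u\|_\infty/\essosc_{Q_k}u$, which grows without bound as $k\to\infty$. Neither the ``exit through the top'' bound nor the Krylov lower bound addresses this contribution, and your sketch uses only $\J_{\phi,\ge}$ for the jump structure. The paper supplies the missing third ingredient, the exit-jump tail bound
\[
\bP^{(t,x)}\big(X_{\tau_{Q(t,x,r)}}\notin B(x,s)\big)\le c\,\phi(r)/\phi(s)\qquad(s\ge 2r),
\]
which is \eqref{proof-holder-2} and comes from $\J_{\phi,\le}$ together with $\E_{\phi,\le}$. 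With it, the correct induction is not a one-step contraction $\essosc_{Q_{k+1}}u\le\lambda\,\essosc_{Q_k}u$; rather, at stage $k$ one bounds the contribution from exits landing in each annulus $B(z_0,r_j)\setminus B(z_0,r_{j+1})$ by $c\,\phi(r_k)/\phi(r_j)$ times the previously established bound on $Q_j$, and summing the resulting geometric series closes the argument. A symptom of the same omission is that your oscillation inequality is stated with $\|u\|_{L^\infty(Q_R)}$ on the right; for a jump process the exit position of $Q_R$ ranges over all of $M$, so the correct bound must be in terms of $\|u\|_{L^\infty([0,\infty)\times M)}$.
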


\begin{proof} The proof is essentially the same as that of \cite[Theorem 4.14]{CK1}, and we should highlight a few different steps. Let $Z:=\{V_s,X_s\}_{s\ge0}$ be a space-time process where $V_s=V_0-s$. The filtration generated by $Z$ satisfying the usual conditions will be denoted by $\{\widetilde{\mathcal{F}}_s;s\ge0\}$. The law of the space-time process $s\mapsto Z_s$ starting from $(t,x)$ will be denoted by $\bP^{(t,x)}$. For every open subset $D$ of $[0,\infty)\times M$, define
$\tau_D=\inf\{s>0:Z_s\notin D\}$ and $\sigma_D=\inf\{t>0: Z_t\in D\}$.

According to Subsection \ref{5-15-1}, $\J_\phi+\E_\phi$ imply
$\UHK(\phi)$.  Then  by Lemma
\ref{Conserv}, $\EP_{\phi,\le}$ holds, i.e.\ there is a constant
$c_0\in(0,1)$ such that for all $x\in M_0$ and $r>0$,
\begin{equation}\label{diff}\bP^{(0,x)}(\tau_{B(x,r)}\le c_0 \phi(r))\le 1/2.\end{equation}  Let $Q(t,x,r)=[t,t+c_0\phi(r)]\times B(x,r)$.  Then,
 following the argument of \cite[Lemma 6.2]{CK2} and using the L\'{e}vy system for the process $\{X_t\}$ (see Lemma \ref{Levy-sys}), we can obtain that there is a constant $c_1>0$ such that for
all $ x\in M_0$,
$t,r>0$ and any compact subset $A\subset Q(t,x,r)$
\begin{equation}\label{proof-holder-1}\bP^{(t,x)} (\sigma_A<\tau_{Q(t,x,r)})\ge c_1\frac{m\otimes \mu(A)}{V(x, r)\phi(r)},\end{equation} where $m\otimes \mu$ is a product measure of the Lebesgue measure $m$ on $\bR_+$ and $\mu$ on $M$. Note that
unlike \cite[Lemma 6.2]{CK2}, here \eqref{proof-holder-1} is satisfied
for all $r>0$ not only $r\in(0,1]$, which is due to the fact \eqref{diff} holds for all $r>0$.

Also by the L\'{e}vy system of the process $\{X_t\}$ (see Lemma \ref{Levy-sys}),
we find that there is a constant $c_2>0$ such that for all $ x\in M_0$,
$t,r>0$ and $s\ge 2r$,
\begin{equation}\label{proof-holder-2}\begin{split}\bP^{(t,x)}(X_{\tau_{Q(t,x,r)}}\notin B(x,s))&=\bE^{(t,x)} \int_0^{\tau_{Q(t,x,r)}} \int_{B(x,s)^c} J(X_v,u)\,\mu(du)\, dv\\
&\le \bE^{(t,x)} \int_0^{\tau_{Q(t,x,r)}} \int_{B(X_v,s/2)^c} J(X_v,u)\,\mu(du)\, dv\\
&\le c_2\frac{\phi(r)}{\phi(s)},\end{split}\end{equation} where in the last inequality we have used \eqref{polycon}, Lemma \ref{intelem} and $\E_\phi$.

Having \eqref{proof-holder-1} and \eqref{proof-holder-2} at hand, one can follow the argument of \cite[Theorem 4.14]{CK1} to get that the H\"{o}lder continuity of bounded parabolic functions, and so the desired assertion for the heat kernel $p(t, x,y).$
 \qed
\end{proof}

\begin{remark}\rm
The proof above is based on \eqref{diff}, \eqref{proof-holder-1} and
\eqref{proof-holder-2}. According to Lemma \ref{E}, \eqref{diff} is
a consequence of $\E_\phi$; while, from the argument above,
\eqref{proof-holder-2} can be deduced from $\J_{\phi,\le}$ and
$\E_{\phi,\le}$. \eqref{proof-holder-1} is the so called Krylov type
estimate, which is a key to yield the H\"{o}lder continuity of
bounded parabolic functions, and where $\J_{\phi,\ge}$ is used.
\end{remark}

\section{Applications and Example}\label{Sectin-ex}
\subsection{Applications}
We first give examples of $\phi$ such that condition
\eqref{polycon} is satisfied (see \cite[Example 2.3]{CK2}).
\begin{example}\label{E:2.3} \rm
\begin{description}
\item{$(1)$}
Assume that there exist $0<\beta_1\le \beta_2<\infty$ and a probability measure $\nu$ on $[\beta_1,\beta_2]$ such that
$$\phi(r)=\int_{\beta_1}^{\beta_2} r^\beta\,\nu(d\beta),\quad r\ge 0.$$
Then \eqref{polycon} is satisfied.
Clearly, $\phi$ is a continuous strictly increasing function with $\phi (0)=0$.
Note that some additional restriction of the range of $\beta_2$ should be imposed for the corresponding
Dirichlet form to be regular. $($For instance, $\beta_2<2$ when $M=\bR^n$.$)$
In this case,
\be\label{jwongkd}
J(x, y) \asymp \frac1{V(x,d(x, y)) \int_{\beta_1}^{\beta_2} d(x, y)^\beta\,
 \nu (d \beta )},\quad x,y\in M.
\ee
When $\beta_1=\beta_2=\beta$ (i.e. $\nu (\{\beta\})=1$), a symmetric jump process whose
jump density
is comparable to \eqref{jwongkd} is called a symmetric $\beta$-stable like process.
\item{$(2)$} Similarly,
consider the following increasing function
$$\phi(r)=\left(\int_{\beta_1}^{\beta_2} r^{-\beta}\,\nu(d\beta)\right)^{-1}
~\mbox{ for }~r>0,~~~\phi (0)=0,$$
where $\nu$ is a finite measure on $[\beta_1,\beta_2]\subset (0,\infty)$. Then \eqref{polycon} is satisfied.
Again, $\phi$ is a continuous strictly increasing function, and some
additional restriction of the range of $\beta_2$ should be imposed for the corresponding
Dirichlet form to be regular. In this case,
$$ J(x, y) \asymp  \frac{1}{V(x,d(x, y))}\int_{\beta_1}^{\beta_2}
\frac1{ d(x, y)^\beta } \,\nu (d \beta),\quad x,y\in M.$$
A particular case is when $\nu$ is a discrete measure.
For example, when $\nu (A)=\sum_{i=1}^N\delta_{\alpha_i}(A)$ for some $\alpha_i\in(0,\infty)$ with $1\le i\le N$ and $N\ge 1$,
$$J(x, y)\asymp \sum_{i=1}^N\frac1{V(x,d(x, y)) d(x, y)^{\alpha_i} }.$$
\end{description}
\end{example}

\ \

We now give an important class of examples where $\beta$, $\beta_2$ in \eqref{polycon}
could be strictly larger than $2$, and then discuss the stability of heat kernel estimates.

The first class of examples are given as subordinations of diffusion processes on fractals.
First, let us define the Sierpinski carpet as a typical example of fractals.
Set $E_0=[0,1]^n$. For any $l \in \bN$ with $l\geq 2$, let
$${\cal Q}=\Big\{\Pi_{i=1}^n [(k_i-1)/l,k_i/l]: 1\le k_i\le l,~k_i\in
\bN,~1\le i\le n\Big\}.$$ For any $l\le N \le l^n$, let $F_i$ ($1\le
i\le N$) be orientation preserving affine maps of $E_0$ onto some
element of ${\cal Q}$. (Without loss of generality, let
$F_1(x)=l^{-1}x$ for $x\in E_0$ and assume that the sets $F_i(E_0)$
are distinct.) Set
$I=\{1, \dots,  N\}$ and $E_1=\cup_{i\in I}F_i (E_0)$.
 Then  there exists
a unique non-empty compact set $\hat M \subset E_0$ such that $\hat
M=\cup_{i\in I}F_i(\hat M)$. $\hat M$ is called a Sierpinski carpet
if the following hold:
\begin{itemize}
\item[(SC1)] (Symmetry) $E_1$ is preserved by all the isometries of the unit cube $E_0$.
\item[(SC2)] (Connectedness) $E_1$ is connected.
\item[(SC3)] (Non-diagonality) Let $B$ be a cube in $E_0$ which is the union of
$2^d$ distinct elements of ${\cal Q}$. (So $B$ has side length $2l^{-1}$.) If $\mbox{Int} (E_1 \cap B)\ne \emptyset$, then it is connected.
\item[(SC4)] (Borders included property) $E_1$ contains the set $\{x: 0\leq x_1
\leq 1, x_2=\cdots =x_d=0 \}$.
\end{itemize}
Note that Sierpinski carpets are
infinitely ramified in the sense that $\hat M$ can not be disconnected by
removing a finite number of points. Let
\[E_k:=\bigcup_{i_1,\cdots,i_k\in I}F_{i_1}\circ\cdots\circ F_{i_k}
(E_0),~~~M_{{\rm pre}}:=\bigcup_{k\ge 0}l^kE_k~~\mbox{and}~~M
:=\bigcup_{k\ge 0}l^k\hat M.\]
$M_{{\rm pre}}$ is called a pre-carpet, and $M$ is called an unbounded carpet.
Both Hausdorff dimensions of $\hat M$ and $M$
with respect to the Euclidean metric are $d=\log N/\log l$. Let $\mu$ be the
(normalized) Hausdorff measure on $M$. The following has been proved in \cite{BB1}:

There exists a $\mu$-symmetric conservative diffusion on $M$
that has a symmetric jointly continuous transition density
$\{q(t,x,y): t>0, x,y\in M\}$ with the following estimates for
all $t>0, x,y\in M$:
\begin{align}
c_{1}t^{-\alpha/\beta_*}\exp \bigg(-c_{2}\Big(\frac{|x-y|^{\beta_*}}t\Big)^{\frac 1{\beta_*-1}}\bigg)
&\le  q(t,x,y)\label{frachk1}\\
&\le
c_{3}t^{-\alpha/\beta_*}\exp \bigg(-c_{4}\Big(\frac{|x-y|^{\beta_*}}t\Big)^{\frac 1{\beta_*-1}}\bigg),
\nonumber\end{align}
where $0<\alpha\le n$ and $\beta_*\ge 2$.
In fact, it is known that there exist $\mu$-symmetric diffusion processes with the
above heat kernel estimates
on various fractals including the Sierpinski gaskets and nested fractals, and
typically $\beta_*>2$. For example, for the two-dimensional Sierpinski gasket,
$\alpha=\log 3/\log2$ and $\beta_*=\log5/\log2$ (see \cite{B,K2}
for details).

Next, let us consider a more general situation. Let $(M, d, \mu)$ be a metric measure space
as in the setting of this paper that satisfies $\VD$ and $\RVD$.
Assume that there exists a $\mu$-symmetric
 diffusion process $\{Z_t\}$
 that admits no killings inside $M$, and
 has a symmetric and jointly continuous transition density
$\{q(t,x,y): t>0, x,y\in M\}$ with the following estimates for
all $t>0, x,y\in M$:

\begin{align}
\frac{c_1}{V(x,\Psi^{-1}(t))}\exp \Big(-c_{2}\Big(\frac{\Psi(d(x,y))}t\Big)^{\gamma_1}\Big)
&\le  q(t,x,y)\label{frachk2}\\
&\le
\frac{c_3}{V(x,\Psi^{-1}(t))}\exp \Big(-c_{4}\Big(\frac{\Psi(d(x,y))}t\Big)^{\gamma_2}\Big),
\nonumber\end{align}
where $\Psi: \bR_+\to \bR_+$ is a strictly increasing continuous
function  with $\Psi (0)=0$, $\Psi(1)=1$ and satisfying \eqref{polycon}.
The lower bound in \eqref{frachk2} implies that
$$
q(t, x, y) \geq \frac{c_1 e^{-c_2}}{V(x, \Psi^{-1}(t))} \quad \hbox{for } d (x, y) \leq \Psi^{-1}(t)
$$
and so we conclude by Proposition \ref{P:3.1}(2) that
the process $\{Z_t\}$ has infinite lifetime.

Clearly \eqref{frachk1} is a special case of \eqref{frachk2} with
$V(x,r)\asymp r^\alpha$, $\Psi(s)=s^{\beta_*}$ and $\gamma_1=\gamma_2=1/(\beta_*-1)$.
A typical example that the local and global structures of $\Psi$ differ is a so called
fractal-like manifold. It is a $2$-dimensional Riemannian manifold whose global structure is
like that of the fractal. For example, one can construct it from $M_{{\rm pre}}$ by changing each bond
to a cylinder and smoothing the connection to make it a manifold.
One can naturally construct a Brownian motion on the surfaces of cylinders. Using the stability of heat kernel estimates like \eqref{frachk2}
(see for instance \cite{BBK1} for details),
one can show that any divergence operator ${\mathcal L}=\sum_{i,j=1}^2\frac{\partial}
{\partial x_i}(a_{ij}(x)\frac{\partial}{\partial x_j})$
in local coordinates on such manifolds that satisfies
the uniform elliptic condition obeys \eqref{frachk2} with $\Psi(s)=s^2+s^{\beta_*}$.

We now subordinate the diffusion $\{Z_t\}$ whose heat kernel enjoys
\eqref{frachk2}. Let $\{\xi_t\}$ be a subordinator that is
independent of $\{Z_t\}$; namely, it is an increasing L\'evy process
on $\bR_+$. Let $\bar \phi$ be the Laplace exponent of the
subordinator, i.e. $$\bE[\exp (-\lambda \xi_t)]=\exp
(-t\bar\phi(\lambda)),\quad \lambda, t>0.$$ It is known that $\bar
\phi$ is a Bernstein function, i.e. it is a $C^\infty$ function on
$\bR_+$ and $(-1)^nD^n \bar\phi\le 0$ for all $n\ge 0$. See for
instance \cite{SSV}
for the general theory of subordinations. See
also \cite{BSS,K1,Sto} for subordinations on fractals. By the general
theory, there exist $a,b\ge 0$ and a measure $\mu$ on $\bR_+$ satisfying
$\int_0^\infty (1\wedge t)\,\mu(dt)<\infty$ such that
\be\label{feiohi}
\bar\phi(\lambda)=a+b\lambda+\int_0^\infty(1-e^{-\lambda
t})\,\mu(dt). \ee Below, we assume that $\bar \phi$ is a complete
Bernstein function; namely, the measure $\mu(dt)$ has a completely
monotone density $\mu(t)$, i.e. $(-1)^nD^n \mu\ge 0$ for all $n\ge
0$. Assume further that $\bar\phi$ satisfies \eqref{polycon} with
different $\beta_1,\beta_2$ from those for $\Psi$, and that
furthermore $\beta_1,\beta_2\in (0,1)$.  Then  $a=b=0$ in
\eqref{feiohi} and one can obtain $\mu(t)\asymp \bar\phi(1/t)/t$
(see \cite[Theorem 2.2]{KSV}).

The process $\{X_t\}$ defined by $X_t=Z_{\xi_t}$ for any $t\ge0$ is called a subordinate process.
Let $\{\eta_t (u):t>0, u\ge 0\}$ be the distribution density of $\{\xi_t\}$.
It is known (see for instance \cite{BSS,Sto}) that the L\'evy density $J(\cdot,\cdot)$ and the heat kernel
$p(t,\cdot,\cdot)$ of $X$ are given by
\begin{align}
J(x, y)&=\int_0^\infty q(u,x,y) \mu (u)\, du,\label{eq:jphibiw}\\
p(t,x,y)&=\int_0^{\infty}q(u,x,y)\eta_t(u)\,du\quad \mbox{for all}~~
t>0,~x,y\in M.\label{eq:pqbihk}
\end{align}

Define
\begin{equation}\label{bieibfis}
\phi(r)=\frac 1{\bar\phi(1/\Psi(r))}.
\end{equation}
Then $\phi$ also satisfies \eqref{polycon} (with different $\beta_1,\beta_2$
from those for $\bar\phi$ and $\Psi$).
From now on, we discuss whether $p(t,\cdot,\cdot)$ satisfies $\HK(\phi)$ or not.
The most classical case is
when $(M, d, \mu)$ is the Euclidean space $\bR^d$ equipped with the Lebesgue measure $\mu$,
  $\{Z_t\}$ is Brownian motion on $\bR^d$  (and so
$\beta_*=2$ and $\gamma_1=\gamma_2=1$), and $\bar\phi(t)=t^{\alpha/2}$ with $0<\alpha<2$.
In this case
$\{\xi_t\}$ is an
$\alpha/2$-stable subordinator and the corresponding subordinate process is
the rotationally symmetric $\alpha$-stable process on $\bR^d$.
For a diffusion on a fractal whose heat kernel enjoys \eqref{frachk1} for some $\beta_*>2$, it is proved in \cite[Theorem 3.1]{BSS} that $p(t,\cdot,\cdot)$ satisfies $\HK(\phi)$ with $\phi(r)=r^{\beta_*\alpha/2}$ when $\bar\phi(t)=t^{\alpha/2}$. (Note that $\beta_*\alpha/2>2$
when $\alpha > 4/\beta_*$.)
The proof uses \eqref{eq:pqbihk} and some
estimates of $\eta_t(u)$ such as
\[
\eta_t(u)\le c_5tu^{-1-\alpha/2},\quad t,u>0.
\]

Now let us consider the case
$\Psi(s)=s^{\beta_{*,1}}+s^{\beta_{*,2}}$ with $2\le \beta_{*,1}\le
\beta_{*,2}$ (e.g.\ the fractal-like manifold is a special case in
that $\beta_{*,1}=2$), and
$\bar\phi(t)=t^{\alpha_1/2}+t^{\alpha_2/2}$ for some $0<\alpha_1\le
\alpha_2<2$. For this case, $\{\xi_t\}$ is a sum of independent
$\alpha_1/2$- and $\alpha_2/2$-subordinators, so the
distribution density $\eta_t(u)$ is a convolution of their
distribution densities. Hence we have
\begin{equation}\label{eq:convetai}
\eta_t(u)\le c_6t/(u^{1+\alpha_1/2}\wedge u^{1+\alpha_2/2}).
\end{equation}
By elementary but tedious computations (along similar lines as in the proof of \cite[Theorem 3.1]{BSS}), one can deduce that $p(t,\cdot,\cdot)$ satisfies $\HK(\phi)$ with
\begin{equation} \label{e:6.9}
\phi(r)=r^{ \alpha_2 \beta_{*,1}/2}1_{\{r\le 1\}}+r^{ \alpha_1 \beta_{*,2}/2}1_{\{r>1\}},
\end{equation}
which is (up to constant multiplicative) the same as
\eqref{bieibfis}. In fact, the computation by using
\eqref{eq:pqbihk} also requires various estimates of $\eta_t(u)$,
which are in general rather complicated. An alternative way is to
prove first $\J_\phi$ by using \eqref{eq:jphibiw}, which is easier
since we have $\mu(t)\asymp \bar\phi(1/t)/t$.  Then  we can obtain
$$p(t,x,y)\le\frac{ c_7t}{V(x,d(x,y))\phi (d(x,y))}$$ by plugging
\eqref{eq:convetai} into \eqref{eq:pqbihk}. Integrating this, we
have $\bP^x(X_t\notin B(x,r))\le c_8t/\phi(r)$ for all $x\in M$ and
$r, t>0$. Note that since the diffusion process $\{Z_t\}$ has
infinite lifetime, so does the subordinated process $\{X_t\}$. Then,
following the argument of Lemma \ref{Conserv}, we can get that
$\bP^x( \tau_{B(x,r)}\le t)\le c_9t/\phi(r)$ for all $x\in M$ and
$r, t>0$.
  Consequently, by taking $\eps>0$ sufficiently small, we have
$$ \bP^x ( \tau_{B(x, r)}\geq \phi (\eps r))=1-\bP^x ( \tau_{B(x, r)} <\phi (\eps r))
\geq 1-\frac{c_9 \phi (\eps r)}{\phi(r)} \geq c_{10}>0,
$$
 which implies $\E_{\phi,\ge}$.
Under $\VD$ and $\RVD$, $\J_\phi$
implies $\E_{\phi,\le}$ (which is due to Section \ref{jFKnow} and
Lemma \ref{upper-e}). Therefore,  by Theorem \ref{T:main}, we
conclude that $p(t,\cdot,\cdot)$ satisfies $\HK(\phi)$.

The above argument shows that $\HK(\phi)$ is satisfied for the
subordinated process $\{X_t\}$ when
$\bar\phi(t)=t^{\alpha_1/2}+t^{\alpha_2/2}$. It follows from  our
stability theorem, Theorem \ref{T:main},  that
 for any symmetric pure jump process on the above
mentioned space whose jumping kernel enjoys $\J_\phi$ with $\phi$  given by
\eqref{e:6.9},  it enjoys the two-sided heat kernel estimates
$\HK(\phi)$.

The stability results we discuss above are new in general, especially for high dimensional
Sierpinski carpets. However, if we restrict the framework so that (roughly)
$\alpha<\beta_*$ in \eqref{frachk1} (which is the case for diffusions on the Sierpinski gaskets,
for instance), then the stability for the heat kernel was already established
in \cite{GHL2}. See \cite[Examples 6.16 and 6.20]{GHL2} for related examples.

\subsection{Counterexample}

In this subsection, we show that $\J_{\phi}$ does not imply $\HK(\phi)$ through the following counterexample.

\begin{example}\label{E:82-1}{\bf ($\J_{\phi}$ does not imply $\HK(\phi)$.)}\rm\quad
In \cite{BBK2,CK1}, it is proved in the setting of graphs or $d$-sets that $\J_{\phi}$ is equivalent to $\HK(\phi)$,
when $V(x,r)\asymp r^d$ and $\phi(r)=r^\alpha$ with $0<\alpha<2$. Here, we give an example that
this is not the case in general.

Let $M=\bR^d$,
$
\phi(r)=r^\alpha+r^\beta$ with $ 0<\alpha<2<\beta$, and
\[J(x,y)\asymp\frac 1{|x-y|^d\phi(|x-y|)},\quad x,y\in\bR^d.
\]
Note that $\phi(r)\asymp r^\alpha$ if $r\le 1$, and $\phi(r)\asymp r^\beta$ if $r\ge 1$.
This example clearly satisfies $\J_{\phi}$.
We first prove the following
\begin{equation}\label{main1-5}
p(t,x,y)\leq \left\{\!\!\begin{array}{ll} c_1t^{-d /\alpha},
&\quad  t\in (0,1], \\
c_2t^{-d /2},  &\quad  t\in [1,\infty).
\end{array}\right.\end{equation}
Indeed, for the truncated process $\{X^{(1)}_t\}$ with $$J_0(x,y)=J(x,y){\bf1}_{\{|x-y|\le 1\}}\asymp \frac 1{|x-y|^d\phi(|x-y|)}{\bf1}_{\{|x-y|\le 1\}},$$
it is proved in \cite[Proposition 2.2]{CKK} that \eqref{main1-5} holds.
Since \eqref{main1-5} is equivalent to
\begin{equation}\label{3.ss6}
 \theta (\| u\|_2^2) \leq c_3\, \sE (u, u)\quad
 \mbox{for every $u\in \FF$ with $\| u\|_1=1$},
\end{equation}
where  $\theta (r)=r^{1+\alpha/d}\vee r^{1+2/d}$
(see for instance \cite[theorem II.5]{Cou}),
it follows from the fact $J_0(x,y)\preceq J(x,y)$ that \eqref{3.ss6} and so \eqref{main1-5} hold for the original process $\{X_t\}$.
So if we take $t=c_4(r^\alpha \vee r^2)$ for $c_4>0$ large enough, then for all $x$, $x_0\in \bR^d$ and $r>0$,
\[ \bP^x(X_t\in B(x_0,r))=\int_{B(x_0,r)} p(t,x,z)\,dz\leq c_5(t^{-d/\alpha}\vee t^{-d/2})r^d \leq \tfrac12. \]
This implies
$\bP^x(\tau_{B(x_0,r)}> t)\leq \tfrac12$.
Using the strong Markov property of $X$, we have for all $x, x_0\in \bR^d$,
$\bP^x(\tau_{B(x_0,r)}>k t)\leq 2^{-k}$ and so
$\bE^x \tau_{B(x_0, r)} \leq c_6 t= c_4 c_6(r^\alpha \vee r^2)$.
Thus $\E_\phi$ fails, and so $\HK(\phi)$ does not hold either.
\end{example}

\section{Appendix}
\subsection{The L\'{e}vy system formula}
The following formula is used many times in this paper. See, for example \cite[Appendix A]{CK2}
for the proof.
\begin{lemma}\label{Levy-sys}
Let $f$ be a non-negative measurable function on ${\mathbb R}_+
\times M \times M$ that vanishes along the diagonal. Then for every
$t\geq 0 $, $x\in M_0$ and  stopping time $T$ $($with
respect to the filtration of $\{X_t\}$$)$,
$$
{\mathbb E}^x \left[\sum_{s\le T} f(s,X_{s-}, X_s) \right]={\mathbb
E}^x \left[ \int_0^T  \int_M f(s,X_s, y)\, J(X_s,dy)
\,ds \right].
 $$
\end{lemma}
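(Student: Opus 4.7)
The plan is to deduce the formula from the general theory of additive functionals for symmetric Hunt processes, applied to the pure-jump Dirichlet form $(\sE, \sF)$ given in \eqref{e:1.1}. The central task is to identify the L\'evy system $(N, H)$ of the process $X$; once this identification is in hand, the identity for general $f$ and stopping times $T$ follows by routine approximation and optional stopping arguments.

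The key step is the identification of $(N, H)$. For every $u \in \sF_b$, Fukushima's decomposition yields $u(X_t) - u(X_0) = M_t^{[u]} + N_t^{[u]}$, where $M^{[u]}$ is a square-integrable martingale additive functional and $N^{[u]}$ is a continuous additive functional of zero energy. Since by \eqref{e:1.1} the form $\sE$ has neither a strongly local nor a killing part, $M^{[u]}$ is purely discontinuous, and by \cite[Theorem 5.3.1]{FOT} the Revuz measure of the sharp bracket $\langle M^{[u]}\rangle$ equals $2\int_M (u(\cdot)-u(y))^2\, J(\cdot, dy)$ with respect to $\mu$. Comparing this with the abstract representation $\langle M^{[u]}\rangle_t = \int_0^t \int_M (u(X_s)-u(y))^2\, N(X_s, dy)\, dH_s$ and invoking the bijection between PCAFs and smooth measures via the Revuz correspondence, one concludes that one may take $H_t = t$ and $N(x, dy) = J(x, dy)$. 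The defining property of the L\'evy system then gives, for every bounded Borel $g$ on $M \times M$ vanishing on the diagonal, every deterministic $t \geq 0$ and every $x \in M_0$,
$$\bE^x \sum_{s \leq t} g(X_{s-}, X_s) = \bE^x \int_0^t \int_M g(X_s, y)\, J(X_s, dy)\, ds.$$

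To accommodate time-dependent integrands $f(s, x, y)$, I would either apply the same reasoning to the space-time process $\wt X_s := (V_s, X_s)$ with $V_s = s$, or perform a monotone class argument starting from product functions $h(s) g(x, y)$ with $h$ bounded Borel and $g$ as above; both approaches reduce the time-dependent case to the time-independent one. To pass from deterministic times to an arbitrary stopping time $T$, I would observe that, for bounded $f$, the process
$$M_t^f := \sum_{s \leq t} f(s, X_{s-}, X_s) - \int_0^t \int_M f(s, X_s, y)\, J(X_s, dy)\, ds$$
is a $\bP^x$-martingale, which is precisely the martingale reformulation of the identity just established. Optional stopping at $T \wedge n$ combined with monotone convergence as $n \to \infty$ (legitimate since $f \geq 0$), and a separate approximation $f \wedge k \uparrow f$ for unbounded $f \geq 0$, then yield the claimed equality in full generality.

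The main obstacle is the first step: identifying the Revuz measure of the jumping part of $\langle M^{[u]}\rangle$ with (the appropriate multiple of) the symmetric jumping kernel $J$. This relies on the Beurling-Deny-Le Jan decomposition of $\sE$ together with the Revuz correspondence between PCAFs and smooth measures; under the regularity assumptions of this paper this identification is essentially classical, and the argument above can be carried out word for word as in \cite[Appendix A]{CK2}. All subsequent steps reduce to the monotone class theorem, optional stopping and monotone convergence.
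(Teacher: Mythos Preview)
Your proposal is correct and mirrors the approach the paper invokes: the paper does not give its own proof but simply refers the reader to \cite[Appendix A]{CK2}, and your outline---identifying the L\'evy system $(N,H)=(J(x,dy),t)$ via Fukushima's decomposition and the Revuz correspondence, then passing to time-dependent integrands and stopping times by monotone class and optional stopping---is exactly the argument presented there. No further elaboration is needed.
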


\subsection{Meyer's decomposition}\label{Sect9.2}
 We use the following construction of Meyer \cite{Me} for jump processes. Assume that $J(x,y)=J'(x,y)+J''(x,y)$ for any $x,y\in M$, and that there exists a constant $C>0$ such that
 $$
 {\cal J} (x)=\int J''(x,y)\,\mu(dy)\le C\quad \textrm{ for all }x\in M.$$
 Note that, by Lemma \ref{intelem} the assumption above holds for $J''(x,y):={\bf1}_{\{d(x,y)\ge r\}}J(x,y)$ with $r>0$, when $\VD$, \eqref{polycon}  and $\J_{\phi,\le}$ are satisfied. Let $\{Y_t\}$ be a process corresponding to the jumping kernel $J'(x,y)$. Then we can construct a process
 $\{X_t\}$ corresponding to the jumping kernel $J(x,y)$ by the following procedure.
 Let $\xi_i$, $i\ge 1$, be i.i.d. exponential random variables of parameter $1$ independent of $\{Y_t\}$. Set
 $$
 H_t=\int_0^t {\cal J} (Y_s)\,ds, \quad T_1=\inf\big\{t\ge0: H_t\ge \xi_1\big\} \quad \textrm{ and } \quad Q(x,y)=\frac{J''(x,y)}{{\cal J}(x)}.
 $$
 We remark that $\{Y_t\}$ is a.s. continuous at $T_1$. We let $X_t=Y_t$ for $0\le t<T_1$, and then define $X_{T_1}$ with law $Q(X_{T_1-}, y)\,\mu(dy)=Q(Y_{T_1}, y)\,\mu(dy)$.  The construction now proceeds in the same way from the new space-time starting point $(T_1, X_{T_1})$. Since ${\cal J}(x)$ is bounded, there can be a.s. only finitely many extra jumps added in any bounded time interval. In \cite{Me} it is proved that the resulting process corresponds to the jumping kernel $J(x,y)$.

In the following, we assume that both $\{X_t\}$ and $\{Y_t\}$ have
transition densities. Denote by $p^X(t, x,y)$ and $p^Y(t,x,y)$ the
transition density of $\{X_t\}$ and $\{Y_t\}$, respectively. The
relation below between $p^X(t, x,y)$ and $p^Y(t,x,y)$ has been shown in
\cite[Lemma 3.1 and (3.5)]{BGK1} and \cite[Lemma 3.6]{BBCK}.
\begin{lemma}\label{meyer} We have the following.
\begin{itemize}
\item[\rm(1)] For almost all $x,y\in M$, $$p^X(t, x,y)\le p^Y(t,x,y)+\bE^x\int_0^t\,ds\int J''(Y_s,z)\,p^X(t-s, z,y)\,\mu(dz).$$
\item[\rm(2)] Let $A\in \sigma(Y_t, 0<t<\infty)$.  Then  for almost all $x\in M$,
\begin{equation}\label{e:note}
\bP^x(A)\le e^{t\,\,\| {\cal J} \|_\infty } \bP^x(A\cap \{X_s=Y_s\textrm{ for all }0\le s\le t\}).
\end{equation}
In particular,
$$
p^Y (t, x,y)\le p^X(t, x,y)e^{t\,\,\| {\cal J} \|_\infty }.
$$
\end{itemize}
\end{lemma}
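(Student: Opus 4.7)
The plan is to exploit Meyer's construction directly. It provides an explicit coupling in which $X_s = Y_s$ for all $s \in [0, T_1)$, where $T_1 = \inf\{s \ge 0 : H_s \ge \xi_1\}$ with $H_s = \int_0^s \mathcal{J}(Y_u)\,du$, and at time $T_1$ the process $X$ jumps from $Y_{T_1}$ to an independent point distributed as $Q(Y_{T_1}, z)\,\mu(dz)$, then continues as an independent copy of the process with jumping kernel $J$. The key probabilistic input is that, conditionally on $\mathcal{F}_\infty^Y := \sigma(Y_s : s \ge 0)$, the random variable $T_1$ satisfies $\bP^x(T_1 > s \mid \mathcal{F}_\infty^Y) = e^{-H_s}$, which is immediate since $\xi_1$ is a standard exponential independent of $Y$.

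For part (1), I would fix an arbitrary non-negative Borel function $f$ and decompose
$$\bE^x f(X_t) = \bE^x[f(Y_t);\, T_1 > t] + \bE^x[f(X_t);\, T_1 \le t].$$
The first term is dominated by $\bE^x f(Y_t) = \int p^Y(t,x,y) f(y)\,\mu(dy)$. For the second, the strong Markov property applied at $T_1$, together with $X_{T_1-} = Y_{T_1}$ (by a.s.\ continuity of $Y$ at $T_1$) and the transition law $Q(Y_{T_1}, \cdot)\,\mu$ for $X_{T_1}$, gives
$$\bE^x\!\left[f(X_t) \mid \mathcal{F}_{T_1}\right] = \int Q(Y_{T_1}, z) \int p^X(t-T_1, z, y) f(y)\,\mu(dy)\,\mu(dz).$$
Conditioning further on $\mathcal{F}_\infty^Y$ and using the conditional density $\mathcal{J}(Y_s)\,e^{-H_s}\,ds$ of $T_1$, the identity $\mathcal{J}(Y_s)Q(Y_s,z) = J''(Y_s, z)$, and the trivial bound $e^{-H_s} \le 1$, this second term is bounded by $\bE^x \int_0^t \int J''(Y_s, z) \int p^X(t-s, z, y) f(y)\,\mu(dy)\,\mu(dz)\,ds$. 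Since $f \ge 0$ is arbitrary, comparison of densities (valid $\mu$-a.e.) yields the desired inequality.

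For part (2), since $A \in \mathcal{F}_\infty^Y$, conditioning on $\mathcal{F}_\infty^Y$ gives
$$\bP^x(A \cap \{T_1 > t\}) = \bE^x\!\left[\mathbf{1}_A\, e^{-H_t}\right] \ge e^{-t\|\mathcal{J}\|_\infty}\,\bP^x(A),$$
using $H_t \le t\|\mathcal{J}\|_\infty$. As $\{T_1 > t\} \subseteq \{X_s = Y_s \text{ for all } s \in [0,t]\}$, rearranging yields \eqref{e:note}. Specializing to $A = \{Y_t \in B\}$ for a Borel set $B$, and noting that $Y_t = X_t$ on $\{T_1 > t\}$, gives $\int_B p^Y(t,x,y)\,\mu(dy) \le e^{t\|\mathcal{J}\|_\infty}\!\int_B p^X(t,x,y)\,\mu(dy)$, whence the pointwise density comparison.

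There is no genuine analytical obstacle here; the only subtle step is identifying the conditional law of $T_1$ given $\mathcal{F}_\infty^Y$, but this is immediate from the definition $T_1 = \inf\{s : H_s \ge \xi_1\}$ together with independence of $\xi_1$ from $Y$. All remaining manipulations are routine applications of Fubini's theorem and the strong Markov property of the coupled process.
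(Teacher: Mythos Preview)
Your proposal is correct and follows essentially the same approach as the references the paper cites (the paper gives no self-contained proof of this lemma, deferring instead to \cite[Lemma 3.1 and (3.5)]{BGK1} and \cite[Lemma 3.6]{BBCK}). Your argument via the explicit coupling from Meyer's construction, the conditional law $\bP^x(T_1>s\mid\mathcal{F}_\infty^Y)=e^{-H_s}$, and the strong Markov property at $T_1$ is exactly the standard route taken in those sources.
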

Note that, by \eqref{e:note}, if the process $\{X_t\}$ has transition density functions, so does $\{Y_t\}$.
\subsection{Some results related to $\FK(\phi)$.}\label{ond-Nash}

The following is a general equivalence of $\FK(\phi)$ for regular Dirichlet forms.

\begin{prop}\label{P:3.3}  Assume that $\VD$ and \eqref{polycon} hold.  Then  the following are
equivalent.
\begin{itemize}
\item[\rm(1)] $\FK(\phi)$.
\item[\rm(2)] $\Nash(\phi)_B$; namely, there exist constants $C_1,\nu>0$ such that
for each $x\in M$ and $r>0$,
\[
\frac{V(x,r)^\nu}{\phi(r)}\|u\|_2^{2+2\nu}\le C_1\sE(u,u)\|u\|_1^{2\nu},\quad u\in \sF_{B(x,r)}.
\]
\item[\rm(3)] There exist constants $C_1,\nu>0$ such that
for any ball $B=B(x,r)$, the Dirichlet heat kernel $p^B(t,\cdot,\cdot)$ exists and satisfies that
\[
\esssup_{y,z\in B} p^B(t,y,z)\le \frac{C_1}{V(x,r)}\Big(\frac{\phi(r)}t\Big)^{1/\nu},
\quad t>0.
\]
\end{itemize}
\end{prop}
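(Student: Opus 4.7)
The plan is to close a three-step cycle $(1) \Longrightarrow (2) \Longrightarrow (3) \Longrightarrow (1)$, together with the easy reverse $(3) \Longrightarrow (2)$. The hypotheses $\VD$ and \eqref{polycon} do not enter the arguments in an essential way, and no assumption on the jumping kernel is used; the proof is the standard Nash--Moser equivalence adapted to the scale $\phi(r)$ and to the local volume $V(x,r)$.

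For $(1) \Longrightarrow (2)$, I would carry out a truncation at level $\lambda > 0$. Fix $B = B(x,r)$ and a nonnegative $u \in \sF_B$; for $\lambda > 0$ the super-level set $D_\lambda = \{u > \lambda\}$ is contained in $B$, $(u-\lambda)_+ \in \sF_{D_\lambda}$ satisfies $\sE((u-\lambda)_+, (u-\lambda)_+) \le \sE(u,u)$, and $\mu(D_\lambda) \le \|u\|_1/\lambda$ by Chebyshev. Applying $\FK(\phi)$ on $D_\lambda$ controls $\|(u-\lambda)_+\|_2^2$ by $C^{-1}\phi(r) V(x,r)^{-\nu}\mu(D_\lambda)^\nu \sE(u,u)$, while an elementary decomposition
\[
\|u\|_2^2 \le 3\lambda\|u\|_1 + 2\|(u-\lambda)_+\|_2^2
\]
(split on $\{u \le \lambda\}$ and $\{u > \lambda\}$ with the bound $u^2 \le 2(u-\lambda)^2 + 2\lambda^2$ on the latter) handles the rest. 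Choosing $\lambda$ so that $\lambda \|u\|_1 \asymp \|u\|_2^2$ and substituting $\mu(D_\lambda) \le \|u\|_1/\lambda$ yields $\Nash(\phi)_B$ after a routine algebraic manipulation. The reduction to nonnegative $u$ uses $|u| \in \sF_B$ with $\sE(|u|,|u|) \le \sE(u,u)$, which holds for any regular Dirichlet form.

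The equivalence $(2) \Longleftrightarrow (3)$ is classical Nash. For $(2) \Longrightarrow (3)$ I would differentiate $\psi(t) = \|P_t^B u\|_2^2$ for $u \in L^1(B;\mu) \cap L^2(B;\mu)$, using $\psi'(t) = -2\sE(P_t^B u, P_t^B u)$ and contractivity $\|P_t^B u\|_1 \le \|u\|_1$; Nash then produces
\[
\psi'(t) \le -\frac{2V(x,r)^\nu}{C_1 \phi(r)\|u\|_1^{2\nu}}\, \psi(t)^{1+\nu},
\]
whose integration gives an $L^1 \to L^2$ bound $\|P_t^B u\|_2 \le C\,V(x,r)^{-1/2}(\phi(r)/t)^{1/(2\nu)}\|u\|_1$. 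Writing $P_t^B = P_{t/2}^B \circ P_{t/2}^B$ and dualizing via symmetry upgrades this to the $L^1 \to L^\infty$ bound, which is exactly the pointwise estimate on $p^B(t,\cdot,\cdot)$. For $(3) \Longrightarrow (2)$, reverse the argument: $\|u\|_2^2 - \|P_t^B u\|_2^2 = 2\int_0^t \sE(P_s^B u, P_s^B u)\,ds \le 2t\,\sE(u,u)$, combined with $\|P_t^B u\|_2^2 \le \|p^B(t,\cdot,\cdot)\|_\infty \|u\|_1^2$, and then optimize in $t$.

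Finally, $(2) \Longrightarrow (1)$ is essentially free. Given any open $D \subset B(x,r)$, let $u \in \sF_D$ be the principal Dirichlet eigenfunction of $-\sL^D$ normalized so that $\|u\|_2 = 1$, so $\sE(u,u) = \lambda_1(D)$ and $\|u\|_1 \le \mu(D)^{1/2}$ by Cauchy--Schwarz. Since $\sF_D \subset \sF_{B(x,r)}$, substituting $u$ into $\Nash(\phi)_B$ gives $V(x,r)^\nu/\phi(r) \le C_1 \lambda_1(D)\mu(D)^\nu$, which is $\FK(\phi)$. The only step that calls for genuine care is the truncation optimization in $(1) \Longrightarrow (2)$, where tracking the dependence of the Nash exponent on $\nu$ has to be done cleanly; the remaining two implications are essentially bookkeeping exercises in the Nash method, so no real obstacle is expected.
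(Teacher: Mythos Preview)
Your proposal is correct and follows essentially the same standard Nash-method cycle that the paper invokes by citing \cite[Lemmas 5.4 and 5.5]{GH} for $(1)\Rightarrow(2)\Rightarrow(3)$ and \cite[p.~553]{GH} for $(3)\Rightarrow(1)$; you have simply spelled out the details (level-set truncation, the differential inequality for $\|P_t^Bu\|_2^2$, and the variational test function for $\lambda_1(D)$) that the paper leaves to the reference. One minor technical point worth tightening in $(1)\Rightarrow(2)$: the super-level set $D_\lambda=\{u>\lambda\}$ of a quasi-continuous $u$ is only quasi-open, so to apply $\FK(\phi)$ as stated for open sets you should either first take $u\in\sF_B\cap C_c(B)$ (dense in $\sF_B$) so that $D_\lambda$ is genuinely open, or invoke the standard fact that $\sF_{D_\lambda}$ and $\mu(D_\lambda)$ are unchanged upon replacing $D_\lambda$ by an open set differing from it by a polar set.
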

\begin{proof} $\rm(1)\Longrightarrow (2)\Longrightarrow (3)$ can be proved similarly to
\cite[Lemmas 5.4 and 5.5]{GH} by choosing $a=CV(x,r)^\nu/\phi (r)$
in the paper.

$\rm (3)\Longrightarrow (1)$ can be proved similarly to
the approach of \cite[p.\ 553]{GH}. Note that \cite{GH} discusses the case $\phi(r)=r^\beta$,
but the generalization to $\phi$ is easy by using \eqref{polycon}.
\qed\end{proof}

Under  $\VD$ and $\RVD$, we have further statements for $\FK(\phi)$.

\begin{prop}\label{P:3.3-2}  Assume that $\VD$, $\RVD$ and \eqref{polycon} hold.
Consider the following inequalities:
\begin{itemize}
\item[$(1)$] $\FK(\phi)$.
\item[$(2)$] There exist constants $c_1,\nu>0$ such that for each $x\in M$ and
$r>0$,
\[
\|u\|_2^{2+2\nu}\le \frac{c_1}{V(x,r)^{\nu}}\|u\|_1^{2\nu}\Big(\|u\|_2^2+\phi(r)\sE(u,u)\Big),\quad
u\in \sF_{B(x,r)}.
\]
\item[$(3)$] $\Nash(\phi)_{loc}$; namely, there exists a constant $c_2>0$ such that for each $s>0$,
\[
\|u\|_2^2\le c_2\Big(\frac {\|u\|_1^2}{\inf_{z\in {\rm supp}\,u}V(z,s)}+\phi(s)\sE(u,u)\Big),
\quad  u\in \sF\cap L^1(M;\mu).
\]
\end{itemize}
 We have $\displaystyle (1)\Longleftrightarrow (2)\Longleftarrow (3)$.
 \end{prop}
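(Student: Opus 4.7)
The plan is to establish the chain $(3)\Longrightarrow(1)\Longrightarrow(2)\Longrightarrow(1)$, whence both the equivalence $(1)\Longleftrightarrow(2)$ and the implication $(3)\Longrightarrow(2)$ follow at once.

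For $(1)\Longrightarrow(2)$, I would simply invoke Proposition \ref{P:3.3}, which under $\VD$ and \eqref{polycon} gives $\FK(\phi)\Longleftrightarrow\Nash(\phi)_B$; the inequality $V(x,r)^\nu\phi(r)^{-1}\|u\|_2^{2+2\nu}\le C_1\sE(u,u)\|u\|_1^{2\nu}$ trivially implies (2) since $\phi(r)\sE(u,u)\le\|u\|_2^2+\phi(r)\sE(u,u)$. For the converse $(2)\Longrightarrow(1)$, fix $D\subset B(x,r)$ and $u\in\sF_D$. Cauchy--Schwarz yields $\|u\|_1^{2\nu}\le\mu(D)^\nu\|u\|_2^{2\nu}$, and I would use $\RVD$ in the form \eqref{e:rvd2} to choose a constant $\lambda\ge1$ depending only on $c_1,\nu,c_\mu,d_1$ with $V(x,\lambda r)\ge(2c_1)^{1/\nu}V(x,r)\ge(2c_1)^{1/\nu}\mu(D)$. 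Applying (2) on the enlarged ball $B(x,\lambda r)\supset D$, the prefactor $c_1(\mu(D)/V(x,\lambda r))^\nu\le1/2$ allows me to absorb $\|u\|_2^2$ from the right-hand side, producing $\lambda_1(D)\ge(V(x,\lambda r)/\mu(D))^\nu/(2c_1\phi(\lambda r))$. Since $\lambda$ is a fixed constant, $\VD$ and \eqref{polycon} give $V(x,\lambda r)\asymp V(x,r)$ and $\phi(\lambda r)\asymp\phi(r)$, yielding $\FK(\phi)$ with the same exponent $\nu$.

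For $(3)\Longrightarrow(1)$, given $D\subset B(x,r)$ and $u\in\sF_D$, Cauchy--Schwarz converts (3) at scale $s>0$ into
\[
\|u\|_2^2\le c_2\Big(\frac{\mu(D)}{V_D(s)}\|u\|_2^2+\phi(s)\sE(u,u)\Big),\qquad V_D(s):=\inf_{z\in D}V(z,s).
\]
When $\mu(D)$ is small enough that the equation $V_D(s)=4c_2\mu(D)$ admits a solution $s^*\in(0,r]$ (which holds whenever $\mu(D)\le cV(x,r)/(4c_2)$, since $V_D(r)\ge cV(x,r)$ by $\VD$), I choose such $s^*$, absorb the first term, and obtain $\lambda_1(D)\ge1/(2c_2\phi(s^*))$. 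To convert $\phi(s^*)$ into $\phi(r)$ with the correct weight, I would use the $\VD$ consequence $V(z_0,r)/V(z_0,s^*)\le C(r/s^*)^{d_2}$ together with $V(z_0,r)\asymp V(x,r)$ for $z_0\in D\subset B(x,r)$; taking the infimum over $z_0$ and using $V_D(s^*)=4c_2\mu(D)$ then gives $(r/s^*)^{d_2}\ge c'V(x,r)/\mu(D)$. The lower bound in \eqref{polycon} delivers $\phi(r)/\phi(s^*)\ge c_1(r/s^*)^{\beta_1}\ge c''(V(x,r)/\mu(D))^{\beta_1/d_2}$, producing $\FK(\phi)$ with exponent $\nu=\beta_1/d_2$; once $(1)$ holds, $(2)$ follows from the first paragraph.

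The hard part will be the scale selection in $(3)\Longrightarrow(1)$, in particular patching the ``small-$\mu(D)$'' regime above with the complementary ``large-$\mu(D)$'' regime $\mu(D)>cV(x,r)/(4c_2)$, where no admissible $s^*\le r$ exists. In that regime $(V(x,r)/\mu(D))^\nu$ is bounded and $\FK(\phi)$ reduces to a Poincar\'e-type estimate $\lambda_1(D)\ge c/\phi(r)$; I would obtain this by instead applying (3) at an enlarged scale $s=\lambda r$ with $\lambda$ chosen via $\RVD$ so that $V_D(\lambda r)\ge c V(x,\lambda r)\ge c c_\mu\lambda^{d_1}V(x,r)\ge 2c_2\mu(D)$ uniformly for all $D\subset B(x,r)$. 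This is precisely the place where $\RVD$ is indispensable on top of $\VD$ and \eqref{polycon}, and verifying that the two regimes glue together into a single $\FK(\phi)$ inequality with one exponent is the main technical point.
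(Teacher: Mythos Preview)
Your proof is correct and essentially self-contained, whereas the paper's proof is a chain of citations: it invokes Proposition~\ref{P:3.3} for $(1)\Longleftrightarrow\Nash(\phi)_B$, then quotes \cite[Proposition~3.4.1]{BCS} for $(2)\Longleftrightarrow\Nash(\phi)_B$ (with $\RVD$ used only in the direction $(2)\Rightarrow\Nash(\phi)_B$), and \cite[Proposition~3.1.4]{BCS} for $(3)\Longrightarrow(2)$. So the logical route in the paper is $(3)\Rightarrow(2)\Rightarrow\Nash(\phi)_B\Rightarrow(1)$, passing through the intermediate ball-Nash inequality, while you go $(3)\Rightarrow(1)$ and $(2)\Rightarrow(1)$ directly by the scale-selection/absorption argument. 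Your approach has the advantage of being explicit about exactly where $\RVD$ enters (the enlarged-scale step in both $(2)\Rightarrow(1)$ and the large-$\mu(D)$ regime of $(3)\Rightarrow(1)$), matching the paper's remark that $\RVD$ is used only there; the paper's approach is shorter on the page but opaque.

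One small technical point in your $(3)\Rightarrow(1)$: the equation $V_D(s^\ast)=4c_2\mu(D)$ need not have an exact solution, since $s\mapsto V_D(s)=\inf_{z\in D}V(z,s)$ is monotone but not obviously continuous. This is harmless: under $\RVD$ there are no atoms, so $V_D(s)\to 0$ as $s\to 0$, and by $\VD$ one has $V_D(2s)\le C_\mu V_D(s)$, so a dyadic choice gives an $s^\ast\le r$ with $4c_2\mu(D)\le V_D(s^\ast)\le 4C_\mu c_2\mu(D)$, which is all your argument needs. With that adjustment the two regimes glue with a single exponent $\nu=\beta_1/d_2$ exactly as you describe.
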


\begin{proof}
$(1)\Longleftrightarrow \Nash(\phi)_B$ is in Proposition \ref{P:3.3}.
$(2)\Longleftrightarrow \Nash(\phi)_B$ is given in \cite[Proposition 3.4.1]{BCS}
(they are proved for the case $\phi(t)=t^2$ but the modifications are easy), while $(3)\Longrightarrow (2)$ is given in \cite[Proposition 3.1.4]{BCS}.
We note that in all the proofs above $\RVD$ is used only in $(2)\Longrightarrow \Nash(\phi)_B$, and $(2)\Longleftarrow \Nash(\phi)_B$ holds trivially.
We thus obtain the desired results.
\qed\end{proof}

We now define the weak Poincar\'e inequality which will be used in the subsequent paper
\cite{CKW}.
\begin{definition} {\rm
We say that the {\em weak Poincar\'e inequality}
($\PI(\phi)$)
 holds if there exist constants $C>0 $ and $\kappa\ge1$ such that
for any  ball $B_r=B(x,r)$ with $x\in M$  and for any $f \in \sF_b$,
\begin{equation*}\label{eq:PIn}
\int_{B_r} (f-\ol f_{B_r})^2\, d\mu \le C \phi(r)\int_{{B_{\kappa r}}\times {B_{\kappa r}}} (f(y)-f(x))^2\,J(dx,dy),
\end{equation*}
where $\ol f_{B_r}= \frac{1}{\mu({B_r})}\int_{B_r} f\,d\mu$ is the average value of $f$ on ${B_r}$.} \end{definition}

\begin{proposition}\label{pi-e-pre} Assume that $\VD$ and \eqref{polycon} hold. Then either $\PI(\phi)$ or $\UHKD(\phi)$ implies $\Nash(\phi)_{loc}$.
Consequently, if $\VD$, $\RVD$ and \eqref{polycon} are satisfied,
then either $\PI(\phi)$ or $\UHKD(\phi)$ implies $\FK(\phi)$.
\end{proposition}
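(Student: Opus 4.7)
The ``consequently'' part reduces immediately to the first claim via Proposition \ref{P:3.3-2}: under $\VD$, $\RVD$ and \eqref{polycon} the implication $\Nash(\phi)_{loc}\Longrightarrow\FK(\phi)$ is already available there. The task is therefore to show, assuming only $\VD$ and \eqref{polycon}, that $\UHKD(\phi)$ and $\PI(\phi)$ each imply $\Nash(\phi)_{loc}$.

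For $\UHKD(\phi)\Longrightarrow\Nash(\phi)_{loc}$ I would run the classical Nash splitting
\[
\|u\|_2^2 = \langle u,\, u-P_tu\rangle + \langle u,\, P_tu\rangle.
\]
Spectral calculus for the non-negative self-adjoint generator, combined with $1-e^{-\lambda t}\le \lambda t$, bounds the first term by $t\,\sE(u,u)$. For the second term, the Cauchy--Schwarz form of $\UHKD(\phi)$ recorded in Remark \ref{R:1.22}(ii) gives $p(t,x,y)\le c/\sqrt{V(x,\phi^{-1}(t))\,V(y,\phi^{-1}(t))}$; restricting to $(x,y)\in \mathrm{supp}\,u\times\mathrm{supp}\,u$ bounds $p(t,x,y)$ by $c/\inf_{z\in\mathrm{supp}\,u}V(z,\phi^{-1}(t))$, whence
\[
\langle u,\, P_tu\rangle \le \|u\|_1\,\esssup_{x\in\mathrm{supp}\,u}|P_tu(x)| \le \frac{c\,\|u\|_1^2}{\inf_{z\in\mathrm{supp}\,u}V(z,\phi^{-1}(t))}.
\]
Setting $t=\phi(s)$ delivers $\Nash(\phi)_{loc}$.

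For $\PI(\phi)\Longrightarrow \Nash(\phi)_{loc}$ I would use a Whitney/Vitali decomposition at scale $s$: $\VD$ supplies a countable cover $\{B(x_i,s)\}_i$ of $M$ whose enlargements $\{B(x_i,\kappa s)\}_i$ have bounded overlap multiplicity $N=N(C_\mu,\kappa)$, together with a measurable partition $\{A_i\}$ with $A_i\subset B(x_i,s)$. Writing $\bar u_i := V(x_i,s)^{-1}\int_{B(x_i,s)}u\,d\mu$, the elementary inequality
\[
\|u\|_2^2 \le \sum_i\int_{B(x_i,s)}u^2\,d\mu \le 2\sum_i\int_{B(x_i,s)}(u-\bar u_i)^2\,d\mu + 2\sum_i V(x_i,s)\,\bar u_i^2
\]
reduces matters to two pieces. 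The first sum is handled by applying $\PI(\phi)$ on each ball and exchanging sum and integral; the bounded overlap $\sum_i \mathbf{1}_{B(x_i,\kappa s)}(x)\mathbf{1}_{B(x_i,\kappa s)}(y)\le N$ then yields a bound of the form $cN\phi(s)\,\sE(u,u)$. For the second sum, $V(x_i,s)\bar u_i^2 \le \|u\mathbf{1}_{B(x_i,s)}\|_1^2/V(x_i,s)$; only indices with $B(x_i,s)\cap\mathrm{supp}\,u\ne\emptyset$ contribute, for which $\VD$ gives $V(x_i,s)\ge C_\mu^{-1}\inf_{z\in\mathrm{supp}\,u}V(z,s)$, while $\sum_i\|u\mathbf{1}_{B(x_i,s)}\|_1^2\le\bigl(\sum_i\|u\mathbf{1}_{B(x_i,s)}\|_1\bigr)^2\le N^2\|u\|_1^2$ follows from nonnegativity together with $\sum_i\|u\mathbf{1}_{B(x_i,s)}\|_1\le N\|u\|_1$.

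The main obstacle is the $\PI(\phi)$ direction: the Poincar\'e inequality naturally produces the local ball volume $V(x_i,s)$ in the denominator, whereas $\Nash(\phi)_{loc}$ demands $\inf_{z\in\mathrm{supp}\,u}V(z,s)$. The fix is to discard balls disjoint from $\mathrm{supp}\,u$ and invoke $\VD$ to compare $V(x_i,s)$ with the required infimum. One must also take care that summing $\PI(\phi)$ over many overlapping balls does not inflate the global $\sE(u,u)$ on the jump-kernel side by more than a constant, which is exactly what the bounded overlap of $\{B(x_i,\kappa s)\}$ guaranteed by $\VD$ provides.
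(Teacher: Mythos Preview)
Your proposal is correct and follows exactly the standard arguments the paper defers to: the $\UHKD(\phi)$ case via the Nash splitting $\|u\|_2^2=\langle u,u-P_tu\rangle+\langle u,P_tu\rangle$ is precisely the Kigami/Grigor'yan--Hu method cited as \cite[Corollary~2.4]{Ki} and \cite[pp.~551--552]{GH}, and the $\PI(\phi)$ case via a bounded-overlap covering at scale $s$ with the variance/mean split is the Saloff-Coste argument cited as \cite[Theorem~2.1]{Sa}. The paper's own proof is nothing more than these citations together with the remark that replacing $r^\beta$ by a general $\phi$ satisfying \eqref{polycon} is a routine modification, so you have simply written out what those references contain.
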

\begin{proof} (i)
When $\phi(t)=t^2$, this fact that $\PI(\phi)\Longrightarrow
\Nash(\phi)_{loc}$ is well-known; see for example \cite[Theorem
2.1]{Sa}. Generalization to this setting is a line by line
modification.  Then  the second assertion follows from Proposition
\ref{P:3.3-2}.

(ii) That $\UHKD(\phi)$ implies $\Nash(\phi)_{loc}$ can be proved similarly to \cite[Corollary 2.4]{Ki}. (We note that
in \cite[Corollary 2.4]{Ki} it is proved
for the case $\phi(t)=t^\beta$, but the modifications are easy.) One also can prove this similarly to
the approach of \cite[p.\ 551--552]{GH}. Note that \cite{GH} discusses the case $\phi(r)=r^\beta$,
but the generalization to $\phi$ is also easy.
\qed\end{proof}

\begin{proposition}\label{P:reg} Under $\VD$ and \eqref{polycon}, $\FK(\phi)$ implies that the semigroup $\{P_t\}$ is locally ultracontractive, which in turn yields that
\begin{itemize}
\item[$(1)$] there exists a properly exceptional set $\mathcal{N}\subset M$ such that, for any open subset $D\subset M$, the semigroup $\{P_t^D\}$ possesses the heat kernel $p^D(t,x,y)$ with domain $D\setminus \mathcal{N} \times D\setminus \mathcal{N}$. \\
\item[$(2)$] Let $\varphi(x,y):M_0\times M_0\to [0,\infty]$ be a upper semi-continuous function such that for some open set $D\subset M$ and for some $t>0$, $$p^D(t,x,y)\le \varphi(x,y)$$ for almost all $x,y\in D$. Then the inequality above holds for all $x,y\in D\setminus \mathcal{N} .$
\end{itemize} \end{proposition}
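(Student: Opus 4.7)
The plan is to first use $\FK(\phi)$ to obtain local ultracontractivity via Proposition \ref{P:3.3}, and then invoke a standard regularization argument to extract a pointwise heat kernel outside a single properly exceptional set.

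First, I would fix a countable base $\{B_n\}_{n\ge1}$ of relatively compact open balls and apply Proposition \ref{P:3.3}, item (3), to conclude that for each $n$ the killed semigroup $\{P_t^{B_n}\}$ maps $L^1(M;\mu)$ into $L^\infty(M;\mu)$ with operator norm bounded by $C V(x_n,r_n)^{-1}(\phi(r_n)/t)^{1/\nu}$, where $B_n=B(x_n,r_n)$. This is local ultracontractivity. Since $(\sE,\sF)$ is a regular symmetric Dirichlet form, a general result (see \cite[Theorem 3.1]{BBCK} and \cite[Section 2.2]{GT}) then produces, for each relatively compact open $U$, a jointly measurable, symmetric, and pointwise defined transition density $p^U(t,x,y)$ defined for all $x,y$ outside an exceptional set and satisfying the Chapman--Kolmogorov equations there.

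The main technical step is to show that the exceptional set can be chosen independently of $U$. For this I would proceed as follows. On a countable exhaustion $U_k\uparrow M$ of $M$ by relatively compact open sets, standard arguments yield, for each $k$ and each rational $t>0$, a version $p^{U_k}(t,x,y)$ defined off a set $\sN_k$ of zero capacity, with $\sN_k$ properly exceptional for the part process on $U_k$. Take $\sN=\bigcup_k \sN_k$; this is a properly exceptional set. Then for an arbitrary open $D\subset M$, using the monotone convergence $p^{D\cap U_k}(t,x,y)\uparrow p^D(t,x,y)$ (which follows from monotonicity of part Dirichlet forms $\sF_{D\cap U_k}\uparrow \sF_D$ and the corresponding monotone convergence of semigroups), one obtains a well-defined pointwise version of $p^D(t,x,y)$ on $(D\setminus \sN)\times(D\setminus \sN)$. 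Symmetry, joint Borel measurability, and the Chapman--Kolmogorov equation transfer to the limit. The use of Chapman--Kolmogorov on $(D\setminus\sN)\times (D\setminus \sN)$ then allows one to redefine $p^D$ at remaining times by $p^D(t,x,y)=\int_D p^D(t/2,x,z)p^D(t/2,z,y)\,\mu(dz)$, yielding the pointwise heat kernel for every $t>0$ as claimed in (1).

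For part (2), suppose $p^D(t_0,x,y)\le\varphi(x,y)$ for $\mu\times\mu$-a.e.\ $(x,y)\in D\times D$. Using Chapman--Kolmogorov at $t=t_0$ write, for arbitrary $x,y\in D\setminus \sN$ and any $\eps\in(0,t_0)$,
\[
p^D(t_0,x,y)=\int_D\int_D p^D(\eps,x,z)\,p^D(t_0-2\eps,z,w)\,p^D(\eps,w,y)\,\mu(dz)\,\mu(dw).
\]
Applying the a.e.\ inequality $p^D(t_0-2\eps,z,w)\le\varphi(z,w)$ (after first replacing $t_0$ by $t_0-2\eps$; the same inequality follows by letting $\eps\to 0$ and using continuity of the semigroup in $t$) and using upper semi-continuity of $\varphi$ together with approximation by $p^D(\eps,x,\cdot)\to\delta_x$ as $\eps\downarrow 0$ (in the weak sense of the heat semigroup's $C_b$ action), one obtains
\[
p^D(t_0,x,y)\le\limsup_{\eps\downarrow 0}\int_D\int_D p^D(\eps,x,z)p^D(\eps,w,y)\varphi(z,w)\,\mu(dz)\,\mu(dw)\le\varphi(x,y).
\]
The main obstacle I anticipate is the rigorous handling of this limit, i.e.\ showing that the heat kernel $p^D(\eps,x,\cdot)$ acts like an approximate identity at every point $x\in D\setminus\sN$ in a sense compatible with upper semi-continuity of $\varphi$; this requires the joint measurability and the strong Feller-type control supplied by local ultracontractivity on small balls around $x$, which in turn comes back to the $L^1$--$L^\infty$ estimate of Proposition \ref{P:3.3}.
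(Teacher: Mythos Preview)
Your high-level strategy matches the paper's exactly: use Proposition~\ref{P:3.3} to get local ultracontractivity, then invoke the general regularization machinery. The paper's own proof does precisely this and then simply cites \cite[Theorem 6.1]{BBCK} and \cite[Theorem 2.12]{GT} for parts (1) and (2), without unpacking those arguments. Your sketch of (1) is a reasonable outline of what those references do.

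Your argument for (2), however, has a genuine gap. The hypothesis gives $p^D(t_0,z,w)\le\varphi(z,w)$ only at the single time $t_0$; you apply it at $t_0-2\eps$, justifying this by ``continuity of the semigroup in $t$''. That does not work: strong continuity of $\{P^D_t\}$ on $L^2$ does not give pointwise control of $p^D(t_0-2\eps,z,w)$ in terms of $\varphi(z,w)$, and in fact $p^D(t,z,w)$ may be strictly larger than $p^D(t_0,z,w)$ for $t<t_0$ (think of the on-diagonal case). The correct maneuver is to smooth \emph{outward}: write
\[
p^D(t_0+2\eps,x,y)=\int_D\!\!\int_D p^D(\eps,x,z)\,p^D(t_0,z,w)\,p^D(\eps,w,y)\,\mu(dz)\,\mu(dw),
\]
apply the hypothesis directly at $t_0$, bound the right side by the $\varphi$-integral, and then send $\eps\downarrow0$. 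This requires (i) that the right-hand limsup is $\le\varphi(x,y)$ via upper semi-continuity and the approximate-identity behavior of $p^D(\eps,x,\cdot)$, and (ii) that $p^D(t_0,x,y)\le\liminf_{\eps\downarrow0}p^D(t_0+2\eps,x,y)$ for every $x,y\in D\setminus\sN$. Both (i) and (ii) are nontrivial; (ii) in particular relies on the specific regularization in \cite[Theorem 2.12]{GT}, where $p^D(t,\cdot,y)$ is constructed to be finely (hence quasi-) continuous so that $P^D_\eps p^D(t_0,\cdot,y)(x)\to p^D(t_0,x,y)$ for $x\notin\sN$. You correctly flag the limit passage as the main obstacle, but as written the argument is applied at the wrong time and cannot be repaired without reversing the direction of the smoothing.
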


\begin{proof}The statement of Proposition \ref{P:3.3} tells us that, under $\VD$, \eqref{polycon} and $\FK(\phi)$, there exist  constants $C_1$, $\nu>0$ such that for any ball $B=B(x,r)$ with $x\in M$ and $r>0$, and any $t>0$,
$$ \|P^B_t\|_{L^1(B;\mu)\to L^\infty(B;\mu)}\le \frac{C_\nu}{V(x,r)} \left( \frac{\phi(r)}{t}\right)^{1/\nu
}. $$
Therefore, the semigroup $\{P_t\}$ is locally ultracontractive.
The other assertions follow from \cite[Theorem 6.1]{BBCK} and \cite[Theorem 2.12]{GT}. \end{proof}

\subsection{Some results related to
the (Dirichlet) heat kernel}

Recall that for any $\rho>0$, $(\sE^{(\rho)}, \sF)$ is the $\rho$-truncated Dirichlet form, which is obtained by $\rho$-truncation for the jump density of the original Dirichlet form $(\sE, \sF)$, i.e.
$$\sE^{(\rho)}(f,g)=\int (f(x)-f(y))(g(x)-g(y)){\bf 1}_{\{d(x,y)\le \rho\}}\, J(dx,dy).$$  As mentioned in Section \ref{section2}, if $\VD$, \eqref{polycon}  and $\J_{\phi,\le}$ hold, then $(\sE^{(\rho)}, \sF)$ is a regular
Dirichlet form on $L^2(M; \mu)$. Let $\{X_t^{(\rho)}\}$ be the
process associated with $(\sE^{(\rho)}, \sF)$.  For any non-negative
open set ${D} \subset M$, as before we denote by $\{P_t^{D}\}$ and
$\{Q_t^{(\rho),{D}}\}$ the semigroups of $(\sE, \sF_{D})$ and
$(\sE^{(\rho)}, \sF_{D})$, respectively. (We write
$\{Q_t^{(\rho),M}\}$ as $\{Q^{(\rho)}_t\}$ for simplicity.) Most of
results in this subsection have been proved in \cite{GHL2}. To be
self-contained, we present new proofs by making full use of the
probabilistic ideas.

The following lemma was proved in \cite[Proposition 4.6]{GHL2}.

\begin{lemma}\label{L:semicomp1} Suppose that $\VD$, \eqref{polycon} and $\J_{\phi,\le}$ hold. Let ${D}$ be the open subset of $M$. Then there exists a constant $c>0$ such that for any $t>0$, almost all $x\in {D}$ and any non-negative $f\in L^2({D};\mu)\cap L^\infty({D};\mu)$, $$|P_t^{D} f(x)-Q_t^{(\rho),{D}}f(x)|\le c \|f\|_\infty \frac{t}{\phi(\rho)}.$$ \end{lemma}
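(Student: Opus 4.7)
The natural approach is to realize both semigroups on a common probability space via Meyer's construction (Section 7.2) and bound the discrepancy by the probability of having inserted at least one ``big jump'' by time~$t$. Write $J=J'+J''$ with $J'(x,y)=J(x,y)\mathbf{1}_{\{d(x,y)\le \rho\}}$ and $J''(x,y)=J(x,y)\mathbf{1}_{\{d(x,y)>\rho\}}$; then $(\sE^{(\rho)},\sF)$ corresponds to $J'$, and by $\J_{\phi,\le}$ together with Lemma \ref{intelem} we have
$$
{\cal J}(x):=\int J''(x,y)\,\mu(dy) \le \frac{c_0}{\phi(\rho)}\quad\text{for all }x\in M.
$$
In particular $\|{\cal J}\|_\infty\le c_0/\phi(\rho)$, which is precisely the boundedness required to run Meyer's construction.

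Next, let $Y:=X^{(\rho)}$ denote the Hunt process associated with $(\sE^{(\rho)},\sF)$. Meyer's construction (see Section 7.2) produces a version of the original process $X$ on an enlarged probability space by taking $X_s=Y_s$ for $s<T_1$, where
$$
T_1=\inf\Big\{t\ge 0:\int_0^t {\cal J}(Y_s)\,ds\ge \xi_1\Big\},
$$
with $\xi_1$ an independent Exp$(1)$ random variable. Since $\int_0^t {\cal J}(Y_s)\,ds\le\|{\cal J}\|_\infty t$, one has
$$
\bP^x(T_1\le t)\le 1-e^{-\|{\cal J}\|_\infty t}\le \|{\cal J}\|_\infty t\le \frac{c_0 t}{\phi(\rho)}.
$$
The crucial point is that on $\{T_1>t\}$, the paths of $X$ and $Y$ coincide on $[0,t]$; in particular, the $X$-exit time $\tau_D$ and the $Y$-exit time $\tau_D^{(\rho)}$ agree on $[0,t]$, and $X_t=Y_t$.

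Now use the probabilistic representations $P_t^D f(x)=\bE^x[f(X_t)\mathbf{1}_{\{\tau_D>t\}}]$ and $Q_t^{(\rho),D}f(x)=\bE^x[f(Y_t)\mathbf{1}_{\{\tau_D^{(\rho)}>t\}}]$, valid for q.e.\ $x\in D$ once suitable quasi-continuous versions are chosen. Splitting over $\{T_1>t\}$ and $\{T_1\le t\}$, the contributions on $\{T_1>t\}$ cancel, leaving
$$
P_t^D f(x)-Q_t^{(\rho),D} f(x)=\bE^x[f(X_t)\mathbf{1}_{\{\tau_D>t,\,T_1\le t\}}]-\bE^x[f(Y_t)\mathbf{1}_{\{\tau_D^{(\rho)}>t,\,T_1\le t\}}].
$$
Since $f\ge 0$ and $\|f\|_\infty<\infty$, each expectation lies in $[0,\|f\|_\infty \bP^x(T_1\le t)]$, yielding
$$
|P_t^D f(x)-Q_t^{(\rho),D} f(x)|\le \|f\|_\infty\,\bP^x(T_1\le t)\le \frac{c_0\|f\|_\infty t}{\phi(\rho)},
$$
which is the desired inequality (with $c=c_0$, up to replacing by $2c_0$ if one prefers symmetric bounds).

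\textbf{Main obstacle.} There are no deep difficulties in this plan; the only technical care required is in identifying $X$ with its coupled version from Meyer's construction off a properly exceptional set, so that the probabilistic representations of $P_t^D$ and $Q_t^{(\rho),D}$ are simultaneously valid. This is standard once one knows both $(\sE,\sF)$ and $(\sE^{(\rho)},\sF)$ are regular Dirichlet forms on $L^2(M;\mu)$ (the latter established just after \eqref{dfcomp}), so the argument above suffices.
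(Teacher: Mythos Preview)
Your proof is correct and follows essentially the same approach as the paper: both arguments couple $X$ and $X^{(\rho)}$ via Meyer's construction, observe that the two processes (and hence their exit times from $D$) agree up to the first large jump time, and bound the discrepancy by $\|f\|_\infty$ times the probability that this jump occurs by time $t$, which is controlled via Lemma~\ref{intelem}. The only cosmetic differences are that the paper phrases the first large jump time as $T_\rho=\inf\{t>0: d(X_t,X_{t-})>\rho\}$ and cites \cite[Lemma~3.1(a)]{BGK1} for the exponential bound, whereas you derive it directly from the Meyer clock; also, by exploiting $f\ge 0$ you save a factor of $2$ in the constant compared to the paper's estimate.
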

\begin{proof}Note that $P_t^{D} f(x)=\bE^x(f(X_t){\bf 1}_{\{\tau_{D}> t\}})$ and $Q_t^{(\rho),{D}} f(x)=\bE^x(f(X^{(\rho)}_t){\bf 1}_{\{\tau^{(\rho)}_{D}> t\}})$.  Let $$T_\rho=\inf\big\{t>0: d(X_t, X_{t-})>\rho\big\}.$$ It is clear that $X_t=X_t^{(\rho)}$ for all $t< T_\rho$.
Thus  by \cite[Lemma 3.1(a)]{BGK1},
  \begin{align*}
  |P_t^{D} f(x)-Q_t^{(\rho),{D}}f(x)|
  &\le  \big|\bE^x(f(X_t){\bf 1}_{\{T_\rho\le t <\tau_{D}\}})\big|+
 \big|\bE^x(f(X^{(\rho)}_t){\bf 1}_{\{T_\rho\le t <\tau^{(\rho)}_{D}\}})\big|\\
 & \le 2\|f\|_\infty  \, \bP^x(T_\rho\le t)\\
 &\le2\|f\|_\infty \left( 1-\exp\left(-t\,\esssup_{z\in M}\int_{B(z,\rho)^c}J(z,y)\,\mu(dy)\right)\right)
 \\
 &\le2\|f\|_\infty t\,\esssup_{z\in M}\int_{B(z,\rho)^c}J(z,y)\,\mu(dy),
 \end{align*}
where the inequality $1-e^{-r}\le r$ for all $r>0$ was used in the last inequality.
The desired  conclusion   now follows from Lemma \ref{intelem}. \qed
\end{proof}

   We need the following comparison of heat kernels in different domains.

\begin{lemma}\label{L:com-domain} Let $V$, $U$ and ${D}$ be open subsets of $M$ such that $U_\rho:=\{z\in M: d(z,U)<\rho\}$ is precompact, $V\subset U$ and $U_\rho\subset {D}$. Then for all $t$, $s>0$,
\begin{equation}\label{uvcom}\begin{split} &\esssup_{x,y\in V}q^{(\rho),{D}}(t+s, x,y)\\
&\le  \esssup_{x,y\in U} q^{(\rho),U}(t, x,y) \\
&\quad + \esssup_{x\in V} \bP^x(\tau^{(\rho)}_U\le t)\,\esssup_{x,y\in U_\rho}q^{(\rho),{D}}(s, x,y).\end{split}\end{equation}\end{lemma}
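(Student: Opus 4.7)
The plan is to apply the Markov property of the $\rho$-truncated Hunt process $X^{(\rho)}$ at time $t$, split according to whether the process has left the intermediate set $U$ by then, and control each piece separately. For $x\in V\subset U$, I will write
\[
q^{(\rho),D}(t+s,x,y)=\bE^x\!\left[{\bf 1}_{\{t<\tau^{(\rho)}_D\}}\,q^{(\rho),D}(s,X^{(\rho)}_t,y)\right]=A(x,y)+B(x,y),
\]
with $A$ the contribution of $\{t<\tau^{(\rho)}_U\}$ and $B$ that of $\{\tau^{(\rho)}_U\le t<\tau^{(\rho)}_D\}$.

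For the first piece, the definition of the Dirichlet kernel on $U$ identifies
\[
A(x,y)=\int_U q^{(\rho),U}(t,x,z)\,q^{(\rho),D}(s,z,y)\,\mu(dz)\le \esssup_{x',y'\in U}q^{(\rho),U}(t,x',y'),
\]
where the last step uses $\int_U q^{(\rho),D}(s,z,y)\,\mu(dz)\le \int_M q^{(\rho),D}(s,y,z)\,\mu(dz)\le 1$ by symmetry and sub-Markovianity of $Q^{(\rho),D}_s$. For the second piece, applying the strong Markov property at $\tau^{(\rho)}_U$ and using that every jump of $X^{(\rho)}$ has size at most $\rho$ (so $X^{(\rho)}_{\tau^{(\rho)}_U}\in\overline{U_\rho}$) yields
\[
B(x,y)=\bE^x\!\left[{\bf 1}_{\{\tau^{(\rho)}_U\le t\}}\,q^{(\rho),D}(t+s-\tau^{(\rho)}_U,X^{(\rho)}_{\tau^{(\rho)}_U},y)\right]\le \bP^x(\tau^{(\rho)}_U\le t)\cdot \sup_{u\in[s,t+s]}\esssup_{z,y'\in U_\rho}q^{(\rho),D}(u,z,y').
\]

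The main obstacle is then to replace the supremum over $u\in[s,t+s]$ by the single value at $u=s$, since the strong Markov step naturally produces the larger argument $t+s-\tau^{(\rho)}_U\in[s,t+s]$. This is handled by two standard facts about the symmetric sub-Markov semigroup $\{Q^{(\rho),D}_u\}$: (i) for each fixed $z$, $u\mapsto q^{(\rho),D}(u,z,z)=\|Q^{(\rho),D}_{u/2}\delta_z\|_2^2$ is non-increasing by $L^2$-contractivity; and (ii) Cauchy--Schwarz gives $q^{(\rho),D}(u,z,y)\le q^{(\rho),D}(u,z,z)^{1/2}q^{(\rho),D}(u,y,y)^{1/2}$, so $\esssup_{z,y\in K}q^{(\rho),D}(u,z,y)=\esssup_{z\in K}q^{(\rho),D}(u,z,z)$ for any measurable $K$. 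Combining with $K=U_\rho$ shows that $u\mapsto \esssup_{z,y\in U_\rho}q^{(\rho),D}(u,z,y)$ is non-increasing, hence its supremum over $u\in[s,t+s]$ equals its value at $u=s$; taking $\esssup_{x\in V}$ in the $A+B$ bound then delivers \eqref{uvcom}.
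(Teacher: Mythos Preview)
Your proof is correct and follows essentially the same route as the paper: split at time $t$ according to whether $\tau^{(\rho)}_U$ has occurred, use the strong Markov property together with the fact that the $\rho$-truncated process satisfies $X^{(\rho)}_{\tau^{(\rho)}_U}\in \overline{U_\rho}$, and then reduce the time-supremum to the value at $s$ via the Cauchy--Schwarz/on-diagonal monotonicity argument. The only cosmetic difference is that the paper carries out the computation in the weak form $\bE^\mu\big[f(X^{(\rho)}_0)g(X^{(\rho)}_{t+s})\big]$ with test functions $f,g$ supported in small balls and then lets the balls shrink, whereas you work directly with the pointwise kernel identity $q^{(\rho),D}(t+s,x,y)=\bE^x\big[{\bf 1}_{\{t<\tau^{(\rho)}_D\}}\,q^{(\rho),D}(s,X^{(\rho)}_t,y)\big]$; the underlying decomposition and estimates are identical.
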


\begin{proof} For simplicity, in the proof $\{X_t^{(\rho)}\}$ denotes the subprocess of $\{X_t^{(\rho)}\}$ on exiting $D$. For any fixed $x$, $y\in V$, one can choose $r>0$ small enough such that $B(x,r)\subset V$ and $B(y,r)\subset V$. Let $f, g \in L^1({D}; \mu)$ be such that $0\le f,g\le 1$,
${\rm supp}\, f\subset B(x,r)$ and ${\rm supp }\, g\subset B(y,r)$. We set $\bE^\mu[\cdot]:=\int_D \bE^x[\cdot]\,\mu(dx)$.
 Then  we have
\begin{align*}
&  \bE^\mu \left[ f(X_0^{(\rho)})g(X_{t+s}^{(\rho)}) \right]\\
 &=  \bE^\mu  \left[ f(X_0^{(\rho)})g(X_{t+s}^{(\rho)}): \tau_U^{(\rho)}>t \right]
    + \bE^\mu  \left[ f(X_0^{(\rho)})g(X_{t+s}^{(\rho)}): \tau_U^{(\rho)}\le t \right] \\
&= \bE^\mu \left[f(X_0^{(\rho)}){\bf 1}_{\{\tau_U^{(\rho)}>t\}} \bE^{X_t^{(\rho)}} g(X_s^{(\rho)})\right] + \bE^\mu \left[f(X_0^{(\rho)}){\bf 1}_{\{ \tau_U^{(\rho)}\le t\}} \bE^{X^{(\rho)}_{ \tau_U^{(\rho)}}} g(X^{(\rho)}_{t+s- \tau_U^{(\rho)}})\right] \\
&= \bE^\mu \left[f(X_0^{(\rho)}){\bf 1}_{\{\tau_U^{(\rho)}>t\}} Q_s g(X_t^{(\rho)})\right] + \bE^\mu \left[f(X_0^{(\rho)}){\bf 1}_{\{ \tau_U^{(\rho)}\le t\}} \bE^{X^{(\rho)}_{ \tau_U^{(\rho)}}} g(X^{(\rho)}_{t+s- \tau_U^{(\rho)}})\right] \\
&=  \bE^\mu \left[f(X_0^{(\rho)})Q_t^{(\rho), U}(Q_s g)(X_0^{(\rho)})\right] + \bE^\mu \left[f(X_0^{(\rho)}){\bf 1}_{\{ \tau_U^{(\rho)}\le t\}} \bE^{X^{(\rho)}_{ \tau_U^{(\rho)}}} g(X^{(\rho)}_{t+s- \tau_U^{(\rho)}})\right] \\
&\le  \|f\|_{L^1({D};\mu)} \|Q_sg\|_{L^1({D};\mu)} \,\esssup_{x',y'\in U} q^{(\rho), U}(t,x',y')\\
&\quad + \|f\|_{L^1({D};\mu)} \,\esssup_{x'\in V}\bP^{x'} (\tau_U^{(\rho)}\le t)\|g\|_{L^1({D};\mu)}\, \esssup_{x',y'\in U^{\rho}, s\le t'\le t+s}
q^{(\rho),{D}}(t', x',y')\\
&\le  \|f\|_{L^1({D};\mu)} \|g\|_{L^1({D};\mu)} \,\esssup_{x',y'\in U} q^{(\rho), U}(t,x',y')\\
&\quad + \|f\|_{L^1({D};\mu)}\, \esssup_{x'\in V}\bP^{x'} (\tau_U^{(\rho)}\le t)\|g\|_{L^1({D};\mu)} \,\esssup_{x',y'\in U^{\rho}, s\le t'\le t+s}
q^{(\rho),{D}}(t', x',y'),\end{align*} where we have used the strong Markov property and the fact that $X^{(\rho)}_{ \tau_U^{(\rho)}}\in U^{(\rho)}$ in the first inequality.

Furthermore, by the Cauchy-Schwarz inequality,
\begin{align*}q^{(\rho),{D}}(t, x',y')=&\int q^{(\rho),{D}}(t/2, x',z) q^{(\rho),{D}}(t/2, z,y')\,\mu(dz)\\
\le & \sqrt{\int \left( q^{(\rho),{D}}(t/2, x',z)\right)^2\,\mu(dz) }\sqrt{\int \left( q^{(\rho),{D}}(t/2, y',z)\right)^2\,\mu(dz)}\\
=& \sqrt{q^{(\rho),{D}}(t, x',x') }\sqrt{q^{(\rho),{D}}(t, y',y')},
\end{align*} and so $$
\esssup_{x',y'\in U^{\rho}}q^{(\rho),{D}}(t, x',y')= \esssup_{x'\in U^{\rho}}q^{(\rho),{D}}(t, x',x').
$$
Therefore,
$$\esssup_{x'\in U^{\rho}}q^{(\rho),{D}}(t, x',x') =\sup_{\|f\|_{L^1(U_\rho;\mu)}\le 1}\langle Q_t^{(\rho), {D}} f, f\rangle=\sup_{\|f\|_{L^1(U_\rho;\mu)}\le 1}\langle Q_{t/2}^{(\rho), {D}} f, Q_{t/2}^{(\rho), {D}}f\rangle,
$$
which implies that the function $s\mapsto\esssup_{x',y'\in U^{\rho}}q^{(\rho),{D}}(s, x',y')$ is decreasing, i.e.
$$
\esssup_{x',y'\in U^{\rho}, s\le t'\le t+s} q^{(\rho),{D}}(t', x',y')=\esssup_{x',y'\in U^{\rho}}q^{(\rho),{D}}(s, x',y').
$$
Hence,
\begin{align*}\frac{ \bE^\mu  \left[ f(X_0^{(\rho)})g(X_{t+s}^{(\rho)})\right]}{\|f\|_{L^1({D};\mu)} \|g\|_{L^1({D};\mu)}}
&\le   \esssup_{x',y'\in U} q^{(\rho), U}(t, x',y')\\
&\quad +\esssup_{x'\in V}\bP^{x'} (\tau_U^{(\rho)}\le t)\esssup_{x',y'\in U^{\rho}}
q^{(\rho),{D}}(s, x',y'). \end{align*}

Letting $f\uparrow {\bf 1}_{B(x,r)}$, $g\uparrow {\bf 1}_{B(y,r)}$ and $r\to 0$, we can get that for almost all $x,y\in V$,
\begin{align*}q^{(\rho),{D}}(t+s, x,y)\le & \esssup_{x',y'\in U} q^{(\rho),U}(t, x',y') \\
&+ \esssup_{x\in V} \bP^x(\tau^{(\rho)}_U\le t)\,\esssup_{x',y'\in U_\rho}q^{(\rho),{D}}(s, x',y')\end{align*} proving
the desired assertion.
\qed\end{proof}

The following lemma gives us the way to get heat kernel estimates in term of the exit time and the on-diagonal heat kernel estimates, e.g.\ see \cite[Theorem 5.1 and (5.13)]{GHL1}.

\begin{lemma}\label{hk-exittime}
Let $U$ and $V$ be open subsets of $M$ such that $U\cap V= \emptyset$. For any $t,s>0$ and almost all $x\in U$ and $y\in V$,
\begin{align*}
q ^{(\rho)}(t+s, x,y) & \le \bP^x(\tau^{(\rho)}_U\le t) \,\esssup_{s\le t'\le t+s}\|q^{(\rho)}(t', \cdot,y)\|_{L^\infty(U_\rho;\mu)} \\
&\quad +\bP^y(\tau_V^{(\rho)}\le s) \, \esssup_{t\le t'\le s+t}\|q^{(\rho)}(t', \cdot,x)\|_{L^\infty(V_\rho;\mu)}.
\end{align*}
\end{lemma}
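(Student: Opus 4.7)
The plan is to decompose $q^{(\rho)}(t+s,x,y)$ via the strong Markov property at $\tau_U^{(\rho)}$ for the process $\{X_s^{(\rho)}\}$ started at $x$, bound the ``exit-by-time-$t$'' piece directly, and bound the complementary piece using the $\mu$-reversibility of $\{X_s^{(\rho)}\}$, translating it into the analogous $\tau_V^{(\rho)}$-decomposition started from $y$. Fix $x\in U$ and $y\in V$; by the usual approximation via test functions as in the proof of Lemma \ref{L:com-domain}, it suffices to argue for a.e.\ such pair. Since jumps of $\{X_s^{(\rho)}\}$ have size at most $\rho$ one has $X^{(\rho)}_{\tau_U^{(\rho)}}\in U_\rho$ a.s., and since $y\in V$ is disjoint from $U$ the event $\{\tau_U^{(\rho)}>t+s\}$ contributes no mass at $y$. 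Hence strong Markov at $\tau_U^{(\rho)}$ yields
\begin{equation*}
q^{(\rho)}(t+s,x,y)=A_1(x,y)+B_1(x,y),
\end{equation*}
where $A_1(x,y):=\bE^x\bigl[{\bf 1}_{\{\tau_U^{(\rho)}\le t\}}\,q^{(\rho)}(t+s-\tau_U^{(\rho)},X^{(\rho)}_{\tau_U^{(\rho)}},y)\bigr]$ and $B_1(x,y)$ is the density of $X_{t+s}^{(\rho)}$ at $y$ on $\{\tau_U^{(\rho)}>t\}$. Since $t+s-\tau_U^{(\rho)}\in[s,t+s]$ on $\{\tau_U^{(\rho)}\le t\}$, an elementary pointwise bound of the integrand in $A_1$ gives
\begin{equation*}
A_1(x,y)\le \bP^x(\tau_U^{(\rho)}\le t)\cdot\esssup_{s\le t'\le t+s}\|q^{(\rho)}(t',\cdot,y)\|_{L^\infty(U_\rho;\mu)},
\end{equation*}
which is precisely the first term on the right-hand side of the claimed inequality.

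For $B_1(x,y)$ I would invoke reversibility. Pairing $B_1$ against non-negative $f$ supported in a small ball around $x\in U$ and $g$ supported in a small ball around $y\in V$ yields $\int\!\!\int fg\,B_1\,d\mu\,d\mu=\bE^\mu[f(X_0^{(\rho)})g(X_{t+s}^{(\rho)});\,\tau_U^{(\rho)}>t]$, and the $\mu$-symmetry of $\{X_s^{(\rho)}\}$ implies that under $\bP^\mu$ the path $(X^{(\rho)}_r)_{0\le r\le t+s}$ and its time-reversal $(X^{(\rho)}_{t+s-u})_{0\le u\le t+s}$ have the same law. Under this time-reversal the event $\{X^{(\rho)}_r\in U\ \forall r\in[0,t]\}$ becomes $\{X^{(\rho)}_u\in U\ \forall u\in[s,t+s]\}$ while $f$ and $g$ exchange roles, so
\begin{equation*}
\int\!\!\int f(x')g(y')B_1(x',y')\,\mu(dx')\mu(dy')=\bE^\mu\bigl[g(X^{(\rho)}_0)f(X^{(\rho)}_{t+s});\,X^{(\rho)}_u\in U\ \forall u\in[s,t+s]\bigr].
\end{equation*}
On this event $X_s^{(\rho)}\in U$; since $\supp g\subset V$ and $U\cap V=\emptyset$, the process started in $V$ must have exited $V$ by time $s$, so $\tau_V^{(\rho)}\le s$. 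Dropping the stronger path constraint, applying strong Markov at $\tau_V^{(\rho)}$ (using $X^{(\rho)}_{\tau_V^{(\rho)}}\in V_\rho$ and $t+s-\tau_V^{(\rho)}\in[t,t+s]$), and letting $f,g$ approximate point masses at $x,y$ gives
\begin{equation*}
B_1(x,y)\le \bP^y(\tau_V^{(\rho)}\le s)\cdot\esssup_{t\le t'\le t+s}\|q^{(\rho)}(t',\cdot,x)\|_{L^\infty(V_\rho;\mu)}
\end{equation*}
for a.e.\ $x\in U,y\in V$, which is the second term of the claimed inequality.

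The main obstacle is the reversibility step: the event $\{\tau_U^{(\rho)}>t\}$ is path-dependent on $[0,t]$, so one cannot formally interchange $x$ and $y$; the comparison $B_1(x,y)\le A_2'(y,x)$ (with $A_2'$ denoting the $\tau_V^{(\rho)}$ analogue of $A_1$) must be carried out at the level of joint laws under $\bP^\mu$, and then localized by taking $f,g\uparrow{\bf 1}_{B(x,r)},{\bf 1}_{B(y,r)}$ and $r\downarrow 0$ as in Lemma \ref{L:com-domain}. The hypothesis $U\cap V=\emptyset$ is used essentially at the point where the path constraint $\{X_u\in U\ \forall u\in[s,t+s]\}$ is relaxed to $\{\tau_V^{(\rho)}\le s\}$; without disjointness this relaxation fails.
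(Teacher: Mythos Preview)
Your proposal is correct and follows essentially the same route as the paper: split the expectation $\bE^\mu[f(X_0^{(\rho)})g(X_{t+s}^{(\rho)})]$ according to $\{\tau_U^{(\rho)}\le t\}$ versus $\{\tau_U^{(\rho)}>t\}$, handle the first piece by the strong Markov property (using $X^{(\rho)}_{\tau_U^{(\rho)}}\in U_\rho$), and convert the second piece via time reversal under $\bP^\mu$ into the $\tau_V^{(\rho)}$-side, then localize by shrinking the supports of $f,g$. The paper compresses your reversibility paragraph into a citation of \cite[Lemma 2.1]{BGK1}, but the content is the same; your explicit identification of how the path constraint $\{X_r^{(\rho)}\in U\ \forall r\in[0,t]\}$ becomes $\{X_u^{(\rho)}\in U\ \forall u\in[s,t+s]\}$ after reversal, and why $U\cap V=\emptyset$ then forces $\tau_V^{(\rho)}\le s$, is exactly what underlies that lemma.
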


\begin{proof}
Let $\bE^\mu [\cdot]:=\int_M \bE^x[\cdot]\,\mu(dx)$.
For any fixed $x\in U$ and $y\in V$, choose $0<r< \frac{1}{2}d(x,y)$
small so that $B(x, r) \subset U$ and $B(y, r)\subset V$. 
Let $f={\bf1}_{B(x,r)}$ and $g={\bf1}_{B(y,r)}$.   Then,
 by \cite[Lemma 2.1]{BGK1} (which follows from the time reversal property 
applied at time $t+s$ and the strong Markov property of the symmetric Hunt process $\{X_t^{(\rho)}\}$),
\begin{align*}
\bE^\mu \left[f(X_0^{(\rho)}) g(X_{t+s}^{(\rho)})\right]
 &=\bE^\mu\left[f(X_0^{(\rho)}) g(X_{t+s}^{(\rho)}); \tau^{(\rho)}_U\leq t\right]+ \bE^\mu\left[f(X_0^{(\rho)}) g(X_{t+s}^{(\rho)});   \tau^{(\rho)}_U >t \right] \\
  &\le \bE^\mu \left[f(X_0^{(\rho)}){\bf 1}_{\{ \tau^{(\rho)}_U\le t\}}\bE^{X_{\tau_U^{(\rho)}}^{(\rho)}}g(X^{(\rho)}_{t+s-\tau^{(\rho)}_U})\right]\\
&\quad +\bE^\mu \left[g(X_0^{(\rho)}){\bf 1}_{\{ \tau^{(\rho)}_V\le s\}}\bE^{X_{\tau_V^{(\rho)}}^{(\rho)}}f(X^{(\rho)}_{t+s-\tau^{(\rho)}_V})\right] \\
&\le \bE^\mu \left[f(X_0^{(\rho)}){\bf 1}_{\{ \tau^{(\rho)}_U\le t\}}\right] \,\esssup_{z\in U_\rho, s\le t'\le t+s}\bE^{z}g (X^{(\rho)}_{t'})\\
&\quad+ \bE^\mu \left[g(X_0^{(\rho)}){\bf 1}_{\{ \tau^{(\rho)}_V\le
s\}}\right] \,\esssup_{z\in V_\rho, t\le t'\le
t+s}\bE^{z}f(X^{(\rho)}_{t'}).\end{align*} Dividing both
sides with $\mu(B(x,r))\mu(B(y,r))$ and letting $r\to 0$, we can
obtain the desired estimate. \qed\end{proof}

The following result was proved in \cite[Theorem 3.1]{GHL2}.

\begin{lemma}\label{probability-exittime}  Assume that for any ball $B$ with radius $r>0$ and any $t>0$,
$$\bP^z(\tau_B^{(\rho)}\le t)\le \psi(r,t) \quad \textrm{  for almost all  } z\in \frac{1}{4}B,$$ where $\psi(r,\cdot)$ is a non-decreasing function for all $r>0.$  Then  for any ball $B(x,r)$, $t>0$ and any integer $k\ge 1$,
\begin{equation}\label{p-e-1} Q_t^{(\rho)}{\bf 1}_{B(x,k(r+\rho))^c}(z) \le  \psi(r,t)^k\quad \textrm{  for almost all  } z\in B(x,r/4).\end{equation} Consequently, for any ball $B(x,R)$ with $R>\rho$, $t>0$ and any integer $k\ge 1$,
$$Q_t^{(\rho)} {\bf 1}_{B(x,kR)^c}(z) \le  \psi(R-\rho,t)^{k-1}\quad \textrm{  for almost all  } z\in B(x,R).$$  \end{lemma}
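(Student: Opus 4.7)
The plan is to prove \eqref{p-e-1} by induction on $k$, exploiting the fundamental fact that jumps of the truncated process $\{X_s^{(\rho)}\}$ are bounded by $\rho$ in size, so that on $\{\tau_{B(x,r)}^{(\rho)}<\infty\}$ we have $X_{\tau_{B(x,r)}^{(\rho)}}^{(\rho)}\in\overline{B(x,r+\rho)}$. The second assertion will then follow from \eqref{p-e-1} by a simple re-centering and triangle-inequality argument.

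For the base case $k=1$, fix $z\in B(x,r/4)$. If $X_t^{(\rho)}\in B(x,r+\rho)^c$, then a fortiori $X_t^{(\rho)}\notin B(x,r)$, so $\tau_{B(x,r)}^{(\rho)}\le t$; hence the hypothesis of the lemma gives directly $Q_t^{(\rho)}{\bf 1}_{B(x,r+\rho)^c}(z)\le \bP^z(\tau_{B(x,r)}^{(\rho)}\le t)\le\psi(r,t)$. For the inductive step from $k$ to $k+1$, set $\tau=\tau_{B(x,r)}^{(\rho)}$. Since $B(x,r)\subset B(x,(k+1)(r+\rho))$, the event $\{X_t^{(\rho)}\in B(x,(k+1)(r+\rho))^c\}$ forces $\tau\le t$. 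The strong Markov property at $\tau$ then gives
$$\bP^z\bigl(X_t^{(\rho)}\in B(x,(k+1)(r+\rho))^c\bigr)=\bE^z\Bigl[{\bf 1}_{\{\tau\le t\}}\bP^{X_\tau^{(\rho)}}\bigl(X_{t-\tau}^{(\rho)}\in B(x,(k+1)(r+\rho))^c\bigr)\Bigr].$$
On $\{\tau<\infty\}$, the bounded-jump property yields $y:=X_\tau^{(\rho)}\in\overline{B(x,r+\rho)}$, and the triangle inequality gives $B(x,(k+1)(r+\rho))^c\subset B(y,k(r+\rho))^c$. Since trivially $y\in B(y,r/4)$, applying the induction hypothesis at center $y$ with radius $r$ and using the monotonicity of $\psi(r,\cdot)$ yields $\bP^y(X_{t-\tau}^{(\rho)}\in B(y,k(r+\rho))^c)\le \psi(r,t-\tau)^k\le\psi(r,t)^k$. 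Inserting this and using the base-case bound $\bP^z(\tau\le t)\le\psi(r,t)$ completes the induction.

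For the consequence, fix $B(x,R)$ with $R>\rho$ and $z\in B(x,R)$. The case $k=1$ is trivial. For $k\ge 2$, the triangle inequality gives $B(x,kR)^c\subset B(z,(k-1)R)^c$. With $r:=R-\rho>0$ we have $r+\rho=R$ and $(k-1)(r+\rho)=(k-1)R$; applying \eqref{p-e-1} centered at $z$ with this radius bounds $\bP^z(X_t^{(\rho)}\in B(z,(k-1)R)^c)$ by $\psi(R-\rho,t)^{k-1}$, as required. No substantial analytical obstacle is anticipated: the argument is a geometric/probabilistic iteration, with the only delicate bookkeeping being the interplay between the two radii $r$ and $r+\rho$, where the $\rho$-enlargement encodes the maximal overshoot of $\{X^{(\rho)}_s\}$ at its exit time, and the verification that re-centering at $y=X_\tau^{(\rho)}$ keeps us in the regime where the inductive hypothesis applies (automatic, since the center of a ball lies in its own quarter-ball).
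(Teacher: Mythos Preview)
Your proof is correct and follows essentially the same approach as the paper's: induction on $k$ via the strong Markov property at $\tau_{B(x,r)}^{(\rho)}$, using that the bounded-jump property forces $X_{\tau}^{(\rho)}\in\overline{B(x,r+\rho)}$ and then re-centering there. The only cosmetic difference is in the second assertion, where the paper covers $B(x,R)$ by a countable family of balls $B(y_i,(R-\rho)/4)$ with $y_i\in B(x,R)$ and applies \eqref{p-e-1} centered at each $y_i$ (so the ``almost all $z$'' conclusion is obtained as a union over these balls), rather than centering directly at $z$ as you do.
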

\begin{proof}We prove \eqref{p-e-1} by induction in $k$. Indeed, for $k=1$,
$$Q_t^{(\rho)}{\bf 1}_{B(x,r+\rho)^c}(z)\le \bP^z(\tau_{B(x,r)}^{(\rho)}< t)\le \psi(r,t) \quad \textrm{  for almost all  } z\in B(x,r/4).$$ For the inductive step from $k$ to $k+1$, we use the strong Markov property and get that for almost all $z\in B(x,r/4)$,
\begin{align*} Q_t^{(\rho)}{\bf 1}_{B(x,(k+1)(r+\rho))^c}(z) &=\bE^z\left[{\bf 1}_{\{\tau^{(\rho)}_{B(x,r)}<t\}} \bP^{X^{(\rho)}_{\tau_{B(x,r)}^{(\rho)}}}\Big(X_{t-\tau^{(\rho)}_{B(x,r)}}^{(\rho)}\notin B(x,(k+1)(r+\rho))\Big) \right]\\
&\le \bP^z(\tau_{B(x,r)}^{(\rho)}<t) \,\esssup_{y\in B(x,r+\rho), s\le t} Q_s^{(\rho)}{\bf 1}_{B(y,k(r+\rho))^c}(y)\\
&\le   \psi(r,t)^{k+1}.  \end{align*} Here, in the first inequality above
we have used the facts that $X^{(\rho)}_{\tau_{B(x,r)}^{(\rho)}}\in
B(x,r+\rho)$, and for $z\notin B(x,(k+1)(r+\rho))$ and $y\in
B(x,r+\rho)$, it holds $d(z,y)\ge d(x,z)-d(y,x)\ge k(r+\rho).$ The last inequality above follows from the assumption that $\psi(r,\cdot)$ is
a non-decreasing function for all $r>0.$ This proves \eqref{p-e-1}.

Finally, let $r=R-\rho>0$.  Then  by \eqref{p-e-1}, for any $y\in B(x,R)$ and $k\ge 1$,
$$Q_t^{(\rho)} {\bf 1}_{B(x,({k+1})R)^c}(z) \le Q_t^{(\rho)} {\bf 1}_{B(y,kR)^c}(z)\le    \phi(R-\rho,t)^k\quad \textrm{  for almost all  } z\in B (y,r/4).$$ Covering $B(x,R)$ by a countable family of balls like $B(y,r/4)$ with $y\in B(x,R)$ and renaming $k$ to $k-1$, we prove the second assertion.   \qed\end{proof}

\subsection{$\SCSJ(\phi)+ \J_{\phi,\le}\Longrightarrow
(\sE, \sF) $  is conservative } \label{S:7.5}

We will prove the following statement in this subsection of the Appendix.
Although this theorem is not used in the main body of the paper, we
include it here
 since it indicates that $\FK(\phi)$ is not required to deduce the conservativeness.
See the paragraph  after  the statement of
 Theorem \ref{T:main-1} for related discussions.

\begin{theorem}\label{T:conser}Assume that $\VD$ and \eqref{polycon} hold. Then,
$$\SCSJ(\phi)+\J_{\phi,\le}\Longrightarrow
(\sE, \sF) \hbox{ is conservative} .
$$
\end{theorem}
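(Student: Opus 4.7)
The plan is to reduce the problem to conservativeness of the $\rho$-truncated process via Meyer's decomposition, and then exploit the locality of the truncated jump measure together with the cut-off functions supplied by $\SCSJ^{(\rho)}(\phi)$ to verify a test-function criterion for conservativeness. Throughout, fix $\rho > 0$ and let $\{X^{(\rho)}_t\}$ denote the Hunt process associated with the $\rho$-truncated Dirichlet form $(\sE^{(\rho)}, \sF)$, with semigroup $\{Q_t^{(\rho)}\}$.

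First, I would invoke Lemma \ref{intelem} together with $\J_{\phi,\le}$ to see that the rate of large jumps $\mathcal{J}(x) = \int_{d(x,y) > \rho} J(x,y)\,\mu(dy) \le c/\phi(\rho)$ is uniformly bounded. Meyer's construction (Section \ref{Sect9.2}) then realizes $X$ from $X^{(\rho)}$ by adding large jumps at this bounded rate, so that $X$ is conservative whenever $X^{(\rho)}$ is. Thus it suffices to show $Q_t^{(\rho)} 1 = 1$ $\mu$-a.e.\ for every $t > 0$. Next, note that the proof of Proposition \ref{CSJ-equi}(1) goes through verbatim for the strong version, yielding $\SCSJ^{(\rho)}(\phi)$ from $\SCSJ(\phi) + \J_{\phi,\le}$ (cf.\ Remark \ref{R:csj}). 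Applying $\SCSJ^{(\rho)}(\phi)$ with $r = \rho$ and test function $f \equiv 1 \in \sF_{loc}$, I obtain for each $R > 0$ a cut-off $\varphi_R \in \sF_b$ for $B(x_0, R) \subset B(x_0, R+\rho)$ with
\[
\sE^{(\rho)}(\varphi_R, \varphi_R) \le \frac{C\,V(x_0, R + 2\rho)}{\phi(\rho)} .
\]
Crucially, because $J^{(\rho)}$ is supported on $\{d(x,y)\le \rho\}$, the integrand $(\varphi_R(x) - \varphi_R(y))^2 J^{(\rho)}(dx,dy)$ is supported on pairs $(x,y)$ with both points in the thin annulus $A_R := B(x_0, R+2\rho)\setminus B(x_0, R-\rho)$.

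The verification of conservativeness proceeds by a duality argument. Take an arbitrary $g \in \sF \cap L^1(M;\mu) \cap L^\infty(M;\mu)$ with $g \ge 0$ and compact support in $B(x_0, R_0)$. For $R > R_0$, $\varphi_R = 1$ on $\supp g$, so by monotone convergence and the semigroup identity,
\[
\langle g,\, 1 - Q_t^{(\rho)} 1 \rangle \;=\; \lim_{R\to\infty} \langle g,\, \varphi_R - Q_t^{(\rho)}\varphi_R \rangle \;=\; \lim_{R\to\infty} \int_0^t \sE^{(\rho)}\bigl(Q_s^{(\rho)} g,\, \varphi_R\bigr)\,ds .
\]
The plan is then to show that the integrand tends to zero as $R\to\infty$, uniformly on compact intervals in $s$. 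Using that $(\varphi_R(x) - \varphi_R(y))$ vanishes off $A_R \times A_R$ (modulo the cut-off $d(x,y)\le \rho$), a localized Cauchy--Schwarz yields
\[
\bigl|\sE^{(\rho)}(Q_s^{(\rho)} g, \varphi_R)\bigr|^2 \le \sE^{(\rho)}(\varphi_R, \varphi_R) \cdot \int_{A_R\times A_R} (Q_s^{(\rho)} g(x) - Q_s^{(\rho)} g(y))^2\, J^{(\rho)}(dx,dy) .
\]
The first factor is controlled by $C V(x_0, R+2\rho)/\phi(\rho)$ from $\SCSJ^{(\rho)}(\phi)$; the second factor is the $\sE^{(\rho)}$-energy of $Q_s^{(\rho)} g$ restricted to $A_R$. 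To conclude, I would iterate this localized energy estimate by a Caccioppoli-type argument on the truncated form, exploiting (i) the smoothing bound $\sE^{(\rho)}(Q_s^{(\rho)} g,Q_s^{(\rho)} g) \le \|g\|_2^2/(es)$ from spectral theory, (ii) the $L^2$-tail decay $\int_{B(x_0,R-\rho)^c} |Q_s^{(\rho)} g|^2\,d\mu \to 0$ as $R\to\infty$, and (iii) repeated use of $\SCSJ^{(\rho)}(\phi)$ to localize $Q_s^{(\rho)} g$ via suitable auxiliary cut-offs, producing a bound of the form $o(\phi(\rho)/V(x_0,R+2\rho))$ on the annular energy.

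The main obstacle is step (iii): turning the soft $L^2$-tail decay of $Q_s^{(\rho)} g$ into a quantitative decay of its localized energy on $A_R$ that beats the volume growth $V(x_0, R+2\rho)$ appearing in $\sE^{(\rho)}(\varphi_R,\varphi_R)$. This is the only place where the interplay between locality of $J^{(\rho)}$ (jumps of size at most $\rho$) and the polynomial volume growth from $\VD$ is delicate. Once this decay is obtained, dominated convergence (using the uniform-in-$R$ bound $|\sE^{(\rho)}(Q_s^{(\rho)}g,\varphi_R)| \le \|g\|_2\sqrt{\sE^{(\rho)}(\varphi_R,\varphi_R)/(es)}$ together with integrability of $s^{-1/2}$ near $0$) transfers the pointwise-in-$s$ convergence to the time integral, giving $\langle g, 1 - Q_t^{(\rho)} 1\rangle = 0$ for all $g \ge 0$ in a dense set, hence $Q_t^{(\rho)} 1 = 1$ $\mu$-a.e., and conservativeness of $(\sE,\sF)$ follows by Meyer's decomposition.
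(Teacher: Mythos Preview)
Your reduction to the $\rho$-truncated process and the test-function identity
$\langle g, 1-Q_t^{(\rho)}1\rangle=\lim_{R\to\infty}\int_0^t\sE^{(\rho)}(Q_s^{(\rho)}g,\varphi_R)\,ds$
are correct and match the paper's starting point. However, the argument breaks down precisely at the step you flag as ``the main obstacle,'' and the proposal does not actually resolve it.

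First, the bound you call ``uniform-in-$R$'' is not: $\sE^{(\rho)}(\varphi_R,\varphi_R)\le CV(x_0,R+2\rho)/\phi(\rho)$ \emph{grows} with $R$ under $\VD$, so $\|g\|_2\sqrt{\sE^{(\rho)}(\varphi_R,\varphi_R)/(es)}$ does not dominate uniformly and the dominated convergence step fails as written. Second, and more fundamentally, your step (iii) asks that the $\sE^{(\rho)}$-energy of $Q_s^{(\rho)}g$ on the annulus $A_R$ decay like $o(1/V(x_0,R))$. Under only $\VD$, $\J_{\phi,\le}$ and $\SCSJ(\phi)$ you have no pointwise or heat-kernel control on $Q_s^{(\rho)}g$; the soft inputs you list (global spectral bound $\le\|g\|_2^2/(es)$, $L^2$-tail decay) give no quantitative rate at all, let alone one that beats polynomial volume growth. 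A Caccioppoli iteration on annuli will transfer energy bounds to $L^2$-mass bounds and back, but each step loses only a bounded multiplicative factor, not the required $1/V(x_0,R)$.

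The paper's proof supplies exactly the missing mechanism: Davies' method. One builds an \emph{exponentially} growing weight $\tilde\varphi=a_0+\sum_n(a_{n+1}-a_n)(1-\varphi_n)$ with $a_n=s^n$, $s>1$, using the $\SCSJ$ cut-offs on concentric annuli of width $\rho$. A differential inequality for $h(t)=\|u(t)\tilde\varphi\|_2^2$ (obtained from $\SCSJ^{(\rho)}$ applied annulus-by-annulus) yields the weighted energy bound
$\int_0^t\int_M\tilde\varphi^2\,d\Gamma^{(\rho)}(u(s),u(s))\,ds\le 2\|f\tilde\varphi\|_2^2e^{Ct/\phi(\rho)}$,
finite because $f$ has compact support. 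Since $\tilde\varphi\asymp s^n$ on the $n$-th annulus, this forces the annular energies to decay \emph{exponentially} in $n$, which is what beats the polynomial volume factor coming from $\sE^{(\rho)}(\varphi_n,\varphi_n)$. The exponential weight is the key idea your proposal is missing.
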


Under $\VD$, \eqref{polycon} and $\J_{\phi,\le}$, in view of Lemma \ref{intelem} and Meyer's construction of adding and removing jumps in Subsection \ref{Sect9.2},
$(\sE,\sF)$ is conservative
if and only if so is $(\sE^{(\rho)},\sF)$
 for some (and hence for any)  $\rho>0$.
Therefore, to prove the conservativeness of $(\sE,\sF)$, it suffices
to establish it for $(\sE^{(\rho)},\sF)$ for some $\rho>0$. Our
proof is based on Davies' method \cite{D},
similar to what  is done in \cite[Section 6]{AB} for diffusion
processes.

We first give some notations. Fix $x_0\in M$ and $r>0$, let
$B_{r}=B(x_0,r)$.
Suppose  $\SCSJ(\phi)$ holds.  Let $\vp_n$ be the associated cut-off function for
$B_{n\rho}\subset B_{(n+1)\rho}$ in $\SCSJ(\phi)$, and  $\{a_n; n\ge -1\}$ an
increasing sequence with $a_{-1}=a_0\ge0$. Set
\begin{equation}\label{e:7.5}
 \wt \vp= a_0+\sum_{n=0}^\infty(a_{n+1}-a_n)(1-\vp_n) .
 \end{equation}
 Note that
\begin{equation}\label{e:7.5-6}
\wt \vp =a_0 + \sum_{k=0}^{n-1}
(a_{k+1}-a_k) (1-\vp_k)\le a_n\ \hbox{ on } B_{n\rho},
\end{equation}
 and  for $0\leq j  <n$,
\begin{equation}\label{e:7.6}
 \tilde\vp \geq  a_0+\sum_{k=0}^{j-1} (a_{k+1}-a_k) (1-\vp_k)=a_j
 \ \hbox{ on } M\setminus B_{j\rho}.
 \end{equation}
We have the following statement.

\begin{lemma}\label{L:con-1}Assume that $\VD$, \eqref{polycon}, $\J_{\phi,\le}$ and $\SCSJ(\phi)$ hold. Then  for any
$f\in \sF_b$,
\begin{equation}\label{e:7.7}
\int_M f^2\, d\Gamma^{( \rho)}(\wt \vp, \wt \vp)\le  A_0
\left(\frac{1}{8}\int_M
\tilde\vp^2\,d\Gamma^{(\rho)}(f,f)+\frac{C_0}{\phi(\rho)}\int_{M}\tilde\vp^2f^2\,d\mu\right),
\end{equation}
where
\begin{equation}\label{e:7.8}
A_0 :=\sup_{n\ge0} \left( \frac{ a_{n+1}-a_n}{a_{n-1}} \right)^2.
\end{equation}
\end{lemma}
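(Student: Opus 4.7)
The proof runs along classical Davies-type lines: derive a pointwise bound for $(\tilde\vp(x)-\tilde\vp(y))^2$ in terms of the simpler differences $(\vp_k(x)-\vp_k(y))^2$, then combine this with the cutoff-Sobolev inequality applied to each $\vp_k$. First partition $M$ into the annuli $I_n := B_{(n+1)\rho}\setminus B_{n\rho}$ for $n\ge 0$. Since $\vp_k\equiv 1$ on $B_{k\rho}$ and $\vp_k\equiv 0$ off $B_{(k+1)\rho}$, for $x\in I_n$ one has $\vp_k(x)=0$ when $k\le n-1$ and $\vp_k(x)=1$ when $k\ge n+1$. If additionally $d(x,y)\le\rho$ then $y\in I_{n-1}\cup I_n\cup I_{n+1}$, and consequently only the three indices $k\in\{n-1,n,n+1\}$ contribute nontrivially to the telescoping representation of $\tilde\vp(x)-\tilde\vp(y)$. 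A three-term Cauchy--Schwarz inequality then yields the clean pointwise estimate
$$(\tilde\vp(x)-\tilde\vp(y))^2 \;\le\; 3\sum_{k\ge 0}(a_{k+1}-a_k)^2(\vp_k(x)-\vp_k(y))^2 \quad\text{whenever } d(x,y)\le\rho.$$

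Integrating against $f^2(x)\,J^{(\rho)}(dx,dy)$ converts the pointwise bound into
$$\int_M f^2\,d\Gamma^{(\rho)}(\tilde\vp,\tilde\vp) \;\le\; 3\sum_{k}(a_{k+1}-a_k)^2\int_M f^2\,d\Gamma^{(\rho)}(\vp_k,\vp_k).$$
To each $\vp_k$ I would apply the self-improving strong cutoff-Sobolev inequality from Corollary~\ref{C:cswk-1} (in its $\SCSJ(\phi)$ form via Remark~\ref{R:csj}, and with $\eps=1/24$ in place of $1/8$ via Proposition~\ref{L:cswk}) with $R=k\rho$, $r=\rho$. The crucial further observation is that $\Gamma^{(\rho)}(\vp_k,\vp_k)$ is supported on the fat annulus $I_k^* := B_{(k+2)\rho}\setminus B_{(k-1)\rho}$, because any jump of length $\le\rho$ starting inside $B_{(k-1)\rho}$ can only reach $B_{k\rho}$, on which $\vp_k\equiv 1$. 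Combined with Lemma~\ref{intelem}, this lets me localize the lower-order term from the ball $B_{(k+2)\rho}$ to $I_k^*$, producing
$$\int_M f^2\,d\Gamma^{(\rho)}(\vp_k,\vp_k) \;\le\; \tfrac{1}{24}\!\int_{I_k\times I_k^*}\!\vp_k^2(x)(f(x)-f(y))^2\,J^{(\rho)}(dx,dy) + \tfrac{c_1}{\phi(\rho)}\!\int_{I_k^*}\!f^2\,d\mu.$$

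To close, I would invoke \eqref{e:7.6} and the definition \eqref{e:7.8} of $A_0$. For $x\in I_k$, $\tilde\vp(x)\ge a_k\ge a_{k-1}$, combined with $\vp_k\le 1$, gives $(a_{k+1}-a_k)^2\vp_k^2(x)\le A_0\,\tilde\vp^2(x)$; since the $I_k$ are disjoint, the total energy contribution sums exactly to $(A_0/8)\int_M\tilde\vp^2\,d\Gamma^{(\rho)}(f,f)$. For $x\in I_k^*\subset M\setminus B_{(k-1)\rho}$ one has $\tilde\vp(x)\ge a_{k-1}$, hence $(a_{k+1}-a_k)^2\le A_0\tilde\vp^2(x)$; and the fat annuli $\{I_k^*\}$ have overlap multiplicity three, so the lower-order terms sum to at most $(9c_1A_0/\phi(\rho))\int_M\tilde\vp^2 f^2\,d\mu$. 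Setting $C_0:=9c_1$ then yields \eqref{e:7.7}. The main obstacle in this plan is the localization of the SCSJ lower-order term from the ball $B_{(k+2)\rho}$ to the annulus $I_k^*$: without it, the sum $\sum_{k\ge n-1}(a_{k+1}-a_k)^2$ diverges for geometrically growing $\{a_k\}$ (which are precisely the sequences needed in the Davies argument for conservativeness), so the ball form of Corollary~\ref{C:cswk-1} cannot be used directly and one must exploit the jump-truncation support of $\Gamma^{(\rho)}(\vp_k,\vp_k)$ together with $\J_{\phi,\le}$ to replace the ball by the annulus.
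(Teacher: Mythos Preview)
Your overall strategy coincides with the paper's: both reduce to a sum $\sum_k (a_{k+1}-a_k)^2\int f^2\,d\Gamma^{(\rho)}(\vp_k,\vp_k)$ via a finite-overlap Cauchy--Schwarz argument (the paper expands the bilinear form and picks up a factor $6$; your pointwise three-term estimate gives $3$), apply a self-improved cut-off Sobolev inequality to each $\vp_k$, and close with $\wt\vp\ge a_{k-1}$ on the fat annulus together with bounded overlap.

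The gap is exactly where you locate it, but your proposed fix does not work. Lemma~\ref{intelem} and $\J_{\phi,\le}$ control tails of the jumping kernel; the term you must localize, $\frac{c}{\phi(\rho)}\int_{B_{(k+2)\rho}}f^2\,d\mu$, contains no kernel at all, so those tools give you nothing. Nor can you simply discard the inner ball on the right-hand side of the ball-form inequality --- that strengthens the inequality rather than weakening it. The paper handles this by invoking $\CSAJ^{(\rho)}(\phi)_+$ (Proposition~\ref{L:cswk}(2)), whose lower-order term is already over the annulus $U_n^*$. What actually makes an annulus-type inequality available here from $\SCSJ(\phi)$ is precisely the ``S'': since $\vp_k$ is \emph{independent of the test function}, you may apply the ball-form inequality not to $f$ but to $g=f\psi_k$, where $\psi_k\in\sF_b$ equals $1$ on $I_k^*$ and is supported in a slightly wider annulus. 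The left side is unchanged (because $\Gamma^{(\rho)}(\vp_k,\vp_k)$ is supported on $I_k^*$, where $\psi_k\equiv 1$), the first right-hand term is unchanged (since $\psi_k\equiv 1$ on $I_k\cup I_k^*$), and the lower-order term becomes $\frac{c}{\phi(\rho)}\int_{\mathrm{supp}\,\psi_k}f^2\,d\mu$, an annulus with bounded overlap in $k$. This, not Lemma~\ref{intelem}, is the mechanism that closes the argument.
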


\begin{proof}
By considering $f \vp_n$ in place of $f$ and then taking $n\to
\infty$ if needed, we may assume without loss of generality that
$f\in \FF_b$ has compact support. Thus in view of \eqref{e:7.5-6}, the
right hand side of \eqref{e:7.7} is finite. Let $U_n=
B_{(n+1)\rho}\setminus B_{n\rho}$ and $U^*_n=B_{(n+2)\rho}\setminus
B_{(n-1)\rho}$. Note that
$$ \Gamma^{(\rho)}(1-\vp_n, 1- \vp_m) = \Gamma^{(\rho)}(\vp_n,\vp_m)=0
$$
for any $m\ge n+3$, and
$\Gamma^{(\rho)}(1-\vp_n, 1- \vp_n)= \Gamma^{(\rho)}(\vp_n,\vp_n)=0$
outside $U_n^*$. Then  using the
Cauchy-Schwarz inequality, $\SCSJ(\phi)$ and Proposition
\ref{L:cswk}(2) (with $\eps=\frac{1}{48}$ in $\CSAJ^{(\rho)}(\phi)_+$),
we have
\begin{align*}
\int_M f^2\,d\Gamma^{(\rho)}(\wt \vp, \wt \vp)
&\le2\sum_{n=0}^\infty\sum_{n\le m}
(a_{n+1}-a_n)(a_{m+1}-a_m) \int_M f^2\,d\Gamma^{(\rho)}(\vp_n,\vp_m)\\
&=2\sum_{n=0}^\infty\sum_{n\le m\le n+2} (a_{n+1}-a_n)(a_{m+1}-a_m) \int_M
f^2\,d\Gamma^{(\rho)}(\vp_n,\vp_m)\\
&\le \sum_{n=0}^\infty\sum_{n\le m\le n+2} (a_{n+1}-a_n)^2 \int_M
f^2\,d\Gamma^{(\rho)}(\vp_n,\vp_n)\\
&\quad +\sum_{n=0}^\infty\sum_{n\le m\le n+2} (a_{m+1}-a_m)^2 \int_M
f^2\,d\Gamma^{(\rho)}(\vp_m,\vp_m)\\
&\le 6\sum_{n=0}^\infty (a_{n+1}-a_n)^2\int_M
f^2\,d\Gamma^{(\rho)}(\vp_n,\vp_n)\\
&=6\sum_{n=0}^\infty (a_{n+1}-a_n)^2\int_{U_n^*}
f^2\,d\Gamma^{(\rho)}(\vp_n,\vp_n)\\
&\le \sum_{n=0}^\infty (a_{n+1}-a_n)^2 \left(
\frac{1}{8}\int_{U_n}\,d\Gamma^{(\rho)}(f,f)+\frac{c_1}{\phi(\rho)}\int_{U_n^*}f^2\,d\mu\right)\\
&\le \sum_{n=0}^\infty \left(\frac{a_{n+1}-a_n}{a_{n-1}}\right)^2
\left( \frac{1}{8}\int_{U_n}\tilde
\vp^2\,d\Gamma^{(\rho)}(f,f)+\frac{c_1}{\phi(\rho)}\int_{U_n^*}\tilde\vp^2f^2\,d\mu\right),
\end{align*} where in the last inequality we have used the fact that $a_{n-1}\le \tilde \vp\le a_{n+2}$ on
$U_n^*$ from \eqref{e:7.5-6} and \eqref{e:7.6}.
The proof is complete. \qed\end{proof}

We also need the following lemma.
\begin{lemma}\label{L:con-2}Assume that $\VD$, \eqref{polycon}, $\J_{\phi,\le}$ and $\SCSJ(\phi)$ hold.
Let $\tilde \vp$ and $A_0$ be as in \eqref{e:7.5} and \eqref{e:7.8},
respectively. Suppose that $A_0\leq 1$. Let $f$ have compact
support, and set $u(t)=Q^{(\rho)}_tf$. Then, we have
 \begin{equation}\label{e:semi-2}
 \int_0^t\,ds\int_M \tilde\vp^2\,d\Gamma^{(\rho)} (u(s),u(s))\le 2\|f\tilde\vp\|^2_{2}\exp\left(\frac{4C_0t}{\phi(\rho)}\right).
 \end{equation}\end{lemma}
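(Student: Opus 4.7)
The plan is to run the classical Davies' method in the truncated setting, using Lemma \ref{L:con-1} to absorb the energy of $\tilde\vp$. Set $F(s):=\|\tilde\vp\,u(s)\|_2^2$ and $G(s):=\int_M\tilde\vp^2\,d\Gamma^{(\rho)}(u(s),u(s))$. Since $f\in\sF_b$ has compact support and $\tilde\vp$ is locally bounded on $M$, $F(0)=\|f\tilde\vp\|_2^2<\infty$; working at first with truncations $\tilde\vp\wedge N$ (and letting $N\to\infty$ at the end via monotone convergence) we may treat $\tilde\vp^2 u(s)\in\sF$ as legitimately paired against $u(s)$ in $\sE^{(\rho)}$. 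Standard semigroup theory then gives
\[
\tfrac{d}{ds}F(s)\;=\;-2\,\sE^{(\rho)}\!\bigl(u(s),\tilde\vp^2 u(s)\bigr).
\]

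The key step is a lower bound on $\sE^{(\rho)}(u,\tilde\vp^2 u)$ of the form
\be\label{e:planmain}
\sE^{(\rho)}(u,\tilde\vp^2 u)\;\ge\;\tfrac14 G\;-\;\tfrac{2C_0}{\phi(\rho)}F,
\ee
with $G=\int\tilde\vp^2\,d\Gamma^{(\rho)}(u,u)$ and $F=\|\tilde\vp u\|_2^2$. I would derive this from the algebraic identity
\[
(a-b)(pa-qb)\;=\;\tfrac12(p+q)(a-b)^2+\tfrac12(a^2-b^2)(p-q),
\]
applied with $a=u(x),\,b=u(y),\,p=\tilde\vp^2(x),\,q=\tilde\vp^2(y)$. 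The first term, after integration against $J^{(\rho)}$ and symmetrization, is exactly $G$. For the second (cross) term I use the factorization $u^2(x)-u^2(y)=(u(x)-u(y))(u(x)+u(y))$ and analogously for $\tilde\vp^2$, then apply Cauchy–Schwarz together with $(\tilde\vp(x)+\tilde\vp(y))^2\le 2(\tilde\vp^2(x)+\tilde\vp^2(y))$ to bound its absolute value by $4\sqrt{G}\,\sqrt{\int u^2\,d\Gamma^{(\rho)}(\tilde\vp,\tilde\vp)}$. Young's inequality $2\sqrt{ab}\le \tfrac12 a+2b$ gives
\[
\sE^{(\rho)}(u,\tilde\vp^2 u)\;\ge\;\tfrac12 G-2\!\int_M u^2\,d\Gamma^{(\rho)}(\tilde\vp,\tilde\vp),
\]
and inserting Lemma \ref{L:con-1} (with $A_0\le 1$) yields \eqref{e:planmain}.

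Combining \eqref{e:planmain} with the $L^2$-derivative identity produces the differential inequality
\[
F'(s)\;\le\;-\tfrac12 G(s)+\tfrac{4C_0}{\phi(\rho)}F(s).
\]
Dropping the nonpositive $-\tfrac12 G$ gives $F'(s)\le\tfrac{4C_0}{\phi(\rho)}F(s)$, hence by Gronwall $F(s)\le F(0)\exp(4C_0 s/\phi(\rho))$. Integrating the differential inequality over $[0,t]$ and using $F(t)\ge 0$ then gives
\[
\tfrac12\!\int_0^t G(s)\,ds\;\le\;F(0)+\tfrac{4C_0}{\phi(\rho)}\!\int_0^t F(0)e^{4C_0 s/\phi(\rho)}\,ds\;=\;F(0)\exp\!\bigl(4C_0 t/\phi(\rho)\bigr),
\]
which is precisely \eqref{e:semi-2}. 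The main technical obstacle is not the computation itself, which is a clean variant of the Davies/Carré-du-champ trick, but justifying that $\tilde\vp^2 u(s)$ is an admissible test function in $\sE^{(\rho)}$ despite $\tilde\vp$ being only locally bounded; this I would handle by the truncation $\tilde\vp_N:=\tilde\vp\wedge N$, noting that all bounds above are uniform in $N$ on the right-hand side (since $A_0$ is monotone under truncation) and then passing $N\to\infty$ by Fatou.
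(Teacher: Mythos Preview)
Your proposal is correct and follows essentially the same Davies-type scheme as the paper: differentiate $\|\tilde\vp\,u(s)\|_2^2$, derive the differential inequality $F'(s)\le -\tfrac12 G(s)+\tfrac{4C_0}{\phi(\rho)}F(s)$ via an energy estimate combined with Lemma \ref{L:con-1}, then apply Gronwall and integrate. The algebraic route to the energy estimate differs only in presentation --- you use the symmetrized identity plus Cauchy--Schwarz, the paper expands $(u(x)-u(y))(\tilde\vp^2(x)u(x)-\tilde\vp^2(y)u(y))$ directly and applies $2ab\le \tfrac14 a^2+4b^2$ --- but both land on the same inequality with the same constants.

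One small point worth tightening: for the truncation, the paper uses the \emph{finite partial sums} $\tilde\vp_{0,N}=a_0+\sum_{n=0}^N(a_{n+1}-a_n)(1-\vp_n)$ rather than $\tilde\vp\wedge N$. These are cleaner because $\tilde\vp_{0,N}$ is itself of the form \eqref{e:7.5} (with the truncated sequence $a_0,\dots,a_{N+1},a_{N+1},\dots$), so Lemma \ref{L:con-1} applies verbatim with the \emph{same} $A_0$ and with $\tilde\vp_{0,N}$ appearing on both sides. Your phrase ``$A_0$ is monotone under truncation'' is a bit loose; in fact $\tilde\vp\wedge a_k$ coincides with $\tilde\vp_{0,k-1}$, so your truncation is equivalent, but the partial-sum formulation makes the application of Lemma \ref{L:con-1} transparent without any additional comment.
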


\begin{proof} We may assume the boundedness of $f$. Let $(a_n)_{n\ge -1}$ and $\vp_n$ as above. For any $N\ge 1$, set
$$
\tilde\vp_{0,N}=a_0+\sum_{n=0}^N(a_{n+1}-a_n)(1-\vp_n)
$$
 and
$$
h_N(t)=\|u(t)\tilde \vp_{0,N}\|_2^2.
$$
 We write $u(t,x)=Q^{(\rho)}_tf(x)$.
Since $u(t) \in\sF_b$ and $\wt\vp_{0,N}^2u(t)\in \sF_b$,
\begin{align*} h_N'(t)=&-2\sE^{(\rho)}(u(t),
\tilde \vp_{0,N}^2u(t))\\
=&-2
\int_{M \times M}
(u(t,x)-u(t,y))(\tilde \vp_{0,N}^2(x)u(t,x)-\tilde \vp_{0,N}^2(y)u(t,y))\,J^{(\rho)}(dx,dy)\\
= &-2\int_{M \times M}  (u(t,x)-u(t,y))^2\tilde\vp_{0,N}^2(x)\,J^{(\rho)}(dx,dy)\\
&-2\int_{M \times M}  (\tilde \vp_{0,N}^2(x)-\tilde \vp_{0,N}^2(y))u(t,y)(u(t,x)-u(t,y))\,J^{(\rho)}(dx,dy)\\
\le&-2\int_{M \times M}  (u(t,x)-u(t,y))^2\tilde\vp_{0,N}^2(x)\,J^{(\rho)}(dx,dy)\\
&+\frac{1}{4}\int_{M \times M}  (\tilde \vp_{0,N}(x)+\tilde\vp_{0,N}(y))^2(u(t,x)-u(t,y))^2\,J^{(\rho)}(dx,dy)\\
&+4 \int_{M \times M} u(t,y)^2(\tilde \vp_{0,N}(x)-\tilde\vp_{0,N}(y))^2\,J^{(\rho)}(dx,dy)\\
\le&-2\int_{M \times M} (u(t,x)-u(t,y))^2\tilde\vp_{0,N}^2(x)\,J^{(\rho)}(dx,dy)\\
&+\frac{1}{2}\int_{M \times M}  (\tilde \vp_{0,N}^2(x)+\tilde\vp_{0,N}^2(y))(u(t,x)-u(t,y))^2\,J^{(\rho)}(dx,dy)\\
&+4\int_{M \times M} u(t,y)^2
(\tilde \vp_{0,N}(x)-\tilde\vp_{0,N}(y))^2\,J^{(\rho)}(dx,dy)\\
= &- \int_{M \times M}  (u(t,x)-u(t,y))^2\tilde\vp_{0,N}^2(x)\,J^{(\rho)}(dx,dy)\\
&+4\int_{M \times M} u(t,x)^2(\vp_{0,N}(x)-\vp_{0,N}(y))^2\,J^{(\rho)}(dx,dy)\\
=&-\int_{M \times M}  \tilde\vp_{0,N}^2\,d\Gamma^{(\rho)}(u(t),u(t))+ 4\int_{M \times M}
u(t)^2\,d\Gamma^{(\rho)}(\vp_{0,N},\vp_{0,N}),
\end{align*}
where in the first inequality we used the fact that $2ab\le
\frac{a^2}{4}+4b^2$ for all $a,b\in \bR$, and in the last inequality
$$ \vp_{0,N}:=\sum_{n=0}^N(a_{n+1}-a_n)
\vp_n=-\tilde\vp_{0,N}+a_{N+1}.
$$
So  by (the proof of)
Lemma \ref{L:con-1} and the
assumption  $A_0\leq 1$,
\begin{equation}\label{e:lcon-2}
h_N'(t)\le -\frac{1}{2}\int_M \tilde\vp_{0,N}^2\,d\Gamma^{(\rho)}(u(t),u(t))+\frac{4C_0}{\phi(\rho)}h_N(t).\end{equation}
In particular, $$h_N'\le \frac{4C_0}{\phi(\rho)}h_N $$ and hence
$$h_N(t)\le h_N(0)\exp\left(
\frac{4C_0t}{\phi(\rho)}\right)=\|f\tilde \vp_{0,N}\|_2^2\exp\left(\frac{4C_0t}{\phi(\rho)}\right).$$
Using the inequality above and integrating
\eqref{e:lcon-2}, we obtain
$$
h_N(t)-h_N(0)+ \frac{1}{2}\int_0^t\,ds\int_M \tilde \vp_{0,N}^2\,d\Gamma^{(\rho)}(u(s),u(s))
\le \|f\tilde \vp_{0,N}\|_2^2 (e^{4C_0t/\phi(\rho)}-1).
$$
Since $h_N(0)=\|f\tilde \vp_{0,N}\|_2^2$, letting
$N\to\infty$ gives us the desired assertion. \qed
 \end{proof}

\medskip \noindent
{\bf Proof of Theorem \ref{T:conser}.} \quad We mainly follow the
argument of \cite[Theorem 7]{D} and make use of Lemma \ref{L:con-2}
above. Let $f\ge0$ be a bounded
 function with compact support and let
$u(t)=Q^{(\rho)}_tf$. As mentioned in the remark below Theorem
\ref{T:conser}, it is sufficient to verify that $Q^{(\rho)}_t{\bf
1}=1 $ $\mu$-a.e for every $t>0$. Since $\int_M Q^{(\rho)}_t f
\,d\mu = \int_M f  \, Q^{(\rho)}_t {\bf 1} \,d\mu$, it reduces to
show that
\begin{equation}\label{e:tcon}
\int_M f\,d\mu\le \int_M u(t)\,d\mu
\end{equation}
for some $t>0$.

For any $n\ge 0$, let $a_n=s^n$ with $s>1$ such that $s(s-1)\le 1$,
and set $a_{-1}=1$. In particular,  with $A_0$  defined by
\eqref{e:7.8},  we have $A_0= s^2(1-s)^2 \leq 1$. Let $\vp_n$ and
$\tilde \vp$ be defined as in the paragraph containing
\eqref{e:7.5}. Set   $U^*_n=B_{(n+2)\rho}\setminus B_{(n-1)\rho}$.
 Then for $t\in (0, 1]$,   by
the Cauchy-Schwarz inequality and Lemma \ref{L:con-2}, for any $t\in(0,1)$,
\begin{align*}\langle f, \vp_n\rangle-\langle u(t), \vp_n\rangle&=-\int_0^t\frac{d}{ds}\langle u(s), \vp_n\rangle\,ds\\
&=\int_0^t\,ds\int_M \Gamma^{(\rho)}(u(s), \vp_n)\\
&=\int_0^t\,ds\int_M\tilde\vp\cdot \tilde\vp^{-1} \,d\Gamma^{(\rho)}(u(s), \vp_n)\\
&\le \left(\int_0^t\,ds\int_M \tilde\vp^2\,d\Gamma^{(\rho)}(u(s),u(s))\right)^{1/2}\left(\int_0^t\,ds\int_M\tilde\vp^{-2}\,d\Gamma^{(\rho)}(\vp_n,\vp_n)\right)^{1/2}\\
&\le \sqrt{2}\|f\tilde\vp\|_2e^{2C_0t/\phi(\rho)}
(\sup_{U_n^*}\tilde\vp^{-1})\left(\int_{U_n^*}\,\Gamma^{(\rho)}(\vp_n,\vp_n)\right)^{1/2},\end{align*}
where in the last inequality we used again the fact that
$\Gamma^{(\rho)}(\vp_n,\vp_n)=0$ outside $U_n^*$. Note that on
$U_n^*$, we have from \eqref{e:7.5-6} and \eqref{e:7.6} that
$a_{n-1}\le \wt \vp \le a_{n+2}$ and  so $ \sup_{U_n^*} \wt \vp^{-1}
\le a_{n-1}^{-1} $.
 On the other hand,
using $\SCSJ(\phi)$ with
 $f\in \sF \cap C_c(M)$
such that $f|_{B_{(n+2)\rho}}=1$,
we find that
$$\int_{U_n^*}\,\Gamma^{(\rho)}(\vp_n,\vp_n)\le \frac{c_1}{\phi(\rho)}\mu({U_n^*}).$$
Combining all the conclusions above, we get
$$
\langle f, \vp_n\rangle-\langle u(t), \vp_n\rangle\le \sqrt{2} \,
\|f\tilde\vp\|_2 \, \exp\left( \frac{2C_0t}{\phi(\rho)}+
\frac{1}{2}\log \left(\frac{c_1}{\phi(\rho)}\mu({U_n^*})\right)-\log
a_{n-1}\right) .
$$
Noting that due to $\VD$, $\mu({U_n^*})\le \mu(B_{(n+2)\rho})\le
c_2(\rho) n^{d_2}$ for any $n\ge 0$, and $a_n=s^n$ for $s>1$, one
can easily see that the right hand side of the inequality above
converges to 0 when $n\to \infty$. Since
$$ \int_M  u(t)\,d\mu=\lim_{n\to\infty}\int_M u(t)\vp_n\,d\mu \quad \textrm{and} \quad
\int_M f\,d\mu=\lim_{n\to\infty}\int_M  f\vp_n\,d \mu,$$
 we get  \eqref{e:tcon} and
the conservativeness of $(\sE, \sF)$. \qed

\begin{remark}\rm By using the arguments above, one can study the stochastic completeness in terms of $\SCSJ(\phi)$ for jump processes in
general settings, namely to obtain some sufficient condition for the
stochastic completeness without $\VD$ assumption. See \cite[Theorem
1.16 and Section 7]{AB} for related discussions about diffusions.
\end{remark}

\ \

\noindent{\bf Acknowledgement.}   The authors thank  the referee for  helpful comments.

{\small

}

 \vskip 0.3truein

\noindent {\bf Zhen-Qing Chen}

\smallskip \noindent
Department of Mathematics, University of Washington, Seattle,
WA 98195, USA

\noindent
E-mail: \texttt{zqchen@uw.edu}

\medskip

\noindent {\bf Takashi Kumagai}:

\smallskip \noindent
 Research Institute for Mathematical Sciences,
Kyoto University, Kyoto 606-8502, Japan

\noindent Email: \texttt{kumagai@kurims.kyoto-u.ac.jp}

\medskip

\noindent {\bf Jian Wang}:

\smallskip \noindent
School of Mathematics and Computer Science \& Fujian Key Laboratory of Mathematical Analysis and Applications (FJKLMAA), Fujian Normal
University, 350007, Fuzhou, P.R. China.

\noindent Email: \texttt{jianwang@fjnu.edu.cn}

 \end{document}